\newcommand{\RN}[1]{%
  \textup{\uppercase\expandafter{\romannumeral#1}}%
}
\definecolor {processblue}{cmyk}{0.96,0,0,0}
\let\emptyset\varnothing
\pgfplotsset{compat=1.15}
\newtheorem{theorem}{Theorem}[section]
\newtheorem{proposition}[theorem]{Proposition}
\newtheorem{lemma}[theorem]{Lemma}
\newtheorem{corollary}[theorem]{Corollary}
\newtheorem{definition}[theorem]{Definition}
\theoremstyle{remark}
\newtheorem{example}[theorem]{Example}
\theoremstyle{remark}
\newtheorem{remark}[theorem]{Remark}
\newtheorem{question}[theorem]{Question}
\newtheorem{conjecture}[theorem]{Conjecture}
\DeclareMathOperator{\spec}{Spec}
\DeclareMathOperator{\aut}{Aut}
\DeclareMathOperator{\ord}{ord}
\DeclareMathOperator{\et}{et}
\DeclareMathOperator{\an}{an}
\DeclareMathOperator{\sw}{sw}
\DeclareMathOperator{\Frac}{Frac}
\DeclareMathOperator{\sep}{sep}
\DeclareMathOperator{\Def}{Def}
\DeclareMathOperator{\Alg}{Alg}
\DeclareMathOperator{\gal}{Gal}
\DeclareMathOperator{\dsw}{dsw}
\DeclareMathOperator{\st}{st}
\DeclareMathOperator{\rsw}{rsw}
\DeclareMathOperator{\Proj}{Proj}
\DeclareMathOperator{\ind}{ind}
\DeclareMathOperator{\ab}{ab}
\DeclareMathOperator{\ness}{ness}
\DeclareMathOperator{\reduce}{red}
\DeclareMathOperator{\characteristic}{char}
\DeclareMathOperator{\Der}{Der}
\DeclareMathOperator{\cohom}{H}
\DeclareMathOperator{\ASW}{ASW}
\begin{document}

\title{Deforming cyclic covers in towers}

\author{Huy Dang}
\email{huydang1130@ncts.ntu.edu.tw}
\address{National Center for Theoretical Sciences, National Taiwan University, Taipei City 106, TAIWAN (R.O.C)} 

\classification{14H30, 14H10, 11S15}
\keywords{Artin-Schreier-Witt theory, Galois covers (of curves), deformation theory, Swan conductor, ramification theory, rigid geometry, good reduction, Hurwitz tree.}
\thanks{The author is supported by the Vietnam Institute for Advanced Study in Mathematics and the Simons Foundation Grant Targeted for
Institute of Mathematics, Vietnam Academy of Science and Technology.}

\begin{abstract}
Obus-Wewers and Pop recently resolved a long-standing conjecture by Oort that says: every cyclic cover of a curve in characteristic $p>0$ lifts to characteristic zero. Sa{\"i}di further asks whether these covers are also ``liftable in towers''. We prove that the answer for the equal-characteristic version of this question is affirmative. Our proof employs the Hurwitz tree technique and the tools developed by Obus and Wewers.
\end{abstract}

\maketitle

\tableofcontents

\section{Introduction}
\label{secintroduction}

Throughout this paper, we assume that $k$ is an algebraically closed field of characteristic $p>0$. An \textit{Artin-Schreier-Witt $k$-curve} is a smooth projective connected $k$-curve $Y$ which is a $\mathbb{Z}/p^n$-cover of the projective line $\mathbb{P}^1_k$. When $n=1$, we call $Y$ an \textit{Artin-Schreier} curve. 

Classically, one may study an object in characteristic $p$ by finding a ``link'' of it with characteristic zero. For instance, Grothendieck showed that every (smooth, projective, connected) curve $Y$ over $k$ ``lifts'' to a curve $\mathcal{Y}$ over a finite extension of the ring of Witt vectors $W(k)$, hence in characteristic zero. In addition, the prime-to-$p$ part of the {\'e}tale fundamental group of $Y$ is equal to that of the generic fiber of $\mathcal{Y}$ \cite[XIII, Cor.2.12]{MR0217087}, which can be easily calculated from the topological fundamental group of its corresponding one over $\mathbb{C}$ due to the Riemann existence 
theorem (see, e.g., \cite{MR82175}). Note that the $p$-parts of the fundamental groups are not the same. For instance, when $Y=\mathbb{A}^1_k \cong \spec k[x]$, the geometric fundamental group of its lift's generic fiber $\mathbb{A}^1_{\overline{\Frac W(k)}} \cong \spec \overline{\Frac W(k)}[X]$ is trivial as $\pi_1(\mathbb{A}^1_{\mathbb{C}})$ is. However, $\pi_1^{\text{{\'e}t}}(\mathbb{A}^1_k)$ is non-trivial as there always exists an {\'e}tale  $\mathbb{Z}/p$-cover defined by the equation $y^p-y=x$. In fact, $\pi_1^{\text{{\'e}t}}(\mathbb{A}^1_k)$ is infinitely generated as there exists for each $n \in \mathbb{Z}_{>0}$ a $\mathbb{Z}/p^n$-cover of $\mathbb{A}^1_k$ (see \S \ref{seccycliccover}).

It is thus natural to ask whether one can lift a Galois cover of curves to charactersitic zero. Thanks to a local-to-global principle \cite[\S 3]{MR1424559}, one may restrict the study of Galois covers of curves to Galois extension of power series (see also \S \ref{seclocalglobal}). The answer, in general, is \emph{no} \cite[\S 1.1]{MR927980}. We call a group $G$ such that every local $G$-cover in characteristic $p$ lifts a \textit{local Oort} group for $p$. In fact, if a group $G$ is a local Oort group for $p$, then $G$ is either cyclic, dihedral of order $2p^n$, or the alternating group $A_4$ (with $p=2$) \cite{MR2919977}. One would naturally expect that the converse holds. When $G$ is cyclic, it is known as the \textit{Oort conjecture}, which first appeared in 1995 in a list of questions and conjecture published in \cite[Appendix 1]{MR3971540}, and was settled recently. The first result are due to Oort, Sekiguchi, and Suwa, who showed that the conjecture holds for all $\mathbb{Z}/pm$-covers, where $(m,p)=1$ \cite{MR1011987}. Green and Matignon then proved that for the case $G \cong \mathbb{Z}/p^2m$ \cite{MR1645000}. Finally, a combined effort of Obus-Wewers and Pop resolved the conjecture in \cite{MR3194815} and \cite{MR3194816}. Sa{\"i}di conjectured a more general version of this result \cite[Conj-0-Rev]{MR3051252}, which says that a lift of a sub-cover of a given $G$-cyclic cover can be extended to a lift of the cover itself. That conjecture holds given a definite answer to the following.
\begin{question}[(\textit{The refined local lifting problem})]
\label{questionrefinedlocallifting}
Let $k[[z]]/k[[x]]$ be a $G$-Galois extension where $G$ is cyclic. Suppose we are given a discrete valuation ring $R$ in characteristic zero and a lift $R[[S]]/R[[X]]$ of a subextension $k[[s]]/k[[x]]$. Does there exist a finite extension $R'$ of $R$ in characteristic zero with residue field $k$ and a $G$-Galois extension $R'[[Z]]/ \allowbreak R'[[X]]$ that lifts $k[[z]]/k[[x]]$ and contains $R'[[S]]/R'[[X]]$ as a sub-extension?
\end{question}  

As in the standard local lifting problem for cyclic groups, one may also assume that $G=\mathbb{Z}/p^n$ \cite[Proposition 6.3]{MR3051249}. We are tackling question \ref{questionrefinedlocallifting} by following the approach from \cite{MR3194815}.
Let us briefly describe how the Oort conjecture was proved. Obus and Wewers first proved a general result stating that a cyclic cover lifts if it has no \textit{essential ramification} (\cite[Theorem 1.4]{MR3194815}, see also Definition \ref{defessentialjump}). Pop completed the proof by showing that every cover that cannot be lifted by Obus and Wewers admits an equal-characteristic deformation whose generic fibers have no essential ramification and thus also lift to characteristic zero. The existence of these non-trivial equal-characteristic-deformations, which change the number of branch points but fix the genus, is also a unique aspect of wildly ramified covers. That gives us another way to investigate a cover in characteristic $p$: finding a connection of it with a slightly different one via equal-characteristic deformation. The main result of this paper is a positive answer to the global equal-characteristic analog of Question \ref{questionrefinedlocallifting}.

\begin{theorem}
\label{thmdeformtowers}
Suppose $\phi: Z \xrightarrow{} X$ is a cyclic $G$-Galois cover of curves over $k$, and $\psi: Y \xrightarrow{} X$ is its $H$-Galois sub-cover (where $H$ is a quotient of $G$). Suppose, moreover, that $\Psi: \mathcal{Y}_R \xrightarrow{} \mathcal{X}_R$ is a deformation of $\psi$ over a complete discrete valuation $R$ of characteristic $p$. Then there exists a finite extension $R'/R$, and a deformation $\Phi: \mathcal{Z}_{R'} \xrightarrow{} \mathcal{X}_{R'}$ of $\phi$ over $R'$ that contains $\Psi \otimes_R R': \mathcal{Y}_{R'} \xrightarrow{} \mathcal{X}_{R'}$ as a sub-cover. That is, one can always fill in the following commutative diagram of cyclic Galois covers,
\begin{equation*}
\begin{tikzcd}
\spec k \arrow[d] & X \arrow[d] \arrow[l] \arrow[d] & Y \arrow[l, "\psi"] \arrow[d] & Z \arrow[l, "\lambda"] \arrow[d, dotted] \\
\spec R   & \mathcal{X} \arrow[l] & \mathcal{Y} \arrow[l, "\Psi"] & \mathcal{Z} \arrow[l, dotted, "\Lambda"]
\end{tikzcd}
\end{equation*}
where $\lambda$ is the factor of $\phi$ through $Y$, after maybe a finite extension of $R$.
\end{theorem}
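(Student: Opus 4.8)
The plan is to reduce Theorem~\ref{thmdeformtowers} to a local, inductive statement about deforming $\mathbb{Z}/p^n$-extensions of $k[[x]]$ ``in towers,'' and then to resolve that statement with the Hurwitz tree technique together with the deformation-data constructions of Obus--Wewers. As in the unrefined problem, one first reduces to $G=\mathbb{Z}/p^n$: by \cite[Proposition~6.3]{MR3051249} a positive answer for $\mathbb{Z}/p^n$-covers yields one for all cyclic groups, and since this reduction respects passage to a quotient we may assume $H=\mathbb{Z}/p^m$ with $0\le m\le n$. Next, by the local--global principle of \cite[\S 3]{MR1424559} (recalled in \S\ref{seclocalglobal}), constructing a deformation $\Phi$ of $\phi$ that contains the given $\Psi$ is equivalent to solving a finite family of \emph{local} deformation problems, one at each closed point of the special fibre where $\Phi$ is to degenerate---in effect the branch points of $\phi$ together with the points where $\Psi$ already degenerates. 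At a branch point of $\phi$ that is unramified in the $H$-layer there is no bottom to be respected, and one may simply take the constant (or any known good-reduction) local deformation; the substance is at a point where $\psi$ ramifies, where one is handed a deformation of the $\mathbb{Z}/p^m$-subextension $k[[s]]/k[[x]]$ over the complete local ring of $\mathcal{X}$ and must extend it to one of $k[[z]]/k[[x]]$.

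Such a local deformation over a complete discrete valuation ring $R$ of characteristic $p$ is recorded, via its behaviour on the boundary, by a \emph{Hurwitz tree} of type $\mathbb{Z}/p^n$: a decorated tree whose vertices carry the successive $\mathbb{Z}/p^i$-quotients showing up in the degeneration and whose edges carry Artin--Schreier--Witt differential data $(\omega_1,\dots,\omega_n)$ whose pole and zero orders at the adjacent vertices are prescribed by the Swan conductors. The given $\Psi$ furnishes a Hurwitz tree $T_m$ of type $\mathbb{Z}/p^m$, and the problem becomes: produce a Hurwitz tree $T_n$ of type $\mathbb{Z}/p^n$ whose truncation to $\mathbb{Z}/p^m$ is $T_m$ and which is \emph{effective}, i.e.\ which satisfies at every vertex the conductor (Riemann--Hurwitz) inequalities that are necessary and sufficient for it to integrate to an honest deformation with smooth total space $\mathcal{Z}_{R'}$. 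Smoothness is exactly what forces the genus to be preserved and prevents any new ramification from appearing in the special fibre; it is also why the branch locus of the generic fibre is allowed to move---its number of branch points may exceed that of $\phi$---which is the source of the flexibility exploited below.

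The tree $T_n$ is then built by induction on $n-m$, one $\mathbb{Z}/p$-layer at a time, so it suffices to treat $n=m+1$. With $T_m$ fixed, the new datum is, subject to the Artin--Schreier--Witt relation, a differential form $\omega_{m+1}$ on each edge together with the new conductor at each vertex; one must choose it so that it restricts correctly to $T_m$, so that its pole and zero orders match the conductor imposed by the given $\mathbb{Z}/p^m$-layer, and so that the completed tree is effective. This is precisely the type of construction performed in the proof of \cite[Theorem~1.4]{MR3194815} and in \cite{MR3194816} to realize covers with no essential ramification (Definition~\ref{defessentialjump}), now carried out ``on top of'' the prescribed $T_m$. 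A finite extension $R'/R$ is taken so that the needed residues and constants lie in the base field and so that branch points colliding in the special fibre can be separated. Finally one patches these local deformations to $\Psi\otimes_R R'$ along the boundary annuli by formal/rigid glueing, obtaining the global $\Phi:\mathcal{Z}_{R'}\to\mathcal{X}_{R'}$; the factorization $\Lambda$ through $\mathcal{Y}_{R'}$ is then automatic from the construction.

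I expect the main obstacle to be exactly the compatibility in the inductive step. In the proof of the Oort conjecture one designs the whole tower freely, whereas here the bottom layers are rigidly prescribed by $\Psi$, so one must show that the Obus--Wewers realization of the new top $\mathbb{Z}/p$-layer can be arranged to match \emph{arbitrary} given bottom data while still producing an effective tree---the delicate point being to balance the (possibly large) conductor forced by the given $\mathbb{Z}/p^m$-layer against the conductor one is permitted to introduce in the new layer. Establishing this, for which the freedom to enlarge $R$ and to add generic branch points is essential, is the crux; once it is in hand, the equality of genera is read off from the Hurwitz tree conductor identity and the remaining verifications are formal.
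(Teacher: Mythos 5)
Your global skeleton matches the paper's: reduce by the local--global principle to a local statement, reduce to $G=\mathbb{Z}/p^n$ and then to a single $\mathbb{Z}/p$-layer, extend the Hurwitz tree of the given sub-deformation, and finally patch the local solutions back into a global $\Phi$. The genuine gap is in the sentence where you call the tree's conductor conditions ``necessary and sufficient for it to integrate to an honest deformation.'' Sufficiency of a Hurwitz tree is only known for $n=1$ (Theorem \ref{thminversedegreep}); for $n\ge 2$ a $\mathbb{Z}/p^n$-cover of the boundary of a disc is \emph{not} determined by its depth and boundary Swan conductor (Remark \ref{remarkboundaryorderp}, Remark \ref{remarkAScase}), so the formal/rigid gluing along boundary annuli that realizes a $\mathbb{Z}/p$-tree (Henrio, Bouw--Wewers) does not go through, and the existence of an extending effective tree is only a necessary condition --- an obstruction --- not a construction. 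Your proposal in effect black-boxes the entire realization step, which is the heart of the paper.

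Concretely, after the extending tree $\mathcal{T}_n$ is produced (Proposition \ref{propextendtree}, which also has to respect the Cartier-operator constraints of Theorem \ref{theoremCartierprediction} linking $\omega_n$ to $\omega_{n-1}$ --- constraints absent from your description of the new layer), the paper must still exhibit a rational top Witt coordinate $G^n$ extending the \emph{given} $\chi_{n-1}$ whose character has the degeneration data of $\mathcal{T}_n$ everywhere and the prescribed reduction class at the root. This is done in \S\ref{secproofmain} by partitioning the Witt vector along the tree (Proposition \ref{proppartition}), achieving the right depth and differential conductor at final vertices, pushing the ``kink'' of the depth function down each edge via a minimization over a quasi-compact analytic parameter space (Lemma \ref{lemmamax}, Proposition \ref{propminimal}, adapted from Obus--Wewers), handling non-final vertices by recombining the pieces, and finally correcting the reduction at the root (\S\ref{seccontrolroot}). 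None of this is ``formal'' once the tree exists, and it cannot be replaced by a patching argument; your identification of the crux as a conductor-balancing problem at the tree level misses that the decisive difficulty is analytic realization of the tree by a character compatible with the fixed lower layers. (Your use of patching to assemble the local deformations into the global $\Phi$ is fine --- that is exactly the paper's Proposition \ref{proplocalglobalintowers}.)
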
 

To prove the theorem, we adapt the techniques from \cite{MR3194815}. One may first reduce the problem to the case where $\phi$ is a one-point cover. Its deformation $\Phi$ can be then regarded as a cover of a rigid disc over $R$ with good reduction (see \S \ref{secreduction}). To achieve the right reduction on the disc's boundary, we continuously control the degeneration of $\Phi$'s restrictions from inside to outside. These information are kept track of in $\Phi$'s \textit{Hurwitz tree}. It is a combinatorial-differential object which has the shape of the dual graph of a semi-stable model of the cover, together with the degeneration data of some restrictions of that cover at the corresponding vertices. The degeneration datum of a cover is derived from its \textit{refined Swan conductor}. The conductor was defined by Kato \cite{MR991978} in 1989. Since then, it has been studied by many other authors (Matsuda, Tsuzuki, Saito, Abbes, Kedlaya, Xiao, Chiarelotto, Pulita, Leal, Thatte, \ldots ) from various points of view (see \cite{MR1465067},\cite{MR1324636}, \cite{MR1617130},\cite{MR1925338},\cite{MR2904193},\cite{MR2567506}, \cite{MR3726102}, \cite{MR3484149}), and has plenty of applications.

\begin{remark}
The notion of Hurwitz tree was initially formulated for covers in mixed characteristic to tackle the lifting problem for $\mathbb{Z}/p$-covers by Henrio \cite{2000math.....11098H}. It was later improved by Bouw, Brewis, and Wewers \cite{MR2254623}, \cite{MR2534115}. In \cite{2020arXiv200203719D}, we characterize Hurwitz trees for $\mathbb{Z}/p$-covers of a rigid disc in \textit{equal} characteristic and use them to classify equal-characteristic-$\mathbb{Z}/p$-deformations \cite[Theorem 1.2]{2020arXiv200203719D}. 
\end{remark}


\begin{remark}
Understanding these deformations also equates to understanding the geometry of the moduli space of cyclic covers of fixed genus. Capitalizing on this fact, we show that the moduli space of Artin-Schreier covers of fixed genus $g$ is connected when the integer $g$ is sufficiently large by explicitly constructing some local $\mathbb{Z}/p$-equal-characteristic-deformations \cite[Theorem 1.1]{DANG2020398}. Furthermore, knowing the geometry of a moduli space, in turn, allows the study of invariants over flat families of objects parameterized by the space. The most well-known among them are $p$-rank, $a$-number, Ekedahl-Oort type, and Newton polygons (see, e.g., \cite[Chapter 6]{MR3971540}). Recently, there has been a consistent stream of papers about how these invariants behave for Galois covers (see, e.g., \cite{MR2049830} \cite{MR3466874} \cite{kato_leal_saito_2019}, \cite{MR4113776}), especially cyclic covers.  We will briefly discuss some of these applications along the way.
\end{remark}

\subsection{Outline of Theorem \ref{thmdeformtowers}'s proof}

Firstly, thanks to a local-to-global principle, we show that Theorem \ref{thmdeformtowers} holds if and only if its local version does (Theorem \ref{theoremmainlocal} and Proposition \ref{proplocalglobalintowers}). That means we can restrict ourselves to the case when $\phi$ is a cyclic extension of a power series over $k$. Furthermore, one may assume that $G \cong \mathbb{Z}/p^n$ for some $n \in \mathbb{Z}_{\ge 1}$, and $H \cong \mathbb{Z}/p^{n-1}$ (Theorem \ref{thminductionmain}). In that setting, the $G$-extensions are described by Artin-Schreier-Witt (ASW) theory, which is briefly discussed in \S \ref{secASW}. Finally, a construction by Katz and Gabber allows us to go back to the global case, i.e., when $\phi$ is a one-point-cover of the projective line (\S \ref{secdeformonepointcover}), which further simplifies some calculations.  

At this point, we can translate our problem to a finding-conditions-for-good-reduction of an $t$-adic projective line's cover as below.

\begin{proposition}[(Proposition \ref{propmainonepointcover})]
\label{propgoodreductiontowers}
Suppose $n \ge 2$ is an integer, $R=k[[t]]$, $\phi_n: \overline{Y}_n \rightarrow \mathbb{P}^1_k$ is a one-point-$\mathbb{Z}/p^n$-cover, and $\phi_{n-1}: \overline{Y}_{n-1} \rightarrow \mathbb{P}^1_k$ is $\phi_n$'s $\mathbb{Z}/p^{n-1}$-sub-cover. Suppose, moreover, that $\Phi_{n-1}: {Y}_n \rightarrow \mathbb{P}^1_R$ is a $\mathbb{Z}/p^{n-1}$ cover whose reduction (modulo $t$) is isomorphic to $\phi_{n-1}$. Then there exists a finite extension $R'/R$, and a $\mathbb{Z}/p^n$-cover $\Phi_n: {Y}_n \otimes_R R' \rightarrow \mathbb{P}^1_{R'}$ that extends $\Phi_{n-1}$ and has a special fiber isomorphic to $\phi_n$.
\end{proposition}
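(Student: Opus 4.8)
The plan is to work inside Artin--Schreier--Witt theory (\S\ref{secASW}) and to reduce the statement to the construction of a single Hurwitz tree together with its realization, following the template of \cite{MR3194815} but over $R=k[[t]]$ rather than a $p$-adic base. First I would write $\phi_n$ by a length-$n$ Witt vector $\underline f=(f_1,\dots,f_n)$ over $k((x))$ in reduced form, so that the only branch point is $x=0$, the upper jumps $u_1<\dots<u_n$ are read off from the $f_i$, and the subextension $\phi_{n-1}$ corresponds to the truncation $(f_1,\dots,f_{n-1})$. Applying Artin--Schreier--Witt theory over $R$ likewise, the cover $\Phi_{n-1}$ is described by a length-$(n-1)$ Witt vector $\underline F=(F_1,\dots,F_{n-1})$; since it has good reduction isomorphic to $\phi_{n-1}$, after a coordinate change on $D=\spec R[[X]]$ I may assume that $\underline F$ reduces modulo $t$ to $(f_1,\dots,f_{n-1})$ and that the Hurwitz tree of $\Phi_{n-1}$ is trivial, i.e.\ consists of its root alone (see \S\ref{secreduction}). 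Producing $\Phi_n$ as in the statement then amounts to producing, over a suitable finite extension $R'/R$, one more Witt coordinate $F_n$ such that $(F_1,\dots,F_{n-1},F_n)$ reduces modulo $t$ to $(f_1,\dots,f_n)$ up to the operator $\wp$ and a coordinate change on $D$ fixing $\Phi_{n-1}$ --- which forces both the special fibre to be $\cong\phi_n$ and the $\mathbb{Z}/p^{n-1}$-subcover to be $\cong\Phi_{n-1}\otimes_R R'$ --- and such that the resulting $\mathbb{Z}/p^n$-cover $\Phi_n$ has good reduction. The reduction condition pins $F_n$ down modulo $t$; the real content is that good reduction is a condition on the $t$-adic tail of $F_n$ and on the location of the branch points that $\Phi_n$ acquires beyond those of $\Phi_{n-1}$.

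Next I would encode the $t$-adic degeneration of $\Phi_n$ in its Hurwitz tree $\mathcal T_n$, whose $\mathbb{Z}/p^{n-1}$-truncation must recover the trivial tree of $\Phi_{n-1}$. Good reduction of $\Phi_n$ means exactly that $\mathcal T_n$ can again be taken to consist of a single vertex $v_0$, now carrying the degeneration datum of the full $\mathbb{Z}/p^n$-cover; by the theory of \S\ref{secreduction} and Kato's refined Swan conductor this datum amounts to a differential form $\omega_n$ on the boundary annulus of $D$ that refines the datum already attached to $\Phi_{n-1}$ at $v_0$ and is subject to: a differential (order) condition involving the jump $u_n$, a compatibility condition with the fixed $\Phi_{n-1}$-datum, and an initial condition at $v_0$ forcing the reduction to be $\phi_n$. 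Adapting the tree-construction arguments of \cite[\S4]{MR3194815}, I would produce such an $\omega_n$; here the equal-characteristic hypothesis is what lets me absorb, into the $t$-adic coefficients and into a controlled finite extension $R'/R$, any incompatibility that in the mixed-characteristic Oort conjecture is caused by \emph{essential ramification} (and there removed by Pop's equal-characteristic deformations): since we deform within characteristic $p$, the analogous ``digits'' are not constrained.

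Given the abstract tree $\mathcal T_n$, I would then realize it by an actual $\mathbb{Z}/p^n$-cover $\Phi_n$ of $R'[[X]]$ whose $\mathbb{Z}/p^{n-1}$-level is $\Phi_{n-1}\otimes_R R'$. By the realization half of the Hurwitz-tree correspondence (the analog of \cite[\S5]{MR3194815}) this reduces to a purely local problem at the single vertex $v_0$: extend the given $\mathbb{Z}/p^{n-1}$-cover of the associated boundary annulus to a $\mathbb{Z}/p^n$-cover over $R'$ with the prescribed Swan conductor and reduction. Because $\mathcal T_n$ was built to carry no essential-ramification obstruction, this local extension exists, and here \cite[Theorem 1.4]{MR3194815} (together with the flexibility of the previous step) is invoked. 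Gluing the local solution to $\Phi_{n-1}\otimes_R R'$ along the annulus yields $\Phi_n$; it contains $\Phi_{n-1}\otimes_R R'$ by construction, and its reduction is $\phi_n$ by the choice of $\omega_n$ together with the uniqueness of good-reduction models with prescribed special fibre.

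The hard part will be the construction in the second step and the local solvability it feeds into the third: one must produce a degeneration datum $\omega_n$ at the root that simultaneously reduces to $\phi_n$, is compatible with the already-fixed trivial tree of $\Phi_{n-1}$, satisfies Kato's differential condition for the jump $u_n$, and whose associated local $\mathbb{Z}/p^n$-extension of the boundary annulus genuinely exists. In the mixed-characteristic Oort conjecture this is precisely the point that fails in the presence of essential ramification and is repaired by Pop; the main work here will be to show that in equal characteristic one can always succeed after a finite extension $R'/R$, by exploiting the extra $t$-adic degrees of freedom.
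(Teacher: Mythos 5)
There is a genuine gap, and it sits at the very first reduction you make. You claim that because $\Phi_{n-1}$ has good reduction isomorphic to $\phi_{n-1}$, you may assume after a coordinate change that its Hurwitz tree is trivial (a single root vertex), and likewise that good reduction of $\Phi_n$ means $\mathcal{T}_n$ is a single vertex. This is false: good reduction of a character is the condition that the depth at the root vanishes and the conductor identity $\sum_{x\in\mathbb{B}(\chi)}\iota_{x,n}=\sw_\chi(0,0,\overline{0})+1$ holds (Corollary \ref{corgood}, Theorem \ref{theoremgoodhurwitz}); it says nothing about the shape of the tree. The given $\Phi_{n-1}$ is an \emph{arbitrary} deformation, and the whole point of equal-characteristic deformations is that they split the single branch point of $\phi_{n-1}$ into several generic branch points inside $D$ (see Examples \ref{exampleorder5threebranches} and \ref{exmain}); no coordinate change removes these, since $\Phi_n$ is required to contain this specific $\Phi_{n-1}$ as a subcover. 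Consequently $\mathcal{T}_{n-1}$, and any $\mathcal{T}_n$ extending it, is a genuinely branched tree whose leaves carry prescribed conductors, and the difficulty the paper actually solves — extending the degeneration data vertex by vertex subject to the Cartier-operator constraints of Theorem \ref{theoremCartierprediction}, the compatibility of constant coefficients (Theorem \ref{theoremcompatibilitydiff}), and the conductor bookkeeping of Corollary \ref{corconductorconditions}, via the partition result (Proposition \ref{proppartition}), the explicit tree extensions (Proposition \ref{propextendtree}, \S\ref{secconstructtree}), and the edge-by-edge kink-reduction and rigid-analytic minimum arguments (\S\ref{seccontroledge}--\S\ref{secminimal}) — disappears from your argument precisely because you collapsed the tree.

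The realization step is also not available in the form you use it. For $n>1$ a $\mathbb{Z}/p^n$-cover of the boundary $\spec R[[X]]\{X^{-1}\}$ is \emph{not} determined by its depth and boundary Swan conductor (Remarks \ref{remarkboundaryorderp} and \ref{remarkAScase}), so one cannot prescribe a datum $\omega_n$ at the root and then "glue the local solution to $\Phi_{n-1}\otimes_R R'$ along the annulus" as Henrio and Bouw--Wewers do for $\mathbb{Z}/p$; this is exactly why the paper replaces patching by a global construction of the Witt coordinate $G^n$, pushed from the final vertices of $\mathcal{T}_n$ down to the root. Finally, invoking \cite[Theorem 1.4]{MR3194815} as a black box does not help: that is a mixed-characteristic lifting statement for covers without essential ramification, whereas here one must extend a fixed equal-characteristic $\Phi_{n-1}$ with prescribed branch locus; the paper only adapts the \emph{techniques} of Obus--Wewers (controlling characters, the qcqs minimum lemma), and even then needs the new partition machinery because the tree has branches. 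So the proposal's overall shape (Witt vectors, Swan conductors, a tree, then realization) matches the paper superficially, but the two steps that carry the actual content — constructing the extension $\mathcal{T}_{n-1}\prec\mathcal{T}_n$ with all its leaf conductors and differential data, and realizing it by a character extending the given $\chi_{n-1}$ — are not established by your argument.
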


One may then assume that $\phi_n$'s generic branch locus lies inside a $t$-adic disc $D \subset (\mathbb{P}^{1}_{\Frac(R')})^{\an}$. We prove the above proposition in 4 steps as follows.

\begin{enumerate}[label=Step \arabic*]
    \item \label{step1main} We start by studying the refined Swan conductors of Artin-Schreier-Witt (ASW) covers of a $t$-adic disc over $K$ (recall that $K=\Frac R$) (\S \ref{secSwan}), which measure the degeneration of these covers. Those invariants usually have the form $(\delta, \omega)$, where $\delta$ is a non-negative rational number and $\omega$ is a differential form over $k(x)$. A key result is Theorem \ref{theoremCartierprediction}, which gives some rules on the degeneration of $\Phi_n$ when that of $\Phi_{n-1}$ are known. That is the equal-characteristic analog of \cite[Theorem 1.2]{MR3167623}.
    \item \label{step2main} The information from the previous step then allow us to define the Hurwitz tree $\mathcal{T}_{n-1}$ associated to $\Phi_{n-1}$ (\S \ref{secHurwitz}). That is a directed tree which encodes in its structure the geometry of $\Phi_{n-1}$'s branch points. Each vertex $v$ of $\mathcal{T}_{n-1}$ is equipped with a pair $(\delta_v, \omega_v)$, which is the refined Swan conductor of $\Phi_{n-1}$'s restriction to certain sub-disc of $D$. Each leaf has a conductor, which is determined by the ramification data of $\Phi_{n-1}$'s generic fiber. This generalizes the one for Artin-Schreier cover in \cite{2020arXiv200203719D}. We further show how the information from a Hurwitz tree can tell whether the corresponding cover has good reduction or not (Proposition \ref{propinverse} and Remark \ref{remarkhurwitzdetectgoodreduction}).
    \item \label{step3main} In this step, we show that one can always construct explicitly a $\mathbb{Z}/p^n$-tree $\mathcal{T}_n$ that ``extends'' $\mathcal{T}_{n-1}$ in the sense of the criteria from \ref{step1main} (Proposition \ref{propextendtree}). Moreover, the structure of $\mathcal{T}_n$ is specially designed to be used in the final step.
    \item \label{step4main} Finally, using $\mathcal{T}_n$ as a frame, we construct a cover $\Phi_n$ that proves Proposition \ref{propgoodreductiontowers}. Roughly speaking, we start from a candidate for $\Phi_n$ that verifies the degeneration data asserted by $\mathcal{T}_n$ on certain sub-discs of $D$ corresponding to the ``leaves'' of $\mathcal{T}_n$, where it is much easier to achieve (\S \ref{seccontrolfinal}). We then continuously modify that cover buy multiplying it to certain ``controlling characters'' (\S \ref{seccontrollingchars}) along $\mathcal{T}_n$ (\S \ref{seccontroledge}) until we get to its ``root'' (\S \ref{seccontrolroot}), which corresponds to the boundary of the $t$-adic disc. We then obtain a $\mathbb{Z}/p^n$-cover whose degeneration data coincide with that of $\mathcal{T}_n$. The fact that $\Phi_n$ has a good reduction isomorphic to $\phi_n$ is then immediate from what we learn in \ref{step2main}.
\end{enumerate}

\begin{remark}
The modifying process in \ref{step4main} is influenced by Obus and Wewers' \cite{MR3194815}, which in turn is inspired by \cite{MR1645000}. To who are familiar with that work, we will present a comparison between the two techniques as well as provide more details of this step in \S \ref{seccomparison}. An overview of the construction can also be found in \S \ref{sectinduction}.
\end{remark}

\begin{remark}
One major difference between this paper and \cite{MR3194815}, besides the characteristic of the ring $R$, is that the tree $\mathcal{T}_n$ of the latter has no ``branches'' that one need to control (that follows from the ramification breaks hypothesis of \cite[Theorem 1.4]{MR3194815}). Therefore, the Hurwitz tree technique is not utilized in the work, even they used it to acquire the intuition and the idea for the main strategy. We, however, prove Proposition \ref{proppartition}, which basically says that one can ``partition'' $\Phi_n$ at each vertex of $\mathcal{T}_n$ so that it is sufficient to modify the ``part'' of the cover corresponding to a sub-tree of $\mathcal{T}_n$ that has no branches at a time. Hence, we can alter many techniques from Obus and Wewers' to fit our situation.
\end{remark}

\begin{remark}
There are three main obstacles that prevent us to answer Question \ref{questionrefinedlocallifting} using the strategy from this manuscript. \begin{enumerate}
    \item Firstly, it is hard to compute the Swan conductors of a Kummer cover (with given equation) of a disc in mixed characteristic, unlike the equal-characteristic case as discussed in \S \ref{sectionrefinedSwanASWleal}. \item Adapting \ref{step3main} is also an issue, as finding the differential forms to fit in the tree $\mathcal{T}_n$ is no longer as natural as shown in \S \ref{secsolutioncartierequation}. We are able to do so when $p=2$ and $n=2$, though.
    \item Finally, \ref{step4main} also becomes much more complicated because the  controlling characters of \S \ref{seccontrollingchars} are no longer straightforward to build. Our current method, which is developed from \cite{MR3194815}, requires showing that certain square matrices, whose sizes can be arbitrarily large, are invertible to prove the existences of such characters. 
\end{enumerate}
\end{remark}

\subsubsection{\ref{step4main}}
\label{seccomparison}
As discussed above, we may restrict ourselves to the situation of Proposition \ref{propgoodreductiontowers}. By Artin-Schreier-Witt theory (\S \ref{seccycliccover}), the cover $\phi_n$ (resp. $\Phi_{n-1}$) could be presented by a length-$n$ (resp. length-$(n-1)$) Witt-vector $\underline{g}_n:=(g^1, g^2, \ldots, g^n) \in W_n\big(k(x)\big)$ (resp. $\underline{G}_{n-1}=(G^1, \ldots, G^{n-1}) \in W_{n-1}\big(K(X)\big)$, where $K:=\Frac R$). In addition, one may assume that a cover $\Phi_n$ verifying the proposition should have the form $\underline{G}_n:=(G^1, G^2, \ldots , \allowbreak G^{n-1}, G^n)\in W_{n}\big(K(X)\big)$. Therefore, it suffices to show the existence of such a rational $G^n \in K(X)$. Corollary \ref{corgood} then asserts that it suffices to construct a $G^n$ so that $\Phi_n$
\begin{enumerate}[label=(C\arabic*)]
    \item \label{condphi_n1} has an {\'e}tale reduction (Definition \ref{defnetalegoodreduction}), and
    \item \label{condphi_n2} its generic fiber's covering space has the right genus, and
    \item \label{condphi_n3} the Witt-vector representing the special fiber is in the same \textit{ASW class} with $\underline{g}_n$.
\end{enumerate}
The construction of such a $G^n$ can be summarized as follows. 
\begin{enumerate}[label=Step 4.\arabic*]
    \item \label{step4.1main} We first assume that $\underline{G}_{n-1}$ is best (Definition \ref{defnbest}), and set $G^n=0$. That makes the genus of $\Phi_n$'s generic fiber ``minimal''. This fact allows us to easier manage the generic ramification.
    \item \label{step4.2main} Recall from \ref{step3main} that $\mathcal{T}_{n-1}$ is a Hurwitz tree arising from $\Phi_{n-1}$, and $\mathcal{T}_n$ is one that ``extends'' $\mathcal{T}_{n-1}$. Suppose $v$ is a vertex of $\mathcal{T}_{n}$. Then, by construction, there is a matching one in $\mathcal{T}_{n-1}$, which we also call $v$. That vertex corresponds to a closed sub-disc $\mathcal{D}_v$ of the $t$-adic disc $\spec R[[X]]$. Suppose moreover that there are $m$ edges or leaves $e_1, e_2, \ldots, e_m$ pointing out from $v$ in $\mathcal{T}_{n-1}$. Each $e_i$ represents an open annulus or an open disc $A_{e_i}$ contained in $\mathcal{D}_v$ and has the same outer radius. We then break down
    \begin{equation}
    \label{eqnpartition}
        \underline{G}_{n-1}=:\underline{G}_{n-1,v, \infty}+ \sum_{i=1}^m \underline{G}_{n-1, v,i},
    \end{equation}
    where $\underline{G}_{n-1,v, \infty}$ (resp. $\underline{G}_{n-1,v,i}$) has no poles inside $\mathcal{D}_v$ (resp. outside the disc formed by the outer radius of $A_{e_i}$). In addition, the data on each vertex of $\mathcal{T}_{n-1}$ starting from $e_i$ coincides with that of one formed by $\underline{G}_{n-1, v, i}$ (Proposition \ref{proppartition}). This fact allows us to modify $G^n$, hence $\underline{G}_n$, inductively along the tree $\mathcal{T}_n$ starting from the leaves and ending at the root. That process equates to controlling the degeneration of $\Phi_n$ from particular subdiscs to the whole disc. The four following steps will construct $G^n$ by doing inductions on the vertices and edges of $\mathcal{T}_n$.
    \item \label{step4.3main} Consider a ``final vertex'' $v$, which is adjacent to the leaves $\{b_1, \ldots, b_m\}$ with conductors $\iota_{1,n}, \ldots, \iota_{m,n}$ respectively of $\mathcal{T}_n$. One can modify the part of $G^n$ at $v$ by adding polynomials whose poles are corresponding to the $b_i \in B_v$ and with degree at most $\iota_{i,n}-1$. The result is a cover whose degeneration on $\mathcal{D}_v$ coincide with that of $\mathcal{T}_n$ at $v$. That is \ref{step 1} in \S \ref{sectinduction}. Note that \ref{condphi_n2} is achieved in this step. The below items correspond to \ref{step 2} to \ref{step 5}.
    \item \label{step4.4main} Look at an edge $e$ adjacent to the final vertex $v$. Note that the previous step gives an existence of a $G^n$ giving rise to the right degeneration at $v$. Furthermore, \ref{step4.2main} allows us to assume that $G^n$ has only poles inside $\mathcal{D}_v$. It hence can be represented by a power series that converges outside $\mathcal{D}_v$. Using an analytical technique and the comparison tool developed in \S \ref{secdetect}, we show that one can modify $G^n$ in such a way that the right degeneration occurs at the starting vertex of $e$, settle the base case. The method is parallel to one in \cite[\S 6.4]{MR3194815}.
    \item \label{step4.5main} Let $v$ be a non-final vertex, which is the target of $e$ and the start of $e_1, \ldots, e_m$. Recall that we may apply (\ref{eqnpartition}) to partition $\underline{G}_{n-1}$ into a sum, where each summand $\underline{G}_{n-1, v,i}$ correspond to a $\mathbb{Z}/p^{n-1}$-cover whose branch points lie inside the disc $D_{e_i}$ formed by the outer radius of $A_{e_i}$. By induction, we obtain, for each $i$, a $G_{v,i}^n \in K(X)$ that extends $\underline{G}_{n-1, v,i}$, and whose sum gives the right degeneration at $v$.
    \item \label{step4.6main} This step is just a repetition of \ref{step4.3main} to an edge $e$ that is adjacent to the vertex $v$ above, completing the inductive step. We hence obtain, by induction, the right degeneration everywhere but the root of $\mathcal{T}_n$. Condition \ref{condphi_n1} is hence fulfilled.
    \item \label{step4.7main} Finally, we adapt \cite[\S 6.5]{MR3194815} to attain the right reduction at the root of $\mathcal{T}_n$ hence on the whole disc. Condition \ref{condphi_n3} is thus satisfied.
\end{enumerate}

\subsection{Acknowledgements}
The author would like to thank Andrew Obus, his Ph.D. advisor, for introducing to him the lifting problem, which is the primary motivation of this research, and for the constant guidance throughout his early career. He thanks Frans Oort and David Harbater for helpful comments on an earlier version of this article. He thanks Takeshi Saito, Haoyu Hu, and Vaidehee Thatte for introducing many important articles that help him to better understand the refined Swan conductors. He thanks Nathan Kaplan, Joe Krammer-Miller, and the University of California Irvine's Department of Mathematics for their hospitality during his stay in Orange County, California, where most of the writing was done, amid the Covid-19 pandemic. He appreciates the valuable suggestions from the anonymous referees. Finally, the author is grateful to the University of Virginia, the Institute of Mathematics of Vietnam Academy of Science and Technology, the Vietnam Institute for Advanced Study in Mathematics, and the National Center for Theoretical Sciences (Taipei) for the excellent working conditions. This research is funded by the Simons Foundation Grant Targeted for the Institute of Mathematics, Vietnam Academy of Science and Technology.


\subsection{Notation and conventions}
The letter $K$ will always be a field of characteristic $p$ that is complete with respect to a discrete valuation $\nu: K^{\times} \rightarrow \mathbb{Q}$. The residue field $k$ of $K$ is algebraically closed of characteristic $p$. One example to keep in mind is $K=k((t))$, the field of Laurent series over $k$, and $\nu(t)=1$ defines the discrete valuation. The ring of integers of $K$ will be denoted by $R$.

We fix an algebraically closure $\overline{K}$ of $K$, and whenever necessary, we will replace $K$ by a suitable finite extension within $\overline{K}$, without changing the above notation. The symbol $\mathbb{K}$ usually denotes a function field over $K$ (e.g., $\mathbb{K}=K(X)$).

Below are some unusual notation used in this paper and their locations.
\begin{longtable}{ |p{1.6cm}|p{10.3cm}|p{2.8cm}| }
\hline
Notation & Description & Location \\
 \hline
 \hline
$\mathfrak{K}_n(\underline{a})$ & The character defined by the Witt-vector $\underline{a}$ of length $n$ & \S \ref{seccharacter} \\
 \hline
$\mathcal{D}[r,z]$ & Closed disc of radius $p^{-r}$, centered at $z$ & \S \ref{secdiscannuli} \\
 \hline
$D[r,z]$ & Open disc of radius $p^{-r}$, centered at $z$ & \S \ref{secdiscannuli} \\
 \hline
$W_n(\mathbb{K})$ & Witt-vector of length $n$ over $\mathbb{K}$ & \S \ref{secASW} \\
\hline
$(\--)_r$ & $\-- \cdot \pi^{-pr}$, where $\-- \in \mathbb{K}$, and $\pi$ is a uniformizer of a DVR & \S \ref{secdiscannuli}  \\
\hline
$[\hspace{1mm} \--\hspace{1mm}]_{r,z}$ & Reduction of $((\--) - z)_r$ & \S \ref{secdiscannuli} \\
\hline
$\mathcal{G}_v$ & A collection of extensions with right depth at $v$  & Definiton \ref{defnrightbranchdepth} \\
\hline
$\mathcal{H}_v$ & A collection of extensions with right branching datum at $v$ & Definition \ref{defnrightbranchdata} \\
\hline
$\mathcal{W}_v$ & A collection of extensions giving rise to right differentials at $v$ & Definition \ref{defextraexact} \\
\hline
$\lambda_e(G)$ & The kink of the character that $G$ gives rise to on an edge $e$ & \S \ref{seccontroledge} \\
\hline
$\lambda_m(\chi)$ & The largest kink of $\chi$ that verify the conditions in \S \ref{secdetect} & Proposition \ref{propdetect} \\
\hline
$\mu_m(\chi)$ & A function that detects the kink of $\chi$ of \S \ref{secdetect}  & Proposition \ref{propdetect} \\
\hline
$\delta_{\chi}(r,z)$ & The depth conductor of $\chi\lvert_{D[pr,z]}$ & \S \ref{secboundaryswanrateofchange}\\
\hline
$\omega_{\chi}(r,z)$ & The differential conductor of $\chi\lvert_{D[pr,z]}$ & \S \ref{secboundaryswanrateofchange} \\
\hline
$\sw_{\chi}(\overline{x})$ & The boundary conductor at the boundary associated to $\overline{x}$ of $\chi$ & \S \ref{secSwan} \\
\hline
$\mathfrak{C}_{\chi}(r,z, \overline{w})$ & Sum of the conductors on the $\overline{w}$ direction in $\mathcal{D}[pr,z]$ & Definition \ref{defconductoratplace} \\
\hline
$\mathfrak{C}_{\mathcal{T}}(e)$ & Sum of conductors of the leaves succeeding an edge $e$ of $\mathcal{T}$ & Definition \ref{defsumcondsprec} \\
\hline
$\nu_{s,z}$ & The Gauss valuation associated to $\mathcal{D}[s,z]$ & \S \ref{secdiscannuli} \\
\hline
$\mathbb{B}(\chi)$ & The branch locus of the character $\chi$ & \S \ref{secdiscannuli} \\
\hline
$\mathbb{B}_{\mathcal{T}}(e)$ & The leaves of $\mathcal{T}$ succeeding an edge $e$  & Definition \ref{defconductoratplace} \\
\hline
$\mathcal{T}(e)$ & The sub-tree of $\mathcal{T}$ that contains only the data after $e$  & \S \ref{secsubtree} \\
\hline
\end{longtable}
\smallskip
\section{Artin-Schreier-Witt covers}
\label{secASW}
Recall that, the prime-to-$p$ part of the fundamental group of a curve in characteristic $p$ is equal to that of its lift to characteristic $0$. However, the $p$-part of $\pi_1^{\et}(\mathbb{A}^1_k)$ is no longer trivial as there always exists an {\'e}tale  $\mathbb{Z}/p$-cover defined by the equation $y^p-y=x$. That cover, also known as an Artin-Schreier cover, is the simplest example of a \textit{wildly ramified} Galois cover ($p$ divides the order of an inertia group). Furthermore, $\mathbb{Z}/p^n$-covers of a projective line can be described using Witt vectors \cite[p. 330]{MR1878556}, hence it is easy to construct examples and study them using explicit methods. Therefore, understanding these covers is usually the first step in developing a theory for all wildly ramified covers.

\subsection{Artin-Schreier-Witt theory}
\label{secASWtheory}
In this section, we give a quick overview of Artin-Schreier-Witt theory. For more details, see, e.g., \cite[\S 26]{MR2371763} or  \cite[\RN{4}]{MR1878556}. Suppose $M/K$ is a $\mathbb{Z}/p^n$-extension of fields in characteristic $p>0$. Then $M=K(\alpha^1, \ldots, \alpha^{n})$, where $\alpha^i \in K^{\sep}$ is a solution of an \textit{Artin-Schreier-Witt equation}
\begin{equation}
    \label{eqnasw}
    \wp(\alpha^1, \ldots, \alpha^{n})=(f^1, \ldots, f^{n}),
\end{equation}
\noindent where $(f^1, \ldots, f^{n})$ lies in the ring $W_n(K)$ of truncated Witt vector of length $n$, and $\wp(\alpha:=(\alpha^1, \ldots, \alpha^{n}))=F(\alpha)-\alpha$ is the Artin-Schreier-Witt isogeny where $F$ is the Frobenius morphism of $W_n(K)$. Moreover, the extension is unique up to adding an element of the form $\wp(b^1, \ldots, b^{n})$ where $(b^1, \ldots, b^n) \in  W_n(K)$ to $(f^1, \ldots, f^{n})$ \cite[\S 26 Theorem 5]{MR2371763}. In other words, we have the following bijection, which is an application of Hilbert 90
\begin{equation*}
    \textrm{H}^1(K, \mathbb{Z}/p^n\mathbb{Z}) \xrightarrow{\simeq} W_n(K)/\wp(W_n(K)).
\end{equation*}
We call $\underline{f}:=(f^1, \ldots, f^n)$ the \textit{defining Witt vector} of the extension $M/K$ and the process of adding $\wp(b^1,\ldots,b^n)$ to the right-hand-side of (\ref{eqnasw})
an \textit{Artin-Schreier-Witt operation}. If $\underline{f}$ and $\underline{g}$ are different by an Artin-Schreier-Witt operation, we say they are in the same \textit{Artin-Schreier-Witt (ASW) class}.

\subsection{Cyclic covers of curves}
\label{seccycliccover}
An \textit{Artin-Schreier-Witt curve of level $n$} is a smooth, projective, connected curve that is a $G \cong \mathbb{Z}/p^n$-cover of the projective line over $k$. Recall that the category of normal projective $k$-curves and nonconstant morphisms is equivalent to that of finitely generated field extensions $K/k$ of transcendence degree $1$ \cite[Chapter I, Corollary 6.12]{MR0463157}. Therefore, it follows from the previous section that an arbitrary $\mathbb{Z}/p^n$-cover $\phi_n: Y_n \xrightarrow{} \mathbb{P}_k^1=\Proj k[x,v]$ can be represented by an ASW equation as follows       \begin{equation*}
            \label{ASWeqn}
            \wp(y^1, \ldots, y^n)=\big(f^1(x), \ldots, f^n(x)\big),
        \end{equation*}
\noindent        where $\underline{f}:=(f^1, \ldots, f^n)$ lies in the ring $W_n(k(x))$. For the rest of section \S \ref{seccycliccover}, we set $K:=k(x)$.

\begin{example}
\label{exAS}
Suppose $p=5$. The cover of $Y$ of $\mathbb{P}^1_K$ defined by
\begin{equation}
\label{eqnASex}
    y^5-y=\frac{1}{x^5}+\frac{1}{(x-1)^7},
\end{equation}
\noindent is an Artin-Schreier curve. Note that the term $1/x^5$ is a $5$th-power. Hence, by the above discussion, adding $(-1/x)^5-(-1/x)$ to the right-hand-side of (\ref{eqnASex}) does not change the cover it defines. The result is an Artin-Schreier equation of the form
\begin{equation}
\label{eqnASexreduced}
    y^5-y=-\frac{1}{x}+\frac{1}{(x-1)^7}.
\end{equation}
\end{example}

\begin{example}
\label{exasw}
Suppose $p=2$. The following equations defines a $\mathbb{Z}/4$-cover of $\mathbb{P}^1_k$.
\begin{equation}
    \label{eqnexasw}
     \wp(y^1, y^2)=\bigg(\frac{1}{x^2}, \frac{1}{x^4} +x^2 \bigg).
\end{equation}
By adding $\wp\big(\frac{1}{x}, x+\frac{1}{x} \big)$ to right-hand-side of (\ref{eqnexasw}), one obtains an alternative representation
\begin{equation}
    \label{eqnexaswreduced}
    \wp(z^1, z^2)=\bigg(\frac{1}{x}, \frac{1}{x}+x \bigg).
\end{equation}
\end{example}

\begin{remark}
We say the ASW equations (\ref{eqnASexreduced}) and (\ref{eqnexaswreduced}) have \textit{reduced forms}, or the defining Witt vectors are reduced. That means the partial fraction decomposition of each entry of the defining Witt vector only consist of prime-to-$p$-degree terms. One can show that every ASW cover can be represented by a unique Witt vector of reduced form.
\end{remark}

\subsubsection{Branching datum}
\label{secbranchingdatum}
A reduced defining Witt vector tells us everything about the ramification data of the cover it defines. Particularly, suppose $\underline{f} \in W_n(K)$ (recall that $K=k(x)$) is \emph{reduced}, and $\mathscr{P}:= \{P_1, \ldots, P_r\}$ is the set of poles of the $f^i$'s. Then $\mathscr{P}$ is also the branch locus of $\phi_n$. Furthermore, for each ramified point $Q_j$ above $P_j$, $\phi_n$ induces an exponent $p$ cyclic extension of complete local ring $\hat{\mathcal{O}}_{Y_n,Q_j}/\hat{\mathcal{O}}_{\mathbb{P}^1,P_j}$ with perfect residue field. Hence, it makes sense to talk about the ramification filtration of $\phi_n$ at a branch point $P_j$ (see \cite[Chapter IV, \S 3]{MR554237}). Suppose the inertia group of $Q_j$ is $\mathbb{Z}/p^m$ (where $n \ge m$). We say the \textit{$i$-th ramification break} of $\phi_n$ at $P_j$ is $-1$ for $i \le n-m$. When $i >n-m$, the $i$-th ramification break of $\phi_n$ at $P_j$ is the $(i-n+m)$-th one of $\hat{\mathcal{O}}_{Y_n,Q_j}/\hat{\mathcal{O}}_{\mathbb{P}^1,P_j}$. We denote by $u_{j,i}$ the $i$-th upper ramification break of $\phi_n$ at $P_j$. We call $e_{j,i}=u_{j,i}+1$ the \textit{$i$-th conductor of $\phi$ at $P_j$}. The following formula explicitly computes the ramification filtration of $\phi_n$ in terms of $\underline{f}$.

\begin{theorem}[{\cite[Theorem 1]{MR1935414}}]
\label{theoremcaljumpirred}
With the assumptions and the notation as above, we have
\begin{equation}
\label{eqnformulalowerjumpasw}
    u_{j,i}=\max\{ p^{i-l} \deg_{(x-P_j)^{-1}} (f^{l}) \mid l=1, \ldots, i\}, 
\end{equation}
for $i>n-m$.
\end{theorem}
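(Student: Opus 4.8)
The plan is to pass to the local, totally ramified situation and then induct on the length. Completing $\mathbb{P}^1_k$ at $P_j$ and then at a point $Q_j$ of $Y_n$ above it, one is reduced to the following statement: if $K=k((x))$, the entries $f^l$ with $l\le n-m$ are regular at $x=0$ while $f^{n-m+1},\dots,f^n$ have poles there of prime-to-$p$ orders $d_{n-m+1},\dots,d_n$, and $L/K$ is the corresponding totally ramified $\mathbb{Z}/p^m$-extension, then the $i'$-th upper break of $L/K$ is $\max_{1\le l'\le i'}p^{i'-l'}d_{n-m+l'}$. Writing $i=n-m+i'$ and noting that the regular slots contribute non-positive quantities $p^{i-l}d_l$, so do not affect the maximum, this is exactly the asserted formula for $i>n-m$; the restriction $i>n-m$ and the convention ``break $=-1$ for $i\le n-m$'' are precisely what encode the passage from the global cover to the local extension at $Q_j$ (that the reduced form of $\underline{f}$ over $k(x)$ restricts to the reduced form of the local extension up to terms regular at $P_j$, carries included, is part of the standard theory of reduced forms). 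From now on assume $L/K$ is totally ramified $\mathbb{Z}/p^n$, with $\underline{f}=(f^1,\dots,f^n)$ reduced and each $f^l$ of prime-to-$p$ pole order $d_l\ge 1$ at $x=0$, and prove $u_i=\max_{l\le i}p^{i-l}d_l$ by induction on $n$. The case $n=1$ is the classical Artin--Schreier conductor computation (\cite[IV]{MR554237}): the unique reduced representative of $f^1$ has pole order $d_1=u_1$.

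For the inductive step, let $L'\subset L$ be the sub-$\mathbb{Z}/p^{n-1}$-extension with defining Witt vector $(f^1,\dots,f^{n-1})$, so by induction its upper breaks are $u_l=\max_{l'\le l}p^{l-l'}d_{l'}$ for $l\le n-1$; these determine the Herbrand functions $\psi_{L/K}$ and $\varphi_{L/K}$ on $[0,u_{n-1}]$ completely (piecewise linear, slopes $1,p,\dots,p^{n-2}$, breakpoints at the $u_l$). Now $L/L'$ is $\mathbb{Z}/p$; let $b$ be its unique ramification break. Since lower numbering is compatible with subgroups while upper numbering is compatible with quotients, $b$ is the last lower break of $G(L/K)$ computed with $v_L$, and therefore $u_n=\varphi_{L/K}(b)$, where on the top segment $\varphi_{L/K}$ has slope $p^{-(n-1)}$ and $\varphi_{L/K}\big(\psi_{L/K}(u_{n-1})\big)=u_{n-1}$; explicitly $u_n=u_{n-1}+p^{-(n-1)}\big(b-\psi_{L/K}(u_{n-1})\big)$. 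On the other hand, by the classical Artin--Schreier formula applied over $L'$, $b$ equals the $v_{L'}$-pole order of the unique reduced representative of the element $g\in L'$ defining $L/L'$.

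It remains to compute $b$. Unwinding the Witt subtraction $\wp(\underline{y})=F(\underline{y})\ominus\underline{y}$ coordinate-by-coordinate, its top coordinate reads $(y^n)^p-y^n=f^n-C$, where $C$ is an explicit polynomial (built from the Witt carry polynomials) in $f^1,\dots,f^{n-1},y^1,\dots,y^{n-1}$; using the lower relations $(y^l)^p=y^l+f^l$ repeatedly one rewrites $g:=f^n-C$ as a polynomial in $x^{-1}$ and $y^1,\dots,y^{n-1}$, which one then reduces modulo $\wp(L')$. The crucial claim is that, after reduction, the terms of $g$ originating from the slot-$l$ entry $f^l$ (for $l\le n$) have $v_{L'}$-pole order at most $\psi_{L/K}\big(p^{n-l}d_l\big)$, with equality for the dominant one, and that these top terms, for distinct values of $p^{n-l}d_l$, have distinct pole orders and so cannot cancel (when two of the $p^{n-l}d_l$ coincide one passes to the next term). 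The function $\psi_{L/K}$ enters because a single reduction step over $L'$ replaces an element of $p$-divisible pole order by one of strictly smaller order, and tracking leading terms through the relations $(y^l)^p=y^l+f^l$ multiplies orders by exactly the slopes of $\psi_{L/K}$ — already visible for $n=2$, where the slot-$1$ carry reduces in $L'=L_1$ to pole order $d_1(p^2-p+1)=\psi_{L_2/K}(pd_1)$ and the slot-$2$ term $x^{-d_2}$ reduces to pole order $\psi_{L_2/K}(d_2)$. Granting the claim, $b=\max_l\psi_{L/K}(p^{n-l}d_l)=\psi_{L/K}\big(\max_l p^{n-l}d_l\big)$ since $\psi_{L/K}$ is increasing, hence $u_n=\varphi_{L/K}(b)=\max_l p^{n-l}d_l$, completing the induction.

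The main obstacle is exactly this crucial claim: keeping the combinatorics of the carry polynomial $C$ and the successive reductions under control tightly enough to pin down the pole order of every relevant leading term and to rule out accidental cancellation of leading coefficients. The cleanest way to organize it appears to be a secondary induction on $j=0,1,\dots,n-1$, proving that over $L_j=K(y^1,\dots,y^j)$ the image of $\underline{f}$ in $W_n(L_j)/\wp(W_n(L_j))$ admits a reduced representative whose $i$-th slot ($i>j$) has $v_{L_j}$-pole order $\psi_{L_j/K}\big(\max_{l\le i}p^{i-l}d_l\big)$ with an explicit non-vanishing leading coefficient; the case $j=n-1$ then feeds into the Herbrand computation above. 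Alternatively, one can sidestep much of this bookkeeping by translating everything into Kato's refined Swan conductor on $H^1(K,\mathbb{Z}/p^n)$ (as used throughout \S\ref{secSwan}) and the known behaviour of the induced filtration, for which a reduced $\underline{f}$ is precisely a minimal representative; but the hands-on induction sketched here is the most self-contained route, and is the one I would write up.
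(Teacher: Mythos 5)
First, a structural remark: the paper does not prove this statement at all --- it is quoted verbatim from \cite{MR1935414} --- so there is no internal argument to compare yours against. Your overall skeleton is the standard (and correct) one for this computation: reduce to the local, totally ramified $\mathbb{Z}/p^n$-extension $L/K=k((x))$; induct on $n$; and relate the top upper break to the unique break $b$ of the degree-$p$ step $L/L'$ via $u_n=\varphi_{L/K}(b)$, using compatibility of lower numbering with subgroups and of upper numbering with quotients, together with the classical Artin--Schreier conductor formula over $L'$. All of that bookkeeping is sound.

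The proposal is nevertheless not a proof, because the step you yourself label ``the crucial claim'' --- that after writing the top Witt coordinate as $(y^n)^p-y^n=f^n-C$ and reducing modulo $\wp(L')$, the contribution of slot $l$ has $v_{L'}$-pole order exactly $\psi_{L/K}\bigl(p^{n-l}d_l\bigr)$ with a controlled, non-vanishing leading coefficient --- is precisely the entire analytic content of the theorem; the Herbrand-function frame around it is formal once this is known. You write ``Granting the claim'' and then only describe how you would organize its proof (a secondary induction over the intermediate fields $L_j$), without carrying it out. Even the $n=2$ sanity check only verifies the two predicted pole orders, not the non-cancellation of leading terms when two of the quantities $p^{n-l}d_l$ coincide and are both maximal --- ruling that out requires explicit control of the leading coefficients produced by the Witt carry polynomials and by each reduction step, which is exactly where the known proofs (Schmid's recursion $u_i=\max(pu_{i-1},d_i)$ and its descendants, including \cite{MR1935414}) do their real work. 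So what you have is a correct plan with the central lemma missing, not a complete argument; to finish it you would need to actually establish the secondary induction you sketch in your final paragraph.
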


\begin{remark}
\label{remarkupperbreaks}
From equation (\ref{eqnformulalowerjumpasw}), one deduces that, if the inertia group of $Q_j$ is $\mathbb{Z}/p^i$, then $i=\min \{ l \mid  \deg_{(x-P_j)^{-1}} (f^l) \neq 0  \}$. In addition, we have $p \nmid u_{j,n-m+1} \neq 0$, that $u_{j,i} \ge pu_{j,i-1}$ for $n-m+2 \le i \le n$, and that if $p \mid u_{j,i}$, then $u_{j,i}=pu_{j,i-1}$. In particular, when $n=1$, i.e., when $\phi_1$ is an Artin-Schreier cover, the unique ramification break at $P_i$ is equal to the order of the pole of $f^1$ at $P_i$.
\end{remark}

\begin{example}
The cover from Example \ref{exasw} has breaks $(1, 2)$ at $x=0$ and $(-1, 1)$ at $x=\infty$.
\end{example}

\begin{remark}
One easily derives from the above discussion that the length-$n$ Witt vector $(x, 0, \ldots, 0)$ defines an {\'e}tale $\mathbb{Z}/p^n$-cover of the affine line over $k$. That proves the infinitely-generatedness of $\mathbb{A}^1_k$'s fundamental group discussed in \S \ref{secintroduction}.
\end{remark}

Furthermore, it follows from \cite[Fact 2.3]{MR3194816} that the degree of the different at the branch point $P_j$ is
\begin{equation}
\label{eqndifferent}
\delta_{P_j}=\sum_{i=1}^n (u_{j,i}+1)(p^i-p^{i-1}).
\end{equation}
Set $e_{j,i}:=u_{j,i}+1$ . We denote by $\phi_l$ the $\mathbb{Z}/p^l$-subcover of $\phi_n$. Then $\phi_l: Y_l \xrightarrow{} \mathbb{P}^1_k$ corresponds to the length $l$ Witt vector $(f^1, \ldots, f^l)\in W_l(K)$. We denote by $\delta_{P_j}^l$ the degree of the different at $P_j$ of $\phi_l$. The following result gives the genus of $Y_l$ in terms of the ramification breaks.

\begin{proposition}
\label{propgenusleveli}
In the above notation, the genus of the $\mathbb{Z}/p^l$-covering-curve $Y_l$ is
\begin{equation}
    \label{eqngenuslvi}
    g_{Y_l}=\frac{2(1-p^l)+ \sum_{i=1}^l(\sum_{j=1}^r e_{j,i})(p^i-p^{i-1})}{2}.
\end{equation}
\end{proposition}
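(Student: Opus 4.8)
The plan is to obtain the genus of $Y_l$ by applying the Riemann--Hurwitz formula to the $\mathbb{Z}/p^l$-cover $\phi_l: Y_l \to \mathbb{P}^1_k$ and then substituting in the different computation already recorded in equation (\ref{eqndifferent}). Since $\mathbb{P}^1_k$ has genus $0$ and $\phi_l$ has degree $p^l$, Riemann--Hurwitz reads $2g_{Y_l} - 2 = p^l(2\cdot 0 - 2) + \deg \mathfrak{d}_{\phi_l}$, where $\mathfrak{d}_{\phi_l}$ is the different of $\phi_l$. The cover $\phi_l$ is branched exactly over $\mathscr{P} = \{P_1, \dots, P_r\}$ (this is where we use that $\underline{f}$ is reduced and that the poles of $f^1, \dots, f^l$ are among the $P_j$; points where all of $f^1, \dots, f^l$ are regular are unramified in $\phi_l$, even if they are branch points of the larger cover $\phi_n$). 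So $\deg \mathfrak{d}_{\phi_l} = \sum_{j=1}^r \delta_{P_j}^l$, where by (\ref{eqndifferent}) applied to the level-$l$ cover we have $\delta_{P_j}^l = \sum_{i=1}^l (u_{j,i}+1)(p^i - p^{i-1}) = \sum_{i=1}^l e_{j,i}(p^i - p^{i-1})$.

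Next I would assemble the pieces:
\begin{equation*}
2g_{Y_l} - 2 = -2p^l + \sum_{j=1}^r \sum_{i=1}^l e_{j,i}(p^i - p^{i-1}) = -2p^l + \sum_{i=1}^l \Bigl(\sum_{j=1}^r e_{j,i}\Bigr)(p^i - p^{i-1}),
\end{equation*}
where the last equality is just interchanging the order of the finite double sum. Adding $2$ to both sides and dividing by $2$ gives
\begin{equation*}
g_{Y_l} = \frac{2(1 - p^l) + \sum_{i=1}^l \bigl(\sum_{j=1}^r e_{j,i}\bigr)(p^i - p^{i-1})}{2},
\end{equation*}
which is exactly (\ref{eqngenuslvi}). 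One should also remark that the right-hand side is visibly an integer, consistent with the fact that each $\delta_{P_j}^l$ is a sum of terms $(u_{j,i}+1)(p^i - p^{i-1})$ and that Riemann--Hurwitz forces the total to have the correct parity.

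I do not expect a genuine obstacle here; the statement is essentially a bookkeeping consequence of Riemann--Hurwitz plus the already-cited different formula (\ref{eqndifferent}) of \cite{MR3194816}. The only point requiring a moment of care is the claim that the branch locus of $\phi_l$ is contained in (indeed equal to, for those $P_j$ at which some $f^i$ with $i \le l$ has a pole) the set $\mathscr{P}$, so that the sum over ramification points reduces to a sum over $j = 1, \dots, r$; this is handled by Remark \ref{remarkupperbreaks}, which identifies the inertia group at $P_j$ in $\phi_l$ via the $(x-P_j)^{-1}$-degrees of $f^1, \dots, f^l$, and in particular shows that if no $f^i$ ($i \le l$) has a pole at $P_j$ then $P_j$ is unramified in $\phi_l$ and contributes $e_{j,i} = 0$ for all $i \le l$, so it is harmless to include it in the sum anyway.
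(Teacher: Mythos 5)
Your proposal is correct and follows essentially the same route as the paper: apply Riemann--Hurwitz to the degree-$p^l$ cover $\phi_l$ and substitute the different formula (\ref{eqndifferent}) at each branch point. Your extra remark about why the sum over ramification points reduces to a sum over $j=1,\dots,r$ (via Remark \ref{remarkupperbreaks}) is a point the paper leaves implicit, but it does not change the argument.
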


\begin{proof}
Apply the Riemann-Hurwitz formula \cite[IV, Corollary 2.4]{MR0463157}, with the degree of the different at each branch point $P_j$ is computed in (\ref{eqndifferent}), we obtain
\[ g_{Y_l}=\frac{2(1-p^l) + \sum_{j=1}^r \delta^l_{P_j}}{2}. \]
Using (\ref{eqndifferent}) to calculate the different, we immediately realize (\ref{eqngenuslvi}).
\end{proof}

Therefore,  $\mathbb{Z}/p^n$-covers of the same genus on each of its sub-cover have the same $e_i:=\sum_{j=1}^r e_{j,i}$ for $1 \le i \le n$. We thus use an $r \times n$ matrix as follows to record the ramification data of the cover
\[M= \begin{bmatrix}
   e_{1,1} & e_{1,2} & \ldots & e_{1,n} \\
   e_{2,1} & e_{2,2} & \ldots & e_{n,n} \\
   \vdots & \vdots & \ddots & \vdots \\
   e_{r,1} & e_{r,2} & \ldots & e_{r,n} \\
   \end{bmatrix}.\]
We call the above matrix $M$ the \textit{branching datum} of $\phi_n$. We call the divisor $\sum_{j=1}^r e_{j,l} P_j$ the \textit{level $l$-th branching divisor} or, when $l=n$, just the \textit{branching divisor} of $\phi_n$.

The forward direction of the following corollary is immediate from Remark \ref{remarkupperbreaks}.

\begin{corollary}
\label{corconductorconditions}
An $r \times n$ matrix $M=(e_{j,i})$ with positive integer entries is a branching datum of a $\mathbb{Z}/p^n$-cover if and only if the followings hold
\begin{enumerate}
    \item \label{corconductorconditions1} For $J=\min \{{i} \mid e_{j,i} \neq 0\}$,  $e_{j,J} \not\equiv 1 \pmod{p}$ for $j=1, \ldots, r$.
    \item \label{corconductorconditions2} For $J<i\le n$, we have  $e_{j,i} \ge pe_{j,i-1}-p+1$. The ``$=$'' holds if and only if $e_{j,i} \equiv 1 \pmod{p}$. If $e_{j,i}>pe_{i,j-1}-p+1$, then $e_{j,i}=pe_{i,j-1}-p+a_{j,i}+1$ for an integer $a_{j,i}$ prime to $p$.
\end{enumerate}
\end{corollary}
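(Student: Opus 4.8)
The forward direction being noted to follow from Remark~\ref{remarkupperbreaks}, the plan is to prove the converse: given an $r\times n$ matrix $M=(e_{j,i})$ satisfying~(\ref{corconductorconditions1}) and~(\ref{corconductorconditions2}), construct a \emph{reduced} Witt vector $\underline{f}=(f^1,\dots,f^n)\in W_n(k(x))$ whose associated $\mathbb{Z}/p^n$-cover $\phi_n$ has branching datum $M$. First I would choose distinct points $P_1,\dots,P_r\in\mathbb{A}^1_k$ and, for each $j$, set $J=J_j:=\min\{i:e_{j,i}\neq 0\}$ and translate the hypotheses into statements about the intended upper breaks $u_{j,i}:=e_{j,i}-1$. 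Condition~(\ref{corconductorconditions1}) says that $u_{j,J}\geq 1$ and $p\nmid u_{j,J}$; condition~(\ref{corconductorconditions2}) says that for $J<i\leq n$ one has $u_{j,i}\geq p\,u_{j,i-1}$, with equality exactly when $p\mid u_{j,i}$, and that $a_{j,i}:=u_{j,i}-p\,u_{j,i-1}$ is a positive integer prime to $p$ whenever the inequality is strict.

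From this I would read off the pole orders that the entries of $\underline{f}$ should have at each $P_j$. Writing $d_{j,l}:=\deg_{(x-P_j)^{-1}}(f^l)$, I set $d_{j,l}=0$ for $l<J_j$, $d_{j,J_j}=u_{j,J_j}$, and, for $l>J_j$, $d_{j,l}=u_{j,l}$ if $p\nmid u_{j,l}$ and $d_{j,l}=0$ if $p\mid u_{j,l}$. Condition~(\ref{corconductorconditions2}) is precisely what guarantees that every nonzero $d_{j,l}$ produced this way is prime to $p$: in the strict case $d_{j,l}=u_{j,l}=p\,u_{j,l-1}+a_{j,l}$, which is prime to $p$, while in the equality case $u_{j,l}$ is a multiple of $p$ and so one is forced to drop the pole of $f^l$ at $P_j$ altogether --- which is exactly why the congruence condition in~(\ref{corconductorconditions2}) is an ``if and only if''. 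Having fixed these orders, I would simply take $f^l:=\sum_{j\,:\,d_{j,l}>0}(x-P_j)^{-d_{j,l}}$, so that $\underline{f}$ is a reduced Witt vector in $W_n(k(x))$. Since some $e_{j,1}\neq 0$ (which is forced by the hypotheses and is in any case necessary, the $\mathbb{Z}/p$-subcover of a connected $\mathbb{Z}/p^n$-cover of $\mathbb{P}^1_k$ being ramified), $f^1$ is a nonzero reduced function with a genuine pole, hence $f^1\notin\wp(k(x))$; so $\phi_n$ is a connected $\mathbb{Z}/p^n$-cover with branch locus exactly $\{P_1,\dots,P_r\}$.

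It then remains to check that $\phi_n$ realizes the prescribed breaks, which I would do by induction on the level $i$, using Remark~\ref{remarkupperbreaks} and Theorem~\ref{theoremcaljumpirred}. By Remark~\ref{remarkupperbreaks}, $J_j=\min\{l:d_{j,l}\neq 0\}$ is the level at which $P_j$ first becomes ramified in the tower, so the breaks of $\phi_n$ at $P_j$ below level $J_j$ equal $-1=u_{j,i}$. For $i\geq J_j$, Theorem~\ref{theoremcaljumpirred} together with the inductive identity $u^{\phi_n}_{j,i-1}=u_{j,i-1}$ gives
\[
u^{\phi_n}_{j,i}=\max\bigl\{p^{i-l}d_{j,l}\ :\ 1\leq l\leq i\bigr\}=\max\bigl\{p\,u_{j,i-1},\,d_{j,i}\bigr\},
\]
and substituting the definition of $d_{j,i}$ gives $u^{\phi_n}_{j,i}=u_{j,i}$ in both cases: if $p\nmid u_{j,i}$ then $d_{j,i}=u_{j,i}=p\,u_{j,i-1}+a_{j,i}>p\,u_{j,i-1}$, and if $p\mid u_{j,i}$ then $d_{j,i}=0$ while $p\,u_{j,i-1}=u_{j,i}$ (the base case $i=J_j$ being the same computation, with all lower $d_{j,l}$ vanishing). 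Hence the $i$-th conductor of $\phi_n$ at $P_j$ equals $u^{\phi_n}_{j,i}+1=e_{j,i}$, i.e.\ $\phi_n$ has branching datum $M$.

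The construction itself is routine; the step I expect to require the most care --- and the conceptual content of the statement --- is the verification in the last paragraph, namely that conditions~(\ref{corconductorconditions1})--(\ref{corconductorconditions2}) are \emph{exactly} the constraints imposed by the maximum formula of Theorem~\ref{theoremcaljumpirred}. The reducedness of $\underline{f}$ forces all pole orders to be prime to $p$, so a break $u_{j,i}$ that is divisible by $p$ can never arise directly as a pole order of $f^i$ and must instead be inherited from level $i-1$ via the factor $p$ in the maximum; reconciling this dichotomy with the prescribed growth and congruence behaviour of the $e_{j,i}$ is where the real bookkeeping lies.
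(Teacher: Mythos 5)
Your construction is exactly the one in the paper: the paper takes $f^i=\sum_j a_{j,i}(x-x_j)^{-(e_{j,i}-1)}$ with $a_{j,i}=1$ precisely when $e_{j,i}\not\equiv 1\pmod p$, which is the same as your choice $d_{j,l}=u_{j,l}$ when $p\nmid u_{j,l}$ and $d_{j,l}=0$ otherwise, and both arguments conclude via Theorem~\ref{theoremcaljumpirred}. Your inductive verification that the maximum formula returns $u_{j,i}$ in both the divisible and non-divisible cases is simply a more explicit version of the step the paper asserts without detail, so the proposal is correct and essentially identical to the paper's proof.
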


\begin{proof}
Suppose $M=(e_{j,i})$ is an $r \times n$ matrix whoses entries verifying (\ref{corconductorconditions1}) and (\ref{corconductorconditions2}). Let $x_i, \ldots, x_r$ be distinct points on $k$. Consider the length-$n$-Witt-vector
\[ F=(f^1, \ldots, f^n) \in W_n(k(x)), \]
where $f^i=\sum_{j=1}^r \frac{a_{j,i}}{(x-x_j)^{e_{j,i}-1}}$ and the $a_{j,i}$'s are defined as follows
\[a_{j,i}:= \begin{cases} 
      0  & e_{j,i} \equiv 1 \pmod{p} \\
      1 & e_{j,i} \not\equiv 1 \pmod{p}  \\
   \end{cases}.
\]
It then follows from Theorem \ref{theoremcaljumpirred} that the cyclic cover defined by $F$ has branching datum coinciding with $M$, proving the necessity.
\end{proof}

\begin{definition}
\label{defnconductormatrices}
Suppose $(e_1, \ldots, e_n)$, as a $1 \times n$ matrix, verifies Corollary \ref{corconductorconditions}. We define $\Omega_{e_1, \ldots, e_n}$ to be the collection of $r \times n$ matrices that partition $(e_1, \ldots, e_n)$ and such that each of its rows satisfies the conditions asserted by the same corollary.
\end{definition}

\begin{definition}
\label{defessentialjump}
With the notation above and set $e_{0,i}=0$ for $1 \le j \le r$, we say $\phi_n$ has no \textit{essential jump} from level $i-1$ to $i$ at $P_j$ if $e_{j,i}=pe_{j,i-1}-p+1$ or $e_{j,i}=pe_{j,i-1}-p+a_{j,i}+1$ where $1 \le a_{j,i} \le p-1$. If that holds for all $P_j$'s, we say $\phi_n$ has no essential jumps from level $i-1$ to $i$. We say $\phi_n$ has no essential , jump if that is true for all $1\le i \le n$.
\end{definition}

\begin{example}
The cover in Example \ref{exAS} has branching datum $[2,8]^{\top}$, hence has as essential jump at $x=1$. One from Example \ref{exasw} has branching datum  \[ \begin{bmatrix}
   2 & 3  \\
   0 & 2  \\
\end{bmatrix},\]
thus has no essential jump.
\end{example}

\begin{remark}
\label{remarksumofconductors}
When $n=1$, the genus of an Artin-Schreier cover $Y_1 \xrightarrow{\phi} \mathbb{P}^1_k$ is
\[ g:=g_{Y_1}= \frac{(\sum_{i=1}^r (e_{i,1}+1)-2)(p-1)}{2}.  \] Hence, all the Artin-Schreier $k$-curves with the same genus $g$ have the same sum of conductors $d+2$, where $d:=2g/(p-1)$. Thus, it is natural to classify Artin-Schreier covers of the same genus by their branching data. This idea is utilized by Pries and Zhu to partition the moduli space $\mathcal{AS}_g$ of Artin-Schreier curves of genus $g$ into irreducible strata \cite[Theorem 1.1]{MR2985514}. By explicitly constructing some local deformations, we show that $\mathcal{AS}_g$ is connected when $g$ is sufficiently large \cite[Theorem 1.1]{DANG2020398}. In a manuscript in preparation, we carry the idea from \cite{MR2985514} and \cite{DANG2020398} further to all cyclic covers. In particular, we show that the moduli space $\mathcal{ASW}\text{cov}_{(g^1, \ldots, g^n)}$ of $\mathbb{Z}/p^n$-covers whose $\mathbb{Z}/p^i$-sub-cover has genus $g^i$ can be partitioned into irreducible strata, where each stratum is represented by a suitable matrix of branching datum.
\end{remark}

\subsection{Deformations of cyclic covers}

\subsubsection{Deformation of Galois covers}
Suppose $Y \xrightarrow{\phi} C$ a $G$-Galois cover over $k$, where $C$ and $Y$ are smooth, projective, connected $k$-curves. Suppose, moreover, that $A$ is a local, Noetherian, complete $k$-algebra with residue field $k$, which is also a domain. Let 
\[ \Def_{\phi} : \Alg/k \xrightarrow{} \text{Set} \]
\noindent be the functor which, to any $A \in \Alg/k$, associates classes of $G$-Galois cover $\mathscr{Y}_A \xrightarrow{\Phi} \mathscr{C}_A$ of smooth proper curves that makes the following cartesian diagram commutes
\begin{equation}
\label{defndef}
     \begin{tikzcd}
Y \arrow{d}{\phi} \arrow{r}{}
& \mathscr{Y}_A \arrow{d}{\Phi} \\
C \arrow{r}[black]{} \arrow[black]{d}{} & \mathscr{C}_A \arrow[black]{d}{}\\
\spec k \arrow[black]{r}{f} & \spec A,
\end{tikzcd}
\end{equation}
\noindent and so that the $G$-action on $\mathscr{Y}_A$ induces the original action on $Y$. More precisely,
\begin{itemize}
    \item The special fiber of $\Phi$ is isomorphic to $\phi$, and
    \item The isomorphism $Y \cong \mathscr{Y}_A \otimes _A k$ is $G$-equivariant.
\end{itemize}
We call $\Phi$ a \textit{deformation} of $\phi$ over $A$. For more details, see \cite[\S 2]{MR1767273}. In this paper, we focus on the case where $G$ is cyclic and $A$ is a finite extension of a power series $k[[t]]$, which is a complete discrete valuation ring of characteristic $p$ with residue $k$.

\begin{remark}
One application of deformation is reducing hard, general problems to ones that are easier to study. For instance, Norman and Oort showed that every abelian variety in characteristic $p>0$ can deforms to an ordinary one \cite{MR595202}, hence lifts to characteristic zero by Serre-Tate \cite{MR638600}, \cite{MR638599}. A similar strategy was applied to prove that every cyclic cover in characteristic $p$ lifts to characteristic zero. Namely, Pop showed that every cyclic cover equal-characteristically deforms to one with no essential ramification jumps. This cover always lifts by Obus and Wewers \cite{MR3194815}, hence so is the original one. This approach was for the first time discussed in \cite{MR228505}.
\end{remark}

\subsubsection{A local-to-global principle}
\label{seclocalglobal}
Let $R$ be a complete discrete valuation ring as before. The \textit{local-to-global principle} below allows us to study the deformation problem by way of the local deformation problem.

\begin{theorem}
\label{thmlocalglobaldeformation}
Let $Y$ be a smooth projective curve over $k$, with a faithful action of $G$ by $k$-automorphisms. Let $y_1, \ldots, y_r \in Y$ be the points where $G$ acts with non-trivial inertia. For each $1 \le j \le r$, let $G_j$ be the inertia group of $y_j$ in $G$, and let $\iota_j: G_j \xrightarrow{} \aut_k k[[u_j]]$ be the induced local action on the complete local ring of $y_j$. Then the deformation of $Y$ with $G$-action over $R$ is determined by the deformation over $R$ of each of the local $G_j$-action.
\end{theorem}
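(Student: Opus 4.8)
The plan is to follow the equivariant deformation theory of Bertin--Mézard \cite[\S 2]{MR1767273}, comparing the global deformation functor with the product of the local ones through equivariant cohomology. First I would record the infinitesimal data. Writing $\phi\colon Y\to C:=Y/G$ and $x_j:=\phi(y_j)$, the global functor $\Def_\phi$ of the $G$-curve $Y$ has tangent space $\mathrm{H}^1_G(Y,\mathcal{T}_Y)$ and obstruction space $\mathrm{H}^2_G(Y,\mathcal{T}_Y)$, where $\mathrm{H}^{*}_G$ denotes $G$-equivariant cohomology of the tangent sheaf; likewise the local functor attached to $\iota_j\colon G_j\to\aut_k k[[u_j]]$ has tangent space $\mathrm{H}^1(G_j,\mathcal{T}_{k[[u_j]]})$ and obstruction space $\mathrm{H}^2(G_j,\mathcal{T}_{k[[u_j]]})$, with $k[[u_j]]$ identified with $\widehat{\mathcal{O}}_{Y,y_j}$. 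Restricting a deformation to the formal neighbourhoods of the points $y_j$ gives a morphism of functors over $R$,
\[
  \rho\colon \Def_\phi \longrightarrow \prod_{j=1}^{r}\Def_{\iota_j},
\]
and the theorem amounts to understanding how far $\rho$ is from being an isomorphism.

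Next I would set up the local--global exact sequence. Applying the $G$-invariant pushforward $\mathcal{F}\mapsto (\phi_*\mathcal{F})^G$ to $\mathcal{T}_Y$ and using that $\phi$ is étale outside $\{y_1,\dots,y_r\}$, one checks that the higher derived functors $R^q(\phi_*\,\cdot\,)^G\mathcal{T}_Y$ are skyscraper sheaves supported on $\{x_1,\dots,x_r\}$ for $q\ge 1$, with stalk at $x_j$ canonically $\mathrm{H}^q(G_j,\mathcal{T}_{k[[u_j]]})$. The associated Leray spectral sequence then degenerates into an exact sequence
\begin{gather*}
  0 \to \mathrm{H}^1\bigl(C,(\phi_*\mathcal{T}_Y)^G\bigr) \to \mathrm{H}^1_G(Y,\mathcal{T}_Y) \to \bigoplus_{j=1}^{r}\mathrm{H}^1(G_j,\mathcal{T}_{k[[u_j]]}) \to \\
  \mathrm{H}^2\bigl(C,(\phi_*\mathcal{T}_Y)^G\bigr) \to \mathrm{H}^2_G(Y,\mathcal{T}_Y) \to \bigoplus_{j=1}^{r}\mathrm{H}^2(G_j,\mathcal{T}_{k[[u_j]]}) \to 0,
\end{gather*}
in which $\mathrm{H}^{\ge 2}(C,-)=0$ because $C$ is a projective curve (Grothendieck vanishing). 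I would extract two consequences: the obstruction map $\mathrm{H}^2_G(Y,\mathcal{T}_Y)\to\bigoplus_j\mathrm{H}^2(G_j,\mathcal{T}_{k[[u_j]]})$ is injective (indeed bijective), so every obstruction to deforming $(Y,G)$ is detected locally and $\rho$ is formally smooth; and the tangent map is surjective with kernel $\mathrm{H}^1(C,(\phi_*\mathcal{T}_Y)^G)$, which is the tangent space of the deformation functor of the pointed curve $(C;x_1,\dots,x_r)$. Thus $\rho$ exhibits $\Def_\phi$ as a smooth fibration over $\prod_j\Def_{\iota_j}$ whose fibre is the deformation functor of the pointed quotient; in the situations of interest $C$ is a pointed projective line with at most three branch points, hence rigid, so $\rho$ is an isomorphism of functors and a global deformation is determined by, and can be assembled from, the tuple of local ones.

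Finally I would pass from the infinitesimal statement to genuine deformations over the complete ring $R$. Given compatible local deformations over $R$, one glues them into a global one by formal patching: the complement $C^{\circ}$ of the branch points is affine and $\phi$ is étale over it, so $\phi|_{C^{\circ}}$ has a unique deformation over $R$, into which one glues the prescribed local deformations over the punctured formal discs around the $x_j$; Grothendieck's existence theorem (formal GAGA for the projective curves $Y$ and $C$) then algebraizes the resulting formal object, and one checks that its special fibre recovers $\phi$ and that the $G$-action is compatible. Conversely a global deformation restricts to local ones, and the two constructions are mutually inverse up to the quotient contribution identified above. I expect the main work to lie in the spectral-sequence step --- verifying that $R^q(\phi_*\,\cdot\,)^G\mathcal{T}_Y$ is a skyscraper with the asserted stalks and that the resulting edge maps are exactly restriction of deformations, since this is what forces the obstructions to be local and makes $\rho$ smooth; once that is established the patching and algebraization are standard.
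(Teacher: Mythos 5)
The paper does not actually prove this statement: it simply cites \cite[\S 1.2]{MR3051252} and \cite[\S 3]{MR1424559}, so your proposal is not so much a different route as a reconstruction of the argument behind those references, namely Bertin--M\'ezard's equivariant local--global machinery \cite[\S 3]{MR1767273} (local--global spectral sequence for $\mathrm{H}^{*}_G(Y,\mathcal{T}_Y)$, skyscraper higher direct images with stalks $\mathrm{H}^q(G_j,\mathcal{T}_{k[[u_j]]})$, vanishing of $\mathrm{H}^2$ of a coherent sheaf on the curve $C$) combined with formal patching and Grothendieck existence to pass from infinitesimal data to deformations over $R$. That skeleton is sound and is exactly what the cited sources and the paper's own subsequent remark (the formula $R_G \cong (R_{G_1}\hat{\otimes}\cdots\hat{\otimes}R_{G_r})[[U_1,\ldots,U_M]]$ with $M=\dim_k \mathrm{H}^1(Y/G,\pi^G_{*}\mathfrak{T}_Y)$) encode, so your plan does give a proof of the theorem in the sense in which it is used.

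The one step that fails is the claim that ``in the situations of interest $C$ is a pointed projective line with at most three branch points, hence rigid, so $\rho$ is an isomorphism of functors.'' The theorem is stated for an arbitrary smooth projective $Y$ with faithful $G$-action; the quotient can have any genus and any number of branch points, and even in the paper's applications there is no bound on $r$. In general the kernel term $\mathrm{H}^1\bigl(C,(\phi_{*}\mathcal{T}_Y)^G\bigr)$ is nonzero, which is precisely the source of the extra factor $[[U_1,\ldots,U_M]]$ in the versal ring above, so $\rho$ is \emph{not} an isomorphism; the correct conclusion from your exact sequence is that $\rho$ is formally smooth with bijective obstruction map, i.e.\ every compatible tuple of local deformations over $R$ is induced by a global one, and the remaining directions come from deformations of the pointed quotient. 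That weaker statement, together with your patching paragraph, is all that ``classified by'' requires here, so you should simply delete the rigidity claim rather than try to repair it.
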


\begin{proof}
See \cite[\S 1.2]{MR3051252} or \cite[\S 3]{MR1424559}.
\end{proof}

We thus can reduce the study of deformation to the case where $\phi$ is local, namely, where $\phi$ is a $G$-Galois extension of power series $k[[x]] \xrightarrow{} k[[z]]$. In that case, Bertin and M{\'e}zard showed that the functor $\Def_{\phi}$ is represented by (the spectrum of) a versal deformation ring \cite[Theorem 2.1]{MR1767273}, which we denote by $R_{\phi}$ or $R_G$. The \textit{tangent space} of $\Def_{\phi}$ is $\textrm{H}^1(G, \Der_k k[[z]])$. Its \textit{obstruction space} is $\textrm{H}^2(G, \Der_k k[[z]])$. Note that, when $G$ is cyclic, the dimension of $\textrm{H}^2(G, \Der_k k[[z]])$ is usually positive \cite[Th{\'e}or{\`e}m 4.1.1]{MR1767273}. The deformation problem is hence non-trivial.


Little is known about $R_G$ when $\phi$ is wildly ramified, i.e., when $p \mid \lvert G \rvert$. When $G \cong \mathbb{Z}/p$ and $\phi$ has ``conductor'' $1$ (that is when the corresponding HKG-cover \cite{MR579791} (see also \S \ref{secdeformonepointcover}) of the projective line over $k$ has genus $0$), $R_{G}$ is completely described by \cite[Th{\'e}or{\`e}me 4.2.8]{MR1767273}. It is also known for all conductors when $G=\mathbb{Z}/2$ \cite[Th{\'e}or{\`e}m 4.3.7]{MR1767273}. To generalize these results, one would naturally ask the following Galois theory type of question.

\begin{question}
\label{questionGalois} Suppose $n \ge m \in \mathbb{Z}_{\ge 0}$, and we are given a tower of Galois extensions $$\begin{tikzpicture}
\node at (0,0) (a) {$k[[x]]$};
\node[right =1cm of a]  (b){$k[[y_m]]$};
\node[right =1cm of b]  (c){$k[[y_n]].$};
\draw[->,>=stealth] (a) --node[above]{$\phi_m$} (b);
\draw[->,>=stealth] (b) --node[above]{$\phi_{n/m}$} (c);
\draw[->,>=stealth] (a) edge[bend right=12]node[below]{$\phi_n$} (c);
\end{tikzpicture}$$
How do the rings $R_{\phi_m}$, $R_{\phi_{n/m}}$, and $R_{\phi_n}$ relate?
\end{question}

There is a natural map $\spec R_{\phi_n} \xrightarrow{\ind} \spec R_{\phi_m}$ \cite{2011arXiv1112.0352B}. In particular, when $\phi_n$ is a $\mathbb{Z}/p^n$-cover defined by a length-$n$-Witt-vector $\underline{f}_n:=(f^1, \ldots, f^n)$ and $\phi_m$ is its $\mathbb{Z}/p^m$-sub-cover, the morphism $\ind$ maps $\underline{f}_n$ to $\underline{f}_m:=(f^1, \ldots, \allowbreak f^m)$. Let $\phi_m$ denotes the $m$-th \textit{level} of $\phi_n$. We conjecture the following.
\begin{conjecture}
\label{conjcyclicdeformationtowers}
In the notation of Question \ref{questionGalois}, if $G:=\gal(k[[y_n]]/k[[x]])$ is cyclic then $\ind: \spec R_{\phi_n} \xrightarrow{} \spec R_{\phi_m}$ is surjective. That is, given an arbitrary complete discrete valuation ring $R$ with residue field $k$, one can always fill in the following commutative diagram
\begin{equation*}
\begin{tikzcd}
\spec k \arrow[d] & \spec k[[x]] \arrow[d] \arrow[l] \arrow[d] & \spec k[[y_m]] \arrow[l, "\phi_m"] \arrow[d] & \spec k[[y_n]] \arrow[l, "\phi_{n/m}"] \arrow[d, dotted] \\
\spec R'   & \spec R'[[X]] \arrow[l] & \spec R'[[Y_m]] \arrow[l, "\Phi_m"] & \spec R'[[Y_n]] \arrow[l, dotted, "\Phi_{n/m}"],
\end{tikzcd}
\end{equation*} 
after a finite extension $R'$ of $R$. If that holds, we say $\phi_n$ is \textit{deformable in towers}.
\end{conjecture}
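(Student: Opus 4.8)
The plan is to run, for general $R$, the same four-part strategy that proves Theorem~\ref{thmdeformtowers}, noting first that the equal-characteristic case of Conjecture~\ref{conjcyclicdeformationtowers} (when $R=k[[t]]$) is already Theorem~\ref{thmdeformtowers}, so that the genuinely new content is the mixed-characteristic case, in particular $R=W(k)$. As in the equal-characteristic proof I would reduce the statement to a local one: the local-global principle (Theorem~\ref{thmlocalglobaldeformation} and its towers version Proposition~\ref{proplocalglobalintowers}) lets one work with a tower of $G$-Galois extensions of power series $k[[x]]\to k[[y_m]]\to k[[y_n]]$; rigidity of tame deformations (cf.\ \cite{MR1011987}) together with \cite[Proposition~6.3]{MR3051249} reduces to $G=\mathbb{Z}/p^n$ and $H=\mathbb{Z}/p^{n-1}$, as in Theorem~\ref{thminductionmain}; and passing to the associated Harbater--Katz--Gabber cover \cite{MR579791} reduces to a one-point cover of $\mathbb{P}^1_k$. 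The remaining assertion is the mixed-characteristic analog of Proposition~\ref{propgoodreductiontowers}: \emph{given an arbitrary complete discrete valuation ring $R$ with residue field $k$ and a $\mathbb{Z}/p^{n-1}$-cover $\Phi_{n-1}$ of the disc $D=\spec R[[X]]$ whose reduction is $\phi_{n-1}$, construct a finite extension $R'/R$ and a $\mathbb{Z}/p^{n}$-cover $\Phi_n$ over $R'$ extending $\Phi_{n-1}$ and reducing to $\phi_n$} --- which is exactly a local form of Sa{\"i}di's refined lifting Question~\ref{questionrefinedlocallifting}.

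The core is a mixed-characteristic Hurwitz tree $\mathcal{T}_{n-1}$ for $\Phi_{n-1}$. Its skeleton and leaf conductors are as in \S\ref{secHurwitz}; the change is the degeneration datum at an interior vertex $v$, which records the Kato conductor of the restriction of $\Phi_{n-1}$ to the boundary of the sub-disc $\mathcal{D}_v$, extracted from the Abbes--Saito ramification filtration (\cite{MR1465067},\cite{MR1925338},\cite{MR3484149},\cite{MR3726102}); for $n=1$ this recovers Henrio's Hurwitz tree \cite{2000math.....11098H} and the Bouw--Brewis--Wewers refinement \cite{MR2254623},\cite{MR2534115}, and Proposition~\ref{propinverse} and Remark~\ref{remarkhurwitzdetectgoodreduction} then show, as before, that a cover with the correct tree has good reduction. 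The indispensable structural input is a mixed-characteristic version of Theorem~\ref{theoremCartierprediction}, i.e.\ of \cite[Theorem~1.2]{MR3167623}, predicting the differential datum of $\Phi_n$ from that of $\Phi_{n-1}$; here I would work inside Sekiguchi--Suwa's deformation of Artin--Schreier--Witt theory, so that the $n$-th layer is a torsor under a group scheme interpolating between $\mathbb{Z}/p$ and $\mu_p$, with the Cartier-operator computation of \S\ref{secsolutioncartierequation} replaced by its $p$-adic (Artin--Hasse) counterpart. Granting this, constructing a $\mathbb{Z}/p^{n}$-tree $\mathcal{T}_n$ extending $\mathcal{T}_{n-1}$ (the analog of Proposition~\ref{propextendtree}) becomes an existence problem for differential forms solving a deformed Cartier equation at each vertex, rather than the explicit construction available in equal characteristic.

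With $\mathcal{T}_n$ as a frame I would realize $\Phi_n$ by the inward induction of \ref{step4main}: begin from a cover whose restrictions to the leaf sub-discs carry the prescribed minimal-genus datum (where there is the most freedom), partition the cover at each vertex via the mixed-characteristic analog of Proposition~\ref{proppartition} so that only branch-free sub-trees are treated at a time, then successively modify the defining datum of $\Phi_n$ by \emph{controlling characters} across each annulus --- using the analytic comparison tool of \S\ref{secdetect}, following \cite[\S 6.4]{MR3194815} --- until the correct degeneration reaches the root, and finally adjust the datum at the root (as in \cite[\S 6.5]{MR3194815}) to pin the special fiber down to $\phi_n$ exactly; Corollary~\ref{corgood} then yields good reduction. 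The case $p=2$, $G=\mathbb{Z}/4$ of this program is already carried out in \cite[Theorem~0.5]{2021arXiv210701780D} and should serve as a test.

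I expect the main obstacle to be precisely this last realization step, and within it the existence of the controlling characters. In equal characteristic they come from solving explicit linear systems, but in mixed characteristic the operators involved carry $p$-adic analytic corrections and, as flagged among the obstacles to Question~\ref{questionrefinedlocallifting} in the introduction, one is reduced to proving that certain square matrices of \emph{unbounded} size --- whose entries encode how the new $\mu_p$-type layer interacts with the $\mathbb{Z}/p^{n-1}$-sub-cover across a boundary annulus --- are invertible; a verbatim estimate seems hopeless, and the realistic route is to reinterpret these matrices conceptually (e.g.\ as a period pairing, or a truncated Gauss--Manin matrix, for the family of covers over the annulus) so that non-degeneracy becomes automatic. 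The two secondary obstacles, both visible already in equal characteristic but sharper here, are the absence of a closed formula for the Kato conductor of a Kummer cover of a mixed-characteristic disc (needed to define $\mathcal{T}_{n-1}$ and check the tree axioms) and the passage, in building $\mathcal{T}_n$, from an explicit construction of the required differential forms to a bare existence statement.
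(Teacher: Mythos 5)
The statement you set out to prove is stated in the paper as a \emph{conjecture}, and the paper itself establishes it only when $R$ has equal characteristic: that case is exactly Theorem \ref{theoremmainlocal} (equivalently Theorem \ref{thmdeformtowers} via Proposition \ref{proplocalglobalintowers}), while the paper explicitly remarks that for $R$ of characteristic zero the conjecture \emph{is} the refined local lifting problem (Question \ref{questionrefinedlocallifting}), which is open beyond special cases such as $p=2$, $G=\mathbb{Z}/4$ in \cite{2021arXiv210701780D}. Your reductions (local--global, reduction to $G\cong\mathbb{Z}/p^n$ and $H\cong\mathbb{Z}/p^{n-1}$, passage to the HKG one-point cover) do match the paper's, and your identification of the equal-characteristic case with the paper's main theorem is correct. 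But for the mixed-characteristic part your text is a program, not a proof: at each of the three decisive points you substitute a hope for an argument, and these are precisely the obstacles the introduction lists as blocking this strategy. Concretely: (a) you need a way to compute the refined Swan conductors of a Kummer cover of a mixed-characteristic disc and a mixed-characteristic analog of Theorem \ref{theoremCartierprediction}; none is supplied (working ``inside Sekiguchi--Suwa theory'' is a suggestion, not a computation). (b) The analog of Proposition \ref{propextendtree} is reduced by you to ``an existence problem for differential forms solving a deformed Cartier equation,'' which you do not solve; the paper can do this only for $p=2$, $n=2$. (c) The existence of the controlling characters that drive \ref{step4main} is acknowledged by you to hinge on the invertibility of square matrices of unbounded size, and the proposed remedy --- reinterpreting them as a period pairing so that ``non-degeneracy becomes automatic'' --- is speculation, not an argument.

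There is also a quieter gap you inherit from the equal-characteristic machinery itself: the partition and compatibility results you invoke (Proposition \ref{proppartition}(\ref{proppartition2}) and Theorem \ref{theoremcompatibilitydiff}) rest on Leal's criterion for best Witt vectors (Theorem \ref{thmbest}), and the paper's Remark \ref{rmkparallelmixedcharacteristic} points out that no exact mixed-characteristic analog of that criterion is currently available, so even the ``partition at each vertex'' step cannot be transported verbatim. In sum, your proposal reproves nothing beyond what the paper already proves (the equal-characteristic case) and leaves the genuinely new content of the conjecture --- arbitrary, in particular mixed-characteristic, $R$ --- exactly as conjectural as the paper leaves it.
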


\begin{remark}
When $R$ has characteristic zero, the conjecture is precisely the refined local lifting problem (Question \ref{questionrefinedlocallifting}).
\end{remark}

\begin{remark}
One may want to approach Question \ref{questionGalois} on the level of tangent space, i.e., investigating the relations between the groups $\textrm{H}^1(\gal(k[[y_n]]/ \allowbreak k[[x]]), \Der_k k[[y_n]])$ and $\textrm{H}^1(\gal(k[[y_m]]/ \allowbreak k[[x]]), \Der_k k[[y_m]])$. So far, by generalizing some results from \cite{MR1767273}, we have computed the dimensions of $R_{\mathbb{Z}/p^n}$, generalizing \cite[Theorem 5.3.3]{MR1767273}. 
\end{remark}

Note that a cyclic extension in characteristic zero is described by Kummer theory, which is ``multiplicative'' compared with the ``additive'' nature of ASW theory. It is hence natural to first study the case when $R$ has equal-characteristic, as both the generic and the special fiber are additive in that situation. That is exactly the local version of Theorem \ref{thmdeformtowers}! 

\begin{theorem}[(Local deformation of Artin-Schreier-Witt covers)]
\label{theoremmainlocal}
Suppose $H$ is a (nontrivial) quotient of a finite cyclic group $G$ and $\phi_n$ is a $G$-extension $k[[y_n]]/k[[x]]$, hence branches at exactly one point. Suppose, moreover, that $\Phi_{m}$ is a $H$-extension $R[[Y_m]]/R[[X]]$ that deforms the unique sub-$H$-cover $\phi_m: k[[x]] \xrightarrow{} k[[y_m]]$ of $\phi_n$ over $R:=k[[t]]$. Then there exist a finite extension $R'/R$ and a deformation $R'[[Y_n]]/R'[[X]]$ of $\phi_n$ over $R'$ that contains $R'[[Y_{n-1}]]/R'[[X]]$ as a sub-cover.
\end{theorem}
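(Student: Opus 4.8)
The plan is to strip Theorem \ref{theoremmainlocal}, by three successive reductions, down to the concrete good-reduction statement of Proposition \ref{propgoodreductiontowers}, and then to run the Hurwitz-tree construction sketched in \ref{step1main}--\ref{step4main}. First I would reduce to the case of $p$-groups. Since $G$ is cyclic, write $G\cong P\times T$ with $P$ its $p$-Sylow subgroup, $P\cong\mathbb{Z}/p^n$, and $|T|$ prime to $p$; any quotient $H$ decomposes compatibly as $\bar P\times\bar T$. The tame sub-extension $k[[x^{1/|T|}]]/k[[x]]$ has, because $\mu_{|T|}\subset k^\times$, an essentially unique deformation $R[[X^{1/|T|}]]/R[[X]]$ over any complete discrete valuation ring $R$, which automatically contains the deformation of each tame sub-extension; so the deformation problem splits off its tame part on both the $G$- and the $H$-side, exactly as in the standard local lifting problem \cite[Proposition 6.3]{MR3051249}. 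This leaves the case $G=\mathbb{Z}/p^n$, $H=\mathbb{Z}/p^m$ with $1\le m<n$ (if $H=G$ there is nothing to prove).

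Next I would reduce to extending by a single level, inducting on $n-m$. The base case $n-m=1$ is Proposition \ref{propgoodreductiontowers}; note that a finite extension of $R=k[[t]]$ is again a complete equal-characteristic discrete valuation ring with residue field $k$, hence again a power-series ring over $k$, so iterating it keeps us in the same setting. For the inductive step, apply the base case with $(\mathbb{Z}/p^{m+1},\mathbb{Z}/p^m)$ in place of $(\mathbb{Z}/p^n,\mathbb{Z}/p^{n-1})$ to extend the given $\Phi_m$, after a finite base change, to a deformation $\Phi_{m+1}$ of $\phi_{m+1}$ containing it; iterating $n-m$ times yields a deformation of $\phi_n$ over a finite extension $R'$ of $R$ whose $\mathbb{Z}/p^{n-1}$-sub-cover is the extension of $\Phi_m$. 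This is Theorem \ref{thminductionmain}; together with the local-global principle (Theorem \ref{thmlocalglobaldeformation} and Proposition \ref{proplocalglobalintowers}) it also delivers Theorem \ref{thmdeformtowers}.

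It then remains to prove Proposition \ref{propgoodreductiontowers}. As $\phi_n$ is a one-point $\mathbb{Z}/p^n$-cover, Artin-Schreier-Witt theory (\S\ref{secASW}) together with a local-global argument replacing $k((x))$ by $k(x)$ and $K((X))$ by $K(X)$ lets me present it by a reduced Witt vector $\underline g_n=(g^1,\dots,g^n)\in W_n(k(x))$, present $\Phi_{n-1}$ by $\underline G_{n-1}=(G^1,\dots,G^{n-1})\in W_{n-1}(K(X))$, and seek a rational $G^n\in K(X)$ such that $\underline G_n=(\underline G_{n-1},G^n)$ defines a $\mathbb{Z}/p^n$-cover $\Phi_n$ of $\spec R[[X]]$ reducing to $\phi_n$. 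By Corollary \ref{corgood} it is enough that $\Phi_n$ have {\'e}tale reduction, that its generic fiber have the right genus, and that the Witt vector of its special fiber lie in the Artin-Schreier-Witt class of $\underline g_n$. I would secure this via the four steps \ref{step1main}--\ref{step4main}: compute the refined Swan conductors of the relevant Artin-Schreier-Witt restrictions and prove the Cartier-type prediction Theorem \ref{theoremCartierprediction} (the equal-characteristic analog of \cite[Theorem 1.2]{MR3167623}) that constrains the degeneration of $\Phi_n$ by that of $\Phi_{n-1}$; encode this in the Hurwitz tree $\mathcal T_{n-1}$ of $\Phi_{n-1}$ and show a Hurwitz tree detects good reduction; construct explicitly an extending $\mathbb{Z}/p^n$-tree $\mathcal T_n$ (Proposition \ref{propextendtree}); and, starting from $G^n=0$ (so the generic genus is minimal) with the prescribed degeneration realized on the sub-discs attached to the leaves of $\mathcal T_n$, modify $G^n$ inductively along $\mathcal T_n$ from the leaves inward — using Proposition \ref{proppartition} to split $\underline G_{n-1}$ at each vertex and handle one branch-free sub-tree at a time — until the prescribed degeneration is reached at the root, hence on all of $\spec R[[X]]$.

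The hard part will be this last step: adapting the analytic ``controlling character'' construction of \cite[\S\S 6.4--6.5]{MR3194815} to a tree $\mathcal T_n$ that genuinely branches — the ramification-break hypotheses of \cite[Theorem 1.4]{MR3194815} being unavailable — so that the modification must be organized through the partition of Proposition \ref{proppartition}; showing the relevant square matrices, of unbounded size, are invertible is where most of the effort goes. A secondary difficulty is \ref{step3main}, the explicit choice of differential forms satisfying the Cartier-type constraints at every vertex of $\mathcal T_n$. Everything else amounts to bookkeeping with Swan conductors and Witt-vector arithmetic.
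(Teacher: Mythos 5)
Your proposal follows the paper's own route essentially step for step: reduce to $G\cong\mathbb{Z}/p^n$ by splitting off the tame part (the paper does this via the normalized fiber product of the tame and wild deformations, checked with the different criterion, Propositions \ref{propdeformtameintowers} and \ref{propreducecyclicpgroup}), induct on the number of $p$-levels to reduce to Theorem \ref{thminductionmain}, pass to one-point covers and the good-reduction statement of Proposition \ref{propgoodreductiontowers}, and then run the Hurwitz-tree construction (Theorem \ref{theoremCartierprediction}, Proposition \ref{propextendtree}, Proposition \ref{proppartition}, and the leaf-to-root modification of $G^n$). This matches the paper's argument, with the genuinely hard analytic work correctly identified as the edge/root control adapted from Obus--Wewers.
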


In the following proposition, we show that the local result implies the global one. The proof of ``$\Rightarrow$'' direction is postponed to \S \ref{secdeformonepointcover}.

\begin{proposition}
\label{proplocalglobalintowers}
Theorem \ref{thmdeformtowers} holds if and only if Theoreom \ref{theoremmainlocal} does.
\end{proposition}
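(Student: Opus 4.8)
The plan is to prove the equivalence by the standard local-global dictionary for deformations of Galois covers (Theorem \ref{thmlocalglobaldeformation}), applied simultaneously to $\phi_n$, $\phi_m$ and the factor $\phi_{n/m}$, and being careful to track the compatibility data that encodes ``$\Psi\otimes_R R'$ is a sub-cover of $\Phi$''. The ``$\Leftarrow$'' direction is the substantive one: assume the local statement (Theorem \ref{theoremmainlocal}) and deduce Theorem \ref{thmdeformtowers}. The ``$\Rightarrow$'' direction is easy and the excerpt already says its proof is postponed to \S \ref{secdeformonepointcover}, so I would only sketch it: a local $G$-extension $k[[y_n]]/k[[x]]$ branched at one point is globalized (e.g.\ by an HKG/Katz-Gabber-type construction, cf.\ \cite{MR579791}) to a $G$-cover of $\mathbb{P}^1_k$ with a compatible sub-$H$-cover, a deformation of the global object restricts to a deformation of the local one, and any local lift produced over $R'$ can be read off from a global lift; hence a counterexample to the local statement would yield one to the global statement. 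I would phrase this as: Theorem \ref{thmdeformtowers} $\Rightarrow$ Theorem \ref{theoremmainlocal} is immediate because a local cover embeds (after the HKG construction) as the completion at a branch point of a global one, so I would spend most of the proof on the converse.

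For ``$\Leftarrow$'', first I would reduce to the case where $G$ is cyclic of $p$-power order and $H=G/(\text{the prime-to-}p\text{ part})\cdot p^{?}$ is a quotient. Actually the cleaner reduction, matching the paper's strategy, is: by Theorem \ref{thmlocalglobaldeformation} the deformation functor $\Def_\phi$ of the global cover $\phi=\phi_n$ decomposes as a (completed tensor) product over the branch points $y_1,\dots,y_r$ of the local deformation functors $\Def_{\phi|_{y_j}}$, together with a smooth factor $[[U_1,\dots,U_M]]$ coming from $\mathrm{H}^1(X,\pi^G_*\mathfrak{T}_Y)$; the analogous decomposition holds for $\psi=\psi_n$ (the $H$-subcover) and for $\lambda$, and the natural transformations $\Def_{\phi}\to\Def_{\psi}$ and $\Def_{\phi}\to\Def_\lambda$ respect these product decompositions branch-point-by-branch-point. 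Concretely: the given global deformation $\Psi$ of $\psi$ over $R$ corresponds, at each branch point $y_j$, to a local deformation $\Psi_j$ of the local $H_j$-action $\phi_m|_{y_j}$ over $R$; at the branch points where $G$ acts through $H$ (i.e.\ where the inertia in $G$ already lies in $H$) there is nothing to do, and at the remaining branch points we invoke Theorem \ref{theoremmainlocal} to obtain, after a finite extension $R'/R$, a local deformation $\Phi_j$ of $\phi_n|_{y_j}$ over $R'$ containing $\Psi_j\otimes_R R'$ as a sub-cover.

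Then I would reassemble: choose a single $R'$ large enough to work at all branch points simultaneously (take the compositum of the finitely many finite extensions produced). Using the product decomposition of $\Def_{\phi_n}$ over $R'$, the tuple $(\Phi_j)_j$ together with the pullback of the smooth coordinates from $\Psi$ (or any extension of them — the smooth factor is unobstructed, so lifting the $U_i$ from the $H$-side to the $G$-side is harmless, as the map on smooth factors is identified via $\pi^G_*$) determines a global $G$-Galois deformation $\Phi:\mathcal Z_{R'}\to\mathcal X_{R'}$ of $\phi_n$. The fact that each $\Phi_j$ contains $\Psi_j\otimes R'$ as a sub-extension translates, under the compatibility of the product decompositions with the transformation $\Def_{\phi_n}\to\Def_{\psi_n}$, into the statement that the $H$-subcover of $\Phi$ is exactly $\Psi\otimes_R R'$; hence the commutative diagram in Theorem \ref{thmdeformtowers} is filled in, with $\Lambda:\mathcal Z_{R'}\to\mathcal Y_{R'}$ the factor of $\Phi$ through the $H$-action.

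The main obstacle — the only place requiring genuine care rather than bookkeeping — is verifying that the local-global product decompositions of the three deformation functors are \emph{compatible with the tower maps}, i.e.\ that $\Def_{\phi_n}\to\Def_{\psi_n}$ really is, under the Bertin–Mézard isomorphisms, the product of the local restriction maps $\Def_{\phi_n|_{y_j}}\to\Def_{\psi_n|_{y_j}}$ together with the obvious identification on the smooth factors (one must check the $M$-parameter $\mathrm{H}^1(X,\pi^G_*\mathfrak T_Y)$ maps appropriately to $\mathrm{H}^1(X,\pi^H_*\mathfrak T_{Y'})$ and that this does not introduce new obstructions). Once that naturality is in hand, the glueing is formal. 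I would also note the harmless reduction, already used elsewhere in the paper, that one may assume $G$ is a $p$-group (prime-to-$p$ parts deform rigidly and uniquely, by \cite[Proposition 6.3]{MR3051249}), so that Theorem \ref{theoremmainlocal} in the stated generality suffices.
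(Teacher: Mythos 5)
Your overall architecture is right, and your reading of which direction is substantive agrees with the paper's; but for the ``$\Leftarrow$'' direction you take a genuinely different route from the one the paper uses. You work at the level of deformation functors and versal rings: decompose $\Def_{\phi}$, $\Def_{\psi}$ and $\Def_{\lambda}$ via Bertin--M\'ezard into completed tensor products of local deformation rings times a smooth factor, and then assert that the tower maps $\Def_{\phi}\to\Def_{\psi}$ respect these decompositions so that the local extensions furnished by Theorem \ref{theoremmainlocal} reassemble into a global one. The paper instead constructs $\mathcal{Z}_{R'}$ directly by formal patching: over the complement of the branch locus the $G$-cover deforms uniquely by Grothendieck's \'etale lifting theory (and this unique deformation automatically has $\mathfrak{V}\to\mathfrak{U}$ as its $H$-subcover, so no compatibility lemma is needed there); at each branch point the local $G_j$-tower provided by Theorem \ref{theoremmainlocal} is glued in along the boundary of its disc, where the \'etale lifting theory identifies the two boundary actions; and Grothendieck's Existence Theorem algebraizes the result. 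What the patching argument buys is precisely the step you flag as your ``main obstacle'': the compatibility of the product decompositions with the maps $\Def_{\phi_n}\to\Def_{\psi_n}$ (including the behaviour of the smooth factors $\mathrm{H}^1(X,\pi^G_*\mathfrak{T}_Z)$ versus $\mathrm{H}^1(X,\pi^H_*\mathfrak{T}_Y)$, which are not literally the same sheaf) is never needed as a separate naturality statement, because the sub-cover relation is built into the construction component by component. As written, your version leaves that naturality unproved, and proving it would essentially amount to redoing the patching argument; so if you keep your route, you should either supply that verification or replace it by the direct construction. Your treatment of the ``$\Rightarrow$'' direction via HKG globalization matches the paper's, and the reduction to $p$-groups is consistent with Proposition \ref{propreducecyclicpgroup}. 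One small point of care: at a branch point of $\phi$ whose inertia meets the kernel of $G\to H$ in the whole inertia group, the point is unramified in $\psi$, so there is no local datum $\Psi_j$ to extend; there you need an unconditional local deformation of the $G_j$-extension (the trivial one suffices), not an application of Theorem \ref{theoremmainlocal}, whose hypothesis requires $H$ nontrivial.
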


\begin{proof}[Sketch of the ``$\Leftarrow$'' direction's proof]
This proof is developed from one for \cite[Theorem 3.1]{MR3051249}. Let us assume that each of the local covers deforms in towers. With the assumptions of Theorem \ref{thmdeformtowers}, let $\mathfrak{X}$ (resp., $\mathfrak{Y}$) be the formal completion of $\mathcal{X}_R$ (resp., $\mathcal{Y}_R$) at $X$ (resp., at $Y$). Let $B \subsetneq X$ be the branch locus of $\psi$, set $U:= X \setminus B$, $V:=\psi^{-1} (U)$, and $W:=\phi^{-1} (U)= Y \setminus \{ y_1, \ldots, y_s \}$. Let $\mathfrak{U} \subseteq \mathfrak{X}$ be the formal subscheme associated to $U \subseteq X$. By Grothendieck's theory of {\'e}tale lifting \cite[IX, 1.10]{MR0217087}, the $G$-cover  $\phi\lvert_{W}: W \rightarrow U$ deforms over $R$ to an {\'e}tale $G$-cover of formal scheme $\mathfrak{W} \rightarrow \mathfrak{U}$ with the deformation $\mathfrak{V} \rightarrow \mathfrak{U}$ of $\psi\lvert_V: V \rightarrow U$ as the $H$-sub-cover. The boundary of $\mathfrak{W}$ is isomorphic to disjoint union $\bigsqcup_{j=1}^s \mathcal{B}_j$, where each $\mathcal{B}_j$, which corresponds to the point $y_j$, is isomorphic to the boundary of a disc (see \S \ref{secdiscannuli}). For each $j$, there exists a canonical action of the inertia group $G_j \le G$ of $y_j$ on $\mathcal{B}_j$.

By assumption, each local $G_j$-extension $\hat{\mathcal{O}}_{Z, y_j} / \hat{\mathcal{O}}_{X, \phi(y_j)}$ deforms over $R$ in towers to a $G_j$-cover of an open disc $D_j \cong \spec R[[W_j]] \rightarrow \spec R[[V_j]] \rightarrow \spec R[[U_j]]$. The action of $G_j$ on $D_j$ induces an action on its boundary $\partial D_j$. In addition, the theory of {\'e}tale lifting asserts that the $G_j$-action on $\partial D_j$ is isomorphic to the action on $\mathcal{B}_j$. Thus, by identifying $\mathcal{B}_j$ and $\partial D_j$, we can use formal patching to ``glue'' each of these discs $D_j$ to $\mathfrak{W}$ in a $G_j$-equivariant way. This yields a formal curve with $G$-action and projective special fiber. By Grothendieck's Existence Theorem \cite[\S 5]{MR217085}, this formal curve is the projective completion of a smooth projective curve $\mathcal{Z}_R$ with $G$-action such that $\mathcal{Z}_R/G \cong \mathcal{X}_R$ and $\mathcal{Z}_R/H \cong \mathcal{Y}_R$. See \cite{2000math.....11098H} or \cite{2020arXiv200203719D} for concrete examples of formal patching. This is the cover we are looking for.
\end{proof}

On the level of tangent space, the following result follows immediately from Theorem \ref{theoremmainlocal}.

\begin{corollary}
In the notation of Question \ref{questionGalois}, when $\phi_n$ is cyclic, the following group homomorphism is surjective
\[ \ind: \cohom^1(\gal k[[y_n]]/k[[x]], \Der_k k[[y_n]]) \xrightarrow{} \cohom^1(\gal k[[y_m]]/k[[x]], \Der_k k[[y_m]]). \]
\end{corollary}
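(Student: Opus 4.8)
The plan is to recast $\ind$ in terms of deformation functors and then feed a suitable equal‑characteristic family into Theorem~\ref{theoremmainlocal}. By Bertin--M\'ezard the functors $\Def_{\phi_m}$ and $\Def_{\phi_n}$ admit versal deformation rings $R_{\phi_m}$ and $R_{\phi_n}$, and for $A=k[\varepsilon]$ the canonical map $\Hom_{k\text{-alg}}(R_{\phi_i},A)\to\Def_{\phi_i}(A)$ is a bijection; in particular it identifies $\textrm{H}^1(\gal(k[[y_i]]/k[[x]]),\Der_k k[[y_i]])$ with $\Def_{\phi_i}(k[\varepsilon])$. Under these identifications $\ind:\spec R_{\phi_n}\to\spec R_{\phi_m}$ corresponds to the natural transformation $\Def_{\phi_n}\to\Def_{\phi_m}$ sending a deformation of $\phi_n$ to its sub-$H$-cover, and the homomorphism in the statement is exactly the induced map on $k[\varepsilon]$-points. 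So it suffices to show that every first-order deformation of $\phi_m$ over $k[\varepsilon]$ is the sub-$H$-cover of a first-order deformation of $\phi_n$ over $k[\varepsilon]$.

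Fix $\xi\in\Def_{\phi_m}(k[\varepsilon])$. Using \S\ref{secASW} and the standard reduction to $G\cong\mathbb{Z}/p^n$, $H\cong\mathbb{Z}/p^{n-1}$, write $\phi_m$ by a reduced Artin--Schreier--Witt equation $\wp(\underline{y}_m)=\underline{f}_m$ over $k[[x]]$. A first-order deformation of $\phi_m$ is recorded concretely by an infinitesimal deformation of the defining Witt vector $\underline{f}_m$ together with an infinitesimal motion of its branch point, and every such datum is visibly the $(t^2)$-truncation of an honest family $\wp(\underline{Y}_m)=\underline{F}_m$ over $R[[X]]$ with $R:=k[[t]]$, after a coordinate change on the $t$-adic disc as in \S\ref{secdiscannuli}. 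Choose such a family $\Phi_m\in\Def_{\phi_m}(R)$ with $\Phi_m\bmod (t^2)=\xi$. This step is where one genuinely uses the explicit Artin--Schreier--Witt description rather than abstract deformation theory.

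Now apply Theorem~\ref{theoremmainlocal} to $\Phi_m$: there are a finite extension $R'/R$ and $\Phi_n\in\Def_{\phi_n}(R')$ whose sub-$H$-cover is $\Phi_m\otimes_R R'$. If $R'=R$ one reduces modulo $(t^2)$ and is finished. The main obstacle is that $R'/R$ may be ramified, and a base change $t\mapsto u^e$ with $e\ge 2$ annihilates the first-order part of $\Phi_m$, so one cannot naively reduce $\Phi_n$ modulo $\mathfrak{m}_{R'}^2$. The fix is to run the construction proving Theorem~\ref{theoremmainlocal} over the Artinian truncation $R/(t^2)$ directly: modulo $(t^2)$ every intermediate cover has the same reduction as the corresponding $\phi_\bullet$, hence the Hurwitz tree $\mathcal{T}_n$ one must realize (cf.\ \ref{step3main}) carries only the generic degeneration data, the controlling characters of \S\ref{seccontrollingchars} exist over $R$ with no kink, and the root-level equations of \S\ref{seccontrolroot} that normally force a finite extension (Step~\ref{step4.7main}) become trivially solvable. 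The construction then produces directly an element of $\Def_{\phi_n}(k[\varepsilon])$ with sub-$H$-cover $\xi$, which proves that $\ind$ is surjective on tangent spaces. In short, the only thing to check carefully is that, modulo $(t^2)$, the delicate boundary analysis of \S\ref{seccontrolfinal}--\S\ref{seccontrolroot} degenerates to a statement solvable over $R$ without any extension.
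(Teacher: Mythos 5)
You are right to sense that this corollary is not actually ``immediate'': the paper offers no argument beyond the one sentence preceding the statement, Theorem~\ref{theoremmainlocal} concerns deformations over complete discrete valuation rings, and the statement to be proved lives over the Artinian ring $k[\varepsilon]$. Your first paragraph --- identifying $\textrm{H}^1(G,\Der_k k[[y_i]])$ with $\Def_{\phi_i}(k[\varepsilon])$ and $\ind$ with the sub-cover map on tangent spaces --- is correct and consistent with \S\ref{seclocalglobal}. But both of the steps you then need are asserted rather than proved. The claim that every $\xi\in\Def_{\phi_m}(k[\varepsilon])$ is ``visibly'' the $(t^2)$-truncation of a family over $R=k[[t]]$ amounts to saying that every tangent vector of $\Def_{\phi_m}$ integrates to an arc. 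This is not visible: the naive lift $\underline{f}_m+t\underline{g}$ of a perturbed Witt vector is in general only a \emph{birational} deformation --- it typically increases the degree of the different of the generic fibre, so by Proposition~\ref{propdifferentcriterion} it is not a deformation --- and the paper itself notes that the obstruction space $\textrm{H}^2(G,\Der_k k[[z]])$ is usually nonzero, so unobstructedness cannot be waved through. You would have to show that every cohomology class admits a representative whose lift is equisingular (e.g.\ realizes an admissible type in the sense of \S\ref{secbranchingdatum}); that is a substantive claim needing its own proof.

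The second gap is your fix for the ramified extension. ``Running the construction of Theorem~\ref{theoremmainlocal} over $R/(t^2)$'' is not meaningful as stated: the entire proof takes place in rigid geometry over $K=\Frac R$ --- Gauss valuations $\nu_{s,z}$, depth and differential Swan conductors, Hurwitz trees, and the maximum-modulus argument of Lemma~\ref{lemmamax} all require a nonarchimedean valued field, and none of this makes sense over $k[t]/(t^2)$, which has no fraction field. The assertions that the controlling characters ``exist over $R$ with no kink'' and that the root-level equations ``become trivially solvable'' are precisely the content that would have to be established; finite, possibly ramified, extensions enter the proof at several unavoidable points (weak unramifiedness in \S\ref{secreduction}, the finite cover $\mathcal{G}'_{t(e)}\to\mathcal{G}_{t(e)}$ of Lemma~\ref{lemmadetectapp}, the roots extracted in Proposition~\ref{propconstructreenodiff}). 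Nor can one extract the desired $k[\varepsilon]$-point after the fact: if $t=u^e$ with $e\ge 2$, then $k[u]/(u^{e+1})$ is not flat over $k[\varepsilon]$ via $\varepsilon\mapsto u^e$ and admits no retraction onto it, so reducing $\Phi_n$ modulo powers of $u$ only exhibits the \emph{trivial} first-order deformation of $\phi_m$ as being in the image. As written, your argument proves surjectivity only onto the set of tangent vectors that integrate to arcs requiring no ramified base change, which is weaker than the corollary.
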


We first explore the case where $G$ is a cyclic $p$-group, i.e., $G \cong \mathbb{Z}/p^n$ for some $n>1$.

 \subsubsection{Deformation of Artin-Schreier-Witt covers}
 
In the notation of Theorem \ref{theoremmainlocal}, suppose that $G=\mathbb{Z}/p^n$. We assume that $R$ is a complete discrete valuation ring of equal-characteristic. Thanks to the local-to-global principle (Theorem \ref{thmlocalglobaldeformation}), one might assume that $\phi_n$ branches at exactly one point while studying its deformations. Thus, a $\Phi_n$ deformation of $\phi_n$ over $R$ can be described by the following expression (see \S \ref{secbranchingdatum})
\begin{equation}
    \label{eqndeformationtype}
[e_1, e_2, \ldots, e_n] \xrightarrow{}   \begin{bmatrix}
   e_{1,1} & e_{1,2} & \ldots & e_{1,n} \\
   e_{2,1} & e_{2,2} & \ldots & e_{2,n} \\
   \vdots & \vdots & \ddots & \vdots \\
   e_{r,1} & e_{r,2} & \ldots & e_{r,n} \\
   \end{bmatrix}
\end{equation}
\noindent We call the expression in (\ref{eqndeformationtype}) the \textit{type} of the deformation $\Phi_n$. The second matrix indicates that $\Phi_n$'s generic fiber branches at $r$ points $Q_1, \ldots, Q_r$, which correspond to rows $1, \ldots, r$, with upper ramification jumps $(e_{i,1}-1, e_{i,2}-1, \ldots, e_{i,n}-1)$ at $Q_i$. 

 \begin{proposition}
Suppose we are given a deformation of a $\mathbb{Z}/p^n$-cover over $R$ of the following type (\ref{eqndeformationtype}). Then $\sum_{j=1}^r e_{j,i}=e_i$ for $i=1,2, \ldots, n$.
\end{proposition}

\begin{proof}
A deformation of a $\mathbb{Z}/p^n$-cover induces for each $1 \le i <n$ a $\mathbb{Z}/p^i$-deformation whose fibers have the same genus. The rest follows immediately from Proposition \ref{propgenusleveli}.
\end{proof}

In the following section, we present a well-known tool to realize a deformation and some explicit examples.

\subsubsection{Birational deformations}
Usually, when dealing with $k[[x]]$, we will often want to think of its Galois extension in terms of the associated extension of fraction field. 

\begin{definition}
\label{defbirationaldeformatiton}
Suppose $A/k[[x]]$ is a $G$-extension. Suppose, moreover, that $M / \Frac (R[[X]])$ where $R/k[[t]]$ finite, is a $G$-extension, and $A_R$ is the integral closure of $R[[X]]$ in $M$. We say $M / \Frac (R[[X]])$ is a \textit{birational deformation} of $A/k[[x]]$ if
\begin{enumerate}
    \item The integral closure of $A_R \otimes_R k$ is isomorphic to $A$, and
    \item The $G$-action on $\Frac (A)= \Frac (A_R \otimes_R k)$ induced from that on $A_R$ restricts to the given $G$-action on $A$.
\end{enumerate}
\end{definition}

Adapting the strategy from \cite{MR1424559}, which proves that any $G$-local-extension lifts, one can show that any $G$-local-extension can also be birationally deformed in towers! The following criterion is extremely useful for seeing when a birational deformation is actually a deformation (i.e., when $A_R \otimes_R k$ is already integrally closed, thus isomorphic to $A$).

\begin{proposition}[(The different criterion) {\cite[I,3.4]{MR1645000}}]
\label{propdifferentcriterion} Suppose $A_R/R[[X]]$ is a birational deformation of the $G$-Galois extension $A/k[[x]]$. Let $K:=\Frac R$, let $\delta_{\eta}$ be the degree of the different of $(A_R \otimes_R K)/(R[[X]] \otimes_R K)$, and let $\delta_s$ be the degree of the different of $A/k[[x]]$. Then $\delta_s \le \delta_{\eta}$, and equality holds if and only if $A_R/R[[X]]$ is a deformation of $A/k[[x]]$.
\end{proposition}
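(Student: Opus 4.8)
The plan is to use the fact that $A_R$ is a finitely generated, torsion-free (hence flat) $R$-module, so that the formation of the different interacts well with base change along $R \to k$ and $R \to K$. First I would observe that, since $A_R$ is the integral closure of $R[[X]]$ in $M$ and $R[[X]]$ is a complete regular local ring of dimension $2$, $A_R$ is a normal domain, finite over $R[[X]]$; localizing at the height-one primes lying over $(t)$ we see $A_R$ is flat over $R$, and $A_R \otimes_R K$ is the integral closure of $R[[X]] \otimes_R K = K[[X]]$ in $M$, while $A_R \otimes_R k$ is an $R[[X]]\otimes_R k = k[[X]]$-algebra whose normalization is, by hypothesis (1) of Definition \ref{defbirationaldeformatiton}, isomorphic to $A$. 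The key input is then a semicontinuity statement for the different (or equivalently the valuation of the discriminant) in a flat family: the function sending a point of $\spec R$ to the degree of the different of the corresponding fiber of $A_R/R[[X]]$ is lower semicontinuous, with the generic value $\delta_\eta$ and the special value bounded below by $\delta_s$.

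The main steps, in order, would be: (i) reduce to comparing discriminants: the degree of the different of a finite generically-\'etale extension of Dedekind-like local rings equals the valuation of the discriminant ideal, and the discriminant ideal commutes with flat base change, so $\operatorname{disc}(A_R/R[[X]])$ specializes to $\operatorname{disc}((A_R\otimes_R k)/k[[X]])$ and to $\operatorname{disc}((A_R\otimes_R K)/K[[X]])$; (ii) since $A_R$ is $R$-flat and finite, the discriminant of the generic fiber and that of the special fiber, both measured by the $X$-adic valuation after passing to the appropriate completed local ring at the point above $X=0$, are related by $\deg \operatorname{disc}(\text{special}) = \deg \operatorname{disc}(\text{generic})$ — here one uses conservation of the total degree of the discriminant divisor in a flat family over $R$, i.e. that the branch divisor of $A_R \to \spec R[[X]]$ is a relative Cartier divisor (or a limit thereof) of constant degree $\delta_\eta$; (iii) finally, $A_R \otimes_R k$ may fail to be normal, and its normalization $A$ has a strictly smaller different exactly when the normalization map is not an isomorphism — so $\delta_s \le \deg \operatorname{disc}((A_R\otimes_R k)/k[[X]]) = \delta_\eta$, with equality if and only if $A_R\otimes_R k$ is already normal, i.e. equal to $A$, which is precisely the statement that $A_R/R[[X]]$ is a deformation of $A/k[[x]]$.

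The step I expect to be the main obstacle is (ii): making precise the claim that the total degree of the different is constant in the flat family, i.e. that no ramification is lost ``at infinity'' or escapes from the formal disc when passing from the generic to the special fiber. Concretely, one must check that the branch locus of $A_R/R[[X]]$, viewed inside $\spec R[[X]]$, is supported in the closed disc (does not spread to the boundary in a way that changes the count) and that its degree over $R$ is locally constant — this is where the completeness of $R$ and the fact that $A_R$ is finite over the \emph{complete} local ring $R[[X]]$ are used, together with a Riemann–Hurwitz / conductor-discriminant bookkeeping on the generic fiber. Once that conservation of degree is in hand, the semicontinuity in step (iii) is the elementary observation that normalizing a one-dimensional local ring strictly decreases the length of $\Omega_{A_R\otimes_R k / k[[X]]}$ unless the ring was already normal. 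I would then cite \cite[I,3.4]{MR1645000} for the detailed verification of step (ii) rather than reproduce it.
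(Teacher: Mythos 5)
The paper gives no proof of this proposition at all --- it is quoted verbatim from Green--Matignon \cite[I,3.4]{MR1645000} --- and your sketch is essentially the argument of that reference: pass to discriminants, use flatness to specialize them, and finish with the conductor--discriminant relation $v_x(\operatorname{disc} B)=v_x(\operatorname{disc}\tilde B)+2\,\mathrm{length}(\tilde B/B)$ for the non-normal special fibre. The two points to make explicit in your step (ii) are that $A_R$ is actually free over $R[[X]]$ (it is normal of dimension two, hence Cohen--Macaulay and finite over a regular local ring, so miracle flatness applies), and that Weierstrass preparation applied to the discriminant, $d=t^{m}uP(X)$ with $m=0$ because the special fibre is generically separable by hypothesis, is exactly what guarantees that no ramification escapes to the boundary and that the degree is conserved.
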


\begin{example}
\label{exampleorder5threebranches}
One may use the above criterion to check that the $\mathbb{Z}/5$-extension $\Phi$ given by
\[ Y^5-Y=\frac{X+2t^{10}}{X^5(X-t^{10})^2(X-t^5)^5} \]
\noindent is a deformation of $y^5-y=\frac{1}{x^{11}}$ over $k[[t]]$. An easy computation shows that the generic branch points of $\Phi$ are $0, t^{10}$, and $t^{5}$, which have conductors $4, 3$, and $5$, respectively. Hence, it is a deformation of type $[12] \xrightarrow{} [4,3,5]^{\top}$. See \cite{2020arXiv200203719D} and \cite{DANG2020398} for more explicit examples of $\mathbb{Z}/p$-deformations.
\end{example}

\begin{example}
\label{exmain}
Let us consider the $\mathbb{Z}/4$-extension $\chi_2$ of $k[[t,X]]$ (where $\characteristic k=2$) defined by
     \begin{equation}
     \label{order4deformation}
         \wp(Y_1,Y_2)=\bigg(\frac{1}{X^2(X-t^8)}, \frac{1}{X^3(X-t^8)^2(X-t^2)^4} \bigg).
     \end{equation}
Its fiber $\overline{\chi}_2$ at $t=0$ is birationally equivalent to the following $\mathbb{Z}/4$-extension of $k[[x]]$
\[ \wp(y_1,y_2)=\bigg( \frac{1}{x^3}, \frac{1}{x^9} \bigg). \]
\noindent By converting the Witt vector in (\ref{order4deformation}) to a reduced form as in Example \ref{exasw}, one sees that the generic fiber has upper jumps $(1,2)$ at $0$, $(1,2)$ at $t^8$, and $(-1,3)$ at $t^2$. Therefore, the deformation $\chi_2$ has type
\[ \begin{bmatrix}
4 & 10 \\
\end{bmatrix} \xrightarrow{}  \begin{bmatrix}
   2 & 2& 0  \\
   3 & 3 & 4 \\
\end{bmatrix}^\intercal. \] 
\noindent Note that the $\mathbb{Z}/2$-sub-extension of $\chi_2$ is a deformation of type $[4] \xrightarrow{} [2,2]^{\top}$ with generic branch points $0$ and $t^8$. 
\end{example}

\subsubsection{A family of non-trivial deformations}
\label{secessentialcover}

Suppose $\phi_n$ is a $\mathbb{Z}/p^n$-extension $k[[z_n]]/k[[x]]]$ with upper ramification jumps $\iota_1<\iota_2<\ldots<\iota_n$. Set $\iota_0=0$. Then, for each $1 \le j \le n$, we may write
\[ \iota_j -p\iota_{j-1}=pq_j+\epsilon_j, \]
with $0 \le q_j$, $0 \le \epsilon_j <p$, and $q_j=0$ if $\epsilon_j=0$, as asserted by Corollary \ref{corconductorconditions}. Thus $0< \epsilon_j$ if and only if $(p, \iota_j)=1$ if and only if $p \iota_{j-1}<\iota_j$. We called $q_j$ the \textit{essential part} of the upper jump at level $j$, and if $0<q_j$ we say that $\iota_j$ is an \textit{essential upper jump}. These terminologies were introduced in \cite{MR3194816}. Let $r:=1+\sum_{j \text{ essential}} q_j$. According to Pop, one may partition $(\iota_1+1, \ldots, \iota_{n}+1)$ into an $r \times n$ matrix $M:=(e_{j,i})$ as follows. 
\begin{enumerate}
    \item $e_{1,i}=pe_{1,i-1}+\epsilon_i$ for $1 \le i \le n$, $e_{1,0}=0$,
    \item Add $q_j$ number of rows of the following form to $M$ for each essential place $j$
    \[(0, \ldots, 0, p-1, p^2-1, \ldots, p^{n-j+1}-1). \]
\end{enumerate}
Furthermore, Pop shows that there always exists a deformation of type $M$ as above. We restate his result using the conventions in this paper as follows.

\begin{lemma}[{\cite[Key Lemma 3.2]{MR3194816}}]
Let $A=k[[x]] \xhookrightarrow{} k[[z]]:=B $ be a cyclic $\mathbb{Z}/p^n$-extension with upper ramification jumps $\iota_1, \ldots, \iota_n$. In the above notation, let $x_1, \ldots, x_r \in tk[[t]]$ be distinct elements. Then there exists a $\mathbb{Z}/p^n$-deformation of $A \xhookrightarrow{} B$ over $k[[t]]$ of types $M$ that has $x_1, \ldots, x_r$ as branch points.
\end{lemma}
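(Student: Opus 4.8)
The plan is to produce an explicit \emph{birational deformation} (Definition~\ref{defbirationaldeformatiton}) of $A\hookrightarrow B$ whose generic fibre is branched exactly over $x_{1},\dots,x_{r}$ with branching datum $M$, and then to upgrade it to an honest deformation by means of the different criterion (Proposition~\ref{propdifferentcriterion}). First I would use Artin--Schreier--Witt theory (\S\ref{secASWtheory}) to fix a \emph{reduced} defining Witt vector $\underline f=(f^{1},\dots,f^{n})$ of $A\hookrightarrow B$, all of whose entries are principal parts at $x=0$; this is consistent with the fact that every $x_{l}\in tk[[t]]$ reduces to $0\in k$, so $0$ is exactly the place at which the generic branch points will specialise. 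Writing $K:=\Frac R=k((t))$, it then suffices to produce a Witt vector $\underline F=(F^{1},\dots,F^{n})\in W_{n}(K(X))$ whose associated $\mathbb Z/p^{n}$-cover of $\spec R[[X]]$ (a) is a birational deformation of $A\hookrightarrow B$, and (b) has generic branch locus $\{x_{1},\dots,x_{r}\}$ with, at each $x_{l}$, the ramification conductors prescribed by the corresponding row of $M$. Indeed, once (b) holds, formula~(\ref{eqndifferent}) gives the degree of the different of the generic fibre as $\sum_{i=1}^{n}\bigl(\sum_{j=1}^{r}e_{j,i}\bigr)(p^{i}-p^{i-1})=\sum_{i=1}^{n}(\iota_{i}+1)(p^{i}-p^{i-1})$, the last equality because $M$ partitions $(\iota_{1}+1,\dots,\iota_{n}+1)$; this equals $\delta_{s}$, the degree of the different of $A/k[[x]]$, so Proposition~\ref{propdifferentcriterion} forces $\delta_{\eta}=\delta_{s}$, whence the birational deformation is a deformation, necessarily of type $M$.

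To build $\underline F$ I would proceed additively along the rows of $M$, writing
\[
\underline F \;=\; \underline F_{1}\;+\;\sum_{j\ \mathrm{essential}}\ \sum_{l\in S_{j}}\underline F_{j,l}
\]
as a Witt-vector sum, where $\underline F_{1}$ realises the first row of $M$, the sets $S_{j}$ of cardinality $q_{j}$ partition $\{2,\dots,r\}$ among the essential places $j$, and each $\underline F_{j,l}$ realises one of the $q_{j}$ rows of the shape $(0,\dots,0,p-1,p^{2}-1,\dots,p^{n-j+1}-1)$ attached to $j$. Each summand is arranged to have a generic pole at a single one of the points allotted to it --- concretely its entries are of the form $c\,t^{N}/(X-x_{l})^{m}$ with $m$ the prescribed prime-to-$p$ pole order and the higher Witt coordinates forced by the Artin--Schreier--Witt recursion --- and the exponents $N$ are tuned so that, on the one hand, the break sequence at $x_{l}$ extracted via Theorem~\ref{theoremcaljumpirred} is exactly the allotted row of $M$, and, on the other hand, letting $t\to0$ (so that all the $x_{l}$ coalesce at $0$) returns $\underline f$ after an explicit \emph{integral} Artin--Schreier--Witt operation over $R$. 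Since the summands have pairwise disjoint generic pole loci, Theorem~\ref{theoremcaljumpirred} verifies (b) one branch point at a time, and the branch locus is visibly $\{x_{1},\dots,x_{r}\}$; the substance of the argument is therefore (a), namely computing the reduction modulo $t$ of the cover defined by $\underline F$. For that I would substitute the expansions $1/(X-x_{l})=X^{-1}\sum_{m\ge0}(x_{l}/X)^{m}$, which converge on a suitable annulus inside $\spec R[[X]]$, clear denominators, and reorganise the resulting Witt vector until its reduction modulo $t$ is manifestly Artin--Schreier--Witt equivalent to $\underline f$; tracking $\nu$ of the coordinates that appear shows that the operation used to reach an integral model does not change the special fibre.

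The principal obstacle is exactly the tension between (a) and (b). Witt-vector addition is badly non-linear --- the coordinates of a sum are the Witt polynomials of the entries, which entangle those entries and manufacture $p$-power carries --- so distributing the single pole of $\underline f$ over $x_{1},\dots,x_{r}$ and then letting these points collide does not commute with the Witt structure in any naive way. One must choose the $t$-adic orders $N$ large enough that, in every Witt polynomial that occurs, the cross terms have strictly positive valuation and hence vanish in the limit $t\to 0$, yet small enough that no individual generic conductor at $x_{l}$ drops below the value prescribed by its row of $M$. What makes this possible is that the rows $(0,\dots,0,p-1,p^{2}-1,\dots,p^{n-j+1}-1)$ are the \emph{minimal} jump sequences compatible with a fixed bottom break, which rigidifies the higher Witt coordinates of each $\underline F_{j,l}$ and renders the required $t$-adic orders explicitly computable; the remaining delicate point --- that the limit is $\underline f$ and not $\underline f$ plus a spurious $\wp$-coboundary with poles at $0$ --- is settled by writing down the absorbing Artin--Schreier--Witt operation explicitly. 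Should one wish to lighten the bookkeeping, the same conclusion can be reached by induction on $r$, each step peeling a single conductor off an existing branch point onto a new one; the non-linearity persists at every step, but in a more tractable form.
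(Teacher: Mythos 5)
Your overall frame---produce a birational deformation branched at $x_1,\dots,x_r$ with branching datum $M$, then conclude by Proposition~\ref{propdifferentcriterion} using the column sums $\sum_j e_{j,i}=\iota_i+1$---is sound, and it is how the argument does end (note the paper does not reprove this lemma: it is quoted from Pop \cite{MR3194816}, and Example~\ref{examplepopdeformation}, equation (\ref{expopdeformationeqn}), displays the construction from his proof). The genuine gap is in your construction of $\underline F$ as a Witt-vector sum $\underline F_1+\sum_{j,l}\underline F_{j,l}$ of \emph{integral} pieces, each with poles at a single $x_l$, with the $t$-adic orders chosen ``large enough that the cross terms have strictly positive valuation and hence vanish in the limit.'' These requirements are incompatible with the special fibre being $\underline f$. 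If the cross terms in the Witt addition polynomials vanish modulo $t$, then the coordinatewise reduction of $\underline F$ is the coordinatewise sum of the reductions of the summands; each coordinate of an integral summand $c\,t^N/(X-x_l)^m$ reduces to a function with pole order at $x=0$ at most $m$ (and to $0$ if $N>0$). Hence, by Theorem~\ref{theoremcaljumpirred} (used as an upper bound, since Artin--Schreier--Witt operations only lower pole orders), the top jump of the reduction is at most $\max_j(e_{j,n}-1)$, which is strictly smaller than $\iota_n$ as soon as $r\ge 2$, i.e.\ in every nontrivial case. Equivalently: Kato's filtration is a filtration by subgroups (Remark~\ref{remarkswanfiltrationdefn}, Lemma~\ref{lemmacombination}), so a product of characters whose reductions have small conductors cannot reduce to $\underline f$; and no integral $\wp$-coboundary can repair this, since adding $\wp(\underline b)$ with $\underline b$ integral does not change the isomorphism class of the special fibre. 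The large special conductor is created precisely by the interference of the coalescing branch points---by partial-fraction coefficients of negative $t$-valuation, as one sees by applying Proposition~\ref{proppartitionWitt} to any genuine deformation---which is exactly what your tuning forbids. The same objection applies to the fallback induction on $r$ at the end of your proposal.

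The construction that works avoids additive decomposition altogether: keep a single Witt vector and split the \emph{denominators} multiplicatively, replacing the pole $x^{m_i}$ of each entry $f^i$ of a suitable representative of $\underline f$ by $\prod_l (X-x_l)^{m_{i,l}}$ with $\sum_l m_{i,l}=m_i$ distributed according to $M$, keeping the numerators free of $t$. Since every $x_l\in tk[[t]]$, setting $t=0$ literally returns $\underline f$, so the special fibre requires no computation at all; the work is on the generic side, where the conductor at each $x_l$ is read off from Theorem~\ref{theoremcaljumpirred} after a local Artin--Schreier--Witt reduction (the pole orders at $x_l$ are typically divisible by $p$, and the prescribed prime-to-$p$ jump appears only after removing $p$-th powers---compare the rows $(0,5)$ of (\ref{eqnpopsplit}) with the pole order $5$ at $t_2,t_3$ in (\ref{expopdeformationeqn})); one then concludes with the different criterion exactly as you propose. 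Incidentally, the added rows should be read, as conductors, as $(0,\dots,0,p,p^2,\dots,p^{n-j+1})$; the displayed $(p-1,p^2-1,\dots)$ are the corresponding jumps, as the column sums in (\ref{eqnpopsplit}) show.
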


\begin{example}
\label{examplepopdeformation}
Suppose $\overline{\chi}$ is a $\mathbb{Z}/5^2$-extension of $k[[x]]$ that is defined by
\[ \wp(y_1,y_2)=\bigg( \frac{1}{x^8}, \frac{1}{x^{52}}+\frac{1}{x^{46}} \bigg)=\bigg( \frac{1}{x^8}, \frac{1+x^6}{x^{52}}\bigg). \]
It branches at $x=0$ with jumps $(\iota_1, \iota_2)=(8, 52)$, thus branching datum $[9,53]$. Observe that
\begin{equation}
\begin{cases}
\iota_1-5\iota_0 & = 5 \cdot \textcolor{black}{1} + \textcolor{black}{3} \\
\iota_2-5\iota_1 & = 5 \cdot \textcolor{black}{2}+\textcolor{black}{2}
\end{cases}.
\end{equation}
Based on the above data one can ``split'' $[9,53]$ into the following $4 \times 2$ matrix
\begin{equation}
\label{eqnpopsplit}
\begin{bmatrix}
9 & 53 \\
\end{bmatrix} \xrightarrow{}  \begin{bmatrix}
   \textcolor{black}{3}+1 & \textcolor{black}{3}\cdot 5+\textcolor{black}{2}+1 \\
   \textcolor{black}{5} & \textcolor{black}{25} \\
   \textcolor{black}{0} & \textcolor{black}{5}\\
   \textcolor{black}{0} & \textcolor{black}{5}\\
   \end{bmatrix}=\begin{bmatrix}
   4 & 18 \\
   5 & 25 \\
   0 & 5\\
   0 & 5 \\
\end{bmatrix}.  
\end{equation}
One can show, using the strategy from Example \ref{exmain}, that the extension $\chi$ defined by
\begin{equation}
    \label{expopdeformationeqn}
    \wp(Y_1,Y_2)= \bigg(\frac{1}{x^3(x-t_1)^5}, \frac{1+x^6}{x^{17}(x-t_1)^{25}(x-t_2)^{5}(x-t_3)^{5}}  \bigg),
\end{equation}
where $t_1, t_2, t_3 \in tk[[t]]$ are distinct, is a deformation of $\overline{\chi}$ over $k[[t]]$ with type (\ref{eqnpopsplit}). Equation (\ref{expopdeformationeqn}) is derived from \cite[Key Lemma 3.2]{MR3194816}'s proof.
\end{example}

\begin{remark}
The splitting in (\ref{eqnpopsplit}) and the explicit equation (\ref{expopdeformationeqn}) easily generalize to arbitrary $\mathbb{Z}/p^n$-cover of $\mathbb{P}^1_k$. We call these type of deformations \textit{Oort-Sekiguchi-Suwa} (OSS) deformations. They are generalizations of (the $p$-fiber of) ones for Artin-Schreier covers introduced in \cite[\S 4.3]{MR1767273}. See \cite[\S 3.1.1]{DANG2020398} for a detail discussion regarding how the $p$-fibers of the deformations in \cite[\S 4.3]{MR1767273} are OSS deformations of Artin-Schreier covers. In addition, when the cover is of order $p$, Bertin and M{\'e}zard show that these deformations form a dominant component of the local deformation ring's spectrum. One thus would expect this also holds for the versal deformation rings of $\mathbb{Z}/p^n$-covers
when $n>1$.
\end{remark}

In the next section, we show that it suffices to answer Theorem \ref{theoremmainlocal} for the case $G \cong \mathbb{Z}/p^n$.

\subsubsection{Reduction to the case of cyclic \texorpdfstring{$p$}{p}-groups}
\label{sectionreductioncyclic}
We first state a well-known result, which suggests that the deformations of local-cyclic-tamely-ramified extensions are not very interesting.

\begin{proposition}
\label{propdeformtameintowers}
Tamely-ramified-cyclic-covers are deformable over $k[[t]]$ in towers. 
\end{proposition}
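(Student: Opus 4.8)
By the local--global principle (Theorem \ref{thmlocalglobaldeformation}) it suffices to treat the local situation, so the plan is to fix a cyclic tame extension $\phi_n\colon k[[x]]\hookrightarrow k[[y_n]]$ of some degree $m$ with $p\nmid m$, its unique degree-$d$ subextension $\phi_d\colon k[[x]]\hookrightarrow k[[y_d]]$ (where $d\mid m$), a deformation $\Phi_d\colon R[[X]]\hookrightarrow R[[Y_d]]$ of $\phi_d$ over $R=k[[t]]$, and to produce a deformation $\Phi_n$ of $\phi_n$ over $R$ --- with no finite extension of $R$ required --- that contains $\Phi_d$ as its $\mathbb{Z}/d$-subextension. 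First I would record the Kummer-theoretic normal form: since $k$ is algebraically closed and $p\nmid m$ we have $\mu_m\subset k^\times$, so by Abhyankar's lemma $k[[y_n]]=k[[x]][y_n]/(y_n^m-ux)$ for a unit $u\in k[[x]]^\times$; after replacing the uniformizer $x$ by $ux$ we may assume $y_n^m=x$, and then the degree-$d$ subextension is $w^d=x$ with $w:=y_n^{m/d}$.

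The heart of the matter is the \emph{rigidity} of tame deformations. Since $|\mathbb{Z}/d|=d$ is prime to $p=\characteristic k$, Maschke's theorem makes $k[\mathbb{Z}/d]$ semisimple, so the cohomology of $\mathbb{Z}/d$ with coefficients in the $k[\mathbb{Z}/d]$-module $\Der_k k[[w]]$ vanishes in all positive degrees; in particular the tangent space $\mathrm{H}^1(\mathbb{Z}/d,\Der_k k[[w]])$ and the obstruction space $\mathrm{H}^2(\mathbb{Z}/d,\Der_k k[[w]])$ are both zero. By Bertin--M{\'e}zard \cite{MR1767273} the versal deformation ring $R_{\phi_d}$ is therefore $k$ and $\Def_{\phi_d}$ is the one-point functor on $\Alg/k$; hence, up to equivalence of deformations, $\Phi_d$ is the constant deformation $R[[Y_d]]\cong R[[X]][W]/(W^d-X)$ with its evident $\mathbb{Z}/d$-action. (One can also see this by hand: the branch divisor of $\Phi_d$ is a relative Cartier divisor on $\spec R[[X]]/\spec R$ reducing to $V(x)$, hence of the form $V(X-c)$ with $c\in tk[[t]]$; translating $X$ and rescaling $W$ by a unit, and using $\mu_d\subset k^\times$ once more, puts $\Phi_d$ in the form $W^d=X$.)

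With this in hand the construction is immediate: I would set $\Phi_n\colon R[[X]]\hookrightarrow R[[Y_n]]:=R[[X]][Y_n]/(Y_n^m-X)$, with $\mathbb{Z}/m$ acting by $Y_n\mapsto\zeta Y_n$ for a primitive $m$-th root of unity $\zeta\in k$. Its special fiber is $k[[x]][y_n]/(y_n^m-x)\cong\phi_n$, $G$-equivariantly, and $R[[Y_n]]=k[[t]][[Y_n]]$ is regular, hence already integrally closed, so this birational deformation is an honest deformation of $\phi_n$ over $R$ --- equivalently, the different of $\Phi_n\otimes_R\Frac R$ and that of $\phi_n$ both have degree $m-1$, so the different criterion (Proposition \ref{propdifferentcriterion}) applies. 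Putting $W:=Y_n^{m/d}$ exhibits $R[[X]][W]/(W^d-X)$ as the $\mathbb{Z}/d$-subextension of $\Phi_n$, which by the previous paragraph is equivalent to $\Phi_d$; thus $\Phi_n$ is the desired extension, which (in view of Conjecture \ref{conjcyclicdeformationtowers}) also shows that $\ind$ is surjective in the tame case. The only genuinely non-formal ingredient is the cohomology vanishing for the tame group --- i.e.\ the classical rigidity of tame covers --- and I expect that to be the single real obstacle; everything else is bookkeeping with Kummer extensions and subgroups.
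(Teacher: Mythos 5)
Your proposal is correct, and its skeleton is the same as the paper's: put the tame cover in Kummer normal form, observe that the given deformation of the subcover is (essentially) the obvious Kummer deformation, extend the defining equation from exponent $d$ to exponent $m$, and check the result is an honest deformation via normality/the different criterion (Proposition \ref{propdifferentcriterion}). The difference is in how the normal form of the given $\Phi_d$ is justified. The paper only normalizes up to a translation of the branch point: after a change of variables $\Phi_d$ is $Y^d=X-h(t)$ with $h(t)\in t\,k[[t]]$, and then $Y^m=X-h(t)$ is literally an extension of that same deformation, so no further identification is needed. You instead invoke rigidity of tame actions ($\mathrm{H}^i(\mathbb{Z}/d,\Der_k k[[w]])=0$ for $i>0$ by Maschke, hence trivial versal ring by \cite{MR1767273}) to reduce $\Phi_d$ to the constant deformation $W^d=X$; this is a clean and fully rigorous way to obtain the step the paper asserts by "change of variables", and it moreover gives uniqueness of tame deformations for free. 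The small price is that your $\Phi_n$ then contains a deformation only \emph{equivalent} to the given $\Phi_d$, so to meet the statement literally you should add the (easy) transport step: push the tower $R[[X]]\to R[[W]]\to R[[Y_n]]$ along the $G$-equivariant isomorphism $R[[W]]\cong R[[Y_d]]$ (and the induced automorphism of $R[[X]]$ reducing to the identity) so that the given subextension sits inside the constructed one on the nose. With that clause added, your argument is complete; your parenthetical "by hand" normalization is in fact the paper's argument, so the two proofs coincide up to this choice of justification.
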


\begin{proof}
Suppose $\phi$ is a tamely-ramified cyclic covers of $k[[x]]$ with Galois group $\mathbb{Z}/m$, where $m$ is prime to $p$. It then follows from Kummer theory that, after a change of variables, $\phi$ is given by $z^m=x$. Suppose $m=nr$, where $n, r \neq 1$. Then the unique $\mathbb{Z}/n$-subcover $\tau$ of $\phi$ is defined by $y^n=x$. Furthermore, a deformation $\mathscr{T}$ of $\tau$ over $k[[t]]$, after a change of variables, can be defined generically by $Y^n=X-h(t)$, where $h(t) \in  t \cdot k[[t]]$ and $X$ is a lift of $x$ to $k[[t]]$. It is then easy to verify, using the different criterion, that $Y^m=X-h(t)$ defines the deformation of $\phi$ that we want. 
\end{proof}

\begin{proposition}
\label{propreducecyclicpgroup}
Let $G \cong \mathbb{Z}/mp^n$, where $p \nmid m$. If $\mathbb{Z}/p^n$ is deformable in towers, then so is $G$. In particular, it suffices to prove Theorem \ref{theoremmainlocal} for the case $G \cong \mathbb{Z}/p^n$.
\end{proposition}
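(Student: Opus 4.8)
The plan is to split $\phi_n$ into its maximal tame and maximal wild subextensions and to treat them independently, using Proposition~\ref{propdeformtameintowers} for the tame factor and the hypothesis on $\mathbb{Z}/p^n$ for the wild factor. Since $\gcd(m,p)=1$, write $G\cong\mathbb{Z}/m\times\mathbb{Z}/p^n$ and let $P\cong\mathbb{Z}/p^n$ be the Sylow $p$-subgroup. Then $\phi_n\colon k[[x]]\hookrightarrow k[[y_n]]$ is the compositum of its maximal tame subextension $k[[x]]\hookrightarrow k[[w]]$, which is cyclic of degree $m$ and, after a change of coordinate, is given by $w^m=x$, and of its maximal wild subextension $\psi\colon k[[x]]\hookrightarrow k[[v]]$, a $\mathbb{Z}/p^n$-extension described by a Witt vector $\underline g\in W_n(k((x)))$; the two are linearly disjoint over $k[[x]]$ since their degrees are coprime. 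Writing $H\cong\mathbb{Z}/m'\times\mathbb{Z}/p^{n'}$ with $m'\mid m$ and $n'\le n$, the sub-$H$-cover of $\phi_n$ decomposes the same way: its tame part is the $\mathbb{Z}/m'$-subextension $k[[x]]\hookrightarrow k[[w']]$ of $k[[x]]\hookrightarrow k[[w]]$ (so $w'=w^{m/m'}$), and its wild part is the $\mathbb{Z}/p^{n'}$-subextension $\psi'$ of $\psi$, corresponding to the truncated Witt vector $\underline g':=(g^1,\dots,g^{n'})$.

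Next I would decompose the given deformation $\Phi$ of the sub-$H$-cover over $R$ compatibly. As a $\mathbb{Z}/m'\times\mathbb{Z}/p^{n'}$-cover of $R[[X]]$, $\Phi$ is the compositum of a tame $\mathbb{Z}/m'$-subcover $\mathscr W'$ and a wild $\mathbb{Z}/p^{n'}$-subcover $\mathscr V'$, and these deform $k[[x]]\hookrightarrow k[[w']]$ and $\psi'$ respectively. By the argument in the proof of Proposition~\ref{propdeformtameintowers}, after a finite extension of $R$ and a change of the coordinate $X$ (a lift of $x$), we may assume that $\mathscr W'$ is generically given by $(W')^{m'}=X-h(t)$ for some $h(t)\in t\,k[[t]]$; in this coordinate the wild part $\mathscr V'$ is then given by a Witt vector $\underline G'$ deforming $\underline g'$. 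Now invoke the hypothesis that $\mathbb{Z}/p^n$ is deformable in towers, applied to the $\mathbb{Z}/p^n$-extension $\psi$, its $\mathbb{Z}/p^{n'}$-subextension $\psi'$, and the deformation $\mathscr V'$ of $\psi'$: there exist a finite extension $R'/R$ and a deformation $\mathscr V\colon R'[[X]]\to R'[[V]]$ of $\psi$ containing $\mathscr V'\otimes_R R'$ as a subcover, given by a Witt vector $\underline G$ whose length-$n'$ truncation equals $\underline G'$ up to an Artin-Schreier-Witt operation.

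To conclude, let $\mathscr W\colon R'[[X]]\to R'[[W]]$ be the $\mathbb{Z}/m$-cover defined by $W^m=X-h(t)$. By the different criterion (Proposition~\ref{propdifferentcriterion}), exactly as in Proposition~\ref{propdeformtameintowers}, $\mathscr W$ is a deformation of $k[[x]]\hookrightarrow k[[w]]$, and it contains $\mathscr W'\otimes_R R'$ as a subcover via $W'=W^{m/m'}$. Since $\gcd(m,p)=1$, the compositum $\mathscr W\cdot\mathscr V$ is a $G$-cover of $R'[[X]]$ (with $G\cong\mathbb{Z}/m\times\mathbb{Z}/p^n$); its special fiber is the compositum of $w^m=x$ with $\psi$, namely $\phi_n$, so $\mathscr W\cdot\mathscr V$ is a deformation of $\phi_n$ over $R'$, and by construction it contains $\Phi\otimes_R R'$ as a subcover. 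This proves that $G$ is deformable in towers, and the last assertion of the proposition is the case $m=1$ (together with Proposition~\ref{propdeformtameintowers} for the purely tame cyclic groups). The point requiring the most care is the bookkeeping of the subcover relations through these tame and wild decompositions: one must fix the coordinate change that normalizes the tame part of $\Phi$ once and for all on $R[[X]]$, so that the tame deformation $W^m=X-h(t)$ written down above genuinely restricts to the tame part of $\Phi$, and one must check that $\mathscr W\cdot\mathscr V$ has special fiber exactly $\phi_n$ and not merely some other $\mathbb{Z}/mp^n$-extension with the same tame and wild factors. Beyond this, no analytic input is needed past Propositions~\ref{propdeformtameintowers} and~\ref{propdifferentcriterion} and the hypothesis on $\mathbb{Z}/p^n$.
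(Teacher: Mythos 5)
Your decomposition into tame and wild factors and the recombination via a compositum is essentially the route the paper itself takes, but the step you set aside as ``bookkeeping'' is in fact the substantive part of the proof, and as written there is a genuine gap: you never verify that $\mathscr W\cdot\mathscr V$ (the normalization of $R'[[X]]$ in the compositum of function fields) is an honest deformation of $\phi_n$ rather than merely a birational one. The special fiber of this normalization need not be the compositum of the special fibers; a priori one only obtains a birational deformation in the sense of Definition \ref{defbirationaldeformatiton}, and whether it is an actual deformation is decided by the different criterion (Proposition \ref{propdifferentcriterion}). Concretely, the different of $\phi_n$ is $m\delta+m-1$, where $\delta$ is the different of the wild subextension $\psi$. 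On the generic fiber, if the tame branch point $X=h(t)$ had inertia only $\mathbb{Z}/p^{s}$ with $s<n$ in $\mathscr V_K$, the tame contribution above $X=h(t)$ would be $p^{n-s}(m-1)$, so the generic different would be $m\delta+p^{n-s}(m-1)>m\delta+m-1$; Proposition \ref{propdifferentcriterion} then says the special fiber of the compositum is not normal, hence not $k[[y_n]]$. So ``special fiber $=$ compositum of the special fibers'' is precisely what can fail, and your assertion of it is unjustified.

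What rescues the construction, and what the paper's proof supplies, is the branch-locus compatibility: since $\Phi$ is itself a deformation, the different criterion applied to $\Phi$ forces its tame branch point $X=h(t)$ to have full inertia $\mathbb{Z}/p^{n'}$ in $\mathscr V'_K$ (when $m'>1$ and $n'\ge 1$), and since any subgroup of $\mathbb{Z}/p^{n}$ surjecting onto $\mathbb{Z}/p^{n'}$ with $n'\ge 1$ is the whole group, $X=h(t)$ is automatically totally ramified in $\mathscr V_K$; the generic different of the compositum is then $(m-1)+m\delta_1$ at that point plus $m\delta_i$ at the remaining wild branch points, i.e.\ $m\delta+m-1$, and Proposition \ref{propdifferentcriterion} yields that $\mathscr W\cdot\mathscr V$ is a genuine deformation. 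You must also treat the degenerate cases separately: if $H$ has trivial wild part, $h(t)$ is imposed by $\Phi$ while $\mathscr V$ is freely chosen, so you must choose the extension of the (trivial) wild deformation to be totally ramified at $X=h(t)$; if $H$ has trivial tame part, $h(t)$ is free and must be placed at a totally ramified branch point of $\mathscr V_K$ (such a $K$-point exists because $\sum_j e_{j,1}>0$ in the branching datum). Without this different computation and compatibility check, the final claim that the special fiber of $\mathscr W\cdot\mathscr V$ is exactly $\phi_n$ does not follow.
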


\begin{proof}
Suppose $H$ is a subgroup of $G$. Then $H$ is isomorphic to $\mathbb{Z}/lp^r$, where $l \vert m$ and $0 \le r \le n$. Given a $G$-cover $f: Y \xrightarrow{} \spec k[[x]]$, and let $g: Z \xrightarrow{} \spec k[[x]]$ (resp. $h: X \xrightarrow{} \spec k[[x]]$) be the unique sub-$\mathbb{Z}/m$-cover (resp. sub-$\mathbb{Z}/p^n$-cover). Then the normalization $Z \times_{\spec k[[x]]} X$ is isomorphic to $Y$. Similarly, the $H$-sub-cover of $f$ can be identified with $f': Y'\cong Z' \times_{\spec k[[x]]} X'  \xrightarrow{} \spec k[[x]]$ where $g': Z' \xrightarrow{} \spec k[[x]]$ (resp. $h': X' \xrightarrow{} \spec  k[[x]]$) is the sub-$\mathbb{Z}/l$-cover over $g$ (resp. the sub-$\mathbb{Z}/p^r$-cover over $h$). Suppose $F'_R: \mathcal{Y}' \xrightarrow{} \spec R[[X]]$ is a deformation over $f'$ over $R$. Then $\mathcal{Y}' \cong \mathcal{X}' \times_{\spec R[[X]]} \mathcal{Z}'$ where $G'_R: \mathcal{Z}' \xrightarrow{} \spec R[[X]]$ deforms $g'$ and $H'_R: \mathcal{X}' \xrightarrow{} \spec  R[[X]]$ deforms $h'$. Moreover, the unique branch point of the generic fiber $G'_K$ of $G'_R$ has index $p^l$ in the generic fiber $H'_K$ of $H'_R$. 
By assumption, $H'_R$ extends to $H_R: \mathcal{X} \xrightarrow{} \spec R[[X]]$. Furthermore, by Proposition \ref{propdeformtameintowers}, one can extend $G'_R$ to $G_R: \mathcal{Z} \xrightarrow{} \spec R[[X]]$.

Finally, let $\mathcal{Y}''_R$ be the normalization of $\mathcal{X}_R \times_{\mathcal{D}} \mathcal{Z}_R$. Then the canonical map $F_R: \mathcal{Y}''_R \xrightarrow{} \mathcal{D}$ is a birational deformation of $f$. The degree of the different of $g$ (and of $G_K$) is $m-1$. Let $\delta$ be the degree of the different of $h$ (and of $H_K$). Using our assumption on the branch loci of $F'_K$ and $G'_K$, one shows that the degree of the different of $F_R$ and $f$'s generic fibers are both $m \delta +m-1$. It thus follows from Proposition \ref{propdifferentcriterion} that $F_R$ is a deformation of $f$ over $R$. 
\end{proof}

Let $L_n=k[[y_n]]/k[[x]]$ be a $\mathbb{Z}/p^n$-extension. From the above discussion, Theorem \ref{theoremmainlocal} is an immediate result of the following.

\begin{corollary}
\label{cordeformcyclicintowers}
Suppose $k[[y_n]]/k[[x]]$ is cyclic Galois of order $p^n$, and $R[[Y_{m}]]/ \allowbreak R[[X]]$ is a deformation of the $\mathbb{Z}/p^{m}$-subextension $k[[y_{m}]]/k[[x]]$ over a finite extension $R$ of $k[[t]]$. Then, there exists a finite extension $R'/R$, and a deformation $R'[[Y_n]]/R'[[X]]$ of $k[[y_n]]/k[[x]]$ over $R'$ that extends $R'[[Y_{m}]]/R'[[X]]$.
\end{corollary}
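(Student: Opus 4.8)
The plan is to reduce the statement to the single case $m=n-1$, which is precisely Proposition~\ref{propgoodreductiontowers}, after two harmless normalizations. First, since $R$ is a complete discrete valuation ring of equal characteristic $p$ with algebraically closed residue field $k$ -- and the same is true of any finite extension of $R$ with residue field $k$ -- the Cohen structure theorem identifies each such ring with a power series ring $k[[t']]$. Hence we may assume $R=k[[t]]$ from the outset, and, just as importantly, every ``finite extension $R'/R$'' the argument produces is again of this shape, so this normalization is stable under the iteration below. Second, a $\mathbb{Z}/p^n$-Galois extension of $k[[x]]$ is ramified only at the closed point, so it is automatically a one-point cover; thus, when $m=n-1$, the hypotheses of Proposition~\ref{propgoodreductiontowers} are exactly those of the corollary and there is nothing more to prove.

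For general $m$ I would induct on $n-m$. The case $m=n$ is vacuous. If $m<n$, let $\phi_{m+1}$ be the unique $\mathbb{Z}/p^{m+1}$-extension of $k[[x]]$ with $k[[y_m]]\subseteq k[[y_{m+1}]]\subseteq k[[y_n]]$ (uniqueness because $G$ is cyclic). Applying Proposition~\ref{propgoodreductiontowers} with $m+1$ in place of its ``$n$'', and with the given deformation $R[[Y_m]]/R[[X]]$ of $\phi_m$ as input, produces a finite extension $R_1/R$ and a deformation of $\phi_{m+1}$ over $R_1$ containing $R_1[[Y_m]]/R_1[[X]]$ as a sub-extension. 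Since $R_1\cong k[[t']]$, the normalizations above apply verbatim, so we may feed this deformation back into Proposition~\ref{propgoodreductiontowers} to pass from level $m+1$ to level $m+2$, and iterate; after $n-m$ steps we obtain a finite extension $R'/R$ and a deformation of $\phi_n$ over $R'$ containing $R'[[Y_m]]/R'[[X]]$, and transitivity of the sub-extension relation along the chain finishes the proof. (That the birational objects appearing along the way are genuine deformations -- checked via the different criterion, Proposition~\ref{propdifferentcriterion} -- is already internal to Proposition~\ref{propgoodreductiontowers}.)

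The whole difficulty is therefore concentrated in Proposition~\ref{propgoodreductiontowers}, which I expect to be the main obstacle. Its proof recasts the task as producing a good-reduction model of a $\mathbb{Z}/p^n$-cover of the rigid disc $D=\spec R[[X]]$ extending the given $\mathbb{Z}/p^{n-1}$-model $\Phi_{n-1}$; after the local-global reduction of \S\ref{secdeformonepointcover} this comes down to exhibiting a single rational function $G^n\in K(X)$ such that the length-$n$ Witt vector $(G^1,\ldots,G^{n-1},G^n)$ has {\'e}tale reduction, has the correct generic genus, and has special fiber in the Artin--Schreier--Witt class of $\underline{g}_n$. The mechanism is to encode the degeneration of $\Phi_{n-1}$ in a Hurwitz tree $\mathcal{T}_{n-1}$, to extend it to a $\mathbb{Z}/p^n$-tree $\mathcal{T}_n$ compatible with the Cartier-type rules relating the $n$-th refined Swan conductor to the earlier ones (Theorem~\ref{theoremCartierprediction}), and then to build $G^n$ by induction along $\mathcal{T}_n$, working from the leaves inward to the root. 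The genuinely hard part is this last realization: one must control the depth, the branching datum, and the differential conductor simultaneously at every vertex, which forces the partitioning of $\Phi_n$ at each vertex (Proposition~\ref{proppartition}) so that branch-free subtrees can be handled one at a time, and it rests on the delicate construction of the ``controlling characters'' of \S\ref{seccontrollingchars}, an adaptation of the analytic arguments of Obus and Wewers.
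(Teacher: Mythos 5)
Your proposal is correct and follows the same route as the paper: the paper likewise deduces Corollary \ref{cordeformcyclicintowers} by iterating the single-level statement (Theorem \ref{thminductionmain}, equivalently Proposition \ref{propgoodreductiontowers} via the one-point-cover reduction of \S\ref{secdeformonepointcover}), with the hard work concentrated in that proposition. Your added remarks on the Cohen-structure normalization $R\cong k[[t']]$ and transitivity along the chain are exactly the (unstated) details behind the paper's ``easily follows.''
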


We will prove Corollary \ref{cordeformcyclicintowers} inductively using the below result.

\begin{theorem}
\label{thminductionmain}
Suppose $k[[y_n]]/k[[x]]$ is cyclic Galois of order $p^n$, and $R[[Y_{n-1}]]/ \allowbreak R[[X]]$ is a deformation of the $\mathbb{Z}/p^{n-1}$-subextension $k[[y_{n-1}]]/k[[x]]$ over a finite extension $R$ of $k[[t]]$. Then, there exists a finite extension $R'/R$, and a deformation $R'[[Y_n]]/R'[[X]]$ of $k[[y_n]]/k[[x]]$ over $R'$ that extends $R'[[Y_{n-1}]]/R'[[X]]$.
\end{theorem}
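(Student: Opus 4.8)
The plan is to adapt the argument of Obus--Wewers \cite{MR3194815} to equal characteristic, in four stages. First, reductions: by the local--global principle (Theorem \ref{thmlocalglobaldeformation}) and the reduction to cyclic $p$-groups, one may assume $\phi_n\colon k[[y_n]]/k[[x]]$ is a one-branch-point $\mathbb{Z}/p^n$-extension, given by a reduced Witt vector $\underline g_n=(g^1,\dots,g^n)\in W_n(k(x))$, and that the given deformation $\Phi_{n-1}$ of its $\mathbb{Z}/p^{n-1}$-subextension is (after a further local--global argument) presented by a reduced $\underline G_{n-1}=(G^1,\dots,G^{n-1})\in W_{n-1}(K(X))$, $K=\Frac R$, reducing mod $t$ to $(g^1,\dots,g^{n-1})$. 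Everything then comes down to Proposition \ref{propgoodreductiontowers}: producing a rational $G^n\in K(X)$ such that $\underline G_n=(G^1,\dots,G^{n-1},G^n)$ defines a $\mathbb{Z}/p^n$-cover $\Phi_n$ of the disc $D=\spec R[[X]]$ with good reduction isomorphic to $\phi_n$. By Corollary \ref{corgood} it is enough to arrange that $\Phi_n$ has étale reduction, that its generic fiber has the prescribed genus, and that the reduced Witt vector of its special fiber is Artin--Schreier--Witt-equivalent to $\underline g_n$.

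Second, I would set up the combinatorial bookkeeping. Using the refined-Swan-conductor calculus of \S\ref{secSwan}, and in particular the degeneration rule Theorem \ref{theoremCartierprediction} (the equal-characteristic analogue of \cite[Theorem 1.2]{MR3167623}, which predicts the conductor data of $\Phi_n$ from that of $\Phi_{n-1}$), I attach to $\Phi_{n-1}$ its Hurwitz tree $\mathcal T_{n-1}$: a directed tree whose vertices $v$ carry the refined Swan conductor $(\delta_v,\omega_v)$ of the restriction of $\Phi_{n-1}$ to a sub-disc $\mathcal D_v\subseteq D$, and whose leaves carry the conductors dictated by the ramification filtration of $\phi_n$ at its branch point. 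I then construct explicitly a $\mathbb{Z}/p^n$-tree $\mathcal T_n$ extending $\mathcal T_{n-1}$ compatibly with Theorem \ref{theoremCartierprediction} (Proposition \ref{propextendtree}), choosing its differential data specifically to be usable in the next stage; by the converse direction (Proposition \ref{propinverse}, Remark \ref{remarkhurwitzdetectgoodreduction}), any cover realizing the data of $\mathcal T_n$ has automatically the right étale reduction.

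Third, the technical core: build $G^n$ by induction along $\mathcal T_n$ from the leaves toward the root, realizing the prescribed degeneration vertex by vertex. Starting with $\underline G_{n-1}$ best (Definition \ref{defnbest}) and $G^n=0$ — which minimizes the generic genus and so tames the generic ramification — at each vertex $v$ I invoke the partition (\ref{eqnpartition}) of $\underline G_{n-1}$ furnished by Proposition \ref{proppartition}, which lets me treat one branchless sub-tree at a time (this replaces the branchless hypothesis that comes for free in \cite{MR3194815}). At a final vertex I adjust $G^n$ by polynomials of controlled pole order at the leaves so that the leaf conductors match, which simultaneously fixes the genus; then, using the analytic ``controlling character'' machinery of \S\S\ref{seccontrollingchars}--\ref{seccontroledge} (parallel to \cite[\S6.4]{MR3194815}), I propagate the correct degeneration across each edge without disturbing the data already set on the descendant sub-discs; finally I push across the root (parallel to \cite[\S6.5]{MR3194815}) to obtain étale reduction on all of $D$. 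The resulting $\Phi_n$ then has good reduction $\cong\phi_n$, and descending the local--global reductions yields Theorem \ref{thminductionmain}; Corollary \ref{cordeformcyclicintowers}, and hence Theorem \ref{theoremmainlocal}, follow by iterating.

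The main obstacle I expect is the edge-by-edge propagation: the existence of the controlling characters is not formal, but reduces to showing that certain square matrices — whose size grows with the conductors and with the combinatorics of $\mathcal T_n$ — are invertible, and to checking that an analytic modification of a power series converging on a sub-disc can be made to produce a prescribed kink while leaving the already-arranged conductor data on the descendants intact. Proving the degeneration rule Theorem \ref{theoremCartierprediction} and making the construction of $\mathcal T_n$ in Proposition \ref{propextendtree} rigid enough to feed this induction are the two other places where genuine work is required.
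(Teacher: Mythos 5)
Your proposal follows essentially the same route as the paper: local--global reduction to a one-point $\mathbb{Z}/p^n$-cover, translation into Proposition \ref{propgoodreductiontowers}, extension of the Hurwitz tree via Theorem \ref{theoremCartierprediction} and Proposition \ref{propextendtree}, and a leaf-to-root induction using the partition of Proposition \ref{proppartition}, the controlling characters, the qcqs/minimality argument for edges, and a final adjustment at the root. The one inaccuracy is your expectation that edge propagation requires inverting large matrices: in equal characteristic the controlling characters of \S\ref{seccontrollingchars} are written down explicitly as rational functions (Lemma \ref{lemmacontroljunk}, using that at most two leaf conductors are prime to $p$), and the edge step instead rests on the rigid-analytic minimality lemma (Lemma \ref{lemmamax}); the matrix issue is an obstacle only in the mixed-characteristic version of the problem.
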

Assuming Theorem \ref{thminductionmain}, Corollary \ref{cordeformcyclicintowers} easily follows.

\subsection{Deformation of one-point-covers}
\label{secdeformonepointcover}
Let us once more consider the $\mathbb{Z}/p^n$-extension $L_n/k[[x]]$. Harbater, Katz, and Gabber shows that there exists a unique $\mathbb{Z}/p^n$-cover, which is usually known as the HKG cover of $L_n/k[[x]]$, $\overline{Y}_n \xrightarrow{} \overline{C}:=\mathbb{P}^1_k$ that is {\'e}tale outside $x=0$, totally ramified at $x=0$, and the formal completion of $\overline{Y} \xrightarrow{} \overline{C}$ at $x=0$ yields the extension $L_n/k[[x]]$ (see \cite{MR579791} and \cite{MR867916}). That allows one to go from a local back to a global situation. Specially, Theorem \ref{thminductionmain} is equivalent to the following version for one-point-covers (of curves), which is compatible with the language used in \cite{MR3194815} and allows us to only deal with rational functions instead of Laurent series. The proof of the below proposition, hence Theorem \ref{thminductionmain} is deferred to \S \ref{secproofmain}.

\begin{proposition}
\label{propmainonepointcover}
Suppose $k[[y_n]]/k[[x]]$ is a $G\cong \mathbb{Z}/p^n$-Galois extension, and $\psi_{n-1}: Y_{n-1}\xrightarrow{} C:=\mathbb{P}^1_K$ is a $\mathbb{Z}/p^{n-1}$-cover with the following properties:
\begin{enumerate}[label=(\arabic*)]
    \item \label{propmainonepointcover1} The cover $\psi_{n-1}$ has good reduction with respect to the standard model $\mathbb{P}^1_R$ of $C$ and reduces to a $\mathbb{Z}/p^{n-1}$-cover $\overline{\psi}_{n-1}: \overline{Y}_n \xrightarrow{} \overline{C} \cong \mathbb{P}^1_k$ that is totally ramified above $x=0$ and {\'e}tale everywhere else.
    \item \label{propmainonepointcover2} The completion of $\overline{\psi}_{n-1}$ at $x=0$ yields $k[[y_{n-1}]]/k[[x]]$, the unique $\mathbb{Z}/p^{n-1}$-sub-extension of $k[[y_n]]/k[[x]]$. We thus may assume that $k[[y_n]]/k[[x]]$ is given by $\underline{g}_n=(g^1, \ldots, g^n) \in W_n(k(x))$, i.e., all the entries are rational.
\end{enumerate}
Then, after possibly a finite extension of $R$, there exists a $G$-Galois cover $\psi_n: Y_n \xrightarrow{} C$ with good reduction that extends $\psi_{n-1}$ and verifies the followings.
\begin{enumerate}
    \item \label{propmainonepointcover3} Its reduction $\overline{\psi}_n: \overline{Y}_n \xrightarrow{} \overline{C}$ is totally ramified above $\overline{x}=0$ and {\'e}tale elsewhere.
    \item \label{propmainonepointcover4} The completion of $\overline{\psi}_n$ at $x=0$ yields $k[[y_n]]/k[[x]]$.
\end{enumerate}
\end{proposition}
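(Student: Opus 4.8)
The plan is to translate the whole situation into the language of Artin-Schreier-Witt characters over the rigid disc $D = \spec R[[X]]$ and to build the missing ``top layer'' $G^n \in K(X)$ by a tree-guided induction, following the strategy of \cite{MR3194815}. By Artin-Schreier-Witt theory (\S \ref{secASWtheory}), the deformation $\psi_{n-1}$ is given by a reduced Witt vector $\underline{G}_{n-1} = (G^1, \ldots, G^{n-1}) \in W_{n-1}(K(X))$ whose reduction modulo $t$ is the prescribed $\underline{g}_{n-1}$, and any extension $\psi_n$ we seek must be of the form $\underline{G}_n = (G^1, \ldots, G^{n-1}, G^n)$ for some rational $G^n$. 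By Corollary \ref{corgood} it suffices to produce a $G^n$ so that $\Phi_n$ has \'etale reduction \ref{condphi_n1}, the generic fiber has the correct genus \ref{condphi_n2}, and the special-fiber Witt vector lies in the Artin-Schreier-Witt class of $\underline{g}_n$ \ref{condphi_n3}. First I would normalize $\underline{G}_{n-1}$ to be \emph{best} (Definition \ref{defnbest}) and set the na\"ive candidate $G^n = 0$, which minimizes the generic genus and is needed to keep the ramification under control.

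Next I would extract from $\Phi_{n-1}$ its Hurwitz tree $\mathcal{T}_{n-1}$ (\S \ref{secHurwitz}), using the refined Swan conductor calculations of \S \ref{secSwan} and the key degeneration rules of Theorem \ref{theoremCartierprediction}; each vertex $v$ carries a pair $(\delta_v, \omega_v)$ coming from the restriction of the character to a subdisc $\mathcal{D}_v$, and each leaf carries the conductor dictated by the generic ramification of $\psi_{n-1}$. Then, via Proposition \ref{propextendtree}, I construct a $\mathbb{Z}/p^n$-tree $\mathcal{T}_n$ extending $\mathcal{T}_{n-1}$ in the precise sense of \ref{step1main}, with its differential data solving the relevant Cartier equations (\S \ref{secsolutioncartierequation}); this $\mathcal{T}_n$ is designed so that the required target conductors at the root reproduce exactly the ramification invariants of the special fiber $\phi_n$, which by Proposition \ref{propgenusleveli} forces the correct genus and hence \ref{condphi_n2} and \ref{condphi_n3}. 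With $\mathcal{T}_n$ in hand, I would then run the inductive modification of $G^n$ along $\mathcal{T}_n$ from the leaves inward: at a final vertex, add polynomials with poles at the appropriate branch points of bounded degree (\ref{step4.3main}); across an edge, use the analytic comparison tool of \S \ref{secdetect} together with the controlling characters of \S \ref{seccontrollingchars} to push the correct degeneration to the edge's source vertex (\ref{step4.4main}); at a non-final vertex, invoke the partition \eqref{eqnpartition} of Proposition \ref{proppartition} to reduce to finitely many branch-free sub-problems and sum the solutions (\ref{step4.5main}--\ref{step4.6main}); finally adapt \cite[\S 6.5]{MR3194815} at the root to obtain \'etale reduction on all of $D$ (\ref{step4.7main}). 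Once $\Phi_n$ has \'etale reduction with the degeneration data of $\mathcal{T}_n$, Proposition \ref{propinverse} and Remark \ref{remarkhurwitzdetectgoodreduction} identify the reduction as a $\mathbb{Z}/p^n$-cover totally ramified over $\overline{x}=0$ and \'etale elsewhere, and \ref{condphi_n3} guarantees that its completion at $x=0$ is $k[[y_n]]/k[[x]]$, giving \ref{propmainonepointcover3} and \ref{propmainonepointcover4}.

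I expect the main obstacle to be the edge-controlling step (\ref{step4.4main}) combined with the construction of the controlling characters in \S \ref{seccontrollingchars}. Unlike in \cite{MR3194815}, where the ramification-breaks hypothesis of \cite[Theorem 1.4]{MR3194815} kills all branching in the tree, here $\mathcal{T}_n$ genuinely branches, so one must show that a modification achieving the correct depth and differential at the source of an edge can be made \emph{simultaneously compatible} with all the sub-trees hanging off the target vertex. The partition Proposition \ref{proppartition} is what makes this tractable — it lets one treat one branch-free sub-tree at a time — but the price is that the existence of the controlling characters ultimately rests on the invertibility of certain square matrices of unbounded size (as flagged in the third remark of \S\ref{secintroduction}); verifying that invertibility in the equal-characteristic setting, where the Swan conductor behaves additively (\S \ref{sectionrefinedSwanASWleal}) but the interaction between consecutive Witt layers via $\Phi_n$ is still delicate, is the technical heart of the argument. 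A secondary difficulty is bookkeeping: one must ensure that each modification of $G^n$ performed at an inner vertex does not disturb the degeneration data already achieved on the subdiscs closer to the leaves, which is exactly why the induction is organized from the leaves toward the root and why $G^n$ is kept with poles only inside $\mathcal{D}_v$ at each stage.
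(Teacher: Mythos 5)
Your proposal follows essentially the same route as the paper: reduce via Corollary \ref{corgood} to producing a single rational entry $G^n$, extend the Hurwitz tree by Proposition \ref{propextendtree}, and then modify $G^n$ inductively from the leaves to the root using the partition of Proposition \ref{proppartition}, the kink-reduction and minimality arguments of \S\ref{seccontroledge}--\S\ref{secminimal}, and the root adjustment of \S\ref{seccontrolroot}, concluding with Proposition \ref{propinverse} and Corollary \ref{corHreductiontype}. One small correction to your closing discussion: the matrix-invertibility difficulty you attribute to the controlling characters arises only in the mixed-characteristic version of the problem; in equal characteristic these characters are written down explicitly in Lemma \ref{lemmacontroljunk}, the key point being that at most two leaves below any place of $\mathcal{T}_n$ have prime-to-$p$ conductor (Remark \ref{remarkatmost2nonp}).
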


\begin{remark}
Items \ref{propmainonepointcover3} and \ref{propmainonepointcover4} of Proposition \ref{propmainonepointcover} can be reformulated as follows:
\begin{enumerate}[label=(\alph*)]
    \item The cover $Y_n \xrightarrow{} C$ is {\'e}tale outside the open disc
    \[ D:=\{ X \in K \mid \lvert X\rvert <1\}. \]
    \item The inverse image of $D$ in $Y_n$ is an open disc.
    \item If $A=R[[X]]\{X^{-1}\}$ is the ring 
    \[ \bigg\{ \sum_{j \in \mathbb{Z}} a_j X^j \mid a_j \in R, a_j \rightarrow 0 \text{ as } j \rightarrow -\infty \bigg\} \hspace{5mm} \text{ (see \S \ref{secdiscannuli})} \]
    the cover $Y_n \xrightarrow{} C$ is unramified when base changed to $\spec A$, which can be thought of as the ``boundary'' of the disc $D$. The extension of residue fields is isomorphic to the extension of fraction fields coming from $k[[y_n]]/k[[x]]$.
\end{enumerate}
\end{remark}

\subsubsection{Proof of Proposition \ref{proplocalglobalintowers}'s ``$\Rightarrow$''}
With the assumptions of Theorem \ref{theoremmainlocal}, there exists an $H$-cover $\psi_m: Y_m \rightarrow \mathbb{P}^1_K=\Proj K[X,V]$ that
\begin{enumerate}[label=(\arabic*)]
    \item verifies \ref{propmainonepointcover1} and \ref{propmainonepointcover2} of Proposition \ref{propmainonepointcover}, where $H$ is in place of $\mathbb{Z}/p^{n-1}$ ($G$ is in place of $\mathbb{Z}/p^n$), and
    \item its standard model's completion at $X=0$ is isomorphic to $R[[Y_m]]/R[[X]]$,
\end{enumerate}
as discussed in \S \ref{secdeformonepointcover}. It thus follows from Theorem \ref{thmdeformtowers} that one can extends $\psi_m$ to a $G$-cover $\psi_n$ over $K$ whose standard model's completion at $X=0$ is a deformation $\Phi_n$ of $\phi_n$ over $R$ that contains $R[[Y_{m}]]/R[[X]]$ as the sub-$H$-cover. The extension $\Phi_n$ is exactly what we seek. \qed

\section{Degeneration of cyclic covers}
\label{sectcharandswan}
In this section, we study the degeneration of $G:=\mathbb{Z}/p^n$-covers of a curve $C$ as in \S \ref{secdeformonepointcover}.

\subsection{Setup}
Throughout the section, $R$ is a complete DVR of characteristic $p$ with uniformizer $\pi$, valuation $\nu$, and residue field $k$ (e.g., $R=k[[t]]$ and $\pi=t$). Normalize the valuation on $R$ so that $\nu(\pi)=1$. Let $K$ be $R$'s fraction field. Let $C$ be a smooth, projective, irreducible curve over $K$. Note that, for the purpose of this paper, we only need to consider the case $C \cong \mathbb{P}^1_K$. We denote by $\mathbb{K}$ the function field of $C$ (e.g., $\mathbb{K}=\Frac K[X]$). We may fix a smooth $R$-model $C_R$ of $C$ over $R$. Fix a rational point $x_0$ on $C$ and write $\overline{x}_0 \in \overline{C}$ for its specialization. 

We denote by $C^{\an}$ the rigid analytic space associated to $C$. The \textit{residue class} of $x_0$ with respect to the model $C_R$, denoted by
\[ D:=]\overline{x}_0[_{C_R} \subset C^{\an}, \]
is the set of points of $C^{\an}$ specializing to $\overline{x}_0$. It is an open subspace of $C^{\an}$ and is isomorphic to the open unit disc. 

The central intention of this section is examining the degeneration of cyclic covers of $C^{\an}$ that are {\'e}tale outside $D$. In the next sub-section, we briefly recall some non-archimedean geometry notions, which are crucial for the study of these degeneration.

\subsection{Discs and annuli}
\label{secdiscannuli}
Suppose $\epsilon \in \mathbb{Q}_{\ge 0}$, $r=p^{-\epsilon}$, $z \in R$, and $a \in K$ be such that $\nu(a)=\epsilon$. In Table \ref{tabnonarchimedean}, we list down some usual rigid geometry conventions. For more details, see \cite{MR1202394}, \cite{MR774362}, \cite{MR767194}.

\begin{table}[ht]
\small
    \centering
\begin{tabular}{ |p{4.8cm}|p{3.8cm}|p{5.8cm}| }
\hline
Notion & Algebra & Geometry  \\
 \hline
 \hline
Open unit disc & $\spec R[[X]]$  & $D=\{ u \in (\mathbb{A}^1_K)^{\an} \mid \nu(u)>0 \}$ \\
\hline
Closed unit disc & $\spec R\{X\}$ & $\mathcal{D}=\{ u \in (\mathbb{A}^1_K)^{\an} \mid \nu(u) \ge 0 \}$ \\
\hline
Boundary of a unit disc & $\spec R[[X]]\{X^{-1}\}$ & $\{ u \in (\mathbb{A}^1_K)^{\an} \mid \nu(u) = 0 \}$ \\
\hline
Open disc of radius $r$ center $z$ & $\spec R[[a^{-1}(X-z)]]$  & $D[\epsilon,z]:=\{ u \in (\mathbb{A}^1_K)^{\an} \mid \nu(u-z) > \epsilon \}$ \\
\hline
Open disc of radius $r$ center $0$ & $\spec R[[a^{-1}X]]$ & $D[\epsilon]:=D[\epsilon,0]$ \\
\hline
Open annulus of thickness $\epsilon$ & $\spec R[[X,U]]/(XU-a)$ & $\{ u \in (\mathbb{A}^1_K)^{\an} \mid 0< \nu(u) < \epsilon \}$ \\
\hline
\end{tabular}
\vspace{3mm}
    \caption{Non-archimedean geometry notions}
    \label{tabnonarchimedean}
\end{table}
Recall that $R\{X\} = \big\{ \sum_{i \ge 0} a_i X^i \in R[[X]] \mid \lim_{i \to \infty} \nu(a_i)=\infty  \big\}$. Let $z \in R$ and $s \in \mathbb{Q}_{\ge 0}$. One can associate with $\mathcal{D}[s,z]$ the ``Gauss valuation'' $\nu_{s,z}$ defined by
\begin{equation*}
    \nu_{s,z}(f)=\inf_{a \in \mathcal{D}[s,z]} (\nu(f(a)),
\end{equation*}
for each $f \in \mathbb{K}^{\times}$. This is a discrete valuation on $\mathbb{K}$ which extends the valuation $\nu$ on $K$, and has the property that $\nu_{s,z}(X-z)=s$. We denote by $\kappa_s$ the function field of $\mathbb{K}$ with respect to the valuation $\nu_{s,z}$. That is the function field of the canonical reduction $\overline{\mathcal{D}}[s,z]$ of $\mathcal{D}[s,z]$. In fact, $\overline{\mathcal{D}}[s,z]$ is isomorphic to the affine line over $k$ with function field $\kappa_{s,z}:=k(x_{s,z})$, where $x_{s,z}$ is the image of $\pi^{-s}(X-z)$ in $\kappa_{s,z}$. For a closed point $\overline{x} \in \overline{\mathcal{D}}[s,z]$, we let $\ord_{\overline{x}}: \kappa_{s,z}^{\times} \xrightarrow{} \mathbb{Z}$ denote the normalized discrete valuation corresponding to the specialization of $\overline{x}$ on $\overline{\mathcal{D}}[s,z]$. We let $\ord_{\infty}$ denote the unique normalized discrete valuation on $\kappa_{s,z}$ associated with the ``point at infinity.''

For $b \in \mathbb{K}$ and $r \in \mathbb{Q}_{\ge 0}$, we define $(b)_r:=b\pi^{-pr}$. We will usually write $X_r:=X\pi^{-pr}$. For $F \in \mathbb{K}$, $z \in (\mathbb{A}^1_K)^{\an}$, and $s \in \mathbb{Q}_{\ge 0}$, we let $[F]_{s,z}$ stand for the image of $\pi^{-\nu_{s,z}(F)}F$ in $\kappa_{s,z}$.

\subsection{Semi-stable models and a partition of a disc}
\label{secsemistablemodel}

Consider the open unit disc $D \subset C^{\an} \cong (\mathbb{P}^1_K)^{\an}$, which we may associate with $\spec R[[X]]$ for some $X \in \mathbb{K}$. Suppose we are given $x_{1,K}, \ldots, x_{r,K}$ in $D(K)$, with $r \ge 2$. We can think of $x_{1,K}, \ldots, x_{r,K}$ as elements of the maximal ideal of $R$. Let $C^{\st}$ be a blow-up of $C_R$ such that
\begin{itemize}
    \item the exceptional divisor $\overline{C}$ of the blow-up is a semi-stable curve over $k$,
    \item the fixed points $x_{b,K}$ specialize to pairwise distinct smooth points $x_b$ on $\overline{C}$, and
    \item if $x_0$ (which we usually denote by $\overline{\infty}$) denotes the unique point on $C$ which lies in the closure of $C^{\st} \otimes k \setminus \overline{C}$, then $(\overline{C},(x_b),x_0)$ is stably marked in the sense of \cite{MR702953}. 
\end{itemize}
\noindent Then we call $C^{\st}$ the \emph{stable model of $C$ corresponding to the marked disc $(D; x_1, \ldots, x_r)$}. See, e.g., \cite[\S 2.5.3]{Akeyrthesis} for more details. Note that the set of points of $(\mathbb{P}^1_K)^{\an}$ that specialize away from $x_0$ form the closed unit disc $\mathcal{D}$. The dual graph of $\overline{C}$ is a tree whose
\begin{itemize}
    \item \textit{leaves} correspond to the marked points, whose
    \item \textit{root} corresponds to $x_0$, whose
    \item \textit{vertices} correspond to the punctured discs, and whose 
    \item \textit{edges} correspond to the annuli
\end{itemize}
that partition $D$. For each edge $e$ of the tree, the \emph{source} (resp. the \emph{target}) of $e$ is the unique vertex  $s(e) \in V$ (resp. $t(e) \in V$)  adjacent to $e$ which lies in the direction toward the root (resp. in the direction away from the root). For each vertex $v$ of the dual graph, we denote by $U_v$ the corresponding closed punctured disc.

\begin{example}
Let us consider the $\mathbb{Z}/4$-cover in Example \ref{exmain}. Recall that $\chi_2$
branches at four points $0, t^8, t^2$, and $t^2(1+t^2)$, all are contained in the open disc $D$ associated to $\spec R[[X]]$. The left graph in Figure \ref{figspecialfiberdual} represents a semi-stable model of $\mathbb{P}^1_K$ corresponding to the disc $D$ marked by the branch points of the cover. The model is obtained by first blowing-up $\spec R[[X]]$ with respect to the ideal $(t^2, X)$, which separates the reductions of $t^2$ and $t^2+t^4$ from ones of $0$ and $t^{8}$. We then can distinguish $t^2$ and $t^2+t^4$ by doing the same for $\spec R[[X_1^{-1}]]$, where $X_1:=X/t^2$ with respect to $(t^2, X_1^{-1}-1)$.

The tree on the right is the special fiber's dual graph together with the leaves (labelled by $[\overline{t^2}], [\overline{t^2+t^4}], [\overline{t^{16}}]$, and $[\overline{0}]$) that are associated with the corresponding marked points. See \cite[Example 3.1]{2020arXiv200203719D} for more explanation.

The vertex $v_1$ represents the punctured disc associated with $\spec R\{ X_1^{-1}, \allowbreak X_1, \allowbreak (X_1^{-1}-1)^{-1} \}$. The points $t^2$ and $t^2+t^4$ (resp. $0$ and $t^{16}$) of $D$ lie inside $\spec R(X_1^{-1}-1)$ (resp. $\spec R(X_1)$), and reduce to $1$ (resp. $0$) on its special fiber. Table \ref{tabpartitions} illustrates where the $\overline{K}$-points of $D$ specialize.

\begin{remark}
\label{remarkdirection}
In the above example. we say the directions from $v_1$ toward $e_2, e_1$, and $e_0$, are the $0,1$, and $\infty$ \textit{directions}, respectively.
\end{remark}

\begin{center}
\begin{figure}[ht]
\centering
\begin{tikzpicture}[line cap=round,line join=round,>=triangle 45,x=1cm,y=1cm]
\clip(-5.913951741732328,-0.42740404202494114) rectangle (10.236822781996308,4.043296939568755);
\draw [line width=2pt] (1,3.5)-- (1,0);
\draw [line width=2pt] (-4.900693696805673,3.5)-- (-4.900693696805673,2.5);
\draw [line width=2pt] (0.5,2)-- (1.52,2);
\draw [line width=2pt] (0.5,1.2)-- (1.5,1.2);
\draw (-4.846846996414543,2.92063948653854) node[anchor=north west] {$\overline{\infty}$};
\draw (1.901869501000637,2.2829169096693054) node[anchor=north west] {$\overline{0}$};
\draw (1.8009271602273331,1.518978406128035) node[anchor=north west] {$\overline{t^{16}}$};
\draw (1.0510697716256465,0.8812558292588005) node[anchor=north west] {$\overline{C}_3$};
\draw [line width=2pt] (2.497760136302129,3)-- (-5.502239863697875,3);
\draw [line width=2pt] (-3.1869684357952335,3.461122757720566)-- (-3.1869684357952335,-0.03887724227943412);
\draw [line width=2pt] (-3.686968435795234,1.9611227577205659)-- (-2.666968435795234,1.9611227577205659);
\draw [line width=2pt] (-3.686968435795234,1.1611227577205658)-- (-2.686968435795234,1.1611227577205658);
\draw (-2.366549480270502,2.1832727570334876) node[anchor=north west] {$\overline{t^2}$};
\draw (-2.45307148664762,1.419334253492217) node[anchor=north west] {$\overline{t^2+t^4}$};
\draw (-2.856840849740836,0.8214693376773098) node[anchor=north west] {$\overline{C}_2$};
\draw (-1.4580684133107664,3.7975080297337374) node[anchor=north west] {$\overline{C}_1$};
\draw [line width=2pt] (5.5,1.5)-- (3.879577221035335,2.587879622356322);
\draw [line width=2pt] (5.5,1.5)-- (6.879577221035335,2.587879622356322);
\draw [line width=2pt] (5.5,1.5)-- (5.5,0);
\draw [line width=2pt] (3.879577221035335,2.587879622356322)-- (3.379577221035335,3.58787962235632);
\draw [line width=2pt] (3.879577221035335,2.587879622356322)-- (4.379577221035335,3.58787962235632);
\draw [line width=2pt] (6.879577221035335,2.587879622356322)-- (6.379577221035335,3.58787962235632);
\draw [line width=2pt] (6.879577221035335,2.587879622356322)-- (7.379577221035339,3.58787962235632);
\begin{scriptsize}
\draw [fill=black] (5.5,1.5) circle (2.5pt);
\draw[color=black] (6.15586814787559,1.4890851603372897) node {$v_1$};
\draw [fill=black] (3.879577221035335,2.587879622356322) circle (2.5pt);
\draw[color=black] (3.3439029406192655,2.565242008804123) node {$v_2$};
\draw[color=black] (4.944560058595942,2.3061672119509966) node {$e_1$};
\draw [fill=black] (6.879577221035335,2.587879622356322) circle (2.5pt);
\draw[color=black] (7.367176237155237,2.631671443894668) node {$v_3$};
\draw[color=black] (5.896302128744237,2.2596666073876146) node {$e_2$};
\draw [fill=black] (5.5,0) circle (2.5pt);
\draw[color=black] (6.213549485460335,0.0010658143090758049) node {$v_0$};
\draw[color=black] (5.146444740142551,0.7450754873231826) node {$e_0$};
\draw [fill=black] (3.379577221035335,3.58787962235632) circle (2.5pt);
\draw[color=black] (3.7620926381086672,3.727757122888665) node {$[\overline{t^2}]$};
\draw [fill=black] (4.379577221035335,3.58787962235632) circle (2.5pt);
\draw[color=black] (5.002241396180688,3.727757122888665) node {$[\overline{t^2+t^{4}}]$};
\draw [fill=black] (6.379577221035335,3.58787962235632) circle (2.5pt);
\draw[color=black] (6.720145870873463,3.727757122888665) node {$[\overline{0}]$};
\draw [fill=black] (7.379577221035339,3.58787962235632) circle (2.5pt);
\draw[color=black] (7.801670950587435,3.7211141793796103) node {$[\overline{t^{16}}]$};
\end{scriptsize}
\end{tikzpicture}
    \caption{The special fiber $\overline{C}$ and its dual graph}
    \label{figspecialfiberdual}
\end{figure}
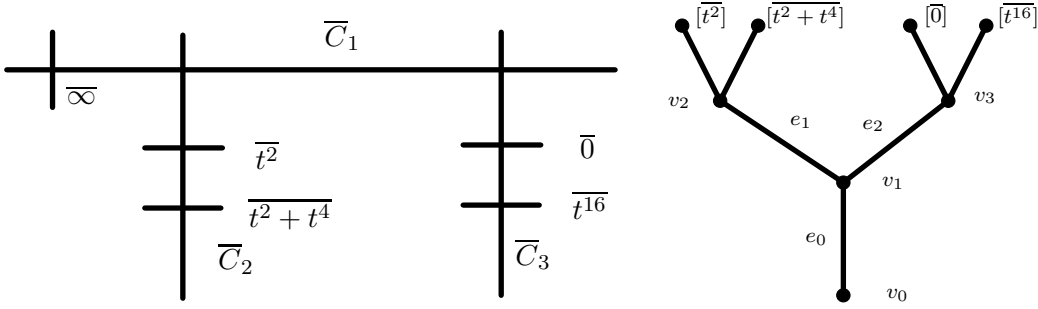
\end{center}
\begin{table}[ht]
\small
    \centering
\begin{tabular}{ |p{3.4cm}|p{5.6cm}|p{5.2cm}|  }
\hline
 Subscheme $\overline{V}$ of $\overline{C}$ & Points of $C(K)$ that specialize to $\overline{V}$ & Associated algebraic object \\
 \hline
 \hline
$\overline{\infty}$ & $ \{Y \mid 0< \nu(Y)<2  \}$  & $ R[[X,X_1]]/(XX_1-t^2)$ \\
\hline
$\overline{C}_3 \cap \overline{C}_1$ & $\{Y \mid 2  < \nu(Y) < 8\}$ & $ R[[X_1^{-1},X_2]]/(X_1^{-1}X_2-t^6)$ \\
\hline
$\overline{C}_3 \setminus \overline{C}_1$ & $\{Y \mid \nu(Y) \ge 8 \}$ & $ R\{X_2^{-1}\}$   \\
\hline
$\overline{C}_2 \cap \overline{C}_1$ & $\{Y \mid 2 < \nu(Y-t^5) <8 \}$ & $ R[[X_1^{-1}-1,V]](V(X_1^{-1}-1)-t^2)$ \\
\hline
$\overline{C}_2 \setminus \overline{C}_1$ & $\{Y \mid \nu(Y-t^5) \ge 2 \wedge \nu(Y)=2 \}$  & $ R\{V^{-1}\}$  \\ 
\hline
$\overline{C}_1 \setminus ( \overline{C}_3 \cup \overline{C}_2 \cup \{\overline{\infty}\})$ & $\{Y \mid \nu(Y)=2 \wedge \nu(Y-t^2)=2 \}$ & $ R\{X_1^{-1},X_1,(X_1^{-1}-1)^{-1}\}$ \\
     \hline
\end{tabular}
    \caption{Partitions of $C(K)$}
    \label{tabpartitions}
\end{table}
\end{example}

\subsubsection{A coordinate system for a marked disc} With the notation of \S \ref{secsemistablemodel}, we define a  one-dimensional ``coordinate system'' for each edge $e$ of the special fiber $\overline{C}$'s dual graph as follows. One may assume that $e$ corresponds to the annulus $\{Y \in K \mid pr_1 < \nu(Y-x_j) <pr_2\}$. We identify $e$ with a rational line segment $[r_1, r_2] \cap \mathbb{Q}$, and assign the value $r_1$ to $s(e)$ and the value $r_2$ to $t(e)$. We then naturally associate a rational number $r \in [r_1, r_2]$ with the circle $\{ Y \mid \nu(Y-x_j)= pr \}$. We call $r$ a \emph{rational place} on $e$. Suppose $r$ is a rational place on an edge $e$, and $r'$ is a rational place on a succeeding edge of $e$. Then we say $r<r'$. By abuse of notation, we usually write $t(e)$ in place of $r_2$ and $s(e)$ in place of $r_1$. The reason we scale the coordinate by $1/p$ is to make it compatible with the calculation of the depth Swan conductor introduced in \S \ref{secSwan}, which, in turn, is also modified to be consistent with \cite{MR2377173} and \cite{2020arXiv200203719D}.

\subsection{Characters}
\label{seccharacter}
Fix $n \ge 1$. We set
\begin{equation}
    \label{eqnaswmap}
    \textrm{H}^1_{p^n}(\mathbb{K}):=\textrm{H}^1(\mathbb{K}, \mathbb{Z}/p^n \mathbb{Z}) \overset{\ASW}{\cong} W_n(\mathbb{K})/\wp(W_n(\mathbb{K})). 
\end{equation}
The ``$\overset{\ASW}{\cong}$'' in (\ref{eqnaswmap}) is just Artin-Schreier-Witt theory (\S \ref{secASWtheory}). We call the identification $\ASW$ the \textit{Artin-Schreier-Witt map}. The elements of $\textrm{H}^1_{p^n}(\mathbb{K})$ are called the \textit{characters} on $C$. Given an element $\underline{F}_n \in W_n(\mathbb{K})$, we let $\mathfrak{K}_n(\underline{F}_n) \in \textrm{H}^1_{p^n}(\mathbb{K})$ denote the character corresponding to the class of $\underline{F}_n$ in $W_n(\mathbb{K})/\wp(W_n(\mathbb{K}))$. 

For $i=1, \ldots, n$ the homomorphism
\[ W_{i}(\mathbb{F}_p) \cong  \mathbb{Z}/p^i \mathbb{Z} \xhookrightarrow{V^{n-i}} \mathbb{Z}/p^n \mathbb{Z} \cong W_n (\mathbb{F}_p), \hspace{3mm} (a^1, \ldots, a^i) \mapsto (0, \ldots, 0,a^1, \ldots, a^i). \]
\noindent induces an injective homomorphism
\begin{equation}
    \label{embedchar}
    \textrm{H}^1_{p^i}(\mathbb{K}) \xrightarrow{\phi_{i,n}} \textrm{H}^1_{p^n}(\mathbb{K}).
\end{equation}
\noindent Its image consists of all characters killed by $p^i$. We consider $\textrm{H}^1_{p^i}(\mathbb{K})$ as a subgroup of $\textrm{H}^1_{p^n}(\mathbb{K})$ via this embedding.

A character $\chi \in \textrm{H}^1_{p^n}(\mathbb{K})$ gives rise to a branched Galois cover $Y \rightarrow C$. In particular, if $\chi=\mathfrak{K}_n(\underline{F})$ for some $\underline{F} \in W_n(\mathbb{K})$, then $Y$ is a connected component of the smooth projective curve given generically by the ASW equation $\wp(y)=\underline{F}$. If $\chi$ has order $p^i$ as an element of $\textrm{H}^1_{p^n}(\mathbb{K})$, then the Galois group of $Y \rightarrow C$ is the unique quotient of $\mathbb{Z}/p^n\mathbb{Z}$ of order $p^i$.

\begin{remark}
If one identifies $\textrm{H}^1_{p^i}(\mathbb{K})$ with $W_i(\mathbb{K})/\wp(W_i(\mathbb{K}))$, then the map $\phi_{i,n}$ assigns to each class of length $i$-Witt vector $(a^1, \ldots, a^i)$ a class of length $n$-Witt vector $\big(0, \ldots, 0, a^1,\allowbreak \ldots, a^i\big)$. 
\end{remark}

\begin{remark}
If $\chi:=\mathfrak{K}((a^1, \ldots, a^n)) \in \textrm{H}^1_{p^n}(\mathbb{K})$, then $\chi^{p^{n-i}} \cong \mathfrak{K}((a^1, \ldots, a^i)) \in \textrm{H}^1_{p^i}(\mathbb{K})$.
\end{remark}

A point $x \in C$ is called a \textit{branch point} for the character $\chi \in \textrm{H}^1_{p^n}(\mathbb{K})$ if it is a branch point for the cover $Y \rightarrow C$. The \textit{branching index} of $x$ is the order of the inertia group for some point $y \in Y$ above $x$. The set of all branch points is called the \textit{branch locus} of $\chi$ and is denoted by $\mathbb{B}(\chi)$.

\begin{definition}
\label{defbranchlocus}
A character $\chi \in \textrm{H}^1_{p^n}(\mathbb{K})$ is called \textit{admissible} if its branch locus $\mathbb{B}(\chi)$ is contained in an open unit disc $D \subset C^{\an}$. We call its associated cover an \textit{admissible cover}.
\end{definition}

\begin{definition}
\label{defngeometryofbranchpoints}
Suppose $\chi:=\mathfrak{K}((f^1, \ldots, f^n))  \in \textrm{H}^1_{p^n}(\mathbb{K})$ is an admissible character and $\mathbb{B}(\chi)=\{b_1, \ldots, b_l\}$. We call the dual graph of the semi-stable model of $\mathbb{P}^1_K$ corresponding to the marked disc $(D;b_1, \ldots, b_l)$ (discussed in \S \ref{secsemistablemodel}) the \textit{branching geometry} of $\chi$, or the \textit{geometry of the branch points} of $\chi$, or the \textit{geometry of the poles} of $(f^1, \ldots, f^n)$.
\end{definition}

\subsubsection{Reduction of characters}
\label{secreduction}

Let $\chi \in \textrm{H}^1_{p^n}(\mathbb{K})$ be an admissible character of order $p^n$, and let $Y \rightarrow C$ be the corresponding cyclic Galois cover. Let $Y_R$ be the normalization of $C_R$ in $Y$. Then $Y_R$ is a normal $R$-model of $Y$ and we have $C_R=Y_R/(\mathbb{Z}/p^n)$.

After enlarging our ground field $K$, we may assume that the character $\chi$ is weakly \textit{unramified} with respect to the valuation $\nu_0$ (where $X$ has valuation $0$) \cite{MR0321929} (there is an error, corrected in \cite{MR2000472}). By definition, this means for all extension $\omega$ of $\nu_0$ to the function field of $Y$ the ramification index $e(\omega/\nu_0)$ is equal to $1$. It then follows that the special fiber $\overline{Y}=Y_R \otimes_R k$ is reduced \cite[\S 2.2]{2012arXiv1211.4624A}.

\begin{definition}
\label{defnetalegoodreduction}
We say that the character $\chi$ has \textit{étale reduction} if the map $\overline{Y} \rightarrow \overline{C}$ is generically étale. It has \textit{good reduction} if, in addition, $\overline{Y}$ is smooth.
\end{definition}

\begin{remark}
\label{remarketalereduction}
In terms of Galois cohomology, the definition can be rephrased as follows. The character $\chi$ has {\'e}tale reduction if and only if the restriction of $\chi$ to the completion $\hat{\mathbb{K}}_0$ of $\mathbb{K}$ with respect to $\nu_0$ is \textit{unramified}. The latter means that $\chi\lvert_{\hat{\mathbb{K}}_0}$ lies in the image of the cospecialization morphism
\[ \textrm{H}^1_{p^n}(\kappa_0) \xrightarrow{} \textrm{H}^1_{p^n}(\hat{\mathbb{K}}_0) \]
\noindent (which is simply the restriction morphism induced by the projection of absolute Galois groups $\gal_{\hat{\mathbb{K}}_0} \xrightarrow{} \gal_{\kappa_0}$). Since the cospecialization morphism is injective, there exists a unique character $\overline{\chi} \in \textrm{H}^1_{p^n}(\kappa_0)$ whose image in $\textrm{H}^1_{p^n}(\hat{\mathbb{K}}_0)$ is $\chi\lvert_{\hat{\mathbb{K}}_0}$. By construction, the Galois cover of $\overline{C}$ corresponding to $\overline{\chi}$ is isomorphic to an irreducible component of the normalization of $\overline{Y}$.
\end{remark}

\begin{definition}
\label{defnreductiondeformation}
If $\chi$ has étale reduction, we call $\overline{\chi}$ the \textit{reduction} of $\chi$, and $\chi$ a \textit{deformation} of $\overline{\chi}$ over $R$.
\end{definition}

With the new definition of good reduction, we can reformulate Proposition \ref{propmainonepointcover} as follows.

\begin{proposition}
\label{propinductionmainreduction}
Proposition \ref{propmainonepointcover} holds if there exists an admissible extension $\psi_n: Y_n \xrightarrow{} C$ of $\psi_{n-1}: Y_{n-1} \xrightarrow{} C$ with the following properties.
\begin{enumerate}
    \item $\psi_n$ has good reduction, and
    \item The completion of the reduction $\overline{\psi}_n$ at $x=0$ is birationally equivalent to $k[[y_n]]/k[[x]]$.
\end{enumerate}
\end{proposition}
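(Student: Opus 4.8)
The plan is to verify that any admissible extension $\psi_n$ satisfying hypotheses (1) and (2) automatically meets every requirement of Proposition \ref{propmainonepointcover}; so the substance of the argument is to promote ``admissible, with good reduction, and birationally controlled at $x=0$'' to the sharp statements \ref{propmainonepointcover3} and \ref{propmainonepointcover4}, plus the sub-cover assertion. Since $\psi_n$ is already a $\mathbb{Z}/p^n$-cover extending the $\mathbb{Z}/p^{n-1}$-cover $\psi_{n-1}$ and has good reduction by hypothesis (1), I only need to analyse the reduction $\overline{\psi}_n\colon\overline{Y}_n\to\overline{C}\cong\mathbb{P}^1_k$; all the finite extensions of $R$ collected along the way are harmless.

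First I would locate the branch locus of $\overline{\psi}_n$. By admissibility $\mathbb{B}(\psi_n)$ is contained in the open disc $D=\,]\overline{x}_0[$, so $\psi_n$ is étale over $C^{\an}\setminus D$. Because $\psi_n$ has good reduction, $\overline{Y}_n$ is smooth, and by the reduction formalism of \S\ref{secreduction} (see Remark \ref{remarketalereduction}) the branch locus of $\overline{\psi}_n$ is the specialization of $\mathbb{B}(\psi_n)$; as $D$ specializes to $\{\overline{x}_0\}=\{\overline{0}\}$, the cover $\overline{\psi}_n$ is étale over $\overline{C}\setminus\{\overline{0}\}$. This gives the ``étale elsewhere'' part of \ref{propmainonepointcover3}.

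Next I would handle $\overline{x}=0$ and establish \ref{propmainonepointcover4}. Let $\overline{y}\in\overline{Y}_n$ be a point lying over $\overline{0}$. Since $\overline{Y}_n$ is smooth, $\widehat{\mathcal{O}}_{\overline{Y}_n,\overline{y}}$ is a complete discrete valuation ring, hence it is the integral closure of $k[[x]]$ in the induced fraction-field extension; by hypothesis (2) that extension is $k((y_n))/k((x))$, which has degree $p^n$, and $k[[y_n]]$ is by definition the integral closure of $k[[x]]$ inside it. Therefore $\widehat{\mathcal{O}}_{\overline{Y}_n,\overline{y}}\cong k[[y_n]]$ as $\mathbb{Z}/p^n$-extensions of $k[[x]]$, which is \ref{propmainonepointcover4}; in particular the decomposition group at $\overline{y}$ is all of $\mathbb{Z}/p^n$, so $\overline{Y}_n$ is connected, $\overline{\psi}_n$ is a genuine $\mathbb{Z}/p^n$-cover, and it is totally ramified over $\overline{0}$, completing \ref{propmainonepointcover3}. (One may instead argue total ramification formally: for the inertia group $I$ at $\overline{y}$, the quotient $\overline{Y}_n/I\to\mathbb{P}^1_k$ is an everywhere-unramified connected cover, hence trivial, forcing $I=\mathbb{Z}/p^n$.) Finally, reduction commutes with passing to the $\mathbb{Z}/p^{n-1}$-sub-cover (truncation of the defining Witt vector), so the $\mathbb{Z}/p^{n-1}$-sub-cover of $\overline{\psi}_n$ is the reduction $\overline{\psi}_{n-1}$ of $\psi_{n-1}$; by properties \ref{propmainonepointcover1}--\ref{propmainonepointcover2} of the given data its completion at $x=0$ is $k[[y_{n-1}]]/k[[x]]$, the $\mathbb{Z}/p^{n-1}$-subextension of $k[[y_n]]/k[[x]]$, consistent with the tower sitting inside $\widehat{\mathcal{O}}_{\overline{Y}_n,\overline{y}}\cong k[[y_n]]$. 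Hence $\psi_n$ satisfies all the conclusions of Proposition \ref{propmainonepointcover}.

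I expect the main obstacle to be the inference, in the second paragraph, that good reduction of an admissible $\psi_n$ really confines the branch locus of $\overline{\psi}_n$ to the single point $\overline{0}$ and prevents the special fiber $\overline{Y}_n$ from degenerating: this is exactly where ``admissible'' and ``good reduction'' are used, and it rests on the reduction-of-characters machinery of \S\ref{secreduction} (Remark \ref{remarketalereduction}) together with the fact that a reduced Witt vector with poles confined to $D$ reduces, at $\nu_0$, to one whose only pole is $\overline{0}$. The remaining steps are the standard comparison between a birational deformation and an honest one once the special fiber is known to be smooth, plus routine bookkeeping with the sub-cover.
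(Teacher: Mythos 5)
Your argument is correct and is essentially what the paper intends: the paper states Proposition \ref{propinductionmainreduction} as a direct reformulation of Proposition \ref{propmainonepointcover} and gives no separate proof, and your write-up supplies exactly the routine details being suppressed. In particular you correctly isolate the two real points — admissibility plus good reduction force the reduction to be étale away from $\overline{0}$ and totally ramified there (an everywhere-unramified cover of $\mathbb{P}^1_k$ being trivial), and smoothness of $\overline{Y}_n$ makes $\widehat{\mathcal{O}}_{\overline{Y}_n,\overline{y}}$ normal, so birational equivalence at $x=0$ upgrades to an isomorphism with $k[[y_n]]/k[[x]]$ — which is precisely the content of the reformulation.
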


\subsection{Swan conductors}
\label{secSwan}

Suppose $\chi \in \textrm{H}^1_{p^n}(\mathbb{K})$ is a character associated with a cyclic $p^n$-exponent cover of $C$. Suppose, moreover, that $\mathcal{D} \subset C^{\an}$ is a closed disc equipped with the topology corresponding to the canonical valuation $\nu_0$ (\S \ref{secdiscannuli}). After enlarging $R$, we may assume that the restriction $\chi\lvert_{\mathcal{D}}$ is weakly unramified with respect to $\nu_0$. As usual, the residue field of $\Frac (\mathcal{D})$ is denoted by $\kappa$.

As in \cite[\S 3.4]{2020arXiv200203719D}, we define two invariants that measure the degeneration of $\chi\lvert_{\mathcal{D}}$ with respect to the valuation $\nu_0$. The \textit{depth} is
\[ \delta_{\chi\lvert_{\mathcal{D}}}:=\sw({\chi\lvert_{\mathcal{D}}})/p \in \mathbb{Q}_{\ge 0}, \]
\noindent where $\sw({\chi\lvert_{\mathcal{D}}})$ is the classical Swan conductor \cite[Definition 3.3]{MR991978} of $\chi\lvert_{\mathcal{D}}$. Note that the rational number $\delta_{\chi\lvert_{\mathcal{D}}}$ is equal to $0$ if and only if $\chi\lvert_{\mathcal{D}}$ is unramified. If this is the case, then its reduction $\overline{\chi}\lvert_{\mathcal{D}}$ is well-defined. In particular, if $\chi\lvert_{\mathcal{D}}$ is of order $p^n$ and $\delta_{\chi\lvert_{\mathcal{D}}}=0$, then there exists $\underline{f} \in W_n(\kappa)$ such that $\overline{\chi}\lvert_{\mathcal{D}}$ is defined by $\wp(y)=\underline{f}$. We call $\underline{f}$ a \textit{reduction} of $\chi\lvert_{\mathcal{D}}$. Note also that, as discussed in \S \ref{secASWtheory}, a reduction $\underline{f}$ is unique up to adding an element of the form $\wp(a)$, where $a \in W_n(\kappa)$, and there exists a unique $\underline{f}^{\reduce}=\underline{f}+\wp(b)$ for some $b \in W_n(\kappa)$, which we called the $\chi\lvert_{\mathcal{D}}$'s \textit{reduced reduction}. We say $\chi\lvert_{\mathcal{D}}$ is \textit{radical} if $\delta_{\chi\lvert_{\mathcal{D}}}>0$.

Suppose $\delta_{\chi\lvert_{\mathcal{D}}}>0$. Then we can define the \textit{differential Swan conductor} or \textit{differential conductor} 
\[ \omega_{\chi\lvert_{\mathcal{D}}}:=\dsw({\chi\lvert_{\mathcal{D}}}) \in \Omega^1_{\kappa}  \]
identically to the mixed characteristic case in \cite[Definition 3.9]{MR991978} (see also \cite[\S 3.2]{MR3167623}). It is derived from the \textit{refined Swan conductor} $\rsw^{\ab}(\chi\lvert_{\mathcal{D}})$ \cite{MR991978} and depends on the choice of the uniformizer $\pi$ for $R$. In particular, we have the relation
\begin{equation}
    \label{eqnrswab}
    \rsw^{\ab}(\chi\lvert_{\mathcal{D}})= \pi^{-sw(\chi\lvert_{\mathcal{D}})} \otimes \dsw(\chi\lvert_{\mathcal{D}}) \in \mathfrak{m}^{-\sw(\chi\lvert_{\mathcal{D}})} \otimes_{\mathcal{O}_{\mathbb{K}}} \Omega^1_{\kappa},
\end{equation}
where $\mathfrak{m}$ is the maximal ideal of $R$. Note that $\rsw^{\ab}(\chi\lvert_{\mathcal{D}})$ does not depend on the choice of $\pi$.

We call the pair $(\delta_{\chi\lvert_{\mathcal{D}}}, \omega_{\chi\lvert_{\mathcal{D}}})$ when $\delta_{\chi\lvert_{\mathcal{D}}}>0$, or $(\delta_{\chi\lvert_{\mathcal{D}}}, \underline{f})$ when $\delta_{\chi\lvert_{\mathcal{D}}}=0$ and $\underline{f}$ is the reduced reduction of $\chi\lvert_{\mathcal{D}}$, the \textit{degeneration type} or the \textit{reduction type} of the restriction $\chi\lvert_{\mathcal{D}}$. 

Suppose $\overline{x}$ is a point on the canonical reduction of $\mathcal{D}$ or the point at infinity, which we write $\overline{x}=\infty$, and let $\ord_{\overline{x}}: \kappa^{\times} \xrightarrow{} \mathbb{Z}$ be the normalized discrete valuation (whose restriction to $k$ is trivial) corresponding to $\overline{x}$. Then the composite of $\nu_0$ with $\ord_{\overline{x}}$ is a valuation on $K$ of rank $2$, which we denote by $\mathbb{K}^{\times} \xrightarrow{} \mathbb{Q} \times \mathbb{Z}$. In \cite{MR904945}, Kato defined a Swan conductor $\sw^K_{\chi\lvert_{\mathcal{D}}} (\overline{x}) \in \mathbb{Q}_{\ge 0} \times \mathbb{Z}$. Its first component is equal to $p\delta_{\chi\lvert_{\mathcal{D}}}$.
We define the \textit{boundary Swan conductor with respect to} $\overline{x}$,
$$\sw_{\chi\lvert_{\mathcal{D}}}(\overline{x}) \in \mathbb{Z},$$ as the second component of $\sw^K_{\chi\lvert_{\mathcal{D}}}(\overline{x})$. Geometrically, it gives the instantaneous rate of change of the depth in the direction (see Remark \ref{remarkdirection}) corresponding to $\overline{x}$. We will discuss in details this phenomenon in \S \ref{secboundaryswanrateofchange}.

\begin{remark}
\label{remarkboundaryswan}
The invariant $\sw_{\chi\lvert_{\mathcal{D}}}(\overline{x})$ is determined by $\delta_{\chi\lvert_{\mathcal{D}}}$ and $\omega_{\chi\lvert_{\mathcal{D}}}$ as follows:

\begin{enumerate}
    \item \label{remarkboundaryswan1} If $\delta_{\chi\lvert_{\mathcal{D}}}=0$, then
    \[ \sw_{\chi\lvert_{\mathcal{D}}}(\overline{x})=\sw_{\overline{\chi}\lvert_{\mathcal{D}}}(\overline{x}), \]
    where $\overline{\chi}\lvert_{\mathcal{D}}$ is the reduction of $\chi\lvert_{\mathcal{D}}$ (remark \ref{remarketalereduction}) and $\sw_{\overline{\chi}\lvert_{\mathcal{D}}}(\overline{x})$ is the usual Swan conductor of $\overline{\chi}\lvert_{\mathcal{D}}$ with respect to the valuation $\ord_{\overline{x}}$ \cite[IV,\S 2]{MR554237}. That follows immediately from the definitions of the conductors. We thus have $\sw_{\chi\lvert_{\mathcal{D}}}(\overline{x})\ge 0$ and $\sw_{\chi\lvert_{\mathcal{D}}}(\overline{x})=0$ if and only if $\overline{\chi}\lvert_{\mathcal{D}}$ is unramified with respect to $\ord_{\overline{x}}$.
    \item \label{remarkboundaryswan2} If $\delta_{\chi\lvert_{\mathcal{D}}}>0$, then \cite[Corollary 4.6]{MR904945} asserts that
    \[ \sw_{\chi\lvert_{\mathcal{D}}}(\overline{x})=-\ord_{\overline{x}}(\omega_{\chi\lvert_{\mathcal{D}}})-1. \]
    This fact will be used intensively later in this paper.
\end{enumerate}
\end{remark}

\subsubsection{Refined Swan conductors of a product of characters}

The Swan conductors behave somewhat like valuations.

\begin{lemma}
\label{lemmacombination}
Let $\chi_i$, with $i=1,2,3$, be abelian characters on a disc satisfying the relation $\chi_3=\chi_1 \cdot \chi_2$. Set $\delta_i:=\sw(\chi_i)/p$ (with the canonical valuation) and $\omega_i:=\dsw(\chi_i)$, for $i=1,2,3$. Then the following holds.
\begin{enumerate}[label=(\arabic*)]
    \item \label{lemmacombination1} If $\delta_1 \neq \delta_2$ then $\delta_3=\max \{\delta_1,\delta_2 \}$. Furthermore, we have $\omega_3=\omega_1$ if $\delta_1>\delta_2$ and $\omega_3=\omega_2$ otherwise.
    \item \label{lemmacombination2} If $\delta_1=\delta_2>0$ and $\omega_1+\omega_2 \neq 0$ then $\delta_1=\delta_2=\delta_3$ and $\omega_3=\omega_1+\omega_2$.
    \item \label{lemmacombination3} If $\delta_1=\delta_2>0$ and $\omega_1+\omega_2=0$ then $\delta_3 < \delta_1$.
    \item \label{lemmacombination4} if $\delta_1=\delta_2=0$, then $\delta_3=0$ and $\overline{\chi}_3=\overline{\chi}_1 \cdot \overline{\chi}_2$. Hence, if the reduction type of $\chi_1$ (resp., $\chi_2$) is $\underline{f}$ (resp., $\underline{g}$), then $\chi_3$'s reduction type is $\underline{f}+\underline{g}$.
\end{enumerate}
\end{lemma}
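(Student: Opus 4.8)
The plan is to deduce all four statements from the standard formal properties of Kato's refined Swan conductor, so that nothing remains but elementary bookkeeping inside a filtered abelian group. Working with the restrictions of the $\chi_i$ to the completion $\hat{\mathbb{K}}_0$ of $\mathbb{K}$ with respect to $\nu_0$ (which is harmless, since restriction is a homomorphism and $\chi_3 = \chi_1\chi_2$), I record the following facts, classical for $n=1$ and due to Kato \cite{MR991978} in general (this is also the form in which the analogous statement is used in \cite{MR3194815}). For each $\delta_0 \in \mathbb{Q}_{\ge 0}$ the subsets $\mathrm{fil}_{\delta_0} := \{\chi \mid \delta_{\chi} \le \delta_0\}$ and $\mathrm{fil}_{<\delta_0} := \{\chi \mid \delta_{\chi} < \delta_0\}$ are subgroups of $\textrm{H}^1_{p^n}(\hat{\mathbb{K}}_0)$, so in particular $\delta_{\chi_1\chi_2} \le \max\{\delta_{\chi_1},\delta_{\chi_2}\}$. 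The subgroup $\mathrm{fil}_0$ consists exactly of the characters unramified with respect to $\nu_0$ (recall $G$ is a $p$-group, so there is no tame part), and on it the reduction map $\chi \mapsto \overline{\chi}$ into $\textrm{H}^1_{p^n}(\kappa)$ is a group homomorphism, being the inverse of the injective cospecialization map of Remark \ref{remarketalereduction}. Finally, for $\delta_0 > 0$, formula \eqref{eqnrswab} together with the chosen uniformizer $\pi$ turns the refined Swan conductor into an injective group homomorphism $\overline{\rsw}_{\delta_0} : \mathrm{fil}_{\delta_0}/\mathrm{fil}_{<\delta_0} \hookrightarrow \Omega^1_{\kappa}$ which sends the class $[\chi]$ of a character $\chi$ with $\delta_{\chi} = \delta_0$ to $\dsw(\chi) = \omega_{\chi}$; the injectivity amounts to the fact that $\omega_{\chi} \neq 0$ whenever $\delta_{\chi} > 0$.

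Granting these inputs, each case is immediate. For \ref{lemmacombination1}, assume without loss of generality $\delta_1 > \delta_2 \ge 0$ (the case $\delta_2 > \delta_1$ follows by symmetry, giving $\omega_3 = \omega_2$); then $\delta_1 > 0$ and $\chi_2 \in \mathrm{fil}_{<\delta_1}$, so $\chi_3$ and $\chi_1$ have the same class in $\mathrm{fil}_{\delta_1}/\mathrm{fil}_{<\delta_1}$, which is nonzero since $\delta_{\chi_1} = \delta_1$. Hence $\delta_3 \ge \delta_1$, and combined with $\delta_3 \le \max\{\delta_1,\delta_2\} = \delta_1$ this gives $\delta_3 = \delta_1 = \max\{\delta_1,\delta_2\}$ and $\omega_3 = \overline{\rsw}_{\delta_1}([\chi_3]) = \overline{\rsw}_{\delta_1}([\chi_1]) = \omega_1$. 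For \ref{lemmacombination2} and \ref{lemmacombination3}, put $\delta_0 := \delta_1 = \delta_2 > 0$; then $\chi_1,\chi_2 \in \mathrm{fil}_{\delta_0}$ and, since $\overline{\rsw}_{\delta_0}$ is a homomorphism, $\overline{\rsw}_{\delta_0}([\chi_3]) = \omega_1 + \omega_2$. If $\omega_1 + \omega_2 \neq 0$ this class is nonzero, so $\delta_3 \ge \delta_0$; with $\delta_3 \le \delta_0$ this yields $\delta_3 = \delta_0 = \delta_1 = \delta_2$ and $\omega_3 = \omega_1 + \omega_2$. If $\omega_1 + \omega_2 = 0$, injectivity forces $[\chi_3] = 0$, i.e. $\delta_3 < \delta_0 = \delta_1$. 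For \ref{lemmacombination4}, $\chi_1,\chi_2 \in \mathrm{fil}_0$, hence $\chi_3 = \chi_1\chi_2 \in \mathrm{fil}_0$ and $\delta_3 = 0$; since reduction is a homomorphism, $\overline{\chi}_3 = \overline{\chi}_1 \cdot \overline{\chi}_2$, which in the additive Witt-vector notation says that if $\chi_1$ and $\chi_2$ have reduction types $\underline{f}$ and $\underline{g}$, then $\chi_3$ has reduction type $\underline{f} + \underline{g}$.

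All the content thus lies in the two structural inputs — that $\delta$ defines a filtration and that the refined Swan conductor is an injective homomorphism on the associated graded pieces — which for Artin--Schreier characters ($n=1$) is classical and in general is Kato's theory; the case analysis itself is purely formal. The one step that requires genuine care is the bookkeeping with the twist in \eqref{eqnrswab}: the raw object $\rsw^{\ab}(\chi) = \pi^{-\sw(\chi)} \otimes \dsw(\chi)$ lives in $\mathfrak{m}^{-\sw(\chi)} \otimes \Omega^1_{\kappa}$, so to add $\rsw^{\ab}(\chi_1)$ and $\rsw^{\ab}(\chi_2)$ and then recover $\sw(\chi_3)$ one must place all three inside a common $\mathfrak{m}^{-m} \otimes \Omega^1_{\kappa}$ and read off $\sw(\chi_3)$ as the exact pole order of the sum in $\pi$ — precisely the information repackaged by the graded map $\overline{\rsw}_{\delta_0}$ above.
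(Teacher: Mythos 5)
Your proof is correct and is essentially the argument the paper intends: the paper simply cites the proof of \cite[Proposition 5.6]{MR3194815}, which rests on exactly the two structural facts you isolate, namely that the depth defines a filtration of $\textrm{H}^1_{p^n}(\hat{\mathbb{K}}_0)$ by subgroups and that the refined Swan conductor induces an injective homomorphism on the graded pieces (with reduction being a homomorphism on $\mathrm{fil}_0$). Your bookkeeping of the twist in (\ref{eqnrswab}) matches the intended use of Kato's theory, so there is nothing to add.
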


\begin{proof}
Parallel to the proof for \cite[Proposition 5.6]{MR3194815}.
\end{proof}

\begin{remark}
The above lemma allows one to compute the conductors of one character by breaking it down into easier-to-calculate-ones. That philosophy will be employed frequently later in this paper.
\end{remark}

\begin{remark}
With the notation of Lemma \ref{lemmacombination}, suppose $m \ge n \in \mathbb{Z}_{\ge 0}$, and $\chi_1 \in \textrm{H}^1_{p^n}(\mathbb{K})$ (resp., $\chi_2 \in \textrm{H}^1_{p^m}(\mathbb{K})$) is defined by the Witt vector $\underline{G} \in W_n(\mathbb{K})$ (resp. $\underline{H} \in W_m(\mathbb{K})$). Then $\chi_1 \cdot \chi_2$ correspond to the Witt vector $\underline{H}+ \phi_{n,m}(\underline{G}) \in W_m(\mathbb{K})$ ($\phi_{n,m}$ is defined in (\ref{embedchar})). 
\end{remark}

\subsubsection{}
\label{secboundaryswanrateofchange}
Fix a closed disc $\mathcal{D}$, which may be associated with the ring $R\{X\}$. Let $z$ be a $K$-point of $\mathcal{D}$ and $r \in \mathbb{Q}_{\ge 0}$. We denote by $\delta_{\chi}(r,z)$ (resp. $\delta_{\chi}(r)$) and $\omega_{\chi}(r,z)$ (resp. $\omega_{\chi}(r)$) the depth and the differential conductors of the restriction of $\chi$ to $\mathcal{D}[pr,z]$ (resp. $\mathcal{D}[pr]$). Following the notation in \cite{MR3194815}, we define $\nu_r: \mathbb{K}^{\times} \xrightarrow{} \mathbb{Q}$ as the Gauss valuation associated with $D[pr]$, such that $\nu_r(X) = pr$. Furthermore, let $\kappa_r$ represent the residue of $\mathbb{K}$ with respect to the valuation $\nu_r$.

When the point $z$ is fixed, we may regard $\delta_{\chi}(r,z)$ and $\omega_{\chi}(r,z)$ as  functions of $r$. Suppose that $\overline{y}$ is a point on the reduction of $\mathcal{D}[pr,z]$, or a point at infinity $\overline{y}=\overline{\infty}$. Set $\sw_{\chi}(r,z,\overline{y}):=\sw_{\chi\lvert_{\mathcal{D}[pr,z]}}(\overline{y})$. The following results show how understanding a character's differential conductor can give a lot of information about its depth. They are the exact analogs of ones from \cite[\S 5.3.2]{MR3194815} in mixed characteristic.

\begin{proposition}
\label{propdeltasw}
Suppose $z \in R$ is fixed. Then $\delta_{\chi}(\--, z)$ extends to a continuous, piecewise linear function 
\[ \delta_{\chi}(\--, z): \mathbb{R}_{\ge 0} \xrightarrow{} \mathbb{R}_{\ge 0}. \]
\noindent Furthermore,
\begin{enumerate}
    \item \label{propdeltasw1} For $r \in \mathbb{Q}_{>0}$, the left (resp. right) derivative of $\delta_{\chi}(\--, z)$ at $r$ is $-\sw_{\chi}(r,z,\overline{\infty})$ (resp. $\sw_{\chi}(r,z,\overline{0})$).
    \item \label{propdeltasw2} If $r$ is a kink of $\delta_{\chi}(\--, z)$ (meaning the left and the right derivaties do not agree), then $r \in \mathbb{Q}$.
\end{enumerate}
\end{proposition}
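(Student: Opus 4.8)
\emph{Overall strategy.} The plan is to transport the argument of Obus--Wewers \cite[\S 5.3.2]{MR3194815} to equal characteristic; once the inputs from Kato's refined Swan conductor theory \cite{MR991978}, \cite{MR904945} are packaged as in \S\ref{secSwan} and Remark \ref{remarkboundaryswan}, the translation is largely formal. First I would reduce to $z=0$: the substitution $X\mapsto X-z$ is an isomorphism of $\mathbb K$ carrying $\mathcal D[pr,z]$ to $\mathcal D[pr,0]$ and preserving the depth, the differential conductor, and the boundary conductors, so it is harmless to fix $z=0$ and write $\delta_\chi(r)$, $\omega_\chi(r)$, $\sw_\chi(r,\overline y)$.

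\emph{The local claim.} The heart of the matter is to show that for every rational $r_0>0$ there is $\epsilon>0$ with $\delta_\chi$ affine on $[r_0,r_0+\epsilon]\cap\mathbb Q$ of slope $\sw_\chi(r_0,\overline 0)$ and affine on $[r_0-\epsilon,r_0]\cap\mathbb Q$ of slope $-\sw_\chi(r_0,\overline\infty)$. Granting this, parts \ref{propdeltasw1} and \ref{propdeltasw2} are immediate: the one-sided derivatives are by construction the asserted boundary conductors, and a kink can only occur at a rational place. To prove the local claim I would pass to $\kappa_{r_0}$ and use that the rank-two valuation $\nu_{pr_0,0}\circ\ord_{\overline y}$ has a Kato--Swan conductor \cite[\S 4]{MR904945} whose first coordinate is $p\,\delta_\chi(r_0)$ and whose second coordinate is $\sw_\chi(r_0,\overline y)$. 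The geometric content is that shrinking the disc ($\mathcal D[pr,0]\subset\mathcal D[pr_0,0]$ for $r>r_0$) amounts, after rescaling, to completing $\overline{\mathcal D}[pr_0,0]$ at its point $\overline 0$, and the variation of the refined Swan conductor $\rsw^{\ab}$ — hence of $\delta$ and $\omega$ — under this operation is controlled exactly by the boundary datum at $\overline 0$; a direct computation with Kato's formula (equivalently, via $\sw_\chi(r_0,\overline y)=-\ord_{\overline y}(\omega_\chi(r_0))-1$ when $\delta_\chi(r_0)>0$, Remark \ref{remarkboundaryswan}\ref{remarkboundaryswan2}, and via the classical Swan conductor of the reduction $\overline\chi$ when $\delta_\chi(r_0)=0$, Remark \ref{remarkboundaryswan}\ref{remarkboundaryswan1}) then gives $\delta_\chi(r)=\delta_\chi(r_0)+(r-r_0)\sw_\chi(r_0,\overline 0)$ on $(r_0,r_0+\epsilon)$ and the symmetric statement on the left with $\overline\infty$. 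The $1/p$-rescaling of the coordinate on the dual graph (\S\ref{secsemistablemodel}) is precisely what makes the slopes come out as the boundary conductors without a spurious factor.

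\emph{From local to global.} To upgrade this to a genuine continuous piecewise-linear function I would bound the number of kinks on any interval $[0,N]$. After a finite extension of $K$, Lemma \ref{lemmacombination} lets me split $\chi$ according to the geometry of its branch locus, writing the defining Witt vector as a finite sum of Witt vectors whose poles each lie over a single point of the reduction; then on each interval $\delta_\chi$ is locally the running maximum of finitely many functions, each affine on that interval by the single--branch-point case, and each contributing boundedly many break radii (the $\nu$-radii at which two branch points of $\chi$ coalesce, together with the finitely many radii where the reduction type changes). Hence only finitely many kinks occur in $[0,N]$; they are rational by the local claim, and $\delta_\chi$ takes rational values at rational places since Swan conductors are integers (so $\delta_\chi\in\tfrac1p\mathbb Z_{\ge 0}$ there and is $\ge 0$). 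Matching the affine pieces at these rational places produces the desired continuous extension $\delta_\chi(\text{--},z)\colon\mathbb R_{\ge 0}\to\mathbb R_{\ge 0}$.

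\emph{Main obstacle.} The hard part will be the local derivative computation: turning Kato's slogan that the boundary Swan conductor is the instantaneous rate of change of the depth into a rigorous identity in equal characteristic, i.e.\ controlling how $\rsw^{\ab}$ (and thus $\delta$, $\omega$) transforms under shrinking and enlarging the disc. This requires unwinding Kato's definitions with care; the remaining steps are bookkeeping parallel to \cite[\S 5.3.2]{MR3194815}.
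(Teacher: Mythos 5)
The paper does not prove this proposition from scratch: its ``proof'' is a citation to \cite[Proposition 2.9]{2005math.....11434W} and \cite[Theorem 1.9]{2020arXiv201014843B}, where the piecewise linearity and the derivative formula are established. Your proposal instead sketches a direct argument, which is a legitimately different route, but as written it has two genuine gaps. First, your ``local claim'' --- that $\delta_\chi$ is affine on a one-sided neighborhood of $r_0$ with slope equal to the boundary Swan conductor --- \emph{is} the proposition; you defer it to ``unwinding Kato's definitions with care,'' but Kato's definition of $\sw^K_\chi(\overline{y})$ as the second component of a rank-two Swan conductor does not by itself say anything about the behavior of $\delta_\chi(r)$ for $r\neq r_0$. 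Deriving the one-sided derivative identity from it is exactly the analytic content that the cited references supply (and note that Corollary \ref{corleftrightderivative} is deduced \emph{from} this proposition together with Remark \ref{remarkboundaryswan}, so invoking the formula $\sw_\chi(r_0,\overline{y})=-\ord_{\overline{y}}(\omega_\chi(r_0))-1$ to get the slope is not a shortcut around the issue).

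Second, the local-to-global step is incorrect as stated. You claim that after splitting the Witt vector according to residue classes of the branch locus, $\delta_\chi$ is locally a running maximum of finitely many affine functions. But (a) by Lemma \ref{lemmacombination}\ref{lemmacombination3}, when two summands have equal depth and cancelling differential conductors the depth of the product drops strictly below the maximum, so $\delta_\chi$ is not the max of the summands' depths; and (b) a character whose branch points all lie in a single residue disc $D[pr_0,z]$ need \emph{not} have affine depth on $(0,r_0)$ --- this is precisely the setting of \S\ref{secdetect}, where $\lambda_m(\chi)$ locates a kink of $\delta_\chi$ strictly inside such an interval (Remark \ref{remarkdetect}). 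So the ``single-branch-point case'' does not yield affine pieces, and your bound on the number of kinks does not follow. To repair the argument you would need the actual finiteness/convexity input from the theory of Swan conductors on annuli (Lütkebohmert-type finiteness or the results of Wewers and Bah that the paper cites), not the decomposition via Lemma \ref{lemmacombination}.
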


\begin{proof}
See, e.g., \cite[Proposition 2.9]{2005math.....11434W} or \cite[Theorem 1.9]{2020arXiv201014843B}. Note that, in the language of \cite{2020arXiv201014843B}, a lisse {\'e}tale sheaf of $\mathcal{F}_{\ell}$-module on a rigid disc $\mathcal{D}$ coincides with a connected {\'e}tale cover of $\mathcal{D}$ \cite[Theorem 2.10]{CM_1995__97_1-2_89_0}.
\end{proof}

\begin{corollary}
\label{corleftrightderivative}
If $r \in \mathbb{Q}_{\ge 0}$ and $\delta_{\chi}(r,z)>0$, then the left and right derivatives of $\delta_{\chi}$ at $r$ are given by $\ord_{\overline{\infty}}(\omega_{\chi}(r))+1$ and $-\ord_{\overline{0}}(\omega_{\chi}(r))-1$, respectively. In particular, $-\ord_{\overline{x}}(\omega_{\chi}(r,z))-1$ is the instantaneous rate of change of $\delta_{\chi}(r,z)$ on the direction with respect to $\overline{x}$.
\end{corollary}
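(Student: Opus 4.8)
The plan is to read this off directly from Proposition \ref{propdeltasw} and the formula for the boundary Swan conductor recorded in Remark \ref{remarkboundaryswan}, part~(2). First I would note that, since the function $r\mapsto\delta_{\chi}(r,z)$ is continuous (Proposition \ref{propdeltasw}) and $\delta_{\chi}(r,z)>0$, it stays positive on an open neighborhood of $r$; hence for every rational $r'$ in that neighborhood the restriction $\chi\lvert_{\mathcal{D}[pr',z]}$ is radical and its differential conductor $\omega_{\chi}(r',z)\in\Omega^1_{\kappa_{r',z}}$ is defined. In particular $\omega_{\chi}(r)=\omega_{\chi}(r,z)$ is defined, the points $\overline{0}$ and $\overline{\infty}$ lie on the canonical reduction of $\mathcal{D}[pr,z]$, and the integers $\ord_{\overline{0}}(\omega_{\chi}(r))$ and $\ord_{\overline{\infty}}(\omega_{\chi}(r))$ appearing in the statement make sense.

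Next, for $r\in\mathbb{Q}_{>0}$ I would apply Proposition \ref{propdeltasw}, part~(1): the left derivative of $\delta_{\chi}(\,\cdot\,,z)$ at $r$ equals $-\sw_{\chi}(r,z,\overline{\infty})$ and the right derivative equals $\sw_{\chi}(r,z,\overline{0})$. Since $\delta_{\chi}(r,z)=\delta_{\chi\lvert_{\mathcal{D}[pr,z]}}>0$, Remark \ref{remarkboundaryswan}, part~(2), applies and gives $\sw_{\chi}(r,z,\overline{x})=-\ord_{\overline{x}}(\omega_{\chi}(r))-1$ for both $\overline{x}=\overline{0}$ and $\overline{x}=\overline{\infty}$. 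Substituting, the left derivative becomes $\ord_{\overline{\infty}}(\omega_{\chi}(r))+1$ and the right derivative becomes $-\ord_{\overline{0}}(\omega_{\chi}(r))-1$, which is the claim. At the boundary value $r=0$ only the right derivative is at issue; applying the preceding computation along rational places decreasing to $0$ and using that $\delta_{\chi}(\,\cdot\,,z)$ is piecewise linear gives the same formula there.

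For the ``in particular'' clause I would translate the $r$-derivatives into rates of change along the two branch directions, using the sign conventions of \S\ref{secsemistablemodel} and Remark \ref{remarkdirection}: increasing $r$ is the $\overline{0}$-direction and decreasing $r$ is the $\overline{\infty}$-direction. Hence the instantaneous rate of change of $\delta_{\chi}(r,z)$ in the $\overline{0}$-direction is the right $r$-derivative, namely $-\ord_{\overline{0}}(\omega_{\chi}(r,z))-1$, while its rate of change in the $\overline{\infty}$-direction is the negative of the left $r$-derivative, namely $-\ord_{\overline{\infty}}(\omega_{\chi}(r,z))-1$; both are of the uniform shape $-\ord_{\overline{x}}(\omega_{\chi}(r,z))-1$.

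This is essentially a bookkeeping corollary, so I do not anticipate a genuine obstacle. The two points worth care are: verifying positivity of $\delta_{\chi}(\,\cdot\,,z)$ on a whole neighborhood of $r$ (so that $\omega_{\chi}$ is defined there and Remark \ref{remarkboundaryswan}, part~(2), may legitimately be invoked, not merely at the single point $r$), and keeping the two sign conventions consistent — the derivative in the coordinate $r$ versus the rate of change along a geometric direction — especially at the degenerate place $r=0$, where only the one-sided right derivative is available.
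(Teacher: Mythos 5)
Your proposal is correct and follows essentially the same route as the paper, which simply cites Proposition \ref{propdeltasw}(\ref{propdeltasw1}) together with Remark \ref{remarkboundaryswan}(\ref{remarkboundaryswan2}); the extra care you take about positivity of $\delta_{\chi}$ near $r$ and about the sign conventions for the two directions is sound but not something the paper spells out.
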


\begin{proof}
Immediate from Proposition \ref{propdeltasw}(\ref{propdeltasw1}) and Remark \ref{remarkboundaryswan} (\ref{remarkboundaryswan2}).
\end{proof}

\begin{definition}
\label{defncondopendisc}
Due to the above results, we can define the refined Swan conductors of the restriction of $\chi$ to an open disc $D[pr,z]$ as $\delta(\chi\lvert_{D[pr,z]}):=\delta(\chi\lvert_{\mathcal{D}[pr,z]})$ and $\omega(\chi\lvert_{D[pr,z]}):=\omega(\chi\lvert_{\mathcal{D}[pr,z]})$. We also set the boundary conductor of $\chi\lvert_{D[pr,z]}$ to be that of $\chi\lvert_{\mathcal{D}[pr,z]}$.
\end{definition}

The below result describes how the refined Swan conductors vary under a change of coordinate. In particular, it shows that the depth Swan conductor of a cover (of a disc) does not depend on the choice of the parameter.

\begin{proposition}
Suppose that $z_1$ and $z_2$ are two $K$-point on the interior of a closed unit disc $\mathcal{D}$. Suppose, moreover, that $\nu(z_1-z_2)\ge r$ for some non-negative rational $r$, and $(z)_r:=(z_1-z_2)\pi^{-r}$. Then
\[ \delta_{\chi}(r,z_1)=\delta_{\chi}(r,z_2). \]
Moreover, if $\omega_{\chi}(r,z_1)=f(x)dx$, then $\omega_{\chi}(r,z_2)=f(x+\overline{(z)}_r)dx$. In particular, if $\nu(z_1-z_2)>r$ then $\omega_{\chi}(r,z_1)$ coincides with $\omega_{\chi}(r,z_2)$.
\end{proposition}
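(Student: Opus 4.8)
The plan is to reduce the whole statement to one elementary fact: the hypothesis forces the two closed discs $\mathcal{D}[pr,z_1]$ and $\mathcal{D}[pr,z_2]$ to coincide. Indeed a non-archimedean closed disc has no privileged centre --- every one of its $K$-points is a centre --- so two $K$-points are centres of one and the same closed disc of radius $p^{-pr}$ as soon as $z_2$ lies in $\mathcal{D}[pr,z_1]$, which is exactly what the hypothesis on $\nu(z_1-z_2)$ provides. Write $\mathcal{D}_0:=\mathcal{D}[pr,z_1]=\mathcal{D}[pr,z_2]$, with its unique Gauss valuation $\nu_0$ and residue field $\kappa$. Then $\chi\lvert_{\mathcal{D}[pr,z_1]}$ and $\chi\lvert_{\mathcal{D}[pr,z_2]}$ are literally the same character $\chi\lvert_{\mathcal{D}_0}$, so their classical Swan conductors with respect to $\nu_0$ agree, and hence $\delta_{\chi}(r,z_1)=\sw(\chi\lvert_{\mathcal{D}_0})/p=\delta_{\chi}(r,z_2)$. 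Everything after this point is only about how the \emph{second} invariant, the differential conductor, is \emph{written} in the two parameters on $\overline{\mathcal{D}}_0$ coming from $z_1$ and from $z_2$.

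For the differential part I would first discard the case $\delta_{\chi}(r,z_i)=0$, where $\omega$ is not defined (if one insists on a statement there, the reduced reduction transforms by the same substitution as below and the argument is identical), and assume $\delta_{\chi}(r,z_i)>0$. The key input is that Kato's refined Swan conductor $\rsw^{\ab}(\chi\lvert_{\mathcal{D}_0})\in\mathfrak{m}^{-\sw(\chi\lvert_{\mathcal{D}_0})}\otimes_{\mathcal{O}_{\mathbb{K}}}\Omega^1_{\kappa}$ --- and therefore also $\dsw(\chi\lvert_{\mathcal{D}_0})=\omega_{\chi\lvert_{\mathcal{D}_0}}$, which is obtained from it via $(\ref{eqnrswab})$ once the uniformizer $\pi$ of $R$ is fixed --- is an invariant of the pair $(\chi,\mathcal{D}_0)$ alone: its construction uses only the completion of $\mathbb{K}$ at $\nu_0$ and the module $\Omega^1_{\kappa}$, and no choice of a coordinate on $\overline{\mathcal{D}}_0$ enters it. Consequently $\omega_{\chi}(r,z_1)$ and $\omega_{\chi}(r,z_2)$ are one and the same differential $\omega\in\Omega^1_{\kappa}$, and the task is only to express $\omega$ in the two parameters $x_{pr,z_i}$, the image of $\pi^{-pr}(X-z_i)$ in $\kappa$.

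The comparison of parameters is the last, purely computational step. From $X-z_2=(X-z_1)+(z_1-z_2)$, rescaling by $\pi^{-pr}$ and reducing modulo $\nu_0$ --- legitimate because $z_2\in\mathcal{D}[pr,z_1]$ --- identifies the two parameters by the constant shift $x_{pr,z_1}=x_{pr,z_2}+\overline{(z)}_r$. This is a translation of $\kappa=k(x)$ by an element of $k$, so $dx_{pr,z_1}=dx_{pr,z_2}$, and substituting it into $\omega_{\chi}(r,z_1)=f(x_{pr,z_1})\,dx_{pr,z_1}$ gives $\omega_{\chi}(r,z_2)=f\big(x+\overline{(z)}_r\big)\,dx$ in the parameter $x=x_{pr,z_2}$, which is the asserted formula; when $\nu(z_1-z_2)$ is large enough that $\overline{(z)}_r=0$, the translation is trivial and $\omega_{\chi}(r,z_1)=\omega_{\chi}(r,z_2)$, giving the ``in particular''.

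I do not expect any serious obstacle. The two substantive ingredients --- that the hypothesis makes the two discs agree, and that $\rsw^{\ab}$ is a coordinate-free invariant of the restricted character --- are both soft; what is left is careful bookkeeping with the $\pi^{-pr}$-rescaling and with the sign of $(z)_r$ so that the final formula matches the statement exactly, and that bookkeeping is the only place where one can slip. Alternatively, since the present setting is word-for-word parallel to the mixed-characteristic one, one may simply transport the corresponding identity from there, its proof being the same.
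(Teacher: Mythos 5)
Your argument is essentially the paper's own proof: both observe that the hypothesis forces $\mathcal{D}[pr,z_1]=\mathcal{D}[pr,z_2]$, so the equality of depths is immediate from the definition, and both obtain the differential formula by the change of parameter from $X-z_1$ to $X-z_2$, i.e.\ a translation by $\overline{(z)}_r$ on the canonical reduction (your extra remark on the coordinate-freeness of $\rsw^{\ab}$ only makes this step explicit). One small bookkeeping point: from $X-z_2=(X-z_1)+(z_1-z_2)$ you get $x_{pr,z_2}=x_{pr,z_1}+\overline{(z)}_r$, not the reversed identity you wrote, but this sign issue is exactly the kind of convention-matching you flagged and does not affect the substance of the argument.
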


\begin{proof}
We first note that $\mathcal{D}[pr,z_1]=\mathcal{D}[pr,z_2]$ as $\nu(z_1-z_2) \ge r$. The first part is then immediate from the definition of the depth Swan conductor. Suppose $X$ is a parameter of $\mathcal{D}[pr,z_1]$. Then $X+(z_1-z_2)$ is one of $\mathcal{D}[pr,z_2]$. The second part follows from the change of variables $X \mapsto X+(z_1-z_2)$.
\end{proof}

\subsection{A vanishing cycle formula}
\label{secvanishingcycleformula}
In this section, we discuss how the refined Swan conductors determine whether a character has good reduction. Most of the details are almost identical to \cite[\S 2.4]{MR3552986}, which examine the case of mixed characteristic. 

We fix an admissible character $\chi \in \textrm{H}^1_{p^n}(\mathbb{K})$ of order $p^n$, which may be identified with a cover $\phi: Y \xrightarrow{} C$. Let $b_1, \ldots, b_r \in K$ be the branch points of $\phi$, hence contained in an open rigid disc $D \subset  C^{\an}$ (as $\chi$ is admissible), and let $\iota_{i,n}$ be the $n$-th conductor at $b_i$ (\S \ref{secbranchingdatum}). Let us also fix $r \in \mathbb{Q}_{\ge 0}$.

\begin{definition}
\label{defconductoratplace}
With the notation as above, let $z$ be a point inside $D$, and let $\overline{w}$ is a closed pont of $\overline{\mathcal{D}}[pr,z]$. We denote by $U(r,z, \overline{w}):=]\overline{w}[_{\mathcal{D}[pr, z]}$ the residue class of $\overline{w}$ on the affinoid $\mathcal{D}[pr, z]$. Set
\[ \mathfrak{C}_{\chi}(r,z, \overline{w}):= \sum_{i \in I(r, z, \overline{w})} \iota_{i,n} \in \mathbb{Z}_{\ge 0}   ,\]
where $I(r, z, \overline{w})=\{i \in \{1, 2, \ldots, r\}  \mid b_i \in \mathbb{B}(\chi) \cap U(r,z, \overline{w})\}$.
\end{definition}

\begin{proposition}
\label{propvanishingcycle}
With the notation introduced above, suppose $\overline{w} \neq \overline{\infty} \in \kappa$, considered as a closed point of $\overline{D}[pr,z]$, we have
\begin{equation}
\label{eqnvanishingcyclenoninfty}
    \ord_{\overline{w}} (\omega_{\chi}(r,z)) \ge - \mathfrak{C}_{\chi}(r,z, \overline{w}).
\end{equation}
Moreover, the equality holds if $\chi$ has good reduction.
When $\overline{w}=\overline{\infty}$, let $g_C$ be the genus of $C$. The following holds
\begin{equation}
\label{eqnvanishingcycleinfty}
    \ord_{\overline{\infty}}(\omega_{\chi}(r,z)) \ge  -\frac{2pg_C}{p-1} -\mathfrak{C}_{\chi}(r,z, \overline{\infty}).
\end{equation}
\end{proposition}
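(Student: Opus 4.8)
The plan is to reduce the statement to a local computation of Kato's refined Swan conductor on an open disc, following \cite[\S 2.4]{MR3552986} but in equal characteristic. By the local–global argument used repeatedly in this section I may assume $\chi = \mathfrak{K}_n(\underline{F})$ with $\underline{F} = (F^1,\dots,F^n) \in W_n(K(X))$ in reduced form, so that the poles of the $F^l$ are exactly $b_1,\dots,b_r$ and the order of the pole of $F^l$ at $b_i$ is controlled by the conductors $\iota_{i,l}$ via Theorem \ref{theoremcaljumpirred} and Corollary \ref{corconductorconditions}; since $\omega_\chi(r,z)$ is only defined when $\delta_\chi(r,z)>0$, I also assume $\chi\lvert_{\mathcal{D}[pr,z]}$ is radical. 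The first step is a localisation: the integer $\ord_{\overline{w}}(\omega_\chi(r,z))$ is a local invariant at $\overline{w}$ and depends only on the restriction of $\chi$ to the residue class $U:=U(r,z,\overline{w})$. Concretely, Kato's refined Swan conductor is compatible with completion, so $\sw_{\chi\lvert_{\mathcal{D}[pr,z]}}(\overline{w})$ equals the boundary Swan conductor of $\chi\lvert_U$ at the boundary of $U$; combined with Remark \ref{remarkboundaryswan}(\ref{remarkboundaryswan2}) (and with Corollary \ref{corleftrightderivative}, its infinitesimal shadow) this gives, for $\overline{w}\neq\overline{\infty}$ where $U$ is an open disc,
\[
\ord_{\overline{w}}(\omega_\chi(r,z)) = -\,\sw_{\chi\lvert_{U}}(\overline{\infty})-1 ,
\]
and similarly for $\overline{w}=\overline{\infty}$, where $U = C\setminus\mathcal{D}[pr,z]$ is a curve of genus $g_C$ with one end. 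The proposition then reduces to the two bounds $\sw_{\chi\lvert_{U}}(\overline{\infty})\le \mathfrak{C}_\chi(r,z,\overline{w})-1$ and $\sw_{\chi\lvert_{C\setminus\mathcal{D}[pr,z]}}(\overline{\infty})\le \tfrac{2pg_C}{p-1}+\mathfrak{C}_\chi(r,z,\overline{\infty})-1$, with equality in the first when $\chi$ has good reduction.

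The core is therefore the following conductor bound, which I would prove by induction on $n$: for a $\mathbb{Z}/p^n$-character $\psi$ on an open disc with branch points of $n$-th conductors $\{\iota_i\}$ and positive boundary depth, $\sw_\psi(\overline{\infty})\le\sum_i\iota_i-1$, with equality iff $\psi$ has good reduction. For $n=1$ this is the classical Artin–Schreier computation: up to scaling $\omega_\psi$ is the reduction of $dF^1$, and since each partial-fraction term of $F^1$ at $b_i$ contributes a pole of order at most $\iota_i$ — strictly less when the leading exponent is divisible by $p$, i.e. when part of $dF^1$ collapses to an exact/$p$-th-power contribution — the total pole order at the boundary is $\le\sum_i\iota_i$, with equality exactly when no such cancellation occurs, which is the good-reduction condition. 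For the inductive step I would pass from the $\mathbb{Z}/p^{n-1}$-subcharacter $\psi'$ (with $(n-1)$-st conductors $\iota_i'$) to $\psi$ by writing, via Lemma \ref{lemmacombination}, $\psi$ as a combination of the level-$n$ part $\mathfrak{K}_n((0,\dots,0,F^n))$ and $\mathfrak{K}_n((F^1,\dots,F^{n-1},0))$; the conductor inequalities of Corollary \ref{corconductorconditions} bound the pole order of $F^n$ at $b_i$ in terms of $\iota_i$ and $\iota_i'$, so that, together with the inductive bound for $\psi'$ and the different/conductor formula (\ref{eqndifferent}), the boundary Swan conductor of $\psi$ stays $\le\sum_i\iota_i-1$. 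The extra $\tfrac{2pg_C}{p-1}$ in the $\overline{\infty}$-case is produced the same way: it is the Riemann–Hurwitz contribution of the genus-$g_C$ end $C\setminus\mathcal{D}[pr,z]$ after the $p/(p-1)$ normalisation built into the differential conductor. For good reduction one runs the same computation with all inequalities tight — equivalently, using that $\overline{Y}$ is smooth so that (\ref{eqndifferent}) holds exactly at the boundary place — to get equality.

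The main obstacle is the case $n>1$ of this conductor bound: controlling the refined Swan conductor of a $\mathbb{Z}/p^n$-character under arbitrary degeneration of its defining Witt vector. The non-additivity of Witt-vector addition in the components, and the coupling between the $n$ levels, make it delicate both to show that the boundary pole order of $\omega_\psi$ never exceeds $\sum_i\iota_i$ and to identify precisely when equality holds; this is exactly the point where the level-by-level degeneration rules (in the spirit of Theorem \ref{theoremCartierprediction}), Lemma \ref{lemmacombination}, and the constraints of Corollary \ref{corconductorconditions} must be combined carefully, as in the mixed-characteristic treatment of \cite[\S 2.4]{MR3552986}. Making the localisation of the first step fully precise — that the order at $\overline{w}$ of the globally defined differential conductor agrees with the boundary conductor of the localised character $\chi\lvert_U$ — also requires some bookkeeping with the functoriality of Kato's construction under completion, but should present no essential difficulty.
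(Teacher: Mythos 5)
Your localisation step and your $n=1$ argument are sound and match what the paper does for that case (the paper defers $n=1$, $\overline{w}\neq\overline{\infty}$ to \cite[Proposition 3.8]{2020arXiv200203719D}, which is the same Artin--Schreier partial-fraction computation you describe). But the route you take for $n>1$ is not the paper's, and it is precisely there that your argument has a gap. The paper does not induct on $n$ at all: it transplants the mixed-characteristic proof of \cite[Proposition 2.14]{MR3552986}, which derives both inequalities from Kato's vanishing cycle formula --- a global Euler-characteristic comparison between the special and generic fibers of the cover of the disc $\mathcal{D}[pr,z]$ --- with $\mathfrak{C}_{\chi}(r,z,\overline{w})$ replacing the naive count $\lvert\mathbb{B}(\chi)\cap U(r,z,\overline{w})\rvert$. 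The good-reduction equality then comes from Riemann--Hurwitz on the smooth special fiber, not from tracking when local cancellations occur.

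The gap in your inductive step is the following. You propose to factor $\psi=\mathfrak{K}_n((F^1,\dots,F^{n-1},0))\cdot\mathfrak{K}_n((0,\dots,0,F^n))$ and combine Lemma \ref{lemmacombination} with Corollary \ref{corconductorconditions} and the inductive bound for $\psi'$. This does not close, for two reasons. First, the differential conductor $\omega_\chi(r,z)$ at the Gauss valuation $\nu_{r,z}$ is computed from a representative that is \emph{best} at that valuation (Theorem \ref{thmbest}, Proposition \ref{propcalculaterefinedswanwitt}); passing from the reduced form of $\underline{F}$ (whose poles you control via Theorem \ref{theoremcaljumpirred}) to a best representative at $\nu_{r,z}$ is an Artin--Schreier--Witt operation that can redistribute poles inside the residue classes, so the pole order of $[d\underline{F}]_{r,z}$ at $\overline{w}$ is not directly bounded by the pole orders of the $F^l$ at the $b_i$. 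Second, the factor $\mathfrak{K}_n((F^1,\dots,F^{n-1},0))$ is not $\psi'$: its differential is related to $\omega_{\psi'}$ only through $\mathcal{C}(\omega_n)=\omega_{n-1}$, and the Cartier operator gives no \emph{upper} bound on the pole order of $\omega_n$ from that of $\omega_{n-1}$ --- the kernel of $\mathcal{C}$ contains exact forms of arbitrarily high pole order. The needed upper bound must come from the level-$n$ conductors $\iota_{i,n}$ of the branch points inside $U(r,z,\overline{w})$, which is exactly the statement being proven; so the induction as sketched is circular at the crucial point. (Two smaller issues: equality at one $\overline{w}$ and one $r$ is only claimed as a consequence of good reduction, not a characterisation of it; and the $2pg_C/(p-1)$ term needs the actual vanishing-cycles bookkeeping, not just the Riemann--Hurwitz heuristic, though in this paper $C\cong\mathbb{P}^1_K$ so it is vacuous.) To repair the proof along the paper's lines, replace the induction on $n$ by the vanishing cycle formula of \cite{MR904945} applied to $\chi\lvert_{\mathcal{D}[pr,z]}$.
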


\begin{proof}
The proof is parallel to the mixed characteristic one in \cite[Proposition 2.14]{MR3552986}, where $\mathfrak{C}_{\chi}(r,z, \overline{w})$ (resp. $\mathfrak{C}_{\chi}(r,z, \overline{\infty})$) plays the role of $\lvert \mathbb{B}(\chi) \cap U(r,z,\overline{w}) \rvert$ (resp. $\lvert \mathbb{B}(\chi) \cap U(r,z,\overline{\infty}) \rvert$). See \cite[Proposition 3.8]{2020arXiv200203719D} for the explanation regarding the case $n=1$ and $\overline{w} \neq \overline{\infty}$. The case $n>1$ or $\overline{w}=\overline{\infty}$ is similar.
\end{proof}

\begin{remark}
We call (\ref{eqnvanishingcyclenoninfty}) and (\ref{eqnvanishingcycleinfty}) the local vanishing cycle formulas because they can be derived from Kato's vanishing cycle formula \cite{MR904945} (see the second version of \cite{MR3552986}).
\end{remark}

One easily derives result below from the proposition above.

\begin{corollary}
\label{corswgood}
With the notation as in the Proposition \ref{propvanishingcycle} and assuming $\overline{w} \neq \overline{\infty}$, we have the following inequality
\begin{equation}
    \sw_{\chi}(r,z,\overline{w}) \le \mathfrak{C}_{\chi}(r, z,\overline{w})-1.
\end{equation}
Moreover, if $\chi$ has good reduction and $\overline{w} \neq \overline{\infty}$ then the equality holds.
\end{corollary}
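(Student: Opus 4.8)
The statement to prove is Corollary~\ref{corswgood}, which asserts that $\sw_{\chi}(r,z,\overline{w}) \le \mathfrak{C}_{\chi}(r,z,\overline{w})-1$ for $\overline{w}\neq\overline{\infty}$, with equality when $\chi$ has good reduction. The plan is to derive this directly from Proposition~\ref{propvanishingcycle} together with the dictionary between boundary Swan conductors and orders of vanishing of the differential conductor recorded in Remark~\ref{remarkboundaryswan}. The proof naturally splits according to whether the restricted character $\chi\lvert_{\mathcal{D}[pr,z]}$ is radical ($\delta_{\chi}(r,z)>0$) or not ($\delta_{\chi}(r,z)=0$).

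\begin{proof}
Write $\mathcal{D}:=\mathcal{D}[pr,z]$ and $\delta:=\delta_{\chi}(r,z)$, $\omega:=\omega_{\chi}(r,z)$ for brevity. Suppose first that $\delta>0$. Then by Remark~\ref{remarkboundaryswan}(\ref{remarkboundaryswan2}) we have
\[
\sw_{\chi}(r,z,\overline{w}) = -\ord_{\overline{w}}(\omega)-1.
\]
By the local vanishing cycle formula (\ref{eqnvanishingcyclenoninfty}) of Proposition~\ref{propvanishingcycle}, since $\overline{w}\neq\overline{\infty}$, we have $\ord_{\overline{w}}(\omega)\ge -\mathfrak{C}_{\chi}(r,z,\overline{w})$, whence
\[
\sw_{\chi}(r,z,\overline{w}) = -\ord_{\overline{w}}(\omega)-1 \le \mathfrak{C}_{\chi}(r,z,\overline{w})-1,
\]
which is the desired inequality. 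If moreover $\chi$ has good reduction, then Proposition~\ref{propvanishingcycle} gives equality $\ord_{\overline{w}}(\omega) = -\mathfrak{C}_{\chi}(r,z,\overline{w})$, and substituting yields $\sw_{\chi}(r,z,\overline{w}) = \mathfrak{C}_{\chi}(r,z,\overline{w})-1$.

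Now suppose $\delta=0$, so that $\chi\lvert_{\mathcal{D}}$ is unramified and its reduction $\overline{\chi}\lvert_{\mathcal{D}}\in \textrm{H}^1_{p^n}(\kappa)$ is well-defined. By Remark~\ref{remarkboundaryswan}(\ref{remarkboundaryswan1}), $\sw_{\chi}(r,z,\overline{w}) = \sw_{\overline{\chi}\lvert_{\mathcal{D}}}(\overline{w})$ is the classical Swan conductor of $\overline{\chi}\lvert_{\mathcal{D}}$ with respect to $\ord_{\overline{w}}$. Since $\overline{w}\neq\overline{\infty}$ is a finite closed point of the affine line $\overline{\mathcal{D}}$, the only branch points of $\overline{\chi}\lvert_{\mathcal{D}}$ specializing to $\overline{w}$ are the reductions of those $b_i$ with $i\in I(r,z,\overline{w})$; at such a point the Swan conductor of the $\mathbb{Z}/p^n$-cover is bounded above by (one less than) the sum of the corresponding $n$-th conductors, i.e. $\sw_{\overline{\chi}\lvert_{\mathcal{D}}}(\overline{w})\le \mathfrak{C}_{\chi}(r,z,\overline{w})-1$; equivalently this is the case $\delta=0$ of the vanishing cycle formula, since good reduction of $\chi\lvert_{\mathcal{D}}$ then forces $\overline{\chi}\lvert_{\mathcal{D}}$ to be the reduction and the branch divisor of $\overline{\chi}\lvert_{\mathcal{D}}$ at $\overline{w}$ to have degree exactly $\mathfrak{C}_{\chi}(r,z,\overline{w})$. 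This gives the inequality, and equality under the good reduction hypothesis exactly as before.
\end{proof}

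The only genuine content is the translation in Remark~\ref{remarkboundaryswan} and the vanishing cycle inequality of Proposition~\ref{propvanishingcycle}; the corollary is a formal consequence. The mild subtlety worth spelling out is the case distinction on $\delta_{\chi}(r,z)$, since the formula relating $\sw_{\chi}(r,z,\overline{w})$ to $\ord_{\overline{w}}(\omega_{\chi}(r,z))$ only holds when $\delta_{\chi}(r,z)>0$; in the radical-free case one instead invokes the classical interpretation of the boundary Swan conductor via the reduction $\overline{\chi}\lvert_{\mathcal{D}}$ and the description of branching data of $\mathbb{Z}/p^n$-covers from \S\ref{secbranchingdatum}. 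I do not anticipate any serious obstacle here; the work has already been done in Proposition~\ref{propvanishingcycle}.
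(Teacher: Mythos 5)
Your proof is correct and follows exactly the derivation the paper intends (the paper itself only says the corollary is ``easily derived'' from Proposition \ref{propvanishingcycle}): in the radical case one combines Remark \ref{remarkboundaryswan}(\ref{remarkboundaryswan2}) with inequality (\ref{eqnvanishingcyclenoninfty}), and equality under good reduction is inherited from the equality case of the proposition. Your separate treatment of $\delta_{\chi}(r,z)=0$ via Remark \ref{remarkboundaryswan}(\ref{remarkboundaryswan1}) is a sensible extra precaution (and is in fact the case used in Corollary \ref{corgood}), and its justification by the $\delta=0$ instance of Kato's vanishing cycle formula is at the same level of detail as the paper's own appeal to that formula.
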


\begin{remark}
\label{remarkmindepth}
Corollary \ref{corswgood} and Proposition \ref{propvanishingcycle} indicate that, for $\chi$ to have good reduction, $\delta_{\chi}(r,z)$ is the smallest it could be, and its instantaneous rate of change at $r$ depends only on the sum of the $n$-th conductors of the branch points whose differences with $z$ have valuations greater than $r$.
\end{remark}

\begin{corollary}\label{corgood}
\begin{enumerate}
    \item \label{corgooditem1} Let $\chi \in \textrm{H}^1_{p^n}(\mathbb{K})$ be an admissible character of order $p^n$. Then
    \begin{equation}
        \label{eqngoodconductorswan}
        \iota_n:= \sum_{x \in \mathbb{B}(\chi)} \iota_{x,n}= \mathfrak{C}_{\chi}(0,0,\overline{0} ) \ge \sw_{\chi}(0,0,\overline{0})+1.
    \end{equation}
    \noindent Also, $\chi$ has good reduction if and only if $\delta_{\chi}(0)=0$ and the equality above holds. We call $\iota_n$ the \textit{conductor} of the character $\chi$.
    \item \label{corgooditem2} Suppose $\chi$ has good reduction with upper breaks $(m_1, \ldots, m_n)$. Let $\chi_i:=\chi^{p^{n-i}}$. If $1 \le i \le n$, then the conductor of $\chi_i$ is
    \begin{equation}
        \label{eqnupperbreakconductor}
        \iota_i:=\sum_{x \in \mathbb{B}(\chi)} \iota_{x,i}=m_i+1
    \end{equation}
    In particular, $\iota_i \ge p\iota_{i-1}-p+1$, and if $\iota_i \equiv 1 \pmod{p}$ then $\iota_i=p\iota_{i-1}-p+1$.
\end{enumerate}
\end{corollary}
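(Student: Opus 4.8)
The plan is to deduce Corollary \ref{corgood} directly from the vanishing cycle formulas (Proposition \ref{propvanishingcycle}, Corollary \ref{corswgood}) together with the local-to-global translation between conductors and ramification breaks established in \S \ref{secbranchingdatum}. For part \ref{corgooditem1}, the equality $\iota_n = \mathfrak{C}_\chi(0,0,\overline{0})$ is definitional: since $\chi$ is admissible its branch locus $\mathbb{B}(\chi)$ lies in the open disc $D$, which is exactly the residue class $U(0,0,\overline{0})$ of $\overline{0}$ on $\mathcal{D}[0,0] = \mathcal{D}$, so $I(0,0,\overline{0}) = \{1,\dots,r\}$ and the defining sum collapses to $\sum_{x\in\mathbb{B}(\chi)} \iota_{x,n} = \iota_n$. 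The inequality $\mathfrak{C}_\chi(0,0,\overline{0}) \ge \sw_\chi(0,0,\overline{0}) + 1$ is then precisely Corollary \ref{corswgood} applied with $r=0$, $z=0$, $\overline{w} = \overline{0} \neq \overline{\infty}$.

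For the ``good reduction iff'' statement in \ref{corgooditem1}: if $\chi$ has good reduction then by Definition \ref{defnetalegoodreduction} it has étale reduction, so its restriction to $\hat{\mathbb{K}}_0$ is unramified, which forces $\delta_\chi(0) = \sw(\chi\lvert_{\mathcal{D}})/p = 0$; and the ``moreover'' clause of Corollary \ref{corswgood} gives the equality in \eqref{eqngoodconductorswan}. Conversely, if $\delta_\chi(0) = 0$ then $\chi\lvert_{\mathcal D}$ is unramified and its reduction $\overline\chi\lvert_{\mathcal D}$ is well-defined, i.e. $\chi$ has étale reduction with reduction cover $\overline\psi\colon \overline Y \to \overline C$; by Remark \ref{remarkboundaryswan}\eqref{remarkboundaryswan1} we have $\sw_\chi(0,0,\overline 0) = \sw_{\overline\chi\lvert_{\mathcal D}}(\overline 0)$, the classical Swan conductor of the reduced cover at $\overline 0$, and similarly the classical conductor formalism identifies the sum of conductors $\iota_n$ with $\sw_{\overline\chi\lvert_{\mathcal D}}(\overline 0)+1$ exactly when the only ramification of $\overline\psi$ is concentrated above $\overline 0$ (equivalently $\overline Y$ is smooth, as $\overline C \cong \mathbb P^1_k$ and the only possible singularities of $\overline Y$ come from ramification away from $\overline 0$ collapsing). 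So the equality forces $\overline Y$ smooth, hence good reduction.

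For part \ref{corgooditem2}: assuming good reduction with upper breaks $(m_1,\dots,m_n)$, the subcharacter $\chi_i = \chi^{p^{n-i}}$ corresponds (Remark after \eqref{eqnaswmap}, or \S\ref{secASW}) to the length-$i$ truncation of the defining Witt vector, hence to the $\mathbb Z/p^i$-subcover $\overline\psi_i\colon \overline Y_i \to \overline C$ of the reduction. Its unique branch point is the reduction of $\overline 0$, at which the $i$-th upper ramification break is $m_i$, so by definition of the conductor (\S\ref{secbranchingdatum}, $e = u+1$) the conductor of $\chi_i$ is $\iota_i = m_i + 1$; combining this with the fact that $\mathbb B(\chi)$ is a single point and applying part \ref{corgooditem1} to $\chi_i$ gives $\iota_i = \sum_{x\in\mathbb B(\chi)}\iota_{x,i} = m_i+1$. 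The final inequalities $\iota_i \ge p\iota_{i-1} - p + 1$ and the rigidity statement $\iota_i \equiv 1 \pmod p \Rightarrow \iota_i = p\iota_{i-1} - p + 1$ then follow immediately from Remark \ref{remarkupperbreaks} (which records $u_{j,i} \ge p u_{j,i-1}$ and $p \mid u_{j,i} \Rightarrow u_{j,i} = p u_{j,i-1}$) after the substitution $u = e - 1$, or equivalently from Corollary \ref{corconductorconditions}\eqref{corconductorconditions2}.

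I expect the main obstacle to be the converse direction of \ref{corgooditem1}: carefully arguing that $\delta_\chi(0)=0$ together with the conductor equality \eqref{eqngoodconductorswan} forces $\overline Y$ to be \emph{smooth} and not merely that $\overline Y \to \overline C$ is generically étale. The point is that étale reduction ($\delta_\chi(0)=0$) already gives a reduced special fiber $\overline Y$ with a well-defined reduction cover, and one must show that the vanishing-cycle count \eqref{eqngoodconductorswan} being an equality exhausts all the ramification/degeneration that could otherwise produce singular points or extra components of $\overline Y$. This is where one invokes the ``moreover'' (equality) clauses of Proposition \ref{propvanishingcycle} and Corollary \ref{corswgood} in the reverse direction, using that the total conductor is an additive invariant distributed over the residue classes of $\overline{\mathcal D}$; since everything is concentrated in the single residue class $U(0,0,\overline 0) = D$ and the inequality there is an equality, no ``defect'' is left over to create bad reduction. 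The cleanest route is to cite that this is the exact analogue of \cite[Proposition 2.14]{MR3552986} and \cite[Proposition 3.8]{2020arXiv200203719D}, as was done for Proposition \ref{propvanishingcycle} itself.
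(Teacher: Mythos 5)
Your part \ref{corgooditem1} follows essentially the paper's route: the identity $\iota_n=\mathfrak{C}_{\chi}(0,0,\overline{0})$ is definitional since every branch point lies in $D$, the residue class of $\overline{0}$, and the inequality plus the forward implication are Corollary \ref{corswgood}. For the converse direction your sketch ($\delta_{\chi}(0)=0$ plus equality leaves no ``defect'' to create a singular special fiber) is at the same level of detail as the paper itself, which simply declares part \ref{corgooditem1} immediate from Proposition \ref{propvanishingcycle} and observes afterwards that the criterion is equivalent to the different criterion (Proposition \ref{propdifferentcriterion}); citing the analogues you name is exactly what the paper does, so no objection there.

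The genuine gap is in part \ref{corgooditem2}. You assert that ``$\mathbb{B}(\chi)$ is a single point'' and that $\iota_i=m_i+1$ holds ``by definition of the conductor''. Both are wrong as stated: $\mathbb{B}(\chi)$ is the branch locus of the \emph{generic} character and typically consists of many points of $D$ (only the reduction $\overline{\chi}$ is branched at the single point $x=0$; if $\mathbb{B}(\chi)$ were one point there would be little to prove in this paper), and $\iota_i$ is by definition the sum $\sum_{x\in\mathbb{B}(\chi)}\iota_{x,i}$ of generic conductors, so the equality (\ref{eqnupperbreakconductor}) is precisely the nontrivial content, not a definition. What you are missing is the middle identification that the paper uses: since $\chi$ has good reduction, so does $\chi_i$, hence part \ref{corgooditem1} applied to $\chi_i$ gives $\iota_i=\sw_{\chi_i}(0,0,\overline{0})+1$; because $\delta_{\chi_i}(0)=0$, Remark \ref{remarkboundaryswan}~(\ref{remarkboundaryswan1}) identifies $\sw_{\chi_i}(0,0,\overline{0})$ with the classical Swan conductor of the reduction $\overline{\chi}_i$ at $\overline{0}$, which equals its largest upper break $m_i$ by \cite[Cor.~2 to Th.~1]{MR554237}. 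Your appeal to ``conductor $=$ break $+1$ at the unique branch point of the reduction'' only restates the last step and cannot substitute for this boundary-Swan identification; the false single-point claim about $\mathbb{B}(\chi)$ is what allows you to skip it. The closing inequalities via Remark \ref{remarkupperbreaks} (equivalently Corollary \ref{corconductorconditions}) are fine.
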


\begin{proof}
Item (\ref{corgooditem1}) is immediate from Proposition \ref{propvanishingcycle}.

In the situation of part (\ref{corgooditem2}), as $\chi_i$ also has good reduction, it has conductor $\mathfrak{C}_{\chi_i}(0,0,\overline{0})=\sum_{x \in \mathbb{B}(\chi)} \iota_{x,i}$ by part (\ref{corgooditem1}). In addition, its reduction $\overline{\chi}_i \in \textrm{H}^1_{p^i}(\kappa)$ is a well defined character with upper ramification breaks $(m_1, \ldots, m_n)$. Hence, the Swan conductor of $\overline{\chi}_i$, which coincides with $\sw_{\chi_i}(0,0,\overline{0})$ as discussed in Remark \ref{remarkboundaryswan} (\ref{remarkboundaryswan1}), is $m_i$ \cite[Cor. 2 to Th. 1]{MR554237}. Compare with (\ref{eqngoodconductorswan}), we obtain (\ref{eqnupperbreakconductor}). The remaining assertions follow from Remark \ref{remarkupperbreaks}. 
\end{proof}

\begin{remark}
Corollary \ref{corgood} (\ref{corgooditem1}) indicates that, for a $\mathbb{Z}/p^n$-cover with {\'e}tale reduction to have good reduction, it is necessary and sufficient that the conductor of the reduction is equal to the sum of conductors of the generic branch points. It is thus equivalent to the different criterion (Proposition \ref{propdifferentcriterion}).
\end{remark}

\subsection{Characters of order \texorpdfstring{$p$}{p}}
Fix an admissible character $\chi \in \textrm{H}^1_p(\mathbb{K})$ and $z \in D$. We will now provide an algorithm to calculate $\delta_{\chi}(r,z)$ and $\omega_{\chi}(r,z)$ for $r \in \mathbb{Q}_{\ge 0}$. We may assume that $z=0$. The proofs in this section can easily translate to the case $z \neq 0$. The proposition below is the equal-characteristic analog of \cite[Proposition 5.16]{MR3194815}.

\begin{proposition}
\label{propcomputeordp}
Suppose $\chi=\mathfrak{K}_1(F)$, where $F \in  \mathbb{K}^{\times}/ \wp(\mathbb{K}^{\times})$, and $r \in \mathbb{Q}_{\ge 0}$. Without loss of generality, one may assume that $\nu_r(F)\le 0$ and $\chi$ is weakly unramified with respect to $\nu_r$.
 \begin{enumerate}
     \item \label{propcomputeorderpfirst} We have 
   \[  \delta_{\chi}(r)= -\frac{1}{p} \max\limits_{a} \nu_r(F+a^p-a), \]
   \noindent where $a$ ranges over all elements of $\mathbb{K}$.
     \item \label{propcomputeorderpsecond} The maximum of $\nu_r(F+a^p-a)$ in (\ref{propcomputeorderpfirst}) is achieved if and only if
     \[ g:=[F+a^p-a]_r \not\in \kappa^p_r. \]
     If this is the case, and $\delta_{\chi}(r)>0$, then
     \[ \omega_{\chi}(r) = dg. \]
     If, instead, $\delta_{\chi}(r)=0$, then $\overline{\chi}$ corresponds to the Artin-Schreier extension given by the equation $y^p-y=g$.
 \end{enumerate}
\end{proposition}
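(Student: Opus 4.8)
The plan is to reduce the computation of the depth and differential Swan conductors of an order-$p$ character $\chi = \mathfrak{K}_1(F)$ on the affinoid $\mathcal{D}[pr,0]$ to a computation in the residue field $\kappa_r$, exploiting the fact that in equal characteristic the Artin-Schreier-Witt map is literally additive. Recall that $\delta_{\chi}(r) = \sw(\chi\lvert_{\mathcal{D}[pr,0]})/p$, and that, up to enlarging $K$, we may assume $\chi$ is weakly unramified with respect to the Gauss valuation $\nu_r$. The key point is that $\chi\lvert_{\mathcal{D}[pr,0]}$ is the class of $F$ in $\hat{\mathbb{K}}_r^{\times}/\wp(\hat{\mathbb{K}}_r)$, where $\hat{\mathbb{K}}_r$ is the completion of $\mathbb{K}$ at $\nu_r$; since Artin-Schreier-Witt operations on $F$ (i.e.\ replacing $F$ by $F + a^p - a$ for $a \in \mathbb{K}$, and by continuity for $a \in \hat{\mathbb{K}}_r$) do not change the character, the Swan conductor of $\chi\lvert_{\mathcal{D}[pr,0]}$ is computed from a representative that is ``best'' with respect to $\nu_r$.

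First I would make precise the normalization: choose $a \in \mathbb{K}$ maximizing $\nu_r(F + a^p - a)$ (a maximum exists because the set of achievable valuations is bounded above once $\chi$ is weakly unramified — this is where weak unramifiedness is used), and set $F' := F + a^p - a$, $e := -\nu_r(F')$, $g := [F']_r = \pi^{-\nu_r(F')} F' \bmod \mathfrak{m}_{\nu_r} \in \kappa_r$. The classical theory of Swan conductors for Artin-Schreier extensions of a complete discretely valued field whose residue field is not perfect (Kato's refined Swan conductor, as recalled in \S\ref{secSwan}) says: $\chi\lvert_{\mathcal{D}[pr,0]}$ is unramified with respect to $\nu_0$ precisely when one can arrange $\nu_r(F') \ge 0$, i.e.\ $e \le 0$; and when $e > 0$, the Swan conductor equals $e$ and the refined Swan conductor is represented by $dg$ (up to the twist by $\pi^{-e}$ recorded in \eqref{eqnrswab}). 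Translating through $\delta_{\chi}(r) = \sw/p$ gives $\delta_{\chi}(r) = e/p = -\frac{1}{p}\nu_r(F')= -\frac{1}{p}\max_a \nu_r(F+a^p-a)$, which is part \ref{propcomputeorderpfirst}, and $\omega_{\chi}(r) = \dsw(\chi\lvert_{\mathcal{D}[pr,0]}) = dg$ when $\delta_{\chi}(r) > 0$, which is the first half of part \ref{propcomputeorderpsecond}. The characterization of when the maximum is attained — namely $g \notin \kappa_r^p$ — is the standard ``reducedness'' criterion: if $g = h^p$ for $h \in \kappa_r$, lift $h$ to $\tilde h \in \mathcal{O}_{\nu_r}$ and replace $F'$ by $F' - (\pi^{-e/p}\tilde h)^p + \pi^{-e/p}\tilde h$; the leading term cancels (the correction term $\pi^{-e/p}\tilde h$ has valuation $-e/p > -e$, hence is negligible), raising $\nu_r$, contradicting maximality. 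Conversely if $g \notin \kappa_r^p$ then no such reduction is possible and $\nu_r(F')$ is genuinely maximal. Finally, when $\delta_{\chi}(r) = 0$ we have $e \le 0$, so $F'$ is integral at $\nu_r$, and its reduction $\bar F' \in \kappa_r$ (which equals $g$ when $e = 0$, and $0$ when $e<0$ after one more reduction) defines the reduction $\overline{\chi}$ via $y^p - y = g$; this is exactly the cospecialization description in Remark \ref{remarketalereduction}, and the condition $g \notin \kappa_r^p$ ensures this Artin-Schreier equation is in reduced (nontrivial) form, so $\overline{\chi}$ is well-defined.

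The main obstacle I anticipate is the bookkeeping around fractional exponents: $\nu_r$ takes values in $\frac{1}{p}\mathbb{Z}$ (since $\nu_r(X) = r$ can be a non-integer rational and we are looking at $p$-th powers), so the reduction steps must be done with care — one reduces modulo $p$-th powers in $\kappa_r$ only after checking that $p \mid \nu_r(F')$, and if $p \nmid \nu_r(F')$ then the maximum is automatically attained (no reduction is possible) and $g$ is automatically not a $p$-th power. I would organize the argument so that this dichotomy ($p \mid e$ versus $p \nmid e$) is handled uniformly by the single statement ``$g \notin \kappa_r^p$''. The only other subtlety is justifying that one may take $a \in \mathbb{K}$ rather than $a \in \hat{\mathbb{K}}_r$: since $\mathbb{K}$ is dense in $\hat{\mathbb{K}}_r$ and the valuation of $F + a^p - a$ depends only on finitely many ``digits'' of $a$ up to the relevant precision, a rational approximation suffices — this is a routine density argument and I would state it as such rather than belabor it. With these points pinned down, the proposition follows by citing the equal-characteristic refined Swan conductor formalism already set up in \S\ref{secSwan} together with the elementary reduction procedure above, exactly paralleling \cite[Proposition 5.16]{MR3194815}.
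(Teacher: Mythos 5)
Your proposal is correct in substance, but it takes a different route from the paper's. The paper's own proof is essentially a two-line citation: part (1) is declared immediate from the definition of the depth (Kato's Swan conductor via the filtration recalled in Remark \ref{remarkswanfiltrationdefn}), and part (2) is obtained by writing $F+a^p-a=\pi^{-p\delta}u$ with $\nu_r(u)=0$, $[u]_r=g\notin\kappa_r^p$, and invoking Proposition 3.16 of \cite{2020arXiv200203719D}, the author's earlier equal-characteristic $\mathbb{Z}/p$ paper. You instead supply the elementary reduction argument yourself (the step ``$g=h^p$ implies the valuation can be raised'', and its converse) and then read off the depth and the differential conductor from the best-representative calculus. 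Be aware that this calculus is not actually ``recalled in \S\ref{secSwan}'' --- that section only defines $\delta$, $\omega$ and the relation $\rsw^{\ab}=\pi^{-\sw}\otimes\dsw$; the computational recipe is \S\ref{sectionrefinedSwanASWleal}, i.e.\ Theorem \ref{thmbest} and Proposition \ref{propcalculaterefinedswanwitt}, resting on \cite{MR3726102} and \cite{MR4045428}, whose $n=1$ case gives exactly $\sw=-\nu_r(F')$ and $\dsw=d[F']_r$. Since those inputs are external, there is no circularity in using them here (even though the paper formally derives the length-one ``best'' criterion from the present proposition), and your argument is more self-contained than the paper's, at the cost of a heavier black box than the $\mathbb{Z}/p$-specific result the paper cites.

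One side-claim in your bookkeeping paragraph is wrong and should be repaired, though it does not sink the proof. You assert that if $p\nmid\nu_r(F')$ then $g$ is automatically not a $p$-th power; this is false: $F'=t^{-1}X^p$ on the unit disc has $\nu_0(F')=-1$ and $[F']_0=x^p\in\kappa_0^p$, and over $\mathbb{K}=K(X)$ no Artin--Schreier substitution can raise the valuation because no element of valuation $-1/p$ exists, so the maximum would be attained with $g\in\kappa_0^p$. What rescues the ``only if'' direction is precisely the standing hypothesis that $\chi$ is weakly unramified with respect to $\nu_r$ (arranged after a finite extension of $K$): if $\nu_r(F')=-e<0$, any root $y$ of $y^p-y=F'$ has $\nu(y)=-e/p$, so weak unramifiedness forces $-e/p$ to lie in the value group of $\nu_r$; consequently an element of valuation $-e/p$ with prescribed reduction always exists, your reduction step applies whenever $g\in\kappa_r^p$, and the dichotomy $p\mid e$ versus $p\nmid e$ disappears. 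With that correction --- and noting that attainment of $\max_a\nu_r(F+a^p-a)$ follows from discreteness of the value group together with nontriviality of $\chi$ on the completed boundary field, rather than from weak unramifiedness as such --- your argument is complete and matches the statement, including the $\delta_\chi(r)=0$ case via the cospecialization description of Remark \ref{remarketalereduction}.
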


\begin{proof}
Part (\ref{propcomputeorderpfirst}) is immediate from the definition of the depth for abelian characters.

Let us now prove part (\ref{propcomputeorderpsecond}). Suppose $g \not \in \kappa_r^p$ and $\delta:=-\nu_r(F+a^p-a)/p \ge 0$. Then
\[ F+a^p-a=\pi^{-p\delta}u, \]
where $u$ is an element of $\mathbb{K}$ such that $\nu_r(u)=0$ and specializes to $g$. By \cite[Proposition 3.16]{2020arXiv200203719D}, we immediately obtain $\delta_{\chi}(r)=\delta$. If $\delta_{\chi}(r)>0$, then it follows from the same result that $\omega_{\chi}(r)=dg$. The case $\delta_{\chi}(r)=0$ is similar.
\end{proof}

\begin{remark}
\label{remarkboundaryorderp}
Sa{\"i}di shows in \cite[Proposition 2.3.1]{MR2377173} that a $\mathbb{Z}/p$-cover of a boundary is completely determined by its depth and its boundary Swan conductor. That result is motivated by Henrio's \cite[Corollaire 1.8]{2000math.....11098H} for the case $R$ is of mixed characteristic. However, just as the case of mixed characteristic examined in \cite[Chapter 5]{brewisthesis}, it is no longer true that a $\mathbb{Z}/p^n$-cover ($n>1$) of a boundary $\spec R[[X^{-1}]]\{X\}$ is determined by its Swan and boundary conductor. 
\end{remark}

\subsection{Computing the slope of particular order-\texorpdfstring{$p$}{p}-characters}
\label{secdetect}
This section is parallel to \cite[\S 5.5]{MR3194815}. Let $\chi \in \textrm{H}^1_p(\mathbb{K})$ be an admissible character of order $p$, hence giving rise to a branched cover of a disc $D[0,z]$, which we may associate with $\spec R[[X]]$. As before, we may assume that $z=0$. Let $m>1$ be a prime-to-$p$ integer. We assume further that the following conditions hold:

\begin{enumerate}[label=(\text{D}{{\arabic*}})]
    \item \label{firstconddetect} The branch locus of $\chi$ is contained in the closed disc $D[pr_0]$, for some $r_0>0$, and $X=0$ is one of the branch points,
    \item \label{secondconddetect} For all $r \in (0,pr_0]$, the left derivative of $\delta_{\chi}$ at $r$ is less than or equal to $m$, and
    \item \label{thirdconddetect}For all $r \in (0,pr_0]$, we have $\delta_{\chi}(r)>0$, and $\delta_{\chi}(pr_0)=\delta$. 
\end{enumerate}

\begin{proposition}
\label{propASclassdetect}
Suppose $\chi \in \textrm{H}^1_p(\mathbb{K})$ verifies \ref{firstconddetect} and \ref{thirdconddetect}. Then $\chi$ can be represented by the Artin-Schreier class of
\begin{equation}
    \label{eqnpropASclassdetect}
     F= \sum_{i=0}^{\infty} a_i X^{-i} \in \mathbb{K},
\end{equation}
with $a_i \in R$ and $\nu(a_i) \ge p(r_0i-\delta)$ for all $i$.
\end{proposition}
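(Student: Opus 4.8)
The plan is to start from an arbitrary Artin-Schreier representative of $\chi$ and successively clean it up using Artin-Schreier operations (adding elements of the form $a^p - a$) so that the coefficients satisfy the claimed valuation bound. First I would use condition \ref{firstconddetect}: since $\mathbb{B}(\chi) \subseteq D[pr_0]$, the character $\chi$ restricted to the complement of $D[pr_0]$ is unramified, so after a suitable Artin-Schreier operation we may pick a representative $F \in \mathbb{K}$ whose only pole is at $X = 0$, i.e.\ $F = \sum_{i \ge 0} a_i X^{-i} \in R[[X^{-1}]]$ (a ``tail'' in negative powers of $X$; the polar part is supported at the origin because $0$ is the relevant branch point, and we absorb the part holomorphic on $D[pr_0]$). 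The content of the proposition is then the \emph{quantitative} statement $\nu(a_i) \ge p(r_0 i - \delta)$ for all $i$, which is a statement about how the Gauss valuations $\nu_{r}(F)$ behave as $r$ ranges over $(0, r_0]$.

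The key computation is to relate $\nu(a_i)$ to the function $\delta_\chi(r)$ via Proposition \ref{propcomputeordp}\ref{propcomputeorderpfirst}, which gives $\delta_\chi(r) = -\tfrac{1}{p}\max_a \nu_r(F + a^p - a)$. For a monomial $a_i X^{-i}$ one has $\nu_{r}(a_i X^{-i}) = \nu(a_i) - i \cdot (pr)$ (recall $\nu_r(X) = pr$ under the scaling conventions of \S\ref{secdiscannuli}). Hence, choosing the representative $F$ so that no cancellation among monomials is possible under Artin-Schreier reduction (i.e.\ $F$ is in reduced form, with all exponents prime to $p$, replacing $a_i X^{-pj}$-type terms by $p$-th roots where needed — this is where one invokes the reduced-form discussion after Example~\ref{exasw}), we get $-p\,\delta_\chi(r) = \nu_r(F) = \min_i\bigl(\nu(a_i) - p r i\bigr)$ for each $r \in (0, r_0]$ where $\chi$ is weakly unramified and $\nu_r(F) \le 0$. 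Therefore $\nu(a_i) - pri \ge -p\,\delta_\chi(r)$, i.e.\ $\nu(a_i) \ge p\bigl(ri - \delta_\chi(r)\bigr)$ for every such $r$. Now I would evaluate this at $r = r_0$ and use condition \ref{thirdconddetect}, $\delta_\chi(r_0 \cdot p / p) = \delta_\chi(r_0) = \delta$... more precisely, unwinding the $1/p$-scaling in the notation $\delta_\chi(r) = \delta(\chi|_{\mathcal{D}[pr,z]})$, condition \ref{thirdconddetect} says $\delta_\chi(r_0) = \delta$, giving exactly $\nu(a_i) \ge p(r_0 i - \delta)$, as desired.

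The step I expect to be the main obstacle is making the ``no cancellation / reduced representative'' argument fully rigorous: a priori an Artin-Schreier operation $F \mapsto F + a^p - a$ with $a \in \mathbb{K}$ (not merely $a \in R[[X^{-1}]]$) could interact with the coefficients $a_i$ in a way that is not simply ``kill the $p$-divisible exponents''; one must check that the representative achieving $\nu_r(F+a^p-a) = -p\delta_\chi(r)$ \emph{simultaneously for all $r$ in the interval} can be taken of the form $\sum_i a_i X^{-i}$ with $a_i \in R$. The way around this is to use condition \ref{secondconddetect} (the left derivative of $\delta_\chi$ is $\le m$, a prime-to-$p$ bound) together with Proposition~\ref{propcomputeordp}\ref{propcomputeorderpsecond}: the derivative bound controls $\ord_{\overline\infty}$ of the differential conductor $\omega_\chi(r) = dg$, hence controls the ``width'' of the reduced polar part $g \in \kappa_r$ at each level, and an induction on $r$ (equivalently, a limiting argument letting $r \to 0^+$, using continuity and piecewise-linearity of $\delta_\chi$ from Proposition~\ref{propdeltasw}) shows the single reduced representative $F$ works throughout. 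This is precisely the equal-characteristic analogue of the argument in \cite[\S 5.5]{MR3194815}, so I would model the bookkeeping on that reference, replacing the mixed-characteristic estimates with the Gauss-valuation computations from Proposition~\ref{propcomputeordp} and \S\ref{secdiscannuli}.
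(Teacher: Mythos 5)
Your core computation is the same as the paper's: identify $\nu_{r_0}(F)$ with $-p\,\delta_{\chi}(r_0)=-p\delta$ via Proposition \ref{propcomputeordp} and condition \ref{thirdconddetect}, write $\nu_{r_0}(F)=\min_i\big(\nu(a_i)-pr_0i\big)$, and conclude term by term. But the setup has genuine gaps. Your opening claim --- that after an Artin--Schreier operation one may choose a representative whose only pole is at $X=0$ --- is false: adding $\wp(a)$ does not change the character, hence cannot move the branch locus, and \ref{firstconddetect} allows several branch points inside $D[pr_0]$. What the paper actually uses is weaker but must be proved: the rational representative is holomorphic on the annulus $\{0<\nu<pr_0\}$ and at infinity, hence is given there by a convergent expansion $\sum_{i\ge 0}a_iX^{-i}$ satisfying the growth estimate (\ref{eqnannulusconds}), $\liminf_i \nu(a_i)/i\ge pr_0$. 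You never derive this estimate (nor the claim $a_i\in R$); without it the identity $\nu_{r_0}(F)=\min_i(\nu(a_i)-pr_0i)$ for an infinite series is unjustified, and putting $F$ in ``reduced form'' would require infinitely many Artin--Schreier operations.

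That growth estimate is also exactly what dissolves the cancellation worry you flag, and your proposed repair leans on a hypothesis you do not have. The paper's fix: by (\ref{eqnannulusconds}) only finitely many indices $i<M$ can have $\nu(a_i)-pr_0i\le -p\delta$, so one kills the $p$-divisible exponents only among those finitely many terms (a finite Artin--Schreier operation, after a finite extension of $K$ to extract $p$-th roots); then $[F]_{r_0}\notin\kappa_{r_0}^p$, Proposition \ref{propcomputeordp} applies at the single place $r_0$, and \ref{thirdconddetect} forces $\min_i(\nu(a_i)-pr_0i)=-p\delta$, which finishes the proof. Nothing ``simultaneously for all $r\in(0,r_0]$'' is needed for this proposition --- that bookkeeping is the content of the later Lemma \ref{lemmaapprox} and Proposition \ref{propdetect} --- and in particular your appeal to \ref{secondconddetect} is not available here: the statement assumes only \ref{firstconddetect} and \ref{thirdconddetect}, so an argument requiring the derivative bound of \ref{secondconddetect} would only prove a weaker result.
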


\begin{proof}
Condition \ref{firstconddetect} tells us that the function $F$ is holomorphic on the annulus $\big\{ a \in K \mid 0<\nu(A)<pr_0 \big\}$. Hence, as there is also no pole at infinity, it follows from the classical theory of analytic function that $F$ is represented by a power series of the form (\ref{eqnpropASclassdetect}) with
\begin{equation}
    \label{eqnannulusconds}
    \liminf\limits_{i \to \infty} \frac{\nu(a_i)}{i} \ge  pr_0 \text{, or } \liminf\limits_{i \to \infty} \frac{\nu(a_i)}{pr_0i} \ge 1.
\end{equation}
Let us now consider the place $pr_0$. Substitute $X$ by $X\pi^{-pr_0}=:X_{r_0}$ in (\ref{eqnpropASclassdetect}), we obtain
\[ F(X_{r_0})=\sum_{i=0}^{\infty} \frac{a_i}{\pi^{pr_0i}X_{r_0}^i}.\]
One can easily derive from (\ref{eqnannulusconds}) that there exists a positive integer $M$ such that, for all $i \ge M$, $\nu(a_i)-pr_0i> -p\delta=-p\delta(r_0)$. Thus, we may assume further, after replacing $F$ by another one in the same Artin-Schreier class, that $a_j=0$ for all $j < M$, $j \equiv 0 \pmod{p}$. It then follows from Proposition \ref{propcomputeordp} and \ref{thirdconddetect} that 
\[ \nu(F(X_{r_0}))=\nu\bigg(\sum_{i=0}^M  \frac{a_i}{\pi^{pr_0i}X_{r_0}^i}   \bigg)=\min_{i} (\nu(a_i)-pr_0i) = -p \delta. \]
Hence, we show $\nu(a_i) \ge p(r_0i-\delta)$, completing the proof.
\end{proof}

Let us consider the function $F$ in Proposition \ref{propASclassdetect}. We wish to find a polynomial $a$ in $T^{-1}$ such that $a^p-a$ approximates $F$ well enough to use Proposition \ref{propcomputeordp} simultaneously for all $r$ in an interval $(0,s] \cap \mathbb{Q}$ for some $0<s<pr_0$. We will then get explicit expressions for the slopes of $\delta_{\chi}$ on the interval $[0,pr_0]$, which will be useful later in this article (\S \ref{secminimal}).

For any $N \ge 1$, set
\[ a:= \sum_{j=1}^N b_j x^{-j} \in R[X].\]
\noindent Here we consider $b_j$ for the moment as indeterminates. Write
\[ F+a^p-a= \sum_{k=1}^{\infty} c_k x^{-k}, \]
\noindent where $c_k$ is a polynomial in $b_1, \ldots, b_{\min (k,m)}$. Note that $c_k=a_k \in R$ for any $k > pN$. 
\begin{lemma}
\label{lemmaapprox}
Assuming condition \ref{firstconddetect}, after replacing $K$ by some finite extension, there exists $b_1, \ldots, b_N \in R$ such that
\begin{enumerate}[label=\arabic*.]
    \item \label{lemmaapprox1st} $\nu(c_k) \ge p(r_0 k -\delta)$ for all $k$, and
    \item \label{lemmaapprox2nd} $c_{kp}=0$ for all $k \le N$.
\end{enumerate}
\end{lemma}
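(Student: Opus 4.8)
The plan is to determine the $b_j$ by a finite \emph{downward} recursion on the index $j$, using that we are in equal characteristic, so that $a^p=\sum_{j=1}^{N}b_j^{p}x^{-pj}$ with no cross terms. Combining this with the representative $F=\sum_{i\ge 0}a_i x^{-i}$, $a_i\in R$, $\nu(a_i)\ge p(r_0 i-\delta)$, furnished by Proposition~\ref{propASclassdetect} (which rests on \ref{firstconddetect} and \ref{thirdconddetect}), the coefficient of $x^{-k}$ in $F+a^p-a$ has the shape
\[ c_k = a_k + [\,p\mid k,\ k/p\le N\,]\,b_{k/p}^{p} - [\,k\le N\,]\,b_k, \]
so that the equation $c_{jp}=0$ (for $1\le j\le N$) is exactly $b_j^{p}=[\,jp\le N\,]\,b_{jp}-a_{jp}$. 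Thus the only $b$'s involved in $c_{jp}$ are $b_j$ and, when $jp\le N$, the strictly higher-index $b_{jp}$.

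Next I would run $j$ from $N$ down to $1$. At stage $j$ the element $b_{jp}$ on the right-hand side has already been constructed when $jp\le N$ (its index is larger) and is simply absent when $jp>N$; in either case the right-hand side is a definite element of the DVR currently in play, and I set $b_j$ equal to a $p$-th root of it. Since $R$ is not perfect, these $p$-th roots need not lie in $R$; but they all lie in $R[\pi^{1/p^{s}}]$ for an $s$ bounded in terms of $N$, which is again a complete DVR with residue field $k$ and with fraction field a finite purely inseparable extension of $K$. Replacing $K$ by this extension (keeping the notation unchanged, as is the convention of this paper) yields $b_1,\dots,b_N$ with $c_{jp}=0$ for all $j\le N$, which is part~\ref{lemmaapprox2nd}.

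For part~\ref{lemmaapprox1st} I would first record, by the same downward induction, the estimate $\nu(b_j)\ge pr_0 j-\delta$: from $p\,\nu(b_j)=\min\!\big(\nu(b_{jp}),\nu(a_{jp})\big)$ (dropping the first argument when $jp>N$), together with $\nu(a_{jp})\ge p(r_0\,jp-\delta)$ from Proposition~\ref{propASclassdetect} and the inductive bound on $\nu(b_{jp})$, both arguments of the minimum are at least $p^2 r_0 j-p\delta$. Given this, I would split on $k$: if $k=jp$ with $j\le N$ then $c_k=0$; otherwise the displayed formula for $c_k$ collapses to $a_k$, or to $a_k-b_k$ when $k\le N$ and $p\nmid k$, and in every case $\nu(c_k)\ge p(r_0 k-\delta)$, because $\nu(a_k)\ge p(r_0 k-\delta)$ and $\nu(b_k)\ge pr_0 k-\delta\ge p(r_0 k-\delta)$. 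The only step that is not purely formal is the passage to a finite extension to extract the iterated $p$-th roots; the rest is bookkeeping, and it goes through precisely because the valuation inequality of Proposition~\ref{propASclassdetect} is tailored so that halving exponents---the effect a $p$-th root has on a valuation---never violates it, the loss of the factor $p$ in front of $\delta$ being absorbed by $\delta\ge 0$.
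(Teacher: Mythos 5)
Your proposal is correct and follows essentially the same route as the paper: you solve the equations $c_{pj}=0$ by the same downward recursion (the equation for $b_j$ only involves the already-constructed $b_{jp}$, or no higher term at all when $jp>N$), extract $p$-th roots after a finite purely inseparable extension of $K$, and close the argument with the same valuation bookkeeping, your bound $\nu(b_j)\ge pr_0j-\delta$ being a slightly sharper form of the paper's $\nu(b_j)\ge p(r_0j-\delta)$. The only cosmetic slip is writing $p\,\nu(b_j)=\min\big(\nu(b_{jp}),\nu(a_{jp})\big)$ where in general only ``$\ge$'' holds, which is all your argument uses anyway.
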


\begin{proof}
In order to achieve \ref{lemmaapprox2nd}, we solve the equations $c_{pN}=c_{p(N-1)}=\ldots=c_p=0$ inductively. They, from top to bottom, are
\begin{equation}
\begin{split}
a_{pN}+b_N^p & = 0 \\
&\vdots\\
a_{p(N-i)}+b_{N-i}^p-b_{p(N-i)} & =0\\
&\vdots\\
a_p+b_1^p-b_p & =0,
\end{split}
\end{equation}
where $b_{p(N-i)}=0$ for $p(N-i)>N$. Moreover, one can easily checks that $b_{N-i}$ has valuation at least $p(r_0(N-i)-\delta)$. Hence, $\nu(c_{N-i})$ is at least $p(r_0(N-i)-\delta)$, proving \ref{lemmaapprox1st}
\end{proof}
\begin{remark}
The proof above also shows that there are only finitely many solutions for the $b_j$'s and that they vary analytically as the $a_i$'s do.
\end{remark}

\begin{proposition}
\label{propdetect}

Assume conditions \ref{firstconddetect},\ref{secondconddetect}, and \ref{thirdconddetect} hold. Choose $s \in (0,pr_0) \cap \mathbb{Q}$ and $N \in \mathbb{N}$ such that
\begin{equation}
\label{eqndetectbound}
    pN \ge \frac{\delta}{r_0-s}.
\end{equation}
\noindent Let $b_1, \ldots, b_N$ be as in Lemma \ref{lemmaapprox}. Define $\lambda_m(\chi) \in [0,pr_0]$ by
\[ \lambda_m(\chi):=\max(\{r \in (0,pr_0] \mid \sw_{\chi}(r,\infty)>-m \} \cup \{0 \}) \]
Set
\[ \mu_m(\chi):=\max\bigg(\bigg\{\frac{\nu(c_m)-\nu(c_k)}{m-k} \mid 1 \le k <m \bigg\} \cup \{0\} \}\bigg). \]
Then the following hold.
\begin{enumerate}[label=\alph*.]
    \item \label{propdetectitem1} For all $r \in (0,s] \cap \mathbb{Q}$, we have
    \[ [F+a^p-a]_r \not\in \kappa^p_r. \]
    \noindent Therefore,
    \[\delta_{\chi}(r)=- \frac{\nu_r(F+a^p-a)}{p}, \text{ and } \sw_{\chi}(r,\infty)=-\ord_{\infty}[F+a^p-a]_r. \]
    \item \label{propdetectitem2} We have
    \[ \lambda_m(\chi) <s \iff \mu_m(\chi)<s.\]
    \item \label{propdetectitem3} If $\lambda_m(\chi)<s$, then $\lambda_m(\chi)=\mu_m(\chi)$.
\end{enumerate}
\end{proposition}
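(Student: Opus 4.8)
The plan is to exploit the approximation set up in Lemma \ref{lemmaapprox} together with the computation of depth and boundary Swan conductor via Proposition \ref{propcomputeordp}. The guiding principle is that once we subtract off $a^p-a$ with $a=\sum_{j=1}^N b_j X^{-j}$ chosen as in Lemma \ref{lemmaapprox}, the remaining function $F+a^p-a=\sum_k c_k X^{-k}$ has \emph{no} $p$-th power coordinates $c_{pk}$ for $k\le N$, and the valuations of the $c_k$ are controlled by $\nu(c_k)\ge p(r_0k-\delta)$. First I would prove part \ref{propdetectitem1}: for $r\in(0,s]\cap\mathbb{Q}$ one has $\nu_r(F+a^p-a)=\min_k(\nu(c_k)-prk)$, and I claim this minimum is attained at some index $k_0$ with $p\nmid k_0$. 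Indeed, if the minimum were attained only at indices divisible by $p$, then for $r\le s$ the bound $\nu(c_k)\ge p(r_0k-\delta)$ combined with $pN\ge \delta/(r_0-s)$ (inequality (\ref{eqndetectbound})) forces all $p$-divisible terms with $k\le pN$ to have $\nu(c_k)-prk$ strictly larger than what a small-index prime-to-$p$ term contributes; and for $k>pN$ we have $c_k=a_k\in R$ but these have even larger valuation relative to index. More precisely the competing prime-to-$p$ index comes from the part of $F$ that produces the positive depth, which exists by \ref{thirdconddetect}. Hence $[F+a^p-a]_r$ has a monomial of prime-to-$p$ degree in $X$, so it is not a $p$-th power in $\kappa_r=k(x_{r})$, and Proposition \ref{propcomputeordp}(\ref{propcomputeorderpsecond}) applies, giving the displayed formulas for $\delta_\chi(r)$ and $\sw_\chi(r,\infty)=-\ord_\infty[F+a^p-a]_r$.

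Granting part \ref{propdetectitem1}, I would then translate $\lambda_m(\chi)$ and $\mu_m(\chi)$ into statements about the Newton polygon of $\sum_k c_k X^{-k}$. From \ref{propdetectitem1}, for $r\le s$ the slope $\sw_\chi(r,\infty)=-\ord_\infty[F+a^p-a]_r$ equals $-k(r)$ where $k(r)$ is the largest index realizing $\min_k(\nu(c_k)-prk)$; this is a standard Newton-polygon / Legendre-transform computation. Thus $\sw_\chi(r,\infty)>-m$ exactly when the minimizing index $k(r)$ is $<m$, i.e. when the term with index $m$ does not yet dominate the term with some index $k<m$. Comparing $\nu(c_m)-prm$ with $\nu(c_k)-prk$, the term of index $m$ beats the term of index $k<m$ precisely when $r\ge \frac{\nu(c_m)-\nu(c_k)}{m-k}$. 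Taking the max over $k<m$ and throwing in $0$ gives exactly $\mu_m(\chi)$ as the threshold above which index $m$ (or larger) dominates among indices $\le m$. This yields $\lambda_m(\chi)<s\iff \mu_m(\chi)<s$, which is part \ref{propdetectitem2}, and moreover shows that whenever $\mu_m(\chi)<s$ the two numbers agree, which is part \ref{propdetectitem3}. The role of hypothesis \ref{secondconddetect} (left derivative of $\delta_\chi$ at most $m$ on $(0,pr_0]$) is to guarantee that no index $k>m$ with $p\nmid k$ can intervene and spoil the comparison on $(0,s]$: it bounds the slope, hence the dominating index, from above by $m$, so the only indices that matter are $k\le m$, which is why $\mu_m(\chi)$ sees only $1\le k<m$.

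The main obstacle I expect is the careful bookkeeping in part \ref{propdetectitem1} showing the minimizing index can be taken prime to $p$ \emph{uniformly} for all $r\in(0,s]$, and that it lies in the range $k<m$ whenever $r\le \min(s,\lambda_m(\chi))$; this requires combining three ingredients — the vanishing $c_{pk}=0$ for $k\le N$, the valuation estimate $\nu(c_k)\ge p(r_0k-\delta)$, and the size condition (\ref{eqndetectbound}) on $N$ — to rule out the large-index tail ($k>pN$, where $c_k=a_k$) contributing to the minimum when $r\le s<pr_0$. Once that is in hand, the equivalences in \ref{propdetectitem2} and \ref{propdetectitem3} are purely formal manipulations of the Newton polygon of $\sum c_kX^{-k}$, closely mirroring \cite[\S 5.5]{MR3194815}; I would phrase that part by direct reference to the piecewise-linear structure of $\delta_\chi(\--,0)$ from Proposition \ref{propdeltasw} and Corollary \ref{corleftrightderivative} to keep the argument short.
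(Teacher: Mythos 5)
Your proposal matches the paper's proof: part \ref{propdetectitem1} is established exactly as you describe --- positivity of the depth (from \ref{thirdconddetect}) together with the estimate $\nu(c_M)\ge p(r_0M-\delta)$ forces the minimizing index $M=\ord_{\infty}[F+a^p-a]_r$ below $\delta/(r_0-s)\le pN$, and Lemma \ref{lemmaapprox} then rules out $p\mid M$ --- while for parts \ref{propdetectitem2} and \ref{propdetectitem3} the paper simply defers to the Newton-polygon argument of \cite[Proposition 5.19]{MR3194815}, which is what you sketch. The only slip is that $\ord_{\infty}$ picks out the \emph{smallest} index achieving the minimum of $\nu(c_k)-prk$ (the lowest-degree surviving term in $x_r^{-1}$), not the largest, and the $p$-divisible terms of index at most $pN$ vanish outright rather than merely having large valuation; neither point affects the thresholds or the conclusion.
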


\begin{remark}
\label{remarkdetect}
Note that, if $\lambda_m(\chi) \neq r$, then Proposition \ref{propdetect} implies that $\lambda_m(\chi)$ is the largest value on the interval $(0,pr_0]$ where the piece-wise linear function $\delta_{\chi}$ has a ``kink''.
\end{remark}

\begin{proof}
Fix $r \in (0, s] \cap \mathbb{Q}$ and set $M:=\ord_{\infty}[F+a^p-a]_r$. Apply Lemma \ref{lemmaapprox}, we obtain the following inequality
\begin{equation}
\label{eqnproofpropdetect1}
    \nu_r(F+a^p-a)=\nu(c_M)-prM \ge p(M(r_0-r)-\delta) \ge p(M(r_0-s)-\delta). 
\end{equation}
\noindent On the other hand, as condition \ref{thirdconddetect} forces $\delta_{\chi}(r)>0$, Proposition \ref{propcomputeordp} shows that 
\begin{equation}
\label{eqnproofpropdetect2}
 \nu_r(F+a^p-a)<0.
\end{equation}
\noindent Hence, it must be true that $\delta/(r_0-s)>M$. It then follows from (\ref{eqnproofpropdetect1}),(\ref{eqnproofpropdetect2}), and the choice of $N$ that
\begin{equation}
\label{eqnconditionN}
    M< \frac{\delta}{r_0-s} \le Np.
\end{equation}
\noindent If $M$ was divisible by $p$ then (\ref{eqnconditionN}) and Lemma \ref{lemmaapprox} part \ref{lemmaapprox2nd} would show that $c_M=0$, which contradicts the definition of $M$. Therefore, $M$ is prime to $p$, and Part \ref{propdetectitem1} of the proposition follows from Proposition \ref{propcomputeordp} and Remark \ref{remarkboundaryswan}.

The rest of the proof is exactly the same as in \cite[Proposition 5.19]{MR3194815}.
\end{proof}

The proof of the following corollary is straightforward.

\begin{corollary}
\label{cordetectlowerbound}
In the notation of Proposition \ref{propdetect}, set
\begin{align*}
    \lambda_{m,l}(\chi): &= \max(\lambda_m(\chi),l), \\
    \mu_{m,l}(\chi): &= \max(\mu_m(\chi),l).
\end{align*}
\noindent Then Proposition \ref{propdetect} still holds when replacing $\lambda_{m}(\chi)$ (resp. $\mu_{m}(\chi)$) by $\lambda_{m,l}(\chi)$ (resp. $ \mu_{m,l}(\chi)$).
\end{corollary}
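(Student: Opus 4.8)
The plan is to deduce the statement directly from Proposition \ref{propdetect} by exploiting that the operation $x \mapsto \max(x,l)$ is compatible with the strict inequality ``$\,<s\,$'' that governs parts \ref{propdetectitem2} and \ref{propdetectitem3}. The elementary fact I would isolate first is: for any real number $x$ and any constant $l$, one has $\max(x,l)<s$ if and only if $x<s$ and $l<s$; in particular, if $l\ge s$ then $\max(x,l)\ge s$ no matter what $x$ is, while if $l<s$ then $\max(x,l)<s \iff x<s$. With this in hand the rest is bookkeeping.

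Part \ref{propdetectitem1} of Proposition \ref{propdetect} mentions neither $\lambda_m(\chi)$ nor $\mu_m(\chi)$, so it holds verbatim and there is nothing to check. For part \ref{propdetectitem2}: if $l\ge s$, then $\lambda_{m,l}(\chi)=\max(\lambda_m(\chi),l)\ge s$ and likewise $\mu_{m,l}(\chi)\ge s$, so both sides of the asserted biconditional are false and it holds (vacuously). If instead $l<s$, then by the elementary fact $\lambda_{m,l}(\chi)<s \iff \lambda_m(\chi)<s$ and $\mu_{m,l}(\chi)<s \iff \mu_m(\chi)<s$, and the equivalence $\lambda_m(\chi)<s \iff \mu_m(\chi)<s$ is precisely Proposition \ref{propdetect}\ref{propdetectitem2}.

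For part \ref{propdetectitem3}: suppose $\lambda_{m,l}(\chi)<s$. Since $\lambda_{m,l}(\chi)=\max(\lambda_m(\chi),l)\ge l$ we get $l<s$, and similarly $\lambda_m(\chi)\le\lambda_{m,l}(\chi)<s$. Then Proposition \ref{propdetect}\ref{propdetectitem3} yields $\lambda_m(\chi)=\mu_m(\chi)$, and therefore $\lambda_{m,l}(\chi)=\max(\lambda_m(\chi),l)=\max(\mu_m(\chi),l)=\mu_{m,l}(\chi)$. There is no genuine obstacle in this argument; the only point requiring a moment's care is the degenerate range $l\ge s$, which is dispatched simply by observing that all the relevant ``$\,<s\,$'' conditions fail simultaneously, so the conclusions remain true by vacuity.
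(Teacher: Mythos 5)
Your proof is correct, and it is exactly the routine case analysis the paper has in mind when it declares the corollary "straightforward" and omits the argument: the key observation that $\max(x,l)<s$ iff $x<s$ and $l<s$, applied to parts \ref{propdetectitem2} and \ref{propdetectitem3}, with part \ref{propdetectitem1} unaffected. Nothing is missing, and the degenerate case $l\ge s$ is handled properly by vacuity.
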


\subsection{Refined Swan conductor of \texorpdfstring{$\mathbb{Z}/p^n$}{Zpn}-extensions}
\label{sectionrefinedSwanASWleal}

In this section, we discuss a technique for calculating the refined Swan conductors of a fixed character $\chi:=\mathfrak{K}_n(\underline{a})$, where $\underline{a} \in W_n(\mathbb{K})$. We mostly follow the settings from \cite{MR3726102} and \cite{kato_leal_saito_2019}. Different perspectives can be found in \cite{MR1465067} \cite{MR2052867}.

Write $\mathbb{K}=\kappa((\pi))$ for some $\pi \in \mathcal{O}_{\mathbb{K}}$, where $\kappa$ is the residue of $\mathbb{K}$. Let $\{b_{\lambda}\}_{\lambda \in \Lambda}$ be a lift of a $p$-basis of $\kappa$ to $\mathcal{O}_{\mathbb{K}}$. Then $\Omega_{\mathcal{O}_{\mathbb{K}}}^1(\log)$ is the $\mathcal{O}_{\mathbb{K}}$-module with basis $\{db_{\lambda}, d\log \pi: \lambda \in \Lambda \}$.

We first define valuations on $\Omega^1_{\mathbb{K}}$ and $W_n({\mathbb{K}})$ as follows.

\begin{definition}
If $\omega \in \Omega^1_{\mathbb{K}}$ and $\underline{a}=(a^1, \ldots, a^n) \in W_n({\mathbb{K}})$, let
\[  \nu^{\log} \omega=\sup \big\{ i \mid \omega \in \pi^i \otimes_{\mathcal{O}_{\mathbb{K}}} \Omega_{\mathcal{O}_{\mathbb{K}}}^1(\log) \big\}, \]
and
\begin{equation}
\label{eqnwittvaluation}
     \nu(\underline{a})=\max_i \{ -p^{n-1-i} \nu(a^i)\}=\min_i \{p^{n-1-i} \nu(a^i) \}. 
\end{equation}
If $\omega=f(x)dx \in \Omega^1_\mathbb{K}$, then we set $[\omega]=[f(x)]dx$.
\end{definition}

These valuations define increasing filtrations of $\Omega^1_{\mathbb{K}}$ and $W_n({\mathbb{K}})$ by the subgroups 
\begin{equation}
    \begin{split}
        F_s \Omega^1_{\mathbb{K}}&=\{ \omega \in \Omega^1_{\mathbb{K}} \mid \nu^{\log} \omega \ge -s \} \text{, and } \\
        F_sW_n({\mathbb{K}})&= \{\underline{a} \in W_n({\mathbb{K}}) \mid \nu(\underline{a}) \ge -s \},
    \end{split}
\end{equation}

\noindent respectively, where $n \in \mathbb{Z}_{\ge 0}$.

\begin{remark}
\label{remarkswanfiltrationdefn}
Kato defined in \cite{MR991978} the filtration $F_s \textrm{H}^1_{p^n}(\mathbb{K})$ as the image of $F_s W_n({\mathbb{K}})$ under the ASW map (\ref{eqnaswmap}). Note that, for $\chi=\mathfrak{R}(\underline{a}) \in \textrm{H}^1_{p^n}(\mathbb{K})$, the Swan conductor $\sw (\chi)$ is defined to be the smallest $s$ such that $\underline{a} \in F_s \textrm{H}^1_{p^n}(\mathbb{K})$.
\end{remark}

We shall now define what it means for a Witt vector $\underline{a} \in W_n({\mathbb{K}})$ to be ``best'' (which is a generalization of ``reducible'' in \cite[Definition 3.14]{2020arXiv200203719D}).

\begin{definition}
\label{defnbest}
Let $\underline{a} \in W_n({\mathbb{K}})$, and $s$ be the smallest non-negative integer such that $\underline{a} \in F_s W_n({\mathbb{K}})$. We say that $a$ is \textit{best} if there is no $\underline{a}' \in W_n({\mathbb{K}})$ that maps to the same element as $\underline{a}$ in $\textrm{H}^1_{p^n}(\mathbb{K})$ such that $\underline{a}' \in F_{s'} W_n({\mathbb{K}})$ for some non-negative integer $s'<s$.
\end{definition}

When $\nu(\underline{a}) \ge 0$, $\underline{a}$ is clearly best. When $\nu(\underline{a})<0$, $\underline{a}$ is best if and only if there are no $\underline{a}',\underline{b} \in W_n({\mathbb{K}})$ satisfying
\[ \underline{a}=\underline{a}'+\wp(\underline{b}) \]
\noindent and $\nu(\underline{a})<\nu(\underline{a}')$.
When $n=1$, the following result, which immediately follows from Proposition \ref{propcomputeordp}, characterizes when $a \in \mathbb{K}$ is best.

\begin{proposition}
$a \in {\mathbb{K}} \setminus \mathcal{O}_{\mathbb{K}}$ is best if and only if $a=\pi w$, where $\nu(\pi)<0$ and $w \in \mathcal{O}_{\mathbb{K}}$ with $\overline{w} \in \kappa \setminus \kappa^p.$ 
\end{proposition}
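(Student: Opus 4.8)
The statement is the $n=1$ instance of the ``best'' formalism, and the plan is to deduce it directly from Proposition~\ref{propcomputeordp}. First I would unwind Definition~\ref{defnbest} in this case: since $W_1(\mathbb{K})=\mathbb{K}$ and $F_s W_1(\mathbb{K})=\{\,b\in\mathbb{K}\mid \nu(b)\ge -s\,\}$, an element $a\in\mathbb{K}\setminus\mathcal{O}_{\mathbb{K}}$ is best precisely when $\nu(a)$ is maximal among the values $\nu\bigl(a+\wp(b)\bigr)$ with $b\in\mathbb{K}$; equivalently, $a$ itself realizes the maximum $\max_{b}\nu(a+b^{p}-b)=-p\,\delta_{\mathfrak{K}_1(a)}$ appearing in Proposition~\ref{propcomputeordp}(\ref{propcomputeorderpfirst}). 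So the whole argument is just a matter of feeding this reformulation into that proposition.

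Next I would set up the elementary translation. Write $a=\pi w$ with $\pi$ a negative power of the fixed uniformizer of $\mathbb{K}$, chosen so that $\nu(\pi)=\nu(a)<0$, and put $w:=\pi^{-1}a\in\mathcal{O}_{\mathbb{K}}^{\times}$; then $\overline{w}$ is exactly the normalized leading reduction $[a]$ of $a$ in the sense of \S\ref{secdiscannuli}. Applying Proposition~\ref{propcomputeordp}(\ref{propcomputeorderpsecond}) with the auxiliary element there taken to be $0$ shows that $a$ realizes $\max_{b}\nu(a+b^{p}-b)$ if and only if $\overline{w}=[a]\notin\kappa^{p}$, which by the previous paragraph is exactly the ``only if'' of the proposition. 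For the converse, given $a=\pi w$ with $\nu(\pi)<0$, $w\in\mathcal{O}_{\mathbb{K}}$ and $\overline{w}\in\kappa\setminus\kappa^{p}$, the condition $\overline{w}\neq 0$ forces $\nu(w)=0$, hence $\nu(a)=\nu(\pi)<0$ (so $a\notin\mathcal{O}_{\mathbb{K}}$) and $\overline{w}=[a]$, and the same criterion from Proposition~\ref{propcomputeordp} shows that $a$ realizes the maximum, i.e.\ $a$ is best.

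The only genuine content here is Proposition~\ref{propcomputeordp} (whose equal-characteristic proof the paper has already supplied, via \cite{2020arXiv200203719D}); everything else is bookkeeping, so I do not expect a real obstacle. The two points I would be careful to state explicitly are: first, that ``$\overline{w}\notin\kappa^{p}$'' has to be read with $\pi$ a power of the chosen uniformizer (replacing $\pi$ by $\pi u$ for a unit $u$ replaces $\overline{w}$ by $\overline{w}\,\overline{u}^{-1}$, which can change membership in $\kappa^{p}$), i.e.\ $\overline w$ is the normalized leading term $[a]$; and second, that although Proposition~\ref{propcomputeordp} is phrased for an order-$p$ character of a rigid disc relative to a Gauss valuation, both the formula $\delta_{\mathfrak{K}_1(a)}=-\tfrac1p\max_{b}\nu(a+b^{p}-b)$ and the non-$p$-th-power criterion depend only on the discrete valuation $\nu$ and its residue field $\kappa$, hence carry over verbatim to the abstract complete discretely valued field $\mathbb{K}$ considered here.
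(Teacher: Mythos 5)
Your proposal is correct and matches the paper, which gives no argument beyond asserting that the proposition ``immediately follows from Proposition~\ref{propcomputeordp}''; you have simply spelled out that deduction (unwinding Definition~\ref{defnbest} for $n=1$ and applying part~(\ref{propcomputeorderpsecond}) with auxiliary element $0$). Your two cautionary remarks — that $\overline{w}$ must be the normalized leading term $[a]$ with $\pi$ a power of the fixed uniformizer, and that the criterion depends only on $(\nu,\kappa)$ — are exactly the right points to make explicit.
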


\begin{definition}[{\cite[Definition 2.3]{MR3726102}}]
\label{defnrelevance}
We say that the $i$-th position of a Witt vector $\underline{a} \in W_n(\mathbb{K})$ is \textit{relevant} if $\nu(\underline{a})=p^{n-1-i}\nu(a^i)$. Let $j=\min \{i \mid \nu(a)=p^{n-1-i} \nu(a^i) \}$. We call $n-j+1$ the \textit{relevance length} of $\underline{a}$.
\end{definition}

The following result gives us an explicit algorithm to calculate the refined Swan conductors of a character using its best Witt vector representation.

\begin{theorem}{\cite[Theorem 2.7]{MR3726102}}
\label{thmbest}
Let $\underline{a} \in W_n(\mathbb{K})$. The following conditions are equivalent:
\begin{enumerate}[label=(\arabic*)]
    \item $\underline{a}$ is best.
    \item There exists some relevant position $i$ such that $a^i$ is best in the sense of length one.
    \item $\nu(\underline{a})=\nu^{\log}(d\underline{a})$, where $d\underline{a}:=\sum_i (a^i)^{p^{n-i}-1} da^i$.
\end{enumerate}
\end{theorem}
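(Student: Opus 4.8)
The plan is to prove the cycle $(2)\Rightarrow(1)\Rightarrow(3)\Rightarrow(2)$, with the length-one characterization of bestness recalled just before Definition \ref{defnrelevance} (itself a consequence of Proposition \ref{propcomputeordp}) as the base input. Throughout, write $s=-\nu(\underline a)$ and work with leading terms for the valuation $\nu$; the organizing principle is that, after passing to leading parts, both Witt-vector addition and the map $\underline a\mapsto d\underline a$ become ``linear up to carries'', and the carries that survive are governed by the weights $p^{n-1-i}$ appearing in (\ref{eqnwittvaluation}). The only genuinely delicate point is keeping track of those carries, so I isolate it in the last paragraph.

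\textbf{$(2)\Rightarrow(1)$.} Let $i_0$ be a relevant position with $a^{i_0}$ best of length one, so $\nu(\underline a)=p^{n-1-i_0}\nu(a^{i_0})$ and, by the length-one criterion, the residue of the normalized $a^{i_0}$ is not a $p$-th power. Given any $\underline b\in W_n(\mathbb K)$, inspect the $i_0$-th component of $\underline a+\wp(\underline b)=\underline a+F\underline b-\underline b$. By the Witt addition polynomials this component is $a^{i_0}+\big((b^{i_0})^p-b^{i_0}\big)$ plus a universal polynomial expression in $a^1,\dots,a^{i_0-1}$, $b^1,\dots,b^{i_0-1}$ and their Frobenius twists. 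If $\nu$ of this component were to increase past $\nu(a^{i_0})$, then the leading term of $(b^{i_0})^p-b^{i_0}$ together with the lower-position ``carry'' contributions would have to cancel the leading term of $a^{i_0}$; one checks that the leading part of $(b^{i_0})^p-b^{i_0}$ (when it reaches valuation $\nu(a^{i_0})$ at all) is a $p$-th power, while relevance of $i_0$ forces the carry contributions to have valuation $\geq\nu(a^{i_0})$ and, by an induction on $n$ applied to the lower positions, no leading term of that shape can combine to produce the missing non-$p$-th-power correction. Hence $\nu\big((\underline a+\wp(\underline b))^{i_0}\big)=\nu(a^{i_0})$, so $\nu(\underline a+\wp(\underline b))\leq p^{n-1-i_0}\nu(a^{i_0})=\nu(\underline a)$, and $\underline a$ is best in the sense of Definition \ref{defnbest}.

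\textbf{$(1)\Rightarrow(3)$ and $(3)\Rightarrow(2)$.} Compute $\nu^{\log}$ of $d\underline a=\sum_i (a^i)^{p^{n-i}-1}da^i$ term by term. The length-one fact that I would record first is: $\nu^{\log}(da^i)\geq\nu(a^i)$ always, with equality if and only if $a^i$ is best of length one (when $a^i$ is reducible, $a^i=c^p-c+a'^i$ with $\nu(a'^i)>\nu(a^i)$ forces $\nu^{\log}(da^i)=\nu^{\log}(-dc+da'^i)>\nu(a^i)$; when $a^i=\pi w$ with $\overline w\notin\kappa^p$ one computes $\nu^{\log}(da^i)=\nu(a^i)$ directly, the log-pole term and the exact term $dw$ being accounted for by $\overline w\notin\kappa^p$). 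Consequently the $i$-th summand of $d\underline a$ has $\nu^{\log}\geq p^{n-i}\nu(a^i)$, with equality iff $a^i$ is best of length one, and $p^{n-i}\nu(a^i)\geq$ the common value attached to $\nu(\underline a)$ with equality iff $i$ is relevant. Thus $\nu^{\log}(d\underline a)\geq\nu(\underline a)$ in general, and the only way equality can fail is a complete cancellation among the minimal-order summands; but such a cancellation would, exactly as in the $(2)\Rightarrow(1)$ analysis read in reverse, exhibit a representative $\underline a+\wp(\underline b)$ of strictly larger valuation, contradicting $(1)$. This proves $(1)\Rightarrow(3)$. For $(3)\Rightarrow(2)$: if $\nu^{\log}(d\underline a)=\nu(\underline a)$ then some summand attains the minimum without being cancelled, which forces the corresponding position $i$ to be both relevant (second equality) and best of length one (first equality), i.e. $(2)$.

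\textbf{Main obstacle.} The hard part is the carry bookkeeping in the Witt addition polynomials — controlling, at a relevant position, the expression in the lower entries and their Frobenius twists that gets added together with $(b^{i_0})^p-b^{i_0}$, and dually ruling out accidental cancellation among the leading differentials in $d\underline a$. The cleanest way to make this rigorous, which I would adopt, is to build the associated graded $\mathrm{gr}_\bullet W_n(\mathbb K)$ and $\mathrm{gr}_\bullet\Omega^1_{\mathbb K}(\log)$ for the valuation filtrations, show that Witt addition induces on $\mathrm{gr}$ a law in which only the top nonvanishing position survives a carry and that law is $\mathrm{gr}(\wp(\underline b))=\mathrm{gr}(b^{i})^p-\mathrm{gr}(b^{i})$ there, and that $\underline a\mapsto d\underline a$ descends to a well-defined additive map on $\mathrm{gr}$; then ``best'' becomes ``the graded image of $\mathrm{gr}(\underline a)$ under the graded $\wp$ is nonzero / the leading differential $\mathrm{gr}(d\underline a)$ is nonzero'', and $(1)$, $(2)$, $(3)$ all become the same statement about this single graded object, with the length-one Proposition supplying the base case.
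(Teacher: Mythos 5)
The first thing to note is that the paper does not prove this statement: Theorem \ref{thmbest} is imported verbatim from Leal \cite[Theorem 2.7]{MR3726102}, so there is no internal argument to compare yours against. Judged on its own, your outline has the right architecture, and it matches how this circle of results is proved in the literature: the cycle $(2)\Rightarrow(1)\Rightarrow(3)\Rightarrow(2)$, the term-by-term bound $\nu^{\log}\bigl((a^i)^{p^{n-i}-1}da^i\bigr)\ge p^{n-i}\nu(a^i)\ge\nu(\underline a)$ with equality characterized by relevance together with length-one bestness, and the implication $(3)\Rightarrow(2)$ (no cancellation forces some summand to attain the minimum) are all sound. You have also tacitly corrected the exponent $p^{n-1-i}$ in (\ref{eqnwittvaluation}) to $p^{n-i}$, which is needed for the weights on $W_n(\mathbb K)$ and on $d\underline a$ to agree; note too that your length-one lemma ``$\nu^{\log}(da^i)=\nu(a^i)$ iff $a^i$ is best'' requires $\nu(a^i)<0$ (an integral $p$-th power is best but has $da^i=0$), so the theorem should be read with $\nu(\underline a)<0$.

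The two steps that carry all the weight, however, are asserted rather than proved, and they are genuine gaps. In $(2)\Rightarrow(1)$ you must show that when the carry terms of the Witt addition $\underline a+\wp(\underline b)$ reach the critical valuation $\nu(a^{i_0})$ at the relevant position $i_0$, their leading residues lie in $\kappa^p+\wp(\kappa)$, so that together with the $p$-th power leading part of $(b^{i_0})^p-b^{i_0}$ they cannot cancel the non-$p$-th-power leading term of $a^{i_0}$; the sentence ``no leading term of that shape can combine to produce the missing correction'' \emph{is} the claim to be proved, and it needs the explicit structure of the addition polynomials (or the ghost components) together with the relevance inequalities $\nu(a^j)\ge p^{j-i_0}\nu(a^{i_0})$ — not merely an appeal to induction on $n$. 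Dually, in $(1)\Rightarrow(3)$ the assertion that a cancellation among the minimal-order summands of $d\underline a$ ``would exhibit a representative $\underline a+\wp(\underline b)$ of strictly larger valuation'' requires you to \emph{construct} $\underline b$ from the cancellation; this is the heart of Leal's proof and is not a formal consequence of reading $(2)\Rightarrow(1)$ backwards. Your proposed remedy — associated graded objects for the filtrations on $W_n(\mathbb K)$ and $\Omega^1_{\mathcal O_{\mathbb K}}(\log)$ on which $\wp$ and $\underline a\mapsto d\underline a$ become computable — is exactly the right device, but it is only described, not built; until the graded addition law and the additivity of the graded $d$ are verified, the proof is incomplete. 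Given that the paper itself treats this as a quoted external result, the efficient fix is simply to cite \cite{MR3726102} rather than reprove it.
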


In particular, when $\underline{a}$ is best, the differential conductor of the corresponding character can be calculated as follows.

\begin{proposition}
\label{propcalculaterefinedswanwitt}
Suppose $\underline{a} = (a^1, \ldots, a^n) \in W_n(\mathbb{K})$ is defined with relevance length $n-j+1$, and let $\chi := \mathfrak{K}_n(\underline{a}) \in \mathrm{H}^1_{p^n}(\mathbb{K})$. Then, $\mathrm{sw}(\chi) = \nu(d\underline{a})$, and if $\mathrm{sw}(\chi) > 0$, we have
$$\dsw(\chi)=[d\underline{a}]=\sum_{i=j}^n [a^i]^{p^{n-i}-1} d[a^i].$$
\end{proposition}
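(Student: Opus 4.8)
The plan is to deduce the formula directly from Theorem \ref{thmbest} together with Kato's definition of the refined Swan conductor via the filtration $F_s\textrm{H}^1_{p^n}(\mathbb{K})$ recalled in Remark \ref{remarkswanfiltrationdefn}. First I would observe that since $\underline{a}$ is best, the Swan conductor $\sw(\chi)$ equals $-\nu(\underline{a})$ by definition of best (no representative of the class of $\underline{a}$ lies in a lower piece of the filtration $F_sW_n(\mathbb{K})$, so the minimal $s$ with $\underline{a}\in F_sW_n(\mathbb{K})$ is $-\nu(\underline{a})$, and this minimal $s$ is exactly $\sw(\chi)$). Then the equivalence (1)$\Leftrightarrow$(3) of Theorem \ref{thmbest} gives $\nu(\underline{a})=\nu^{\log}(d\underline{a})$, hence $\sw(\chi)=-\nu^{\log}(d\underline{a})=\nu(d\underline{a})$ in the sign convention being used; this settles the first assertion.

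For the differential conductor, the key input is Kato's construction of $\rsw^{\ab}$ from the symbol map on $\Omega^1$: the refined Swan conductor of a character represented by a best Witt vector $\underline{a}$ is read off from the class of $d\underline{a}=\sum_i (a^i)^{p^{n-i}-1}\,da^i$ in the appropriate graded piece $\mathrm{gr}_s\,\Omega^1_{\mathcal{O}_\mathbb{K}}(\log)$, and $\dsw(\chi)$ is by definition the reduction $[d\underline{a}]$ of $\pi^{\sw(\chi)}d\underline{a}$ modulo $\mathfrak{m}$ (compatibly with the relation \eqref{eqnrswab}). So I would cite Theorem 2.7 of \cite{MR3726102} (which is Theorem \ref{thmbest} here) and the surrounding discussion in that reference, where precisely this identification $\dsw(\mathfrak{K}_n(\underline{a}))=[d\underline{a}]$ for best $\underline{a}$ is established; the Witt-vector differential formula $d\underline{a}=\sum_i (a^i)^{p^{n-i}-1}da^i$ is the standard expression for the logarithmic derivative of a Witt vector under the Artin-Schreier-Witt/Kato pairing. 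Finally, to get the stated truncation of the sum to $i=j,\dots,n$, I would use that the relevance length is $n-j+1$: for $i<j$ one has $\nu(a^i)>p^{j-i}\nu(a^j)\ge\cdots$, so $\nu^{\log}\big((a^i)^{p^{n-i}-1}da^i\big)>\nu(\underline{a})=\nu^{\log}(d\underline{a})$, meaning those terms lie in a strictly smaller piece of the filtration and vanish after passing to $[\,\cdot\,]$ in $\mathrm{gr}$. Hence $[d\underline{a}]=\sum_{i=j}^n[a^i]^{p^{n-i}-1}d[a^i]$, where $[a^i]$ denotes the reduction of $\pi^{-\nu(a^i)}a^i$.

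The main obstacle here is not any deep argument but rather making the normalizations and sign conventions line up: the paper scales the valuation, Swan conductor, and the "$1/p$" depth in its own way (see \S\ref{secSwan} and the note that things are "modified to be consistent with \cite{MR2377173}"), and the reference \cite{MR3726102} may use a different convention for $\nu^{\log}$ and for which representative of the log-differential is reduced. So the real work is a careful bookkeeping check that $[d\underline{a}]$ as defined in Definition (the $[\omega]=[f(x)]dx$ convention) agrees with $\dsw(\chi)=\omega_{\chi\lvert_{\mathcal{D}}}$ as defined in \S\ref{secSwan} via $\rsw^{\ab}$ and \eqref{eqnrswab}; once that is pinned down, the vanishing of the irrelevant terms and the identity $\sw(\chi)=\nu(d\underline{a})$ are immediate from Theorem \ref{thmbest}. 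I would therefore structure the proof as: (i) $\sw(\chi)=-\nu(\underline{a})$ from bestness; (ii) $-\nu(\underline{a})=\nu(d\underline{a})$ from Theorem \ref{thmbest}(3); (iii) $\dsw(\chi)=[d\underline{a}]$ citing \cite{MR3726102}; (iv) relevance-length bound to drop the terms $i<j$.
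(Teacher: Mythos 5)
Your outline follows the same route as the paper: bestness pins down $\sw(\chi)$, Theorem \ref{thmbest}(3) converts $-\nu(\underline{a})$ into the valuation of $d\underline{a}$, and the differential conductor is read off from $[d\underline{a}]$, with the terms $i<j$ dying in the associated graded because of the relevance length. Steps (i), (ii) and (iv) are fine as you state them.

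The weak point is step (iii), and it is precisely the point you wave off as ``bookkeeping.'' The identification $\dsw(\chi)=[d\underline{a}]$ is \emph{not} established inside \cite{MR3726102} in the form you need: Leal defines a map $\rsw$ on $F_d\textrm{H}^1_{p^n}(\mathbb{K})$ sending a best $\underline{a}$ to $d\underline{a}$, but the quantity $\dsw$ in this paper is defined through Kato's $\rsw^{\ab}$ via (\ref{eqnrswab}), and the agreement of Leal's map with $\rsw^{\ab}$ is a genuine theorem, not a normalization check --- it is \cite[Theorem 1.5]{MR4045428} (Kato--Saito), which is exactly the citation the paper's proof hinges on. Once that comparison is invoked, one gets $\rsw^{\ab}(\chi)=\pi^{-\nu(d\underline{a})}\otimes[d\underline{a}]$ and both assertions follow from (\ref{eqnrswab}); without it, your step (iii) has no support, since citing \cite{MR3726102} alone only gives you Leal's map, not its coincidence with the refined Swan conductor used to define $\delta_\chi$ and $\omega_\chi$ here. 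So the fix is small --- replace ``careful bookkeeping'' by the Kato--Saito comparison theorem --- but as written the proposal leaves its one substantive input unjustified.
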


\begin{proof}
In \cite[\S 2]{MR3726102}, the author defines the map $\rsw: F_d \textrm{H}_{p^n}^1(\mathbb{K}) \xrightarrow{} F_d \Omega^1_{\mathbb{K}}/ \allowbreak F_{\lfloor d/p \rfloor} \Omega^1_{\mathbb{K}}$ (which is the same as $\rsw$ in \cite{MR4045428}) sending a best $\underline{a}$, which represents $\chi$, to $d\underline{a}$. In \cite[Theorem 1.5]{MR4045428}, Kato and Saito show that $\rsw$ in Leal's coincides with $\rsw^{\ab}$ defined in \cite{MR948251} (mentioned in (\ref{eqnrswab})). See also \cite[Lemma 3.7]{MR991978} and the discussion following Proposition 6.8 of the same paper. In particular, in the notation above, we have
\[ \rsw^{\ab}(\chi)=\pi^{-\nu(d\underline{a})} \otimes [d\underline{a}].  \]
The rest then follows immediately from (\ref{eqnrswab}). 
\end{proof}

\begin{example}
\label{excalculatemainex}
In this example, we calculate the refined Swan conductors of some restrictions of the deformation in Example \ref{exmain}. Recall the $\chi$ is defined by the ASW equation
\begin{equation}
\label{order4deformationrepeat}
    \wp(Y_1,Y_2)=\bigg(\frac{1}{X^2(X-t^8)}, \frac{1}{X^3(X-t^8)^2(X-t^2)^2(X-t^2(1+t^2))^2} \bigg).
\end{equation}
Over the subdisc $\mathcal{D}[2]$ associated with $\spec R[[t^{-8}X]]$, substitute $t^{-8}X$ by $X_4$ in (\ref{order4deformationrepeat}), we obtain
\begin{equation*}
    \bigg(\frac{1}{t^{24}X_4^2(X_4-1)}, \frac{1}{t^{48}X_4^3(X_4-1)^2(t^6X_4-1)^2(t^6X_4-1(1+t^2))^2} \bigg).
\end{equation*}
As $48=24 \cdot 2$ and
\[ \frac{1}{x^2(x-1)} \frac{d}{dx} \bigg( \frac{1}{x^2(x-1)} \bigg)+\frac{d}{dx} \bigg( \frac{1}{x^3(x-1)^2} \bigg) =\frac{dx}{x^3(x-1)^3} \neq 0, \]
it follows from Proposition \ref{propcalculaterefinedswanwitt} that $\delta_{\chi_2}(4)=24$ and
\[ \omega_{\chi_2}(4)=\frac{dx}{x^3(x-1)^3}, \]
\noindent which is not an exact differential form as in the $\mathbb{Z}/p$-covers case. Observe that $\mathcal{C}(\omega_{\chi_2}(4))=\omega_{\chi_1}(4)=\frac{dx}{x^2(x-1)^2}$, where $\chi_1=\chi^p$ and $\mathcal{C}$ is the Cartier operator of $\Omega^1_{k(x)}$. This phenomenon will be discussed in \S \ref{seccartierprediction}.

Iterate the above computation for $\mathcal{D}[1]$ (associated with $\spec R[[t^{-2}X]]$, we get $\delta_{\chi_2}(1)=9$ and $\omega_{\chi_2}(1)=\frac{dx}{x^6(x-1)^4}$, which is exact thus satisfies Theorem \ref{theoremCartierprediction} part \ref{theoremCartierpredictionpart2b}. 
\end{example}

\subsubsection{Conditions on the refined Swan conductors of cyclic covers}
\label{seccartierprediction}
In order to answer induction type questions like Proposition \ref{propmainonepointcover}, one would be interested in learning what the $n$-level can be when the $(n-1)$-level is known. The next theorem, which is the equal-characteristic analog of \cite[Theorem 1.2]{MR3167623}, will do exactly that for our situation.

Let $\chi_n \in \textrm{H}^1(K,\mathbb{Z}/p^n)$ be a radical character of order $p^n$, with $n \ge 1$. For $i=1, \ldots, n$, we set
$$ \chi_i:=\chi_n^{p^{n-i}}, \hspace{5mm} \delta_i:=\delta_{\chi_i}, \hspace{5mm} \omega_i:=\omega_{\chi_i}.  $$
The tupel $(\delta_i, \omega_i)_{i=1, \ldots, n}$ is called the \textit{ramification datum} associated to $\chi_n$. For $1 \le j \le n$, the pair $(\delta_j, \omega_j)$ is called $\chi_n$'s \textit{$j$th ramification datum}.
\begin{theorem}
\label{theoremCartierprediction}
Let $\chi_n$ and $(\delta_i,\omega_i)_{i=1,\ldots,n}$ be as above. Let $\mathcal{C}: \Omega^1_{\kappa} \xrightarrow{} \Omega^1_{\kappa}$ be the Cartier operator \cite[\S 3.2]{MR3167623}. Then for all $i=1, \ldots, n$ the following holds.
\begin{enumerate}[label=(\roman*)]
    \item \label{theoremCartierpredictionpart1} $\mathcal{C}(\omega_1)=0$
    \item \label{theoremCartierpredictionpart2} Suppose $i>1$. Then
    \[\delta_i \ge p \delta_{i-1}. \]
    Moreover, we have
    \begin{enumerate}
        \item \label{theoremCartierpredictionpart2a} $\delta_i=p\delta_{i-1} \Rightarrow \mathcal{C}(\omega_i)=\omega_{i-1}$,
        \item \label{theoremCartierpredictionpart2b} $\delta_i >p\delta_{i-1}  \Rightarrow \mathcal{C}(\omega_i)=0$.
    \end{enumerate}
\end{enumerate}
\end{theorem}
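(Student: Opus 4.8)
The plan is to reduce the statement to a computation with best Witt vector representatives and then apply the explicit formula for the differential Swan conductor from Proposition~\ref{propcalculaterefinedswanwitt}. First I would choose, after enlarging $R$ if necessary, a best representative $\underline{a}=(a^1,\ldots,a^n)\in W_n(\hat{\mathbb{K}}_0)$ of (the restriction to the relevant boundary/disc of) $\chi_n$, and observe that truncation $\underline{a}_{(i)}:=(a^1,\ldots,a^i)$ represents $\chi_i=\chi_n^{p^{n-i}}$ for each $i$, though $\underline{a}_{(i)}$ need not itself be best. By Theorem~\ref{thmbest} and Proposition~\ref{propcalculaterefinedswanwitt}, $\sw(\chi_n)=\nu(d\underline{a})=\nu^{\log}(d\underline{a})$ and $\dsw(\chi_n)=[d\underline{a}]$, where $d\underline{a}=\sum_{i}(a^i)^{p^{n-i}-1}\,da^i$; the key identity to exploit is the telescoping relation $d\underline{a}_{(i)}=d\underline{a}_{(i-1)}+(a^i)^{p^{n-i}\cdot(p^{i-1}/p^{i-1})-1}$ — more precisely, rescaling so that the Witt-vector valuation weights match, $d\,(\text{rep of }\chi_i)$ relates to $d\,(\text{rep of }\chi_{i-1})$ by adding the single term $(a^i)^{p^{0}-1}da^i$ after the appropriate Frobenius twist. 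The cleanest route is to compare $\chi_i$ with $\chi_{i-1}$ via Lemma~\ref{lemmacombination}: write $\chi_i=\chi_{i-1}'\cdot\mathfrak{K}_i(0,\ldots,0,a^i)$ inside $\textrm{H}^1_{p^i}$ — no, rather use that the Verschiebung/Frobenius structure forces $\sw(\chi_i)\ge p\,\sw(\chi_{i-1})$ with a controlled discrepancy.

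Concretely, for part \ref{theoremCartierpredictionpart1} I would argue that $\chi_1$ is an order-$p$ character with $\delta_1>0$, so by Proposition~\ref{propcomputeordp} its differential conductor has the form $dg$ with $g=[F+a^p-a]_r\notin\kappa_r^p$; since $\mathcal{C}$ annihilates exact forms (and $\mathcal{C}(dg)=0$ is the defining property that the image of $\mathcal C$ consists of logarithmic-type forms while $dg$ is exact), we get $\mathcal{C}(\omega_1)=0$. For part \ref{theoremCartierpredictionpart2}, the inequality $\delta_i\ge p\delta_{i-1}$ should follow from the Witt-vector valuation formula \eqref{eqnwittvaluation}: if $\underline{a}$ is a best representative of $\chi_i$ then $\nu(\underline a)=-p\delta_i$ and its truncation represents $\chi_{i-1}$ with $\nu(\text{trunc})\ge -p\delta_{i-1}$, but the weighting $p^{n-1-j}$ in \eqref{eqnwittvaluation} shifts by a factor of $p$ under truncation, giving $p\delta_i\ge p\cdot p\delta_{i-1}$, i.e.\ $\delta_i\ge p\delta_{i-1}$; one must check that passing to a best representative of $\chi_{i-1}$ only decreases the valuation, which is exactly the definition of best. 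The Cartier relations then come from Theorem~\ref{thmbest}(3): when $\delta_i=p\delta_{i-1}$, the relevant position computation gives $d\underline{a}_{(i)}=(a^i)^{p-1}da^i + (\text{Frobenius twist of }d\underline{a}_{(i-1)})$, and since $\mathcal{C}\big((a^i)^{p-1}da^i\big)=da^i$ reduces modulo $p$-th powers appropriately while $\mathcal{C}$ of a $p$-th-power-twisted form $h^{p}\eta$ equals $h\,\mathcal{C}(\eta)$, one extracts $\mathcal{C}(\omega_i)=\omega_{i-1}$; when $\delta_i>p\delta_{i-1}$, the term $d\underline{a}_{(i-1)}$ contributes at strictly smaller valuation than $\sw(\chi_i)$, hence does not appear in $[d\underline{a}_{(i)}]$, and the surviving piece is a pure $\mathcal{C}$-kernel element, giving $\mathcal{C}(\omega_i)=0$.

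The main obstacle I anticipate is the bookkeeping of valuations and Frobenius twists when truncating a best Witt vector: a truncation of a best representative is typically \emph{not} best, so I cannot directly apply Proposition~\ref{propcalculaterefinedswanwitt} to $\underline{a}_{(i-1)}$. The resolution is to invoke the \emph{relevance length} (Definition~\ref{defnrelevance}) and Theorem~\ref{thmbest}: choose $i$ to be a relevant position, so that the $i$-th entry $a^i$ is best in the length-one sense and controls $\nu(\underline{a})$; then the relation between $d\underline{a}_{(i)}$ and $d\underline{a}_{(i-1)}$ is governed entirely by the top term $(a^i)^{p^{n-i}-1}da^i$, and the dichotomy ``$\delta_i=p\delta_{i-1}$ vs.\ $\delta_i>p\delta_{i-1}$'' is precisely the dichotomy ``position $i-1$ is also relevant vs.\ not''. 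Once this is set up, the two Cartier identities are a direct consequence of the two basic properties of the Cartier operator, namely $\mathcal{C}(df)=0$ and $\mathcal{C}(f^p\omega)=f\,\mathcal{C}(\omega)$, applied to the explicit form $\sum_{i}[a^i]^{p^{n-i}-1}d[a^i]$. I would also cross-check the whole argument against the worked computation in Example~\ref{excalculatemainex}, where $\delta_{\chi_2}(4)=24=2\cdot 12$ indeed forces $\mathcal{C}(\omega_{\chi_2}(4))=\omega_{\chi_1}(4)$, while $\delta_{\chi_2}(1)=9>2\cdot\delta_{\chi_1}(1)$ forces $\mathcal{C}(\omega_{\chi_2}(1))=0$ — consistent with the stated exactness of $\omega_{\chi_2}(1)$.
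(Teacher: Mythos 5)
Your proposal is essentially the paper's own argument: part~\ref{theoremCartierpredictionpart1} from the exactness of $\omega_1$ supplied by Proposition~\ref{propcomputeordp}, and part~\ref{theoremCartierpredictionpart2} by computing $\dsw$ from a best Witt-vector representative via Theorem~\ref{thmbest} and Proposition~\ref{propcalculaterefinedswanwitt}, with the dichotomy read off from the relevance structure and the identities $\mathcal{C}(df)=0$, $\mathcal{C}(f^{pm-1}df)=f^{m-1}df$; your valuation argument for $\delta_i\ge p\delta_{i-1}$ is fine and is, if anything, a bit cleaner than the paper's case split by relevance length.

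Two points need tightening. First, the term of $d\underline{a}_{(i)}$ at the top position is $(a^i)^{p^{i-i}-1}da^i=da^i$ (as you correctly state in your first paragraph), \emph{not} $(a^i)^{p-1}da^i$; taken literally, your later formula $\mathcal{C}\bigl((a^i)^{p-1}da^i\bigr)=da^i$ would leave a spurious exact summand and give $\mathcal{C}(\omega_i)=d[a^i]+\omega_{i-1}$. The correct bookkeeping is that the top term is exact and is killed by $\mathcal{C}$, while each lower term $[a^j]^{\,p\cdot p^{i-1-j}-1}d[a^j]$ maps to the corresponding term of $\omega_{i-1}$. Second, your worry that the truncation $\underline{a}_{(i-1)}$ of a best representative need not be best is resolved in the paper by adjusting it with an Artin--Schreier--Witt operation; alternatively, note that in case~\ref{theoremCartierpredictionpart2a} the chain $p\delta_{i-1}=\sw(\chi_{i-1})\le-\nu(\underline{a}_{(i-1)})\le-\nu(\underline{a}_{(i)})/p=\delta_i=p\delta_{i-1}$ forces equality, so the truncation is automatically best there, and in case~\ref{theoremCartierpredictionpart2b} (relevance length one) the truncation is not needed at all since $\omega_i=d[a^i]$ is already exact. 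With these repairs your outline reproduces the paper's proof.
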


\begin{proof}
Part \ref{theoremCartierpredictionpart1} follows immediately from the exactness of $\omega_1$ asserted by Proposition \ref{propcomputeordp}. 

Suppose the length-$i$-Witt-vector $\underline{a}_i=(a^1, \ldots, a^i)$ that defines $\chi_i$ is best with relevance length $l$. One may assume, after an ASW operation, that $\underline{a}_{i-1}=(a^1, \ldots, a^{i-1})$ is also best. Suppose first that $l=1$. Then, it follows from the definition that $$\delta_{i}=-\nu(a^i)/p>-\max_{j<i}\{p^{i-1-j}\nu(a^j)\}/p = p\delta_{i-1}.$$ In addition, the differential form $\omega_i=[d\underline{a}_i]=d[a^i]$ is exact, proving \ref{theoremCartierpredictionpart2b}.

Let us now consider the case where the relevance length $l>1$. Thus, the relevance length of $\overline{a}_{i-1}$ is $l-1$. An easy computation shows
\[ \mathcal{C}(\omega_i)= \mathcal{C} \bigg( \sum_{m=i-l+1}^i  [a^m]^{p^{n-m}-1} d[a^m]   \bigg) =\sum_{m=i-l+1}^{i-1}  [a^m]^{p^{n-m-1}-1} d[a^m]=\omega_{i-1},   \]
confirming \ref{theoremCartierpredictionpart2a}.
\end{proof}

\begin{definition}
\label{defnextensiondegdata}
In the notation of Theorem \ref{theoremCartierprediction} \ref{theoremCartierpredictionpart2}, we say that $(\delta_i, \omega_i)$ \textit{extends} $(\delta_{i-1}, \omega_{i-1})$.
\end{definition}

\begin{definition}
We call the equation $\mathcal{C}(y)=w$ for a given $w \in \Omega^1_{\kappa}$ the \emph{Cartier operator equation}. Some solutions to this equation will be discussed in \S \ref{secsolutioncartierequation}.
\end{definition}

\begin{remark}
The above theorem can also be proved quite easily by adapting the computation from \cite{MR3167623} to the equal-characteristic case.
\end{remark}

The following result says that the inverse of Theorem \ref{theoremCartierprediction} also holds. That is the equal-characteristic version of \cite[Theorem 4.6]{MR3167623}. The theorem itself is not employed in this paper. However, it contains the key idea of the main theorem's proof.  

\begin{theorem}
\label{theoremCartiersuff}
Let $(\delta_i, \omega_i)_{i=1, \ldots, n}$ be a tuple satisfying Theorem \ref{theoremCartierprediction}. Then, after a finite extension of $K$, there exists a radical character $\chi$ of order $p^n$ on $\mathbb{K}$, such that $(\delta_i, \omega_i)_i$ is the ramification datum associated to $\chi$.
\end{theorem}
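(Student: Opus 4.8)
The plan is to induct on the length $n$. At each stage I will have produced a radical character $\chi_k$ of order $p^k$ with ramification datum $(\delta_i,\omega_i)_{i\le k}$, represented by a \emph{best} Witt vector $\underline a_k=(a^1,\dots,a^k)\in W_k(\mathbb K)$, and I will pass from level $k-1$ to level $k$ by adjoining a single top entry $a^k$. For the base case $n=1$, the condition $\mathcal C(\omega_1)=0$ means precisely that $\omega_1$ is exact, $\omega_1=dg_1$ for some $g_1\in\kappa$; since $\omega_1\neq 0$ (the character is radical) we automatically have $g_1\notin\kappa^p$. After a finite extension of $K$ choose $a^1\in\mathbb K$ with $\nu_0(a^1)=-p\delta_1$ and $[a^1]_0=g_1$; by Proposition~\ref{propcomputeordp}, $\chi_1:=\mathfrak K_1(a^1)$ is radical with depth $\delta_1$ and differential conductor $dg_1=\omega_1$, and $a^1$ is best of length one.

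For the inductive step, assume $\chi_{n-1}$ has been built with best representative $\underline a_{n-1}$ whose relevant positions are $\{j,\dots,n-1\}$, so that $\omega_{n-1}=\sum_{i=j}^{n-1}[a^i]^{p^{n-1-i}-1}\,d[a^i]$ by Proposition~\ref{propcalculaterefinedswanwitt}. I want $a^n\in\mathbb K$ so that $\underline a_n:=(a^1,\dots,a^n)$ is best with top depth $\delta_n$ and top differential conductor $\omega_n$; then $\chi_n:=\mathfrak K_n(\underline a_n)$ satisfies $\chi_n^p=\chi_{n-1}$, so all lower data are inherited, and $\chi_n$ has order exactly $p^n$ since $\chi_n^{p^{n-1}}=\chi_1\neq 1$. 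The crucial computational fact is that, for $i<n$, the term $[a^i]^{p^{n-1-i}-1}d[a^i]$ appearing in $\omega_{n-1}$ is the Cartier image of $[a^i]^{p^{n-i}-1}d[a^i]$ (because $\mathcal C(h^{pr-1}\,dh)=h^{r-1}\,dh$). Hence, when $\underline a_{n-1}$ is lengthened to $\underline a_n$, the positions $\{j,\dots,n-1\}$ remain relevant and contribute to $[d\underline a_n]$ the form $\eta:=\sum_{i=j}^{n-1}[a^i]^{p^{n-i}-1}\,d[a^i]$, with $\mathcal C(\eta)=\omega_{n-1}$ and top depth $p\delta_{n-1}$; equivalently, writing $\chi_n=\chi_n'\cdot\rho$ (Witt addition splits off the last entry) with $\chi_n':=\mathfrak K_n(a^1,\dots,a^{n-1},0)$ (top datum $(p\delta_{n-1},\eta)$) and $\rho:=\phi_{1,n}\big(\mathfrak K_1(a^n)\big)$, Lemma~\ref{lemmacombination} will govern how the two top data combine.

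If $\delta_n>p\delta_{n-1}$, the hypothesis gives $\mathcal C(\omega_n)=0$, so $\omega_n=dg_n$ is exact with $g_n\notin\kappa^p$; choose, over a finite extension of $K$, an $a^n$ with $\nu_0(a^n)=-p\delta_n$ and $[a^n]_0=g_n$. Then the $n$th position dominates and is the unique relevant position of $\underline a_n$, $a^n$ is best of length one, so $\underline a_n$ is best by Theorem~\ref{thmbest}; by Proposition~\ref{propcalculaterefinedswanwitt} its top datum is $(\delta_n,d[a^n])=(\delta_n,\omega_n)$, and this also follows from Lemma~\ref{lemmacombination}\ref{lemmacombination1} applied to $\chi_n'\cdot\rho$. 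If $\delta_n=p\delta_{n-1}$, the hypothesis gives $\mathcal C(\omega_n)=\omega_{n-1}=\mathcal C(\eta)$, so $\omega_n-\eta=dg$ is exact; if $\omega_n\neq\eta$ choose an antiderivative $g$ (automatically $\notin\kappa^p$) and an $a^n$ with $[a^n]_0=g$ and $\nu_0(a^n)=-p\delta_n$, chosen so that the $n$th position becomes relevant, tied with $\{j,\dots,n-1\}$. Then $[d\underline a_n]=\eta+d[a^n]=\omega_n$ while the top depth remains $\delta_n$, and $\underline a_n$ is still best, now witnessed by the inherited relevant positions; Lemma~\ref{lemmacombination}\ref{lemmacombination2} (valid since $\eta+dg=\omega_n\neq 0$) reconfirms the top datum. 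If $\omega_n=\eta$ simply take $a^n=0$. Iterating up to level $n$ and collecting the finitely many field extensions yields the desired radical character $\chi$ of order $p^n$ with ramification datum $(\delta_i,\omega_i)_{i=1,\dots,n}$.

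I expect the main obstacle to be the borderline case $\delta_n=p\delta_{n-1}$: one must verify that the correction $dg=\omega_n-\eta$, when nonzero, has exactly the order forced by the depth $\delta_n$ --- it cannot be larger without raising the depth, and if it were strictly smaller one would already have $\omega_n=\eta$ --- so that adjoining $a^n$ with $[a^n]_0=g$ genuinely makes the $n$th position relevant and preserves best-ness of $\underline a_n$. This pole-order bookkeeping, coupled with the compatibility of $\mathcal C$ with the exponent shift $p^{n-1-i}\rightsquigarrow p^{n-i}$ caused by lengthening the Witt vector, is where the real work lies. The repeated passages to finite extensions of $K$ --- needed to solve for prescribed reductions, to land in $\kappa\setminus\kappa^p$, and to guarantee weak unramifiedness at $\nu_0$ --- are routine and finite in number.
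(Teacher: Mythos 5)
Your proposal is correct and follows essentially the same route as the paper: your $\chi_n'=\mathfrak{K}_n(a^1,\dots,a^{n-1},0)$ is exactly the paper's minimal extension $\chi_{\min}$ (Lemma \ref{lemmaminimalextension}, via best-ness and Theorem \ref{thmbest}), and the correction by the order-$p$ character $\rho$ built from an exact antiderivative, combined through Lemma \ref{lemmacombination}, is precisely the paper's inductive step in both the cases $\delta_n=p\delta_{n-1}$ and $\delta_n>p\delta_{n-1}$. The "main obstacle" you flag at the end is actually moot: once you prescribe $\nu_0(a^n)=-p\delta_n$ and invoke Proposition \ref{propcomputeordp} for $\rho$, Lemma \ref{lemmacombination} determines the top datum of the product without any need to re-verify relevance or best-ness of the lengthened Witt vector.
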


\begin{proof}
We do induction on $i$. Suppose first that we are given a dual $(\delta_1, \omega_1)$, where $\delta_1 \in \mathbb{Q}_{\ge 0}$ and $\omega_1 \in \Omega^1_{\kappa}$ such that $\mathcal{C}(\omega_1)=0$. By the previous discussion, we may assume without loss of generality that $\omega_1=dw$ for some $w \in \kappa \setminus \kappa^p$. Let $W$ be a lift of $w$ to $\mathbb{K}$. Then, it immediately follows from Proposition \ref{propcomputeordp} that the character $\chi_u \in \textrm{H}^1_p({\mathbb{K}})$ where $u:=\pi^{-p\delta_1}w$ verifies $\delta_{\chi_u}=\delta_1$ and $\omega_{\chi_u}=\omega_1$, confirming the base case.

We complete the induction process in two steps corresponding to the following two lemmas.

\begin{lemma}
\label{lemmaminimalextension}
Let $\chi_{n-1} \in \textrm{H}^1_{p^{n-1}}({\mathbb{K}})$ be a radical character of order $p^{n-1}$. Then there exists a character $\chi_{\min} \in \textrm{H}^1_{p^n}(\mathbb{K})$ extending $\chi_{n-1}$ and such that $\delta_{\chi_{\min}}=p\delta_{n-1}$.
\end{lemma}

\begin{proof}
 One may assume that $\chi_{n-1}$ is given by $\underline{f}=(f^1, \ldots, f^{n-1}) \in W_{n-1}({\mathbb{K}})$ where $\underline{f}$ is best (Definition \ref{defnbest}). It follows from Theorem \ref{thmbest} that there exists some relevant position $0 \le i \le n-2$ such that $f^i$ best in the sense of length one. Recall that it means $\nu(f)=p^{n-2-i}\nu(f^i)=-\delta_{n-1}$, and $f^i=\pi^{-\nu(f^i)} w$ where $\overline{w} \in \kappa \setminus \kappa^p$. Consider the character $\chi_{\min}$ defined by $\underline{f}'=(f^1, \ldots, f^{n-1},0) \in W_n({\mathbb{K}})$. It is immediate from the definition that $\chi_{\min}^p=\chi_{n-1}$, and $\nu(\underline{f}')=p^{n-i}\nu(f^i)$. Hence, $i$ is also a relevant position of $f'$. Therefore, it again follows from Theorem \ref{thmbest} that $f'$ is best. Thus, we have $\delta_{\chi_{\min}}=-p^{n-i-1}\nu(f^i)=p\delta_{n-1}$.
\end{proof}

\begin{lemma}
Let $\chi_{n-1} \in \textrm{H}^1_{p^{n-1}}({\mathbb{K}})$ be a radical character of order $p^{n-1}$ with $(n-1)$th ramification datum $(\delta_{n-1}, \omega_{n-1})$. Suppose $(\delta_n, \omega_n)$ extends $(\delta_{n-1}, \omega_{n-1})$ in the sense of Definition \ref{defnextensiondegdata}. Then there exists an extension $\chi_{n}$ of $\chi_{n-1}$ such that $\delta_{\chi_n}=\delta_n$ and $\omega_{\chi_n}=\omega_n$.
\end{lemma}

\begin{proof}
Let $\chi_{\min}$ be the extension of $\chi_{n-1}$ with $\delta_{\chi_{\min}}=p \delta_{n-1}$, which exists by Lemma \ref{lemmaminimalextension}. Set $\omega_{\min}:=\dsw_{\chi_{\min}}$. It follows from Theorem \ref{theoremCartierprediction} \ref{theoremCartierpredictionpart2a} that $\mathcal{C}(\omega_{\min})=\omega_{n-1}$. If $\delta_n=p\delta_{n-1}$ and $\omega_n=\omega_{\min}$, then we are done. Otherwise, set $0 \neq \eta:=\omega_{n}-\omega_{\min}$. Since $\mathcal{C}(\eta)=\omega_{n-1} -\omega_{n-1}=0$, the differential form $\eta$ is exact. We thus may write $\eta=du$ for some $u \in \kappa$. Just as in the base case, one shows that the character $\psi \in \textrm{H}^1_p({\mathbb{K}})$ defined by $\pi^{-p\delta_n} U$, where $U$ is a lift of $u$ to $\mathbb{K}$, has depth $\delta_{\psi}=\delta_n$ and differential Swan $\dsw_{\psi}=\eta$. Set $\chi_n:=\chi_{\min} \cdot \psi$. Lemma \ref{lemmacombination} asserts that $\delta_{\chi_n}=\delta_n$ and $\dsw_{\chi_n}=\omega_n$, as desired.

A similar process applies for the case $\delta_n > p \delta_{n-1}$. As before, there exists $\psi \in \textrm{H}^1_p({\mathbb{K}})$ with $\delta_{\psi}=\delta$ and $\dsw(\psi)=\omega_n$. It, once more, follows from Lemma \ref{lemmacombination} that the character $\chi_n:=\chi_{\min} \cdot \psi$ satisfies the conditions we are seeking.
\end{proof}

That completes the induction process and hence the proof of Theorem \ref{theoremCartiersuff}. 
\end{proof}

\begin{remark}
\label{remarkcartierpredictiondeformingintowers}
In the situation of Proposition \ref{propmainonepointcover}, one can apply the technique from the above proof to form a cyclic    $p^n$-cover $Y_n \xrightarrow{} C$ with {\'e}tale reduction, extending $Y_{n-1} \xrightarrow{} C$, whose completion of the reduction at $x=0$ is birationally equivalent to $k[[y_n]]/k[[x]]$. However,  it is not true in general that the cover $Y_n \xrightarrow{} C$ has a good reduction as it would require the generic fiber to have the right ramification datum (Corollary \ref{corgood}), which is not easy to achieve using only the current method.
\end{remark}

\section{Hurwitz tree}
\label{secHurwitz}

\subsection{Hurwitz tree and the deformation problem}

Let $R=k[[t]]$ be a complete discrete valuation ring of equal-characteristics, where $k$ is an algebraically closed field of characteristic $p>0$. In this section, we first introduce the notion of Hurwitz tree for cyclic covers of a curve $C$ that is {\'e}tale outside an open disc $D \subset C^{\an}$. Then, we will describe how a $\mathbb{Z}/p^n$-cover gives rise to such a tree. Finally, we present an obstruction for the refined equal-characteristic deformation problem that is parallel to the obstruction for lifting given in \cite{MR2534115}. 

The definition below is identical to \cite[\S 3.1]{MR2254623}. We repeat it here for the convenience of the reader. 

\begin{definition}
\label{defdecorated}
A \emph{decorated tree} is given by the following data

\begin{itemize}
    \item a semi-stable curve $\overline{C}$ over $k$ of genus $0$,
    \item a family $(x_b)_{b\in B}$ of pairwise distinct smooth $k$-rational points of $\overline{C}$, indexed by a finite nonempty set $B$,
    \item a distinguished smooth $k$-rational point $x_0 \in \overline{C}$, distinct from any of the point $x_b$.
\end{itemize}

\noindent We require that $\overline{C}$ is stably marked by the points $((x_b)_{b \in B},x_0)$ in the sense of \cite{MR702953}.
\end{definition}

The \emph{combinatorial tree} underlying a decorated tree $\overline{C}$ is the graph $T=(V,E)$, defined as follows. The vertex set $V$ of $T$ is the set of irreducible components  of $\overline{C}$, together with a distinguished element $e_0$. We write $\overline{C}_v$ for the component corresponding to a vertex $v \neq v_0$ and $z_e$ for the singular point corresponding to an edge $e \neq e_0$. The singular point $z_e$ associated to an edge $e$ is adjacent to the vertices corresponding to the two components which intersect at $z_e$. The edge $e_0$ is adjacent to the root $v_0$ and the vertex $v$ corresponding to the (unique) component $\overline{C}_v$ containing the distinguished point $x_0$. At any point on an edge $e$, we say the direction away from the root the \textit{positive} one.

Note that, since $(\overline{C}, (x_b), x_0)$ is stably marked of genus $0$, the components $\overline{C}_v$ have genus zero, too, and the graph $T$ is a tree. Moreover, we have $\lvert B \rvert \ge 1$. For a vertex $v \in V$, we write $\overline{U}_v \subset \overline{C}_v$ for the complement in $\overline{C}_v$ of the set of singular and marked points.

\begin{definition}
\label{defnhurwitztree}
Let $M=(e_{j,i})_{1 \le j \le r}$ be an $r \times n$ matrix in $\Omega_{e_1, \ldots, e_n}$ (see Definition \ref{defnconductormatrices}). A \textit{$G=\mathbb{Z}/p^n$-Hurwitz tree} $\mathcal{T}$ of type $M$ is defined by the following data:
\begin{itemize}
    \item A decorated tree $\overline{C}=(\overline{C},(x_b),x_0)$ with underlying combinatorial tree $T=(V,E)$.
    \item For every $v \in V$, a rational $0 \le \delta_v=\delta_{\mathcal{T}}(v)$, called the \textit{depth} of $v$, 
    \item For each $v \in V$ such that $\delta_v>0$, a differential form $\omega_v=\omega_{\mathcal{T}}(v) \in \Omega^1_{\kappa}$, called the \textit{differential conductors} at $v$.
    \item For each $v \in V$, a group $G_v \subseteq G$, called the \textit{monodromy group} of $v$.
    \item For every $e \in E$, a positive rational number $\epsilon_e$, called the \textit{thickness} of $e.$
    \item For every $e \in E$, a positive integer $d_e$, called the \textit{slope} on $e$.
    \item For every $b_j \in B=\{b_1, \ldots, b_r \}$, the positive number $h_j=e_{j,n}$, called the \textit{conductor} at $b$.
    \item For $v_0$ with $\delta_{v_0}=0$, a \textbf{reduced} length-$n$-Witt-vector $\underline{f}:=(f^1, \ldots,f^n) \in W_n(\kappa)$ with only pole $0$, called the \textit{degeneration} of the tree. The rational $f^i$ is called the \textit{$i$-th degeneration} of $\mathcal{T}$. Define $$d:=\max\{ p^{n-l} \deg_{x^{-1}} (f^{l}) \mid l=1, \ldots, n\}.$$
\end{itemize}
\noindent These data are required to satisfy all of the following conditions.

\begin{enumerate}[label=(\text{H}{{\arabic*}})]
    \item \label{c1Hurwitz} Let $v \in V$. We have $\delta_v \neq 0$ if $v \neq v_0$.
    \item \label{c2Hurwitz} For each $v \in V\setminus \{v_0\}$, the differential form $\omega_v$ does not have zeros nor poles on $\overline{U}_v \subsetneq \overline{C}_v$.
    \item \label{c3Hurwitz} For every edge $e \in E \setminus \{e_0\}$, we have the equality 
    \[ -\ord_{z_e} \omega_{t(e)}-1 = \ord_{z_e}\omega_{s(e)}+1. \]
    \item \label{c4Hurwitz} For $v_0$, we have $d = \ord_{z_{e_0}}\omega_{t(e_0)}+1$.
    \item \label{c5Hurwitz} For every edge $e \in E$, we have
    \[ d_{e}= -\ord_{z_e} \omega_{t(e)}-1 \underset{s(e) \neq v_0}{\stackrel{\ref{c2Hurwitz}}{=}}\ord_{z_e} \omega_{s(e)}+1 .\]
    \item \label{c6Hurwitz} For every edge $e \in E$, we have
    \[\delta_{s(e)}+(p-1) \epsilon_e  d_{e} = \delta_{t(e)}. \]
    \item \label{c7Hurwitz} For $b \in B$, let $\overline{C}_v$ be the component containing the point $x_b$. Then the differential $\omega_v$ has a pole at $x_b$ of order $h_b$.
    \item \label{c8Hurwitz} For each $v$, we have
    \[ G_{v'} \subseteq G_v, \]
    for every successor vertex $v'$ of $v$. Moreover, we have
    \[ \sum_{v \to v'} [G_v:G_{v'}]>1, \]
    except if $v=v_0$ is the root, in which case there exists exactly one successor $v'$ and we have $G_v=G_{v'}=G$.
\end{enumerate}
\end{definition}

\noindent For each $v \in V \setminus \{v_0\}$, we call $(\delta_v, \omega_v)$ the \textit{degeneration type} of $v$. For each $e \in E$, we call $(\delta_{s(e)}, d_{s(e)})$ (resp. $(\delta_{t(e)}, d_{t(e)})$) the \textit{initial degeneration type} (resp. the \textit{final degeneration type}) of $e$. The positive integer $\mathfrak{C}:=d+1$ is called the \emph{conductor} of the Hurwitz tree. The rational $\delta:=\delta_{v_0}$ is the depth of $\mathcal{T}$. We define the \textit{height} of the tree to be the maximal positive direction edges from its root to its leaves. We call $\sum_{b \in B} h_b \cdot [B]$ the \textit{branching divisor} of $\mathcal{T}$.

\begin{remark}
\label{remarkslopesumconductors}
With the notation as above, fix $e \in E$ and let $\overline{C}_e \subseteq \overline{C}$ be the union of all components $\overline{C}_v$ corresponding to vertices $v$ which are separated from the root $v_0$ by the edge $e$. Then
\[ d_e= \sum_{\substack{b \in B \\ x_b \in \overline{C}_e}} h_b -1 >0.\]
In particular, we have $\mathfrak{C}=d+1=\sum_{b \in B} h_b$.
\end{remark}

\begin{remark}
\label{rmkextrainfhurwitz}
Depending on the study, one may like to add or remove some information from the Hurwitz tree in Definition \ref{defnhurwitztree}. For instance, the monodromy groups can be omitted where $G=\mathbb{Z}/p$ as in \cite{2020arXiv200203719D} and \cite{2000math.....11098H}, or where $G=\mathbb{Z}/p \rtimes \mathbb{Z}/m$ as in \cite{MR2254623}. In our situation, these data are useful for deducing the Hurwitz tree of a sub-cover as discussed in \S \ref{secquotienttree}. When $G=\mathbb{Z}/p \rtimes_{\psi} \mathbb{Z}/m$ where $\psi$ is non-trivial, one would like to add one extra piece of information coming from $\psi$ to the Hurwitz tree of the $\mathbb{Z}/p$-sub-cover like in \cite[Definition 3.2]{MR2254623}.
\end{remark}

\subsection{Hurwitz trees arise from cyclic covers}
\label{seccovertotree}

Fix a cyclic group $G:=\mathbb{Z}/p^n$. Let $R=k[[t]]$. Let $K$ denote the fraction field of $R$. As usual, we may associate with $\mathcal{D} \subset C^{\an}$ the spectrum of $R\{X\}$. Suppose we are given an admissible $G$-character $\chi \in \textrm{H}^1_{p^n}(\mathbb{K})$, which gives rise to an exponent-$p^n$-cover of the closed disc $\mathcal{D}$
\[ \Phi: \spec  R\{X\} \xrightarrow{} \spec R\{Z\}.\]
Let $h$ be the conductor, and $\delta$ be the depth of this cover. As discussed in \S \ref{secsemistablemodel}, there exists a semi-stable model of $C$ corresponding to $\mathcal{D}$'s interior and $\Phi$'s branch locus. The dual graph of its special fiber $\overline{C}$ forms a decorated tree $T=(V,E)$. For each vertex $v$ in $T$, we denote by $U_v \subset C^{\an}$ the affinoid subdomain with reduction $\overline{U}_v$, which can be thought of as a punctured disc. Following the exact procedure from \cite[\S 4.2]{2020arXiv200203719D}, one can construct a Hurwitz tree $\mathcal{T}$ from the dual graph and the refined Swan conductors. More precisely, the data of $\mathcal{T}$ are as in Table \ref{tab:covertotree}. See also \cite{MR2254623} \cite{MR2534115}.

\begin{table}[ht]
    \centering
\begin{tabular}{ |p{5.4cm}|p{9.6cm}|  }
\hline
Data on $\mathcal{T}$ & Degeneration data of the cover \\
\hline
\hline
The decorated tree & The branching geometry of $\Phi$ (Definition \ref{defngeometryofbranchpoints})\\
\hline
The depth of $v$ & The depth of the restriction of $\Phi$ to $U_v$.   \\
\hline
The differential conductor at $v$ & The differential conductor of the restriction of $\Phi$ to $U_v$ \\
\hline
The thickness of an edge $e$ & The thickness of the corresponding annulus divided by $p$ \\
\hline
The conductor at a leaf $b$ & The conductor of $\Phi$ at the branch point associated to $b$ \\
\hline
The reduced degeneration at $v_0$ & The reduced degeneration of $\Phi$ \\
\hline
The monodromy group at $v$ & The largest inertia group of the leaves succeeding $v$ \\
\hline
\end{tabular}
\vspace{3mm}
    \caption{Assigning a Hurwitz tree to an admissible cover}
    \label{tab:covertotree}
\end{table}

\begin{example}
\label{exmainhurwitz}
Let us calculate the Hurwitz tree for the $\mathbb{Z}/4$-cover from Example \ref{exmain}. Figure \ref{figlevel1tree} (resp. \ref{figlevel2tree}) is the one associated with the subcover $\chi_1$ (resp. $\chi_2$), which we call $\mathcal{T}_1$ (resp. $\mathcal{T}_2$). The monodromy group at each vertex of $\mathcal{T}_1$ is $\mathbb{Z}/2$. The reduction type at $v_1$ is $\big(12, \frac{dx}{x^2(x-1)^2} \big)$ and at $v_0$ is $\big(0, \frac{1}{x^3} \big)$. 

\tikzstyle{level 1}=[level distance=2.2cm, sibling distance=5cm]
\tikzstyle{level 2}=[level distance=1.2cm, sibling distance=2.3cm]
\tikzstyle{level 3}=[level distance=1.2cm, sibling distance=1cm]
\tikzstyle{level 4}=[level distance=1.2cm, sibling distance=1cm]
\tikzstyle{bag} = [text width=11.5em, text centered]
\tikzstyle{end} = [circle, minimum width=3pt,fill, inner sep=0pt]
\begin{figure}[ht]
\begin{subfigure}{.43\textwidth}
  \centering
  \begin{tikzpicture}[grow=up, sloped]
\node[end, label=left:{$v_0$}]{}
child{
        node[end, label=above:{$v_1$}]{}
    child {
                node[end, label=above:
                    {$2[t^8]$}] {}
                edge from parent
            }
    child {
                node[end, label=above:
                    {$2[0]$}] {}
                edge from parent
            }
    edge from parent
    node[above] {$e_0$}
    node[below] {$4$}
    };
\end{tikzpicture}
  \caption{$\mathcal{T}_1$}
  \label{figlevel1tree}
\end{subfigure}%
\tikzstyle{level 1}=[level distance=1.2cm, sibling distance=2cm]
\tikzstyle{level 2}=[level distance=1.2cm, sibling distance=3.0cm]
\tikzstyle{level 3}=[level distance=1.2cm, sibling distance=1.5cm]
\tikzstyle{level 4}=[level distance=1.2cm, sibling distance=2cm]
\tikzstyle{bag} = [text width=11.5em, text centered]
\tikzstyle{end} = [circle, minimum width=3pt,fill, inner sep=0pt]
\begin{subfigure}{.43\textwidth}
  \centering
\begin{tikzpicture}[grow=up, sloped]
\node[end, label=left:{$v_0$}]{}
child{
        node[end, label=above:{$v'_1$}]{}
    child {
        node(a)[end, label=above:{$v_2$}]{}        
            child {
                node(d)[end, label=above:
                    {$2[t^2(1+t^2)]$}] {}
                edge from parent
            }
            child {
                node(c)[end, label=above:
                    {$2[t^2]$}] {}
                edge from parent
            }
            edge from parent 
            node[above] {$e_{2}$}
            node[below]  {$1$}
    }
    child {
        node(a)[end, label=above:{$v_1$}]{}        
            child {
                node(d)[end, label=above:
                    {$3[t^8]$}] {}
                edge from parent
            }
            child {
                node(c)[end, label=above:
                    {$3[0]$}] {}
                edge from parent
            }
            edge from parent 
            node[above] {$e_{0,2}$}
            node[below]  {$3$}
    }
    edge from parent
    node[above] {$e_{0,1}$}
    node[below] {$1$}
    };
\end{tikzpicture}
  \caption{$\mathcal{T}_2$}
  \label{figlevel2tree}
\end{subfigure}%
\caption{The Hurwitz trees for Example \ref{exmain}}
\label{figexmainhurwitz}
\end{figure}
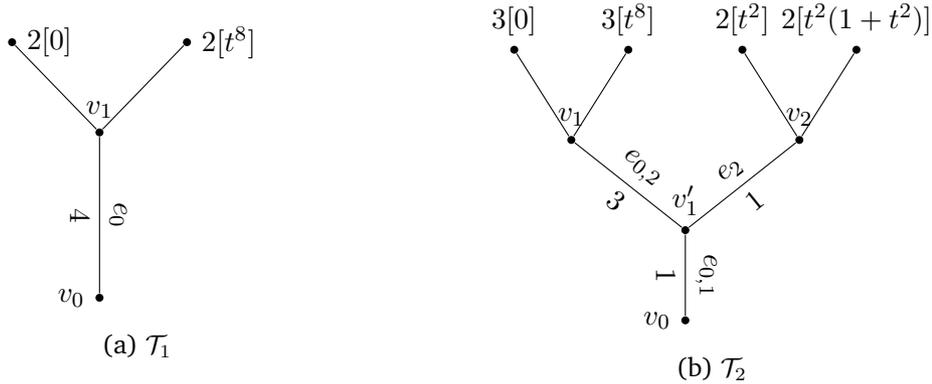

Regarding $\mathcal{T}_2$, it follows from the calculation in Example \ref{excalculatemainex} that the degeneration type at $v_1, v'_1, v_2$, and $v_0$ are $\Big(24, \frac{dx}{x^3(x-1)^3} \Big)$, $\Big(9, \frac{dx}{x^6(x-1)^4} \Big)$, $\Big(12, \allowbreak \frac{dx}{x^2(x-1)^2} \Big)$, and $\Big(0, \Big(\frac{1}{x^3}, \frac{1}{x^9} \Big) \Big)$, respectively. The monodromy groups at all vertices but $v_2$, and the two leaves $[t^2]$ and $[t^2(1+t^2)]$ are $\mathbb{Z}/4$. The monodromy groups at those vertices are $\mathbb{Z}/2$.
\end{example}

\begin{definition}
\label{defnextendhurwitz}
Suppose we are given a $\mathbb{Z}/p^{n-1}$-tree $\mathcal{T}_{n-1}$ and a $\mathbb{Z}/p^{n}$-tree $\mathcal{T}_{n}$. We say that $\mathcal{T}_n$ \textit{extends} $\mathcal{T}_{n-1}$, denoted
by $\mathcal{T}_{n-1} \prec \mathcal{T}_n$, if the followings hold.
\begin{enumerate}[label=(\arabic*)]
    \item The decorated tree of $\mathcal{T}_{n}$ is a refinement of one for $\mathcal{T}_{n-1}$.
    \item At each vertex $v$ of $\mathcal{T}_{n-1}$, the depth conductor and the differential conductor verify the conditions of Theorem \ref{theoremCartierprediction}.
    \item At each vertex $v$ (resp. each leaf $b$) of $\mathcal{T}_{n-1}$, if the monodromy group is $\mathbb{Z}/p^i$ ($i \le n-1$), then the monodromy group at the corresponding one on $\mathcal{T}_n$ is $\mathbb{Z}/p^{i+1}$.
    \item At each vertex $v$ (resp. each leaf $b$) of $\mathcal{T}_n$ that is not one of $\mathcal{T}_{n-1}$, the monodromy group is exactly $\mathbb{Z}/p$.
    \item Suppose $\delta_{\mathcal{T}_n}(v_0)=\delta_{\mathcal{T}_{n-1}}(v_0)=0$. Then the reduced degeneration of $\mathcal{T}_n$ is a length-$n$ Witt vector $(f^1, \ldots, f^{n-1}, f^n)$ such that $(f^1, \ldots, f^{n-1})$ is the reduced degeneration of $\mathcal{T}_{n-1}$.
\end{enumerate}
We say $\mathcal{T}_n$ extends a $\mathbb{Z}/p^i$-tree $\mathcal{T}_i$ if there exist a sequence of consecutive $n-i+1$ extending trees $\mathcal{T}_i \prec \mathcal{T}_{i+1} \prec \ldots \prec \mathcal{T}_{n-1} \prec \mathcal{T}_n$.
\end{definition}


\begin{definition}
\label{defsumcondsprec}
Suppose $r$ is a rational place on an edge $e$ of a Hurwitz tree $\mathcal{T}$. Set
\[ \mathfrak{C}_{\mathcal{T}}(e):= \sum_{b \in \mathbb{B}_{\mathcal{T}}(e)} h_b, \]
which is equal to $d_e+1$ by Remark \ref{remarkslopesumconductors}. We say a $\mathbb{Z}/p^n$-Hurwitz tree $\mathcal{T}$ is \textit{{\'e}tale} if $\delta_{\mathcal{T}}=0$, and is \textit{radical} otherwise. We define, for an edge $e$ and $r \in [s(e),t(e)]\cap \mathbb{Q}$, the depth of $\mathcal{T}$ at the place $r$ of $e$ as follows
\[ \delta_{\mathcal{T}}(r,e):=\delta_{\mathcal{T}}(s(e))+d_e(r-s(e))= \delta_{\mathcal{T}}(t(e))-d_e(t(e)-r). \]
\end{definition}

The existence of a Hurwitz tree $\mathcal{T}$ with particular conditions is necessary for a cyclic extension to deform non-trivially.

\begin{corollary}
\label{corminimalitydepth}
Suppose $\mathcal{T}$ is an {\'e}tale $\mathbb{Z}/p^n$-tree that extends $\mathcal{T}_{n-1}$, and $\chi \in \textrm{H}^1_{p^n}(\mathbb{K})$ is a character extends $\chi_{n-1}$. Suppose, moreover, that the tree associated with $\chi$ has the same shape as $\mathcal{T}$ and its branch points' conductors agree with that of $\mathcal{T}$. Then, on each edge $e$ of $\mathcal{T}$, 
\[ \delta_{\chi}(r,z_e) \ge \delta_{\mathcal{T}}(r,e)\]
\noindent holds for all $r \in [s(e), t(e)] \cap \mathbb{Q}_{\ge 0}$. The equality happens everywhere if $\chi$ is {\'e}tale (Definition \ref{defnetalegoodreduction}).
\end{corollary}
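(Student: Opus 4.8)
The plan is to compare the two piecewise linear depth functions edge by edge, exploiting the fact that the slopes of any Hurwitz tree are already determined by the conductors at its leaves. Write $\mathcal{T}_\chi$ for the Hurwitz tree attached to $\chi$ as in \S\ref{seccovertotree}; by hypothesis its underlying combinatorial tree and the conductors at its leaves agree with those of $\mathcal{T}$. First I would recall that, for each edge $e$ with associated $K$-point $z_e$, the function $r \mapsto \delta_\chi(r,z_e)$ is continuous and affine linear on $[s(e),t(e)]\cap\mathbb{Q}$, with slope equal to its (constant) right derivative $\sw_\chi(r,z_e,\overline{0})$ (Proposition \ref{propdeltasw}, Corollary \ref{corleftrightderivative}, and the construction of \S\ref{seccovertotree}); moreover, by the change-of-coordinates proposition of \S\ref{secboundaryswanrateofchange} these functions glue along the tree into a single continuous function, so that $\delta_\chi$ is well defined at each vertex $v$, independently of the incident edge used to compute it.

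The key point is to evaluate the slope on each edge. Since $\mathcal{T}_\chi$ is a Hurwitz tree, Remark \ref{remarkslopesumconductors} gives that its slope on $e$ equals $\big(\sum_{b} h_b\big)-1$, the sum being over the leaves of $\mathcal{T}_\chi$ separated from the root by $e$. Because $\mathcal{T}_\chi$ and $\mathcal{T}$ have the same shape and their leaf conductors coincide, this is exactly $\mathfrak{C}_{\mathcal{T}}(e)-1=d_e$. Hence $\delta_\chi(\cdot,z_e)$ and $\delta_{\mathcal{T}}(\cdot,e)$ are affine linear on $e$ with the \emph{same} slope $d_e$, so their difference is constant along $e$. (One could instead extract only the inequality of slopes $\le d_e$ directly from Corollary \ref{corswgood}, using $\mathfrak{C}_\chi(r,z_e,\overline{0})=\mathfrak{C}_{\mathcal{T}}(e)$ for $r$ in the interior of $e$; but matching the slopes exactly is what makes the propagation below work, and the reverse inequality is precisely the étale case.)

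It then remains to propagate from the root. At $v_0$ one has $\delta_\chi(s(e_0))=\delta_{\chi\lvert_{\mathcal{D}}}\ge 0=\delta_{\mathcal{T}}(v_0)$, the last equality because $\mathcal{T}$ is étale. Integrating the common slopes $d_e$ from the root along the common underlying tree and using continuity of $\delta_\chi$, one obtains for every edge $e$ and every $r\in[s(e),t(e)]\cap\mathbb{Q}_{\ge 0}$
\[ \delta_\chi(r,z_e)=\delta_{\chi\lvert_{\mathcal{D}}}+\delta_{\mathcal{T}}(r,e). \]
Since $\delta_{\chi\lvert_{\mathcal{D}}}\ge 0$ this is the desired inequality, and equality holds everywhere if and only if $\delta_{\chi\lvert_{\mathcal{D}}}=0$, i.e.\ if and only if $\chi$ has étale reduction (Remark \ref{remarketalereduction}).

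The step I expect to be delicate is checking that $\mathcal{T}_\chi$ genuinely satisfies the axioms of Definition \ref{defnhurwitztree} — in particular \ref{c2Hurwitz}, \ref{c5Hurwitz}, \ref{c7Hurwitz}, which force the differential conductor at each vertex to have no zeros or poles on the open part $U_v$ and to have poles of the prescribed orders at the nodes and branch points — so that Remark \ref{remarkslopesumconductors} applies and the depth has no hidden kinks inside an edge; this, together with reading "same shape'' as "same combinatorial tree with matching leaves'', is what reduces the whole statement to the single inequality $\delta_{\chi\lvert_{\mathcal{D}}}\ge 0$ at the root. All of this is supplied by the construction of \S\ref{seccovertotree} and by Corollaries \ref{corleftrightderivative} and \ref{corswgood}; once it is in place, the inequality and the characterization of the equality case are immediate.
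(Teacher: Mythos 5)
Your argument is correct and is essentially the paper's own: the official proof is a one-line appeal to Remark \ref{remarkmindepth}, i.e., to the slope information coming from Proposition \ref{propvanishingcycle} and Corollary \ref{corswgood} together with the vanishing of the étale tree's depth at the root, which is exactly the edge-by-edge slope comparison and root-anchoring that you spell out. The delicate point you single out --- that the tree attached to $\chi$ must genuinely satisfy the Hurwitz-tree axioms, so that its slopes are exactly $d_e$ and $\delta_{\chi}$ has no kinks in the interior of an edge (note that Corollary \ref{corswgood} alone only bounds the slope from above by $d_e$) --- is precisely the point the paper absorbs into Remark \ref{remarkmindepth} without further comment, so you have not deviated from its route.
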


\begin{proof}
The result follows immediately of Remark \ref{remarkmindepth}.
\end{proof}

\begin{theorem}
\label{theoremgoodhurwitz}
The  $\mathbb{Z}/p^n$-cover $Y_n \xrightarrow{\phi_n} C$ verifying \ref{propmainonepointcover1} and \ref{propmainonepointcover2} in Proposition \ref{propmainonepointcover} is a deformation of $k[[y_n]]/k[[x]]$ over $R$ if it gives rise to an {\'e}tale tree $\mathcal{T}_n$ and its reduction is in the same ASW class of one that defines $k[[y_n]]/k[[x]]$.
\end{theorem}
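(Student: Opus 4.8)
The plan is to unwind the hypothesis that $\phi_n$ gives rise to an \emph{étale} Hurwitz tree $\mathcal{T}_n$ into exactly the two inputs required by the good--reduction criterion of Corollary~\ref{corgood}~(\ref{corgooditem1}), and then to use the Artin--Schreier--Witt hypothesis together with Proposition~\ref{propinductionmainreduction} (equivalently, the different criterion of Proposition~\ref{propdifferentcriterion}) to conclude. Write $\chi \in \textrm{H}^1_{p^n}(\mathbb{K})$ for the admissible character attached to $\phi_n$ and $\overline{\chi}$ for its reduction.

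First I would observe that ``$\mathcal{T}_n$ étale'' means $\delta_{\mathcal{T}_n} = \delta_{v_0} = 0$, which by the dictionary of Table~\ref{tab:covertotree} says precisely $\delta_{\chi}(0) = 0$; hence $\phi_n$ has étale reduction, $\overline{\chi}$ is defined over $\kappa_0 \cong k(x)$, and its reduced defining Witt vector is the degeneration $\underline f = (f^1, \ldots, f^n)$ of $\mathcal{T}_n$. Next I would read off the conductor identity. Since $\chi$ is admissible, all its branch points lie in $D$, so Definition~\ref{defconductoratplace} gives $\mathfrak{C}_{\chi}(0,0,\overline 0) = \sum_{b \in B} \iota_{b,n} = \sum_{b \in B} h_b$, the sum of the leaf conductors of $\mathcal{T}_n$; by Remark~\ref{remarkslopesumconductors} this equals the tree conductor $\mathfrak{C} = d+1$. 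Because $\mathcal{T}_n$ is a genuine étale Hurwitz tree, axiom~\ref{c4Hurwitz} identifies $d = \max\{p^{n-l}\deg_{x^{-1}}(f^l) \mid l = 1, \ldots, n\}$, which by Theorem~\ref{theoremcaljumpirred} is the top upper ramification break $u_{0,n}$ of $\overline{\chi}$ at $\overline 0$. On the other hand, $\delta_{\chi}(0) = 0$ lets Remark~\ref{remarkboundaryswan}~(\ref{remarkboundaryswan1}) identify $\sw_{\chi}(0,0,\overline 0)$ with the ordinary Swan conductor of $\overline{\chi}$ at $\overline 0$, which for the totally ramified $\mathbb{Z}/p^n$-character $\overline{\chi}$ is again $u_{0,n}$. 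Combining,
\[ \mathfrak{C}_{\chi}(0,0,\overline 0) = d+1 = u_{0,n}+1 = \sw_{\chi}(0,0,\overline 0)+1, \]
so the inequality in Corollary~\ref{corgood}~(\ref{corgooditem1}) is an equality; together with $\delta_{\chi}(0) = 0$ this shows $\phi_n$ has \emph{good} reduction.

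It then remains to identify $\overline{\chi}$ with the right local object. Its reduced Witt vector $\underline f$ lies, by hypothesis, in the Artin--Schreier--Witt class of the vector $\underline g_n \in W_n(k(x))$ defining $k[[y_n]]/k[[x]]$ (cf.\ \ref{propmainonepointcover2}); restricting along $k(x) \hookrightarrow k((x))$ (a homomorphism on Artin--Schreier--Witt classes), the completion of $\overline{\chi}$ at $x = 0$ therefore defines the same $\mathbb{Z}/p^n$-extension as $\underline g_n$, i.e.\ is birationally equivalent to $k[[y_n]]/k[[x]]$. Since $\phi_n$ extends the given $\psi_{n-1}$, its $\mathbb{Z}/p^{n-1}$-subcover \emph{is} $\psi_{n-1}$, so the sub-extension is carried along. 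Thus $\phi_n$ satisfies both conditions of Proposition~\ref{propinductionmainreduction}, and the completion of $\phi_n$ at $x = 0$ is the desired $\mathbb{Z}/p^n$-deformation of $k[[y_n]]/k[[x]]$ over $R$ containing the given sub-extension.

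The argument is essentially bookkeeping once the Hurwitz--tree formalism is available; the one step deserving care is the chain of identifications forcing the combinatorial datum of $\mathcal{T}_n$ (the tree conductor, via Remark~\ref{remarkslopesumconductors} and axiom~\ref{c4Hurwitz}) to coincide with the ramification datum of $\overline{\chi}$ (the boundary Swan conductor $\sw_{\chi}(0,0,\overline 0)$, via Remark~\ref{remarkboundaryswan} and Theorem~\ref{theoremcaljumpirred}) — in other words, verifying that ``$\phi_n$ gives rise to an étale Hurwitz tree'' already encodes the different equality characterizing good reduction, so that Corollary~\ref{corgood} can be applied. A minor additional point is that one needs $\overline{\chi}$ to have full order $p^n$ for ``totally ramified $\mathbb{Z}/p^n$'' to be legitimate (and hence its Swan conductor to be $u_{0,n}$); this is guaranteed by the Artin--Schreier--Witt hypothesis, since $\underline g_n$ has order $p^n$.
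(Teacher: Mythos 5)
Your proof is correct and takes essentially the same route as the paper's: both reduce to Corollary \ref{corgood}(\ref{corgooditem1}) via Remark \ref{remarkslopesumconductors} (sum of the generic branch-point conductors $=d+1=\iota_n$) combined with the \'etale-tree and Artin--Schreier--Witt-class hypotheses, your write-up merely making explicit the chain $d=u_{0,n}=\sw_{\chi}(0,0,\overline{0})$ that the paper compresses into ``follows immediately.'' One trivial slip: the identity $d=\max\{p^{n-l}\deg_{x^{-1}}(f^l)\}$ is the definition of $d$ in Definition \ref{defnhurwitztree} rather than a consequence of axiom \ref{c4Hurwitz}, but nothing in your argument depends on this.
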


\begin{proof}
The assumption about the degeneration of $\mathcal{T}_n$ implies that the cover has {\'e}tale reduction and the special fiber is birational equivalence to $k[[y_n]]/ \allowbreak k[[x]]$. In addition, suppose the $n$-th conductor of $k[[y_n]]/k[[x]]$ is $\iota_n$, which is equal to $d+1$ by definition. Then, by Remark \ref{remarkslopesumconductors}, the sum of the conductors of $\phi_n$'s branch points is also $\iota_n$. The theorem follows immediately from the previous corollary and Corollary \ref{corgood} (\ref{corgooditem1}).
\end{proof}

Suppose we are given a $\mathbb{Z}/p^n$-extension $\phi_n$ of $k[[x]]$ that is defined by $\underline{f}:=(f^1, \ldots, f^n) \in W_n(k[[x]])$, and a $\mathbb{Z}/p^{n-1}$-deformation $\chi_{n-1}$ of its sub-cover. The above theorem implies that, one can extend $\chi_{n-1}$ to a deformation of $\phi_n$ only if the Hurwitz tree it gives rise to can be extended. We call that obligation the \emph{Hurwitz tree obstruction} for the deforming in towers problem.

When $n=1$, it is known that every $\mathbb{Z}/p$-tree arises from some $\mathbb{Z}/p$-covers of the rigid disc.

\begin{theorem}[{\cite[Theorem 1.1]    {2020arXiv200203719D}}]
\label{thminversedegreep}
Suppose $\mathcal{T}$ is an {\'e}tale $\mathbb{Z}/p$-tree. Then there exists a $\mathbb{Z}/p$-covers whose associated tree coincides with $\mathcal{T}$.
\end{theorem}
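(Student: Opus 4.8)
The plan is to reduce the statement to an explicit construction of a rational function and then to verify, vertex by vertex along $\mathcal{T}$, that the character it defines realizes the prescribed degeneration data. First I would normalize: take $C=\mathbb{P}^1_K$, identify the disc $D$ with $\spec R[[X]]$, and — using the coordinate system attached to a marked disc (\S\ref{secsemistablemodel}) — fix for every leaf $b$ of $\mathcal{T}$ a $K$-point $x_b\in D(K)$ specializing to the marked point $\overline{x}_b$ of the component indexed by the vertex $v(b)$ carrying $b$, so that the $x_b$ reproduce the branching geometry recorded by the decorated tree. The goal then becomes: produce a \emph{reduced} $F\in K(X)$ whose poles are exactly the $x_b$, with pole order $h_b-1$ at $x_b$, such that the character $\chi=\mathfrak{K}_1(F)$ has associated Hurwitz tree (in the sense of Table~\ref{tab:covertotree}) equal to $\mathcal{T}$. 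For order $p$ this is a workable target because Proposition~\ref{propcomputeordp} turns every local invariant of $\chi$ into a computation of a Gauss valuation $\nu_r(F+a^p-a)$ and of a residue $[F+a^p-a]_r$, which is amenable to the same approximation argument used in Lemma~\ref{lemmaapprox}.

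Next I would build $F$ by working from the leaves of $\mathcal{T}$ inward to the root. Fix a vertex $v$ adjacent only to leaves. Since $\mathcal{T}$ is a $\mathbb{Z}/p$-tree its differential conductors are exact (Proposition~\ref{propcomputeordp}(\ref{propcomputeorderpsecond})), so the $\overline{x}_b$-primary part $\omega_{v,b}$ of $\omega_v$ in the partial-fraction decomposition over the function field of $C_v$ is exact, has a single pole of order $h_b$ at $\overline{x}_b$ (condition~\ref{c7Hurwitz}), and no zero or pole elsewhere on $U_v$ (condition~\ref{c2Hurwitz}); choose a primitive $g_{v,b}$ with a single pole of the minimal order $h_b-1$. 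Lifting $g_{v,b}$ to $K(X)$ through the coordinate at $v$ and scaling by $\pi^{-p\delta_v}$ produces a summand with the right depth and differential at $v$. Propagating toward the root along an edge $e$, conditions~\ref{c5Hurwitz} and~\ref{c6Hurwitz} prescribe the slope $d_e$ and the thickness $\epsilon_e$, so the $\pi$-adic normalization along the annulus $A_e$ is forced to be the affine function $r\mapsto\delta_{\mathcal{T}}(r,e)$; at an interior vertex I would sum the contributions coming from the sub-trees above it, and at the root I would add a lift $F_0\in R[[X]]$ of a reduced representative of the degeneration $f^1$, so that the reduction of $\chi$ at $v_0$ is $\mathfrak{K}_1(f^1)$ and $\delta_{v_0}=0$. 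The coefficients in these lifts solve triangular systems of the type appearing in Lemma~\ref{lemmaapprox}, so a finite extension of $K$ suffices, and enlarging $K$ also secures weak unramifiedness at all the relevant Gauss valuations.

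Then I would verify that $\chi=\mathfrak{K}_1(F)$ has tree exactly $\mathcal{T}$. At each vertex $v$ one evaluates $\delta_\chi(r_v,z_v)$ and $\omega_\chi(r_v,z_v)$ via Proposition~\ref{propcomputeordp}: after the substitution $X\mapsto X\pi^{-pr_v}$ the summands coming from leaves lying in a direction at $v$ other than toward the sub-tree below $v$ become $\pi$-adically small with prime-to-$p$ pole orders, hence are absorbed into the auxiliary $a$ exactly as in the proof of Lemma~\ref{lemmaapprox}; what survives is the part attached to the sub-tree below $v$, whose leading term in $\nu_{r_v,z_v}$ reduces to $\omega_v$ and has valuation $-p\delta_v$. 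This gives $\delta_\chi(r_v,z_v)=\delta_v$ and $\omega_\chi(r_v,z_v)=\omega_v$, while the pole order $h_b-1$ of $F$ at $x_b$ gives conductor $h_b$ at the leaf $b$, so $\mathfrak{C}_\chi(r,z_e,\overline{w})=\mathfrak{C}_{\mathcal{T}}(e)$ at each node. By Proposition~\ref{propdeltasw} the depth along each edge is continuous and piecewise linear with one-sided slopes $\pm\sw_\chi(r,z_e,\cdot)$; because the $g_{v,b}$ were chosen with the smallest admissible pole orders, the differential $\omega_\chi$ attains the maximal pole allowed by the vanishing cycle bound at every node, so the inequalities of Proposition~\ref{propvanishingcycle} and Corollary~\ref{corswgood} are equalities there, forcing $\sw_\chi(r,z_e,\cdot)=\mathfrak{C}_{\mathcal{T}}(e)-1=d_e$ (Remark~\ref{remarkslopesumconductors}). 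Hence $\delta_\chi(\cdot,z_e)$ coincides with $\delta_{\mathcal{T}}(\cdot,e)$, the slopes and thicknesses match, and $\chi$ has good reduction with reduction $\mathfrak{K}_1(f^1)$; so the Hurwitz tree of $\chi$ is $\mathcal{T}$.

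I expect the main obstacle to be the \emph{global} matching: a single function $F$ must simultaneously realize $(\delta_v,\omega_v)$ at every vertex, and the Gauss valuations attached to the nested discs $\mathcal{D}[pr_v,z_v]$ interact, so one must show that the contributions of "wrong-direction" leaves and of $F_0$ genuinely drop out in the residue-field computation at each $v$ (i.e. land in $\kappa_{r_v}^p$ or vanish after the optimal choice of $a$). The leaf-indexed partial-fraction decomposition together with the coordinate system of \S\ref{secsemistablemodel} is what makes this tractable, and the conditions~\ref{c2Hurwitz}--\ref{c7Hurwitz} are exactly the hypotheses guaranteeing that the local pieces $\omega_{v,b}$ fit together consistently across the nodes; checking this compatibility carefully is the technical heart of the argument. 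The same circle of ideas appears in the proof of Theorem~\ref{theoremCartiersuff}, which handles the global datum on a single disc — here one iterates it along the tree.
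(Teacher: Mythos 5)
Your overall strategy (write down one global rational function and check its refined Swan conductors vertex by vertex) is the strategy this paper uses for the harder $\mathbb{Z}/p^n$ problem, but not the route behind Theorem~\ref{thminversedegreep} itself: the cited proof goes by formal patching, gluing covers of the discs and annuli of the semistable partition along their boundaries, using that a $\mathbb{Z}/p$-cover of a boundary is determined by its depth and boundary Swan conductor (Remark~\ref{remarkboundaryorderp}, Remark~\ref{remarkAScase}). More importantly, your construction as written has a genuine gap at exactly the point where the real work happens. If $v$ is a vertex with at least two leaves (or branches) above it, and you take for each leaf $b$ a summand $F_b$ whose pole at $x_b$ has the \emph{minimal} order $h_b-1$, scaled to give the right depth at $v$, then at a coarser radius $r$ on the edge $e$ below $v$ the Gauss valuation of the sum is $\min_b\bigl(\nu(c_b)-(h_b-1)\cdot(\text{dist})\bigr)$, so the depth of $\mathfrak{K}_1\bigl(\sum_b F_b\bigr)$ decreases along $e$ with slope on the order of $\max_b(h_b-1)$, and the reduction $[F]_r$ has pole order at most $\max_b(h_b-1)$ at the merged point. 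Good reduction, however, forces the slope to be $d_e=\mathfrak{C}_{\mathcal{T}}(e)-1=\sum_{b\succ e}h_b-1$ and the differential conductor along $e$ to have pole order $\sum_{b\succ e}h_b$ (Corollary~\ref{corswgood}, Remark~\ref{remarkslopesumconductors}). These are strictly larger than what your sum produces whenever there are two or more leaves above $e$, so the depth does not reach $0$ at the root, the interior differential conductors do not match $\omega_{\mathcal{T}}$, and the associated tree is not $\mathcal{T}$.

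The sentence in your verification step asserting that ``because the $g_{v,b}$ were chosen with the smallest admissible pole orders, the differential $\omega_\chi$ attains the maximal pole allowed by the vanishing cycle bound'' is exactly backwards: Proposition~\ref{propvanishingcycle} bounds the pole order of $\omega_\chi$ \emph{above} by the sum of the conductors in the residue class, and equality (needed for good reduction) means the pole is as large as allowed, which a sum of minimal-pole-order leaf terms does not achieve. Compare Example~\ref{exampleorder5threebranches}: the defining function is not a sum of reduced local terms; its local pole orders exceed the reduced ones and the numerator is tuned precisely so that the reduction at every Gauss point along the tree has the full pole order $\sum_b h_b-1$ without becoming a $p$-th power. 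Making such choices coherently along all edges is the actual content of the theorem; in the present paper's language it is what the machinery of \S\ref{seccontroledge}--\S\ref{secminimal} (kink reduction via Lemma~\ref{lemmacontroljunk} and Proposition~\ref{propreducekink}, plus the compactness argument of Proposition~\ref{propminimal}) is built to do, and in the cited proof it is replaced by the boundary-gluing argument. A smaller point: exactness of the $\omega_v$ is an axiom of a $\mathbb{Z}/p$-Hurwitz tree (it is part of the definition in the cited paper), not a consequence of Proposition~\ref{propcomputeordp}, which applies to characters rather than to abstract trees.
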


\begin{remark}
\label{remarkAScase}
The proof of Theorem \ref{thminversedegreep} utilizes the formal patching technique for $G\cong \mathbb{Z}/p$-torsor. Recall that a $\mathbb{Z}/p$-cover of the boundary of a disc is determined by its depth and its boundary Swan conductor as mentioned in Remark \ref{remarkboundaryorderp}. Hence, one can ``glue'' the covers of the discs and the annuli which partition a unit disc, along their boundaries in a $G$-equivariant way to get a $\mathbb{Z}/p$-cover of the whole disc. That is the technique first used by Henrio in \cite{2000math.....11098H} to construct a $\mathbb{Z}/p$-lift from a Hurwitz tree in mixed characteristic. Unfortunately, we cannot apply the same technique for $\mathbb{Z}/p^n$-covers (where $n>1$), since it is no longer true that the depth and the boundary conductor determine the action on the boundary as discussed in the same remark. We can, however, generalize that approach to an arbitrary $\mathbb{Z}/p \rtimes_{\psi} \mathbb{Z}/m$-cover by adding an information coming from $\psi$ to the Hurwitz tree of the sub-$\mathbb{Z}/p$-cover as explained in Remark \ref{rmkextrainfhurwitz}. We thus expect an analog of Theorem \ref{thminversedegreep} for $\mathbb{Z}/p \rtimes_{\psi} \mathbb{Z}/m$-covers, which is also parallel to the main result of \cite{MR2254623}.
\end{remark}

\begin{definition}
Suppose $\mathcal{T}_n$ is a $\mathbb{Z}/p^n$-Hurwitz tree, and $v \neq v_0$ is one of its vertices with $m$ branches branching out $e_1, \ldots, e_m$, i.e., $v=s(e_1)=\ldots =s(e_m)$. Suppose, moreover, that the differential conductor at $v$ is of the form
\[ \omega_{\mathcal{T}_n}(v)= \frac{c_vdx}{\prod_{i=1}^m (x-a_i)^{h_i}} = \bigg(\sum_{i=1}^m \sum_{j=1}^{h_i} \frac{a_{i,j}}{(x-a_i)^j}\bigg) dx=: \sum_{i=1}^m \omega_i, \]
where $a_i=[z_{e_i}]_v$. We call $\omega_i$ the $e_i$-part of $\omega_{\mathcal{T}_n}(v)$.
\end{definition}

The below example shows how one can derive the relative positions of unknown branch points from the conditions for Hurwitz trees and their extensions.

\begin{example}
\label{exhurwitztreeobstruction}
Suppose $p=2$, $\phi_2 \in {\rm H}^1_4(k(x))$ is defined by $\big(\frac{1}{x^5}, 0\big)$, hence of branching datum $[6,11]$, and $\Phi_1$ is a deformation of $\phi_1:=\phi_2^2$ of type $[6] \rightarrow [2, 2, 2]^{\intercal}$ over $R:=k[[t]]$, whose branch points are $(0, t^2, t^4)$. Then the tree $\mathcal{T}_1$ associated to $\Phi_1$ has the following form
\tikzstyle{level 1}=[level distance=4cm, sibling distance=2cm]
\tikzstyle{level 2}=[level distance=4cm, sibling distance=1cm]
\tikzstyle{bag} = [text width=6em, text centered]
\tikzstyle{end} = [circle, minimum width=3pt,fill, inner sep=0pt]
\[ 
\begin{tikzpicture}[grow=right, sloped]
\node[bag] {$\Big(0,\frac{1}{x^{5}}\Big)$}
child{
        node[bag] {$\Big(\textcolor{black}{5},\textcolor{black}{\frac{dx}{x^{4}(x-1)^2}}\Big)$}
    child {
                node[end, label=right:
                    {$2[t^2]$}] {}
                edge from parent
            }
    child {
        node[bag] {$\Big(\textcolor{black}{8},\textcolor{black}{\frac{dx}{x^2(x-1)^2}}\Big)$}        
            child {
                node[end, label=right:
                    {$2 [t^{4}]$}] {}
                edge from parent
            }
            child {
                node[end, label=right:
                    {$2 [0]$}]{}
                edge from parent
            }
            edge from parent 
            node[above] {$e_1$}
            node[below]  {$1$}
    }
    edge from parent
    node[above] {$e_0$}
    node[below] {$1$}
    };
\end{tikzpicture}
\]       
The existence of such deformation is asserted by Theorem \ref{thminversedegreep}. Suppose in addition that there is a deformation $\Phi_2$ of $\phi_2$ extending $\Phi_1$ of type
\begin{equation}
\label{eqnexhurwitztreeobstruction}
M:=\begin{bmatrix}
   2 & 2& 2 &0 \\
   3 & 3 & 3 & 2 \\
\end{bmatrix}^\intercal
\end{equation} 
Denote by $\mathcal{T}_2$ the corresponding tree. Then it must be true that $\omega_{\mathcal{T}(2)}(s(e_0))$ is equal to either
\begin{center}
    \begin{enumerate*}[label=(\roman*)]
    \item \label{exhurwitztreeobstructioncase1} $\frac{adx}{x^6(x-1)^3(x-b)^2}$, or
    \item \label{exhurwitztreeobstructioncase2} $\frac{cdx}{x^6(x-1)^5}$, or
    \item \label{exhurwitztreeobstructioncase3} $\frac{edx}{x^8(x-1)^3}$, or 
    \item \label{exhurwitztreeobstructioncase4} $\frac{fdx}{x^6(x-1)^3}$,
\end{enumerate*}
\end{center}
for some $a, b, c, e, f\neq 0$ and $b \neq 1$. Either case, $\mathcal{C}(\omega_{\mathcal{T}(2)}(s(e_0))) \neq 0$. Theorem \ref{theoremCartierprediction} \ref{theoremCartierpredictionpart2a} then forces $\delta_{\mathcal{T}_2}(s(e_0))=10$ and, respectively,
\begin{center}
    \begin{enumerate*}
    \item $d_{e_0}=10, d_{e_1}=5$, or
    \item $d_{e_0}=10, d_{e_1}=5$, or
    \item $d_{e_0}=10$, or 
    \item $d_{e_1}=5$.
\end{enumerate*}
\end{center}
A similar reasoning shows that $\delta_{\mathcal{T}_2}(s(e_1))=16$. If $\Phi_2$ has no branch point in the interior of the annulus associated to $e_1$, then $d_{e_1}=6$. That contradicts the information we got for the cases \ref{exhurwitztreeobstructioncase1}, \ref{exhurwitztreeobstructioncase2} and \ref{exhurwitztreeobstructioncase4}. Finally, a straightforward calculation shows that the branch point of ramification breaks $(0,1)$, which we call $P$, has valuation $3$ and the tree $\mathcal{T}_2$ should be of the following form.
\tikzstyle{level 1}=[level distance=3.5cm, sibling distance=1.5cm]
\tikzstyle{level 2}=[level distance=3.5cm, sibling distance=1cm]
\tikzstyle{level 4}=[level distance=3cm, sibling distance=1cm]
\tikzstyle{bag} = [text width=6em, text centered]
\tikzstyle{end} = [circle, minimum width=3pt,fill, inner sep=0pt]
\[ 
\begin{tikzpicture}[grow=right, sloped]
\node[bag] {$\Big(0,\Big(\frac{1}{x^5},0 \Big)\Big)$}
child{
        node[bag] {$\Big(\textcolor{black}{10},\textcolor{black}{\frac{dx}{x^{8}(x-1)^3}}\Big)$}
    child {
                node[end, label=right:
                    {$3[t^2]$}] {}
                edge from parent
            }
    child {
        node[bag] {$\Big(\textcolor{black}{\frac{27}{2}},\textcolor{black}{\frac{dx}{x^6(x-1)^2}}\Big)$}   
        child {
                node[end, label=right:
                    {$2 P$}]{}
                edge from parent
            }
            child {
        node[bag] {$\Big(\textcolor{black}{16},\textcolor{black}{\frac{dx}{x^3(x-1)^3}}\Big)$}    
            child {
                node[end, label=right:
                    {$3 [t^{4}]$}] {}
                edge from parent
            }
            child {
                node[end, label=right:
                    {$3 [0]$}]{}
                edge from parent
            }
            edge from parent 
            node[above] {$e_{1,1}$}
            node[below]  {$.5$}
    }
            edge from parent 
            node[above] {$e_{1,0}$}
            node[below]  {$.5$}
    }
    edge from parent
    node[above] {$e_0$}
    node[below] {$1$}
    };
\end{tikzpicture}
\]     
\end{example}

\subsubsection{Deforming Artin-Schreier-Witt covers with no essential part}
Recall from \S \ref{secessentialcover} that every one-point-$\mathbb{Z}/p^n$-cover with an essential component can deform into a cyclic cover whose branching datum has no essential component. Using the Hurwitz tree technique, we demonstrate that further non-trivial deformations of these covers are not possible. This result plays a crucial role in determining the geometry of the moduli space of cyclic covers, as detailed in \cite[Theorem 4.16]{2023arXiv230614711D}.

\begin{proposition}
    A one-point-$\mathbb{Z}/p^n$-cover $\phi_n$ whose branching datum has no essential part cannot be non-trivially deformed.
\end{proposition}

\begin{proof}
Suppose $\phi_n$ has a branching datum $[e_1, \ldots, e_n]$, where $e_1 \neq 0$. Furthermore, suppose that $\phi_n$ deforms non-trivially to $\Phi_n$. Without loss of generality, one may assume that $\Phi_n$ has the type:
\begin{equation}
\label{eqndefnoessential}
M = [e_1, e_2, \ldots, e_n] \xrightarrow{}
\begin{bmatrix}
   e_{1,1} & e_{1,2} & \ldots & e_{1,n} \\
   e_{2,1} & e_{2,2} & \ldots & e_{2,n} \\
   \vdots & \vdots & \ddots & \vdots \\
   e_{r,1} & e_{r,2} & \ldots & e_{r,n}
\end{bmatrix} = N,
\end{equation}
where $e_{1,1} \neq 0$. Suppose $e_{j,1} \neq 0$ for some $1 < j \leq n$. Let $B_i$ be the branch point of $\Phi_n$ associated with the $i$-th row of $N$. Then there exists a non-trivial Artin-Schreier deformation $\Phi_1 := (\Phi_n)^{p^{n-1}}$ of type $[e_1] \xrightarrow{} [e_{1,1}, \ldots, e_{r,1}]^{\top}$. Let $\mathcal{T}_1$ be the associated Hurwitz tree. Then Theorem \ref{theoremCartierprediction} \ref{theoremCartierpredictionpart1} asserts the existence of exact differential forms at certain leaves of $\mathcal{T}_1$ of the form:
\begin{equation}
\omega = \frac{cdx}{\prod_{i \in I}(x-a_i)^{e_{i,1}}} \in \Omega^1_{\kappa},
\end{equation}
where $c \neq 0$, $a_i \neq a_j$ for $i \neq j$, and $I \subseteq \{1, \ldots, r\}$. However, as $e_1 < p$ due to the assumption of no essential part, we have $\sum_{i \in I} e_{i,1} \leq e_1 < p + |I|$. Therefore, $\omega$ does not exist, as shown in \cite[Proposition 6.4]{2020arXiv200203719D}.

By induction, we may rewrite \eqref{eqndefnoessential} as
\begin{equation}
\label{eqndefnoessentialfinal}
M = [e_1, e_2, \ldots, e_n] \xrightarrow{}
\begin{bmatrix}
   e_1 & e_2 & \ldots &  e_{n-1} &  e_{1,n} \\
   0 & 0 & \ldots & 0 & e_{2,n} \\
   \vdots & \vdots & \ddots & \vdots & \vdots \\
   0 & 0  & \ldots & 0 & e_{r,n}
\end{bmatrix} = N.
\end{equation}
Also, since $M$ lacks an essential part, we have $pe_{n-1} \ge e_n = \sum_{i=1}^r e_{i,n}$. Therefore, we have the condition
\begin{equation*}
    pe_{n-1}-p+1 \le e_{1,n} < pe_{n-1},
\end{equation*}
where the first inequality follows from the condition on the upper jump.

Suppose $e_{1, n} = pe_{n-1} - p + 1 \equiv 1 \pmod{p}$. Let $\mathcal{T}_n$ (respectively, $\mathcal{T}_{n-1}$) represent the Hurwitz tree associated with $\Phi_n$ (respectively, $\Phi_{n-1} := (\Phi_n)^p$). Then, for each edge $e$ between the root and the leaf associated with $B_1$ in $\mathcal{T}_n$, its slope, denoted as $d_e(\mathcal{T}_n)$, can be expressed as
\begin{equation*}
    d_e(\mathcal{T}_n)=\sum_{\substack{b \in B(\mathcal{T}_n), \hspace{1mm} x_b \in \overline{C}_e}} h_b -1 > e_{1,n}-1 \ge p(e_{n-1}-1),
\end{equation*}
given that $x_{B_1} \in \overline{C}_e$ and $\lvert \overline{C}_e \rvert > 1$. On the other hand, the slope of the associated one in $\mathcal{T}_{n-1}$ is precisely $e_{n-1}-1$. As a consequence, the depth at vertex $v_1$, preceding the leaf associated with $B_1$ in $\mathcal{T}_n$, is strictly greater than $p$ times the corresponding depth in $\mathcal{T}_{n-1}$. Therefore, in accordance with Theorem \ref{theoremCartierprediction}\ref{theoremCartierpredictionpart2b}, the differential form at vertex $v_1$ within $\mathcal{T}_n$ is exact. This, however, leads to a contradiction, considering that $e_{1, n} \equiv 1 \pmod{p}$.

Suppose now that $e_{1,n} = pe_{n-1} - p + a + 1$, where $0 < a < p$. In this case, the Hurwitz tree $\mathcal{T}_n$ is {\'e}tale of type $[e_{1,n}, \ldots, e_{r,n}]$ with all the differential forms being exact. However, one can demonstrate that such a tree does not exist using an argument similar to that employed for Artin-Schreier deformation.
\end{proof}

\subsection{Reduction type}
\label{secreductiontype}
Suppose we are given a cyclic character $\chi_{n} \in \textrm{H}^1_{p^{n}} (\mathbb{K})$ of order $p^n$ that is defined by a length-$n$-Witt-vector $\underline{F}_n:=(F^1, \ldots, F^n) \in W_n(\mathbb{K})$.  Suppose, moreover, that $\delta_{\chi_n}(0)=0$. Then the reduction $\overline{\chi}_n \in \textrm{H}^1_{p^n}(\kappa)$ of $\chi_n$ is well-defined. One may further assume that $\overline{\chi}_n$ is given by a length-$n$-Witt-vector 
\[ \underline{f}_n:= (f^1 ,f^2, \ldots, f^n) \in W_n(\kappa). \]
\noindent We call $\underline{f}_n$ a \textit{reduction type} of $\chi_n$. If $\underline{f}_n$ is reduced, we say it is the \textit{reduced reduction type} of $\chi_n$, and $f^i$ is its \textit{$i$-th degeneration}. It is clear that the character $\chi_{i}:= (\chi_n)^{p^{n-i}}  \in \textrm{H}^1_{p^i}(\kappa)$ has $\underline{f}_i=(f^1, \ldots, f^i)$ as a reduction type. 


\begin{proposition}
\label{propinverse}
Suppose we are given a character $\overline{\chi}_n \in \textrm{H}^1_{p^n}(\kappa)$ of order $p^n$ defined by a length $n$-Witt vector $\underline{f}_n=(f^1, f^2, \ldots, f^n) \in W_n(\kappa)$, a complete discrete valuation ring $R$ that is finite over $k[[t]]$, and a deformation $\chi_{n-i}$ of the sub-character $\overline{\chi}_{n-i}:= \overline{\chi}^{p^{i}}_n$ over $R$ given by
\[ \wp(\underline{Y}_{n-i})=(F^1, \ldots, F^{n-i}). \]
Suppose, moreover, that $\chi_n$ is a character given by 
\[ \wp(\underline{Y}_{n})=(F^1, \ldots, F^{n-1},F^n),\]
\noindent and gives rise to an {\'e}tale tree $\mathcal{T}_n$ that extends $\mathcal{T}_{n-i}$ (in the sense of Definition \ref{defnextendhurwitz}). Then $\chi_n$ extends $\chi_{n-i}$ if it has reduction type $\underline{f}_n$.
\end{proposition}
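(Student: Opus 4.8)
The plan is to verify that $\chi_n$ satisfies the two conditions of Theorem \ref{theoremgoodhurwitz} (with $i=1$ in the "tower" sense handled by iteration), namely that it has \'etale reduction and that its reduction lies in the correct Artin--Schreier--Witt class, and that in addition it genuinely \emph{extends} $\chi_{n-i}$ as a cover of the disc. First I would reduce to the case $i=1$: since $\mathcal{T}_n$ extends $\mathcal{T}_{n-i}$ means, by Definition \ref{defnextendhurwitz}, that there is a chain $\mathcal{T}_{n-i}\prec\cdots\prec\mathcal{T}_n$, and since $\chi_n$ gives rise to $\mathcal{T}_n$, the subcharacters $\chi_n^{p^j}$ give rise to the intermediate trees $\mathcal{T}_{n-j}$ (their refined Swan conductors being the truncations of those of $\chi_n$, by the compatibility recorded in \S\ref{sectionrefinedSwanASWleal} and Theorem \ref{theoremCartierprediction}); and the hypothesis that $\chi_n$ has reduction type $\underline f_n=(f^1,\dots,f^n)$ forces $\chi_{n-1}':=\chi_n^p$ to have reduction type $(f^1,\dots,f^{n-1})$, which by Corollary \ref{corgood}(\ref{corgooditem1}) together with $\mathcal{T}_{n-1}$ being \'etale shows $\chi_{n-1}'$ has good reduction; comparing reduced reduction types, $\chi_{n-1}'$ then agrees with $\chi_{n-1}$ up to $\wp(W_{n-1}(\kappa))$, and after an Artin--Schreier--Witt operation on $\underline F_n$ (which does not change $\chi_n$) we may assume $\chi_{n-1}'=\chi_{n-1}$ exactly, so $\chi_n$ extends $\chi_{n-1}$. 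Iterating down the chain gives that $\chi_n$ extends $\chi_{n-i}$.

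Next I would establish \'etale reduction of $\chi_n$. By Corollary \ref{corminimalitydepth} applied to the \'etale tree $\mathcal{T}_n$ and the character $\chi_n$ (which by hypothesis has the same shape and the same conductors at the branch points as $\mathcal{T}_n$), we get $\delta_{\chi_n}(r,z_e)\ge\delta_{\mathcal{T}_n}(r,e)=0$ on every edge $e$, and in particular $\delta_{\chi_n}(0)=0$; but $\delta_{\chi_n}(0)\ge 0$ always and the reduction type hypothesis presupposes $\delta_{\chi_n}(0)=0$ (that is exactly what lets one speak of $\underline f_n$ being a reduction type), so $\chi_n$ has \'etale reduction in the sense of Definition \ref{defnetalegoodreduction}. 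Then, because the reduction type of $\chi_n$ is $\underline f_n$, which by assumption is a defining Witt vector of $\overline\chi_n$, the reduction $\overline{\chi_n}$ lies in the Artin--Schreier--Witt class of $\overline\chi_n$, i.e. the special fiber is birationally equivalent to $k[[y_n]]/k[[x]]$. Finally, to upgrade \'etale reduction to \emph{good} reduction one invokes Corollary \ref{corgood}(\ref{corgooditem1}): the conductor $\iota_n$ of $\overline\chi_n$ equals $d+1$ where $d$ is read off from $\underline f_n$, and by Remark \ref{remarkslopesumconductors} (the sum of the $h_b$ over the leaves of $\mathcal{T}_n$) this equals $\mathfrak{C}_{\chi_n}(0,0,\overline 0)=\sum_{x\in\mathbb{B}(\chi_n)}\iota_{x,n}$; since $\delta_{\chi_n}(0)=0$ and this equality of conductors holds, Corollary \ref{corgood}(\ref{corgooditem1}) gives good reduction, so Proposition \ref{propinductionmainreduction} is satisfied. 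This is, essentially, the content of Theorem \ref{theoremgoodhurwitz}, now applied not to a standalone cover but to one already containing $\chi_{n-i}$.

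The main obstacle is the bookkeeping in the first paragraph: one must be careful that "$\chi_n$ has reduction type $\underline f_n$'' really pins down the lower reduction types compatibly, and that after performing an Artin--Schreier--Witt operation on $\underline F_n$ to match $\chi_{n-1}$ on the nose one has not disturbed the hypotheses on the tree $\mathcal{T}_n$ — but an AS--Witt operation over $\kappa$ lifts to one over $\mathbb{K}$ with coefficients of non-negative valuation, hence changes neither the depths nor the differential conductors at any vertex of positive depth, so $\mathcal{T}_n$ is unaffected. A secondary point requiring care is that Corollary \ref{corminimalitydepth} is stated for $\mathcal{T}$ extending $\mathcal{T}_{n-1}$ and $\chi$ extending $\chi_{n-1}$; here we first need the "extends'' relation between covers, which is why the identification $\chi_{n-1}'=\chi_{n-1}$ above must be carried out \emph{before} applying the corollary. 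Once these compatibilities are in place, the rest is a direct citation of Theorem \ref{theoremgoodhurwitz} / Corollary \ref{corgood}.
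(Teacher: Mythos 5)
Your second paragraph is essentially the paper's proof: the paper disposes of Proposition \ref{propinverse} by noting that the \'etale tree hypothesis gives good reduction via Theorem \ref{theoremgoodhurwitz} (whose proof is exactly your chain Corollary \ref{corminimalitydepth} plus Remark \ref{remarkslopesumconductors} plus Corollary \ref{corgood}(\ref{corgooditem1})), and that the reduction-type hypothesis identifies the special fiber with $\overline{\chi}_n$ up to birational equivalence. So that part is fine and matches the intended argument.

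The problem is your first paragraph. The claim that $\chi_n$ extends $\chi_{n-i}$ needs no argument at all: by hypothesis $\chi_n=\mathfrak{K}_n\big((F^1,\ldots,F^{n-1},F^n)\big)$ and $\chi_{n-i}=\mathfrak{K}_{n-i}\big((F^1,\ldots,F^{n-i})\big)$, and by the remark in \S \ref{seccharacter} one has $\chi_n^{p^i}=\mathfrak{K}_{n-i}\big((F^1,\ldots,F^{n-i})\big)=\chi_{n-i}$ on the nose; this is what the paper means by ``clear from the definition.'' Your substitute argument, besides being unnecessary, does not work as written: equality of (reduced) reduction types only identifies the \emph{special fibers} of $\chi_n^p$ and $\chi_{n-1}$ as elements of $W_{n-1}(\kappa)/\wp(W_{n-1}(\kappa))$, and in general two distinct deformations over $\mathbb{K}$ of the same character over $\kappa$ have the same reduction type, so no conclusion about equality of generic-fiber characters follows. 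Moreover, an Artin--Schreier--Witt operation on $\underline{F}_n$ leaves the character $\chi_n$, and hence $\chi_n^p$, unchanged, so it cannot be used to ``arrange'' $\chi_n^p=\chi_{n-1}$ if that equality did not already hold. In the present proposition this is harmless only because the equality is built into the hypotheses; if you were trying to prove the extension property from reduction data alone, this step would be a genuine gap.
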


\begin{proof}
It is clear from the definition that $\chi_n$ extends $\chi_{n-i}$. Since the Hurwitz tree $\mathcal{T}_n$ to which it gives rise to is {\'e}tale, $\chi_n$ has good reduction by Theorem \ref{theoremgoodhurwitz}. Moreover, having reduction type $\underline{f}_n$ guarantees that the special fiber of $\chi_n$ is birationally equivalent to $\overline{\chi}_n$. It then follows from the definition that $\chi_n$ is a deformation of $\overline{\chi}_n$ extending $\chi_{n-i}$.
\end{proof}

\begin{remark}
\label{remarkhurwitzdetectgoodreduction}
The above proposition implies that one can extend a $\mathbb{Z}/p^{n-1}$-deformation by finding a rational function $F^n$ that gives the right degeneration data at the root of the associated tree $\mathcal{T}_{n}$, and the right branching data to the generic fiber. In general, it is not easy to calculate the refined Swan conductor of a character, let alone controlling it to obtain the wanted reduction. To get around it, we start our construction of $F^n$ from the disc corresponds to a ``final vertex'' (see \ref{step 1}) of the extending Hurwitz tree $\mathcal{T}_n$, where the degeneration is easy to manage. We then continuously modify $F^n$ until we get to the boundary of the unit disc $\spec R[[X]]$. More details will be given in \S \ref{sectinduction}. That strategy again resembles one used by Obus and Wewers in \cite{MR3194815} to prove the Oort conjecture.
\end{remark}

\subsection{The compatibility of the differential conductors}
\label{seccompatibility}

\begin{definition}
\label{defnconstantcoeffepart}
Suppose $\mathcal{T}$ is a $\mathbb{Z}/p^n$-Hurwitz-tree arises from $\chi \in \textrm{H}^1_{p^n}(\mathbb{K})$, which has conductor $\iota_n$ and good reduction. Suppose, moreover, that $v$ is a vertex (which can be the root) of the {\'e}tale tree $\mathcal{T}$ initiating $m$ edges $e_1, \ldots, e_m$, and has a differential conductor (or $n$-th degeneration type) of the form 
\[ \omega_{\mathcal{T}}(v)= \frac{c_vdx}{\prod_{i=1}^m (x-a_i)^{h_i}}=\sum_{i=1}^m \sum_{j=1}^{h_i} \frac{c_{v,j}dx}{(x-a_i)^j} \hspace{2mm} \bigg(\text{or }   \sum_{j=1}^l \frac{d_j}{x^j} \text{, where } d_l \neq 0, p \nmid l \bigg), \]
where $c_{v, h_i} \neq 0$ for $i=1, \ldots, m$ and $a_i=[z_{e_i}]_v$. Then we call $c_v$ (or $-ld_l$) the \textit{constant coefficient at $v$}, and $c_{v,h_i}$ the \textit{constant coefficient of its $e_i$ part}.
\end{definition}

Suppose $e$ is an edge in $\mathcal{T}$ and $r \in (s(e), t(e)) \cap \mathbb{Q}$. Then it follows from the discussion in \S \ref{secvanishingcycleformula} that the differential conductor at $r$ of $\chi$ is a finite sum of the form
\[ \omega_{\chi}(r)=\sum_{j \ge l} \frac{c_j dx}{(x-[z_e]_r)^j} \]
for some $l \in \mathbb{N}$ where $c_l \neq 0$.

\begin{definition}
\label{defnconstantpartonedge}
With the notation above, we say $\omega_{\mathcal{T}}$ has coefficient $c_l$ at $r$.
\end{definition}

The following result shows that the constant coefficients of the differential conductors along the Hurwitz tree constructed in \S \ref{seccovertotree} are ``compatible'' in some senses.

\begin{theorem}
\label{theoremcompatibilitydiff}
Suppose $\mathcal{T}$ is a tree that arises from a cyclic cover with good reduction. Let $e$ be an edge in $\mathcal{T}$. If $\delta_{\mathcal{T}}(s(e))>0$, then the followings hold.
\begin{enumerate}[label=(\arabic*)]
    \item Suppose $s(e)< r <t(e)$ is a rational place on $e$. Then the constant coefficient at $r$ is equal to that at $t(e)$.
    \item The constant coefficient at $t(e)$ is equal to that of the $e$-part at $s(e)$.
\end{enumerate}

Suppose $\mathcal{T}$ is an {\'e}tale tree, and the $n$-th level reduced degeneration at its root is a polynomial in $x^{-1}$ of degree $l$ with coefficient $-ld_l$. If $l< p\iota_{n-1}-p$, then we only need the differential conductors starting from $v_1$ to be compatible. Otherwise, we have $\iota_n=l+1$, and the differential conductor at $v_1$ has the same coefficient with that at $v_0$.
\end{theorem}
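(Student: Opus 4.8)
The plan is to exploit the explicit description of the differential conductors in terms of best Witt-vector representations (Proposition \ref{propcalculaterefinedswanwitt} and Theorem \ref{thmbest}) together with the change-of-coordinate behaviour of the refined Swan conductor on an annulus. Recall that if $e$ corresponds to the annulus $\{pr_1<\nu(X-z_e)<pr_2\}$ and $\chi$ is radical on it, then on each circle of radius $pr$ inside the annulus the differential conductor has the shape $\omega_\chi(r)=\sum_{j\ge l}c_j\,dx/(x-[z_e]_r)^j$ for a fixed leading exponent $l=d_e$ and a fixed leading coefficient $c_l$ (this is forced by the vanishing-cycle formula, Proposition \ref{propvanishingcycle}, together with condition \ref{c5Hurwitz} on the tree: $d_e$ is constant on the open edge and equals $-\ord_{z_e}\omega_{t(e)}-1=\ord_{z_e}\omega_{s(e)}+1$). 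So the first step is to show the \emph{leading} coefficient $c_l$ does not vary with $r\in(s(e),t(e))$. This I would do by picking a best Witt vector $\underline{a}=(a^1,\dots,a^n)$ representing $\chi$ on the annulus, writing $\omega_\chi(r)=[d\underline a]$ via Proposition \ref{propcalculaterefinedswanwitt}, and tracking how the relevant monomial of $d\underline a=\sum_i (a^i)^{p^{n-i}-1}\,da^i$ scales under $X\mapsto X\pi^{-pr}$: the substitution multiplies everything homogeneously, and the reduction picks out the single monomial whose $\nu_{r}$-valuation is minimal; because the depth function $\delta_\chi(\--,z_e)$ is linear on the open edge (no kink, Proposition \ref{propdeltasw}), the minimising monomial is the same throughout the edge, and its reduced coefficient is literally constant. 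This gives part (1).

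For part (2), I would compare the coefficient at $r$ just above $s(e)$ with the $e$-part of $\omega_{\mathcal T}(s(e))$. By the construction of the tree in \S \ref{seccovertotree}, $\omega_{\mathcal T}(s(e))$ is the differential conductor of $\chi|_{U_{s(e)}}$, i.e.\ of $\chi$ on the punctured disc $\overline{\mathcal D}[p\cdot s(e),\cdot]$, while the coefficient at $r$ records $\chi$ on the slightly smaller closed disc $\mathcal D[pr,z_e]$ with $r>s(e)$. The key point is that the residue class of $\overline{z_e}$ on $\overline{U}_{s(e)}$ is exactly the disc $]\overline{z_e}[$ into which the whole edge $e$ contracts, so the $e$-part of $\omega_{\mathcal T}(s(e))$ — the partial-fraction piece supported at $a=[z_e]_{s(e)}$ — is obtained from $\omega_\chi(r)$ by the coordinate change $x\mapsto \pi^{r-s(e)}x+a$ (recentering at $z_e$ and rescaling), which, being a change at a \emph{strictly larger} radius, preserves the leading coefficient by the last sentence of the Proposition preceding \S \ref{secvanishingcycleformula} on how $\omega_\chi(r,z_1)$ and $\omega_\chi(r,z_2)$ compare when $\nu(z_1-z_2)>r$. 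Hence the two coefficients agree.

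For the final paragraph of the statement (the {\'e}tale case at the root), I would argue separately along the edge $e_0$. The reduced $n$-th degeneration at $v_0$ is a polynomial $f^n=\sum_{j\le l}d_j x^{-j}$ with $p\nmid l$, and condition \ref{c4Hurwitz} gives $d=l$, hence $\iota_n=l+1$ precisely when $l$ is the genuine slope $d_{e_0}$ on $e_0$. The subtlety is the dichotomy $l<p\iota_{n-1}-p$ versus $l\ge p\iota_{n-1}-p$: this is exactly the distinction, via Theorem \ref{theoremCartierprediction} and Corollary \ref{corgood}\ref{corgooditem2}, between the case where the $n$-th level contributes an \emph{essential} jump (so the leading behaviour of $\omega$ on $e_0$ is governed by the new rational function and the coefficient $-ld_l$ of $f^n$ survives all the way from $v_0$ to $v_1$) and the case where level $n$ merely ``tracks'' level $n-1$ under Cartier (so $\delta_n=p\delta_{n-1}$, $\mathcal C(\omega_n)=\omega_{n-1}$, and the compatibility one needs is only among the radical vertices $v_1,\dots$, since the edge $e_0$ carries no new information). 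Concretely, when $l\ge p\iota_{n-1}-p$ one checks, using Proposition \ref{propcomputeordp} applied to the order-$p$ ``difference'' character between $\chi_n$ and its minimal extension of $\chi_{n-1}$ (à la Lemma \ref{lemmaminimalextension} and Lemma \ref{lemmacombination}), that the leading partial-fraction coefficient of $\omega_\chi$ at $v_1$ equals $-ld_l$, i.e.\ the constant coefficient at $v_0$; when $l<p\iota_{n-1}-p$ the contribution of $f^n$ is absorbed and $\omega_\chi(r)$ on $e_0$ has the same leading coefficient as $\omega_{\chi_{n-1}}$ rescaled, so no extra condition at $v_0$ is imposed.

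\textbf{Main obstacle.} The routine part is the homogeneity/scaling bookkeeping in parts (1)--(2); the genuinely delicate point is the root case, where one must correctly identify when the $n$-th degeneration polynomial's top coefficient $-ld_l$ is the true leading coefficient of the differential conductor on $e_0$ and when it is not. This requires carefully separating the ``essential jump'' contribution of $f^n$ from the part forced by Cartier compatibility with level $n-1$, i.e.\ decomposing $\chi_n=\chi_{\min}\cdot\psi$ as in the proof of Theorem \ref{theoremCartiersuff} and computing the refined Swan conductor of $\psi$ near $v_1$ via Proposition \ref{propcomputeordp} — and checking that the inequality $l\gtrless p\iota_{n-1}-p$ is exactly the threshold that decides which of $\delta_n=p\delta_{n-1}$ or $\delta_n>p\delta_{n-1}$ holds at the first radical vertex.
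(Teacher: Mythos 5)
Your overall strategy (track the leading coefficient of a best Witt-vector representative along the edge, then run a Cartier dichotomy at the root) is in the same spirit as the paper's, but there are two concrete gaps. First, in part (1) you fix one Witt vector that is ``best on the annulus'' and assert that Proposition \ref{propcalculaterefinedswanwitt} computes $\omega_\chi(r)$ from it at every place $r$ of the open edge. Bestness is defined with respect to a single Gauss valuation $\nu_r$; a representative that is best at $t(e)$ need not stay best (nor even have reduction outside $\kappa_r^p$) at smaller radii, and when $\chi$ has branch points in other residue classes the restriction is not given by a one-sided expansion $\sum_i a_iX^{-i}$ at all, so conditions \ref{firstconddetect}--\ref{thirdconddetect} fail and your monomial bookkeeping is unavailable. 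The paper resolves exactly this by first proving the statement in the situation of \S\ref{secdetect} (Proposition \ref{propcompatibilitydetectsituation}), where a Lemma \ref{lemmaapprox}-type adjustment of the representative inside its Artin--Schreier--Witt class is made at each radius without touching the relevant coefficient, and then reduces the general edge to that situation via the partition $\underline G=\underline G_r+\underline G_{r,\infty}$ and Proposition \ref{proppartition}, which yields $\omega_\chi(r)=\omega_{\chi_r}(r)$ with $\chi_r$ branched only inside the subdisc. Your proposal has no substitute for this partition step.

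Second, your justification of part (2) does not work: the change-of-coordinate proposition you invoke compares $\omega_\chi(r,z_1)$ with $\omega_\chi(r,z_2)$ at the \emph{same} radius $r$, whereas you need to compare the conductor at interior places $r>s(e)$ with the $e$-part of the conductor at $s(e)$. These are reductions at different Gauss valuations and are not related by a literal substitution $x\mapsto\pi^{r-s(e)}x+a$; such a substitution does not even make sense inside $\Omega^1_{\kappa}$. The correct argument is the coefficient tracking in Proposition \ref{propcompatibilitydetectsituation}: using Proposition \ref{propcomputeordp} and the valuation estimates on the coefficients, one shows that as the place moves down to $s(e)$ the form only acquires lower-order (in the relevant case exact) terms while the top partial-fraction coefficient persists. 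Your root-case discussion is closer to the intended dichotomy, but it too ultimately rests on this unproved cross-radius comparison, so as written the proof does not go through without importing the partition proposition and the explicit analysis of \S\ref{secdetect}.
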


We first prove the result above for the situation of \S \ref{secdetect}. The rest of the proof will be given \S \ref{secpartition}, where we have extra tool to generalize Proposition \ref{propcompatibilitydetectsituation}.

\begin{proposition}
\label{propcompatibilitydetectsituation}
Suppose $\chi \in \textrm{H}^1_{p^{n}}(\mathbb{K})$ satisfies conditions \ref{firstconddetect}, \ref{secondconddetect}, and \ref{thirdconddetect} of \S \ref{secdetect} with respect to a rational place $r_0$. Suppose at $r_0$, the differential conductor $\omega_{\chi}(r_0)$ has constant coefficient $c$ and order of infinity $\iota-2>0$. Then there exists $r \in [0, r_0) \cap \mathbb{Q}$ such that, for all $s \in (r,r_0)\cap \mathbb{Q}$, we have
\[ \delta_{\chi}(s)= \delta_{\chi}(r_0)-(\iota-1) (r_0-s), \text{ and } \omega_{\chi}(s)=\frac{cdx}{x^{\iota}}, \]
In particular, for all $r' \in [0,r_0) \cap \mathbb{Q}$ such that $\delta_{\chi}(r')=\delta_{\chi}(r_0)-(\iota-1)(r_0-r')$, we have
\[ \omega_{\chi}(r')= \frac{cdx}{x^{\iota}} + \sum_{i=1}^{\iota-1}  \frac{c_idx}{x^i}=: \frac{cdx}{x^\iota} + \omega'_{\chi}(r'), \]
where $\omega'_{\chi}(r')$ is exact, i.e., $c_i=0$ for all $i \equiv 1 \pmod{p}$. 
\end{proposition}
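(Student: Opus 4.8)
# Proof Proposal for Proposition \ref{propcompatibilitydetectsituation}

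The plan is to leverage the explicit approximation machinery already set up in \S\ref{secdetect}, especially Lemma \ref{lemmaapprox} and Proposition \ref{propdetect}, together with the relation between the differential conductor and the residue-field expansion given by Proposition \ref{propcomputeordp}(\ref{propcomputeorderpsecond}). First I would reduce to the case $n=1$: by Theorem \ref{thmbest} and Proposition \ref{propcalculaterefinedswanwitt}, the depth $\delta_\chi$ and differential conductor $\omega_\chi$ of a $\mathbb{Z}/p^n$-character near a place are governed by a single relevant entry of a best Witt-vector representative, which behaves like an order-$p$ character; the conditions \ref{firstconddetect}, \ref{secondconddetect}, \ref{thirdconddetect} are exactly what is needed to run that reduction. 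So assume $\chi = \mathfrak{K}_1(F)$ with $F = \sum_{i \ge 0} a_i X^{-i}$ as in Proposition \ref{propASclassdetect}, with $\nu(a_i) \ge p(r_0 i - \delta)$ and (after an Artin--Schreier adjustment) $a_j = 0$ for $j$ divisible by $p$ up to a large bound.

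The key step is to analyze the leading behaviour at $r_0$. By hypothesis $\omega_\chi(r_0) = [F]_{r_0,0}\,dx$ has a pole of order $\iota - 1$ at infinity with constant (leading) coefficient $c$; by Proposition \ref{propcomputeordp}, after substituting $X_{r_0} = X\pi^{-pr_0}$ and clearing the common valuation, the reduction $[F(X_{r_0})]$ equals a polynomial in $x_{r_0}$ (equivalently, $F$ contributes a term $a_{\iota-1}X^{-(\iota-1)}$ whose valuation realizes the minimum $\nu(a_{\iota-1}) - pr_0(\iota-1) = -p\delta(r_0)$). Since $\iota - 2 > 0$ and, by \ref{secondconddetect}, the left slope of $\delta_\chi$ at $r_0$ is at least $\iota - 1 > 1$ while no kink occurs strictly between the relevant index and $r_0$, I would invoke Proposition \ref{propdetect} and Remark \ref{remarkdetect}: the largest kink $\lambda_m(\chi)$ below $r_0$ is strictly less than $r_0$ (taking $m = \iota - 1$), so there is an interval $(r, r_0)$ with $r = \lambda_m(\chi)$ on which the leading term of $F$ at infinity is the \emph{unique} valuation-minimizing term. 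On that interval $\nu_s(F + a^p - a) = \nu(a_{\iota-1}) - ps(\iota-1)$, giving $\delta_\chi(s) = \delta_\chi(r_0) - (\iota-1)(r_0 - s)$, and $[F + a^p - a]_s = c\, x_s^{\iota-1} + (\text{lower order})$, so $\omega_\chi(s) = d(c\,x_s^{\iota-1}) = c(\iota-1)x_s^{\iota-2}\,dx_s$ up to lower-order terms; rewriting in the $x^{-1}$ convention (the pole at the branch point $0$, i.e. at infinity on the reduction after inversion) yields $\omega_\chi(s) = c\,dx/x^{\iota}$ exactly, since the lower-order differential terms are killed by the fact that they come from polynomials of strictly smaller degree that the approximant $a^p - a$ has already absorbed.

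For the final ``In particular'' clause, I would take any $r' \in [0, r_0) \cap \mathbb{Q}$ realizing $\delta_\chi(r') = \delta_\chi(r_0) - (\iota-1)(r_0 - r')$. Then the same leading term still dominates at $r'$ (its valuation is forced by the linear formula), so the coefficient of $x^{-\iota}$ in $\omega_\chi(r')$ is $c$; the remaining terms $\sum_{i=1}^{\iota-1} c_i dx/x^i$ form $\omega'_\chi(r')$, and exactness of $\omega'_\chi(r')$ — equivalently $c_i = 0$ whenever $i \equiv 1 \pmod p$ — follows from Theorem \ref{theoremCartierprediction}(\ref{theoremCartierpredictionpart1}) in the order-$p$ case (where $\mathcal{C}(\omega_\chi) = 0$ forces $\omega_\chi$ exact) combined with the observation that the leading term $c\,dx/x^{\iota}$ is itself exact precisely when $p \nmid \iota$, which holds because $\iota - 1$ is the order of the pole of an actual differential conductor and Proposition \ref{propcomputeordp} forces that order to be prime to $p$. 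I expect the main obstacle to be the bookkeeping in the reduction from $\mathbb{Z}/p^n$ to $\mathbb{Z}/p$: one must check that passing to a best representative does not disturb conditions \ref{firstconddetect}--\ref{thirdconddetect} and that the relevant entry's approximant interacts correctly with the Witt-vector differential $d\underline{a}$; this is where I would spend the most care, deferring (as the paper itself does) the fully general compatibility statement of Theorem \ref{theoremcompatibilitydiff} to \S\ref{secpartition}.
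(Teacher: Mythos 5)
Your argument is essentially the paper's own proof in the base case: for an order-$p$ character you expand the Artin--Schreier representative as in Proposition \ref{propASclassdetect}, observe that near $r_0$ the degree-$(\iota-1)$ term is the unique valuation-minimizing one, and read off the linear depth and the monomial differential via Proposition \ref{propcomputeordp}; that part is fine. The genuine gap is your opening reduction from $\mathbb{Z}/p^n$ to $\mathbb{Z}/p$. You claim that by Theorem \ref{thmbest} and Proposition \ref{propcalculaterefinedswanwitt} everything is ``governed by a single relevant entry of a best Witt-vector representative, which behaves like an order-$p$ character,'' but that is only true when the relevance length is $1$, i.e.\ when $\delta_{\chi}(r_0)>p\delta_{\chi^p}(r_0)$. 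In the case $\delta_{\chi}(r_0)=p\delta_{\chi^p}(r_0)$ the differential conductor is the sum $\sum_{i\ge j}[f^i]^{p^{n-i}-1}d[f^i]$ over \emph{several} relevant entries, it is not exact, and which positions are relevant can change as $s$ moves below $r_0$; no single entry controls $(\delta_{\chi}(s),\omega_{\chi}(s))$ on the interval. The paper handles this case by a separate induction on the level: it uses Proposition \ref{propdeltasw} together with Corollary \ref{corswgood} to rule out a downward kink of $\delta_{\chi^p}$ (a concavity would force $\omega_{\chi^p}$ to acquire a pole away from $0$), concludes that $\omega_{\chi^p}(s)$ is one and the same monomial $m\,dx/x^{l}$ throughout the interval, and then gets $\omega_{\chi}(s)=m^p\,dx/x^{pl+1}$. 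Your proposal defers exactly this to ``bookkeeping,'' but it is the substantive half of the proof, and the machinery of \S\ref{secdetect} you invoke is set up only for order-$p$ characters, so it cannot be applied to the Witt vector wholesale.

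Two smaller points. Your exactness argument for $\omega'_{\chi}(r')$ leans on $\mathcal{C}(\omega_{\chi})=0$, which again only holds in the order-$p$ (or relevance-length-$1$) situation; when $\mathcal{C}(\omega_{\chi})=\omega_{\chi^p}\neq 0$ (there $\iota\equiv 1\pmod p$, so the leading term $c\,dx/x^{\iota}$ is itself non-exact) the vanishing of the coefficients $c_i$ with $i\equiv 1\pmod p$ must come from the level-by-level induction, not from exactness of $\omega_{\chi}(r')$. Also, the criterion you state is off: $dx/x^{\iota}$ is exact precisely when $\iota\not\equiv 1\pmod p$, not when $p\nmid\iota$.
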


\begin{proof}
We first assume that $G \cong \mathbb{Z}/p$. Set $Y:=X/t^{pr_0}$ and $\delta:=\delta_{\chi}(r_0)$. By Proposition \ref{propcomputeordp}, we may assume the restriction of $\chi$ to $D[pr_0]$ is represented by an Artin-Schreier class $f_{\chi}=t^{-p\delta}F(Y)$, where the reduction (modulo $t$) of $F$ is $\overline{F}:=f \not\in \kappa^p$. In addition, assumption \ref{firstconddetect} allows us to write
\[  df=\bigg(  \sum_{j \ge \iota} \frac{\overline{b}_j}{x^j}  \bigg) dx \in \Omega_{\kappa}^{\ast}, \]
where $\overline{b}_j \in k$ and $\overline{b}_{\iota}=c$. Set $I:=\{i \in \mathbb{N} \mid  \overline{b}_i \neq 0 \}$. It then follows from the discussion in \S \ref{secdetect} that
\begin{equation}
\label{eqninitialAS}
   f_{\chi}=\sum_{i=0}^{\infty}  a_i Y^{-i} =\frac{1}{t^{p\delta}} \Big( \sum_{j \ge \iota-1} \frac{d_j}{Y^j} + \sum_{l \not\in I} \frac{e_l}{Y^l}   \Big), 
\end{equation}
where $d_j \in R$, $\nu(d_j)=0$, and $\nu(e_l)>0$. In addition, using the assumption about the constant coefficient and Proposition \ref{propcomputeordp}, we learn that $c=(1-\iota)\overline{d}_{\iota-1}$. Let $Z:=Yt^{ps}$. We thus have
\begin{equation}
    \label{eqnlaterAs}
    f_{\chi}= \sum_{i=0}^{\infty}  a_it^{psi} Z^{-i} =\frac{1}{t^{p\delta}} \Big( \sum_{j \ge \iota-1} \frac{d_jt^{psj}}{Y^j} + \sum_{l \not\in I} \frac{e_lt^{psl} }{Y^l}   \Big).
\end{equation}
Therefore, there exists $v \in (0,r_0) \cap \mathbb{Q}$ such that, for all $w \in [v,r_0) \cap \mathbb{Q}$, the value  $\nu(d_{\iota-1})+p(r_0-w)(\iota-1)$ is strictly smaller than the valuations of other terms. The first part then immediately follows from Proposition \ref{propcomputeordp}. Let $r' \ge v$ be the largest kink of $\delta_{\chi}$ on $(0, r_0]$. Apply the process from \S \ref{secdetect} for some $r \in (0, r') \cap \mathbb{Q}$, one may replace $f_{\chi}$ of (\ref{eqninitialAS}) by another one in the same Artin-Schreier class, which may assume to have the form of (\ref{eqnlaterAs}), such that Proposition \ref{propcomputeordp} applies immediately for any rational place in $(0,v)$. In addition, that procedure does not change the  ${\iota-1}$ coefficient of $f_{\chi}$ and, for $s:=r_0-r$, $\nu(d_{\iota-1} t^{ps(\iota-1)}) \le \nu(e_l t^{psl})$ only when $l< \iota-1$ and $l \not\equiv 0 \pmod p$. The rest once more follows from an application of Proposition \ref{propcomputeordp} to the place $r'$.

We now consider the case $G \cong \mathbb{Z}/p^n$ ($n>1$). Suppose the length-$n$-Witt vector $\underline{f}=(f^1, \ldots, \allowbreak f^{n-1}, f^n)$ that represent $\chi$ is best at $r_0$, and $n-j+1$ is its relevance length (see Definition \ref{defnbest}). Recall from Proposition \ref{propcalculaterefinedswanwitt} that its differential Swan conductor at $r_0$ is 
\[ \dsw_{\chi}(r_0)= \sum_{i\ge j}^n [f^i]_{r_0}^{p^{n-i}-1}d[f^i]_{r_0}= \bigg[\sum_{i=1}^n(f^i)^{p^{n-i}-1} df^i\bigg]_{r_0}. \]
The first assertion then follows from the application of the base case's argument to $\sum_{i=1}^n(f^i)^{p^{n-i}-1} \allowbreak df^i$. Let us consider the case $\delta_{\chi}(r_0)> p\delta_{\chi^p}(r_0)$. Then $\underline{f}$'s relevance length (Definition \ref{defnrelevance}) is $1$ at $r_0$ and $\dsw_{\chi}(r_0)=d[f^n]_{r_0}$.
One then may prove the second assertion for this case using the same reasoning for the case $n=1$, where $f^n$ plays the role of $f_{\chi}$. Suppose now that $\delta_{\chi}(r_0)= p\delta_{\chi^p}(r_0)$, i.e., the relevance length of $\underline{f}$ is greater than $1$. Then, as before, there exists a smallest $r' \in [0, r_0) \cap \mathbb{Q}$ such that $\delta_{\chi}(s)$ is linear on $[r', r_0)$ with slope $\iota-1$. In addition, for all $s \in [r, r_0) \cap \mathbb{Q}$, it must be true that
\[ \omega_{\chi}(s)=\frac{c_sdx}{x^{\iota}} \text{, and } \omega_{\chi}(r')=\frac{c_{r'}dx}{x^{\iota}}+ \sum_{i=1}^{\iota-1} \frac{c_{r',i} dx}{x^i}. \]
Note that $c_s=c$ for $s \in (r,r_0) \cap \mathbb{Q}$. Let us consider two separate cases
\begin{itemize}
    \item If $\delta_{\chi}(s')>p\delta_{\chi^p}(s')$ for $s' \in (r', r_0)$, then $\delta_{\chi}(s)>p\delta_{\chi^p}(s)$ for all $s \in [r', r_0) \cap \mathbb{Q}$. Therefore, it must be true that $\underline{f}$ is best of relevance length $1$ on the interval $(r',r_0)$. We are in a situation similar to one where $\delta_{\chi}(r_0)> p\delta_{\chi^p}(r_0)$, where $s'$ plays the role of $r_0$.
    \item  If $\delta_{\chi}(s')=p\delta_{\chi^p}(s')$ for $s' \in (r, r_0)$, then, by Theorem \ref{theoremCartierprediction}, it must be true that $\delta_{\chi}(s)=p\delta_{\chi^p}(s)$ for all $s \in [r,r_0]$. Note that, if $\delta_{\chi^p}( \-- )$ concaves down at a place $s$ on $(0, r_0)$, then $\ord_{\infty} \omega_{\chi^p}(s)+1> -\ord_{0} \omega_{\chi^p}(s)-1$ (Proposition \ref{propdeltasw}), which implies $\omega_{\chi^p}(s)$ has a pole outside $0$, contradicting Corollary \ref{corswgood}. Therefore, if $\omega_{\chi^p}(s')=mdx/x^l$, then $\omega_{\chi^p}(s)=mdx/x^l$ for all $s \in (r',r_0) \cap \mathbb{Q}$ by induction, hence $\omega_{\chi}(s)=m^pdx/x^{pl+1}$. The rest easily follows.
\end{itemize}
That completes the proof of the proposition.
\end{proof}

\begin{definition}
\label{defncompatibility}
With the above notation, we say the conductor at $t(e)$ is \emph{compatible} with one at $s(e)$ if the constant coefficient at $t(e)$ is equal to that of the $e$ part at $s(e)$. Suppose $v \prec v'$, then we say $v'$ is compatible with $v$ if, given any pair of adjacent vertices $v \prec v_1 \prec v_2 \prec v'$, $v_1$ is compatible with $v_2$.
\end{definition}

\subsubsection{Partition of a tree by its edges and vertices}
\label{secsubtree}

Suppose we are given a Hurwitz tree $\mathcal{T}$ and $e$ is one of its edges. We define $\mathcal{T}(e)$ to be a sub-(non Hurwitz)-tree of $\mathcal{T}$ whose
\begin{itemize}
    \item edges, vertices and information on them coincide with those succeeding $s(e)$, and whose
    \item differential datum at $s(e)$ is equal to the $e$-part of $\omega_{\mathcal{T}}(s(e))$.
\end{itemize}
These tree will play a critical role in the extending of Hurwitz tree in \S \ref{secvanishingobstruction}, and the ``partition process'' \S \ref{secpartition}. More precisely, \S \ref{secproofmain} will construct a cover that gives rise to a whole tree inductively from its sub-tree. Note that $\mathcal{T}(e)$ verifies all of the Hurwitz tree's axioms but ones that involve the root and the trunk.

\subsubsection{Deriving a \texorpdfstring{$\mathbb{Z}/p^{n-1}$}{zpn-1}-tree from a \texorpdfstring{$\mathbb{Z}/p^{n}$}{zpn}-tree}
\label{secquotienttree}
Suppose $\chi_n$ is a $\mathbb{Z}/p^n$-character that gives rise to a Hurwitz tree $\mathcal{T}_n$. By following the below process, one can derive the tree $\mathcal{T}_{n-1}$ for level $n-1$ just from the level-$n$-tree, and the branching data of the generic fiber.
\begin{enumerate}[label=(\arabic*)]
    \item Erasing the $\mathbb{Z}/p$-leaves and the edges $e$ of $\mathcal{T}_n$ such that the monodromy group at $t(e)$ is $\mathbb{Z}/p$.
    \item Set the conductors at the leaves according to the $(n-1)$-branching-data of the generic fiber.
    \item If the monodromy group at a vertex $v$ is $G_{\mathcal{T}_n}(v) =\mathbb{Z}/p^m$, where $m>1$, then set $G_{\mathcal{T}_{n-1}}(v)= \mathbb{Z}/p^{m-1}$. 
    \item Starting from the root, assigning the degeneration data at the succeeding vertices inductively so that the differential conductors are compatible and verify Theorem \ref{theoremCartierprediction}.
\end{enumerate}

One thus can easily derive the degeneration data of the lower (than $(n-1)$) levels from the highest one by reiterating the above process. The readers can see that from the examples in \S \ref{secvanishingobstruction}.

\subsection{The vanishing of the differential Hurwitz tree obstruction}
\label{secvanishingobstruction}

In the below proposition, we discuss the vanishing of the Hurwitz tree obstruction for the refined deformation problem. In addition, those extending Hurwitz trees are specially designed to be used as the ``skeletons'' to build the lifts in \S \ref{secproofmain}. The proof is postponed to \S \ref{secconstructtree} as they are quite technical. However, we will provide some generalized enough examples and discuss some of their features to help the readers follow \S \ref{secproofmain}.

\begin{proposition}
\label{propextendtree}
Suppose $k[[y_n]]/k[[x]]$ is cyclic Galois of order $p^n$ and with conductor $\iota_n$. Suppose, moreover, that $Y_{n-1} \xrightarrow[]{\phi_{n-1}} C$ is an admissible deformation of the $\mathbb{Z}/p^{n-1}$-subextension $k[[y_{n-1}]]/\allowbreak k[[x]]$ over a finite extension $R$ of $k[[t]]$, and $\mathcal{T}_{n-1}$ is its associated Hurwitz tree. Then there exists a good $\mathbb{Z}/p^n$-Hurwitz tree $\mathcal{T}_{n}$ of conductor $\iota_n$ that extends $\mathcal{T}_{n-1}$.
\end{proposition}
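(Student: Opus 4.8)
\textbf{Proof strategy for Proposition \ref{propextendtree}.}

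The plan is to build $\mathcal{T}_n$ by induction along the combinatorial tree of $\mathcal{T}_{n-1}$, starting from the leaves and working back toward the root, at each stage prescribing the depth $\delta_v$, the differential conductor $\omega_v$, the monodromy group $G_v$, the slopes $d_e$, the thicknesses $\epsilon_e$ and the leaf conductors so that all of axioms \ref{c1Hurwitz}--\ref{c8Hurwitz} hold together with the five conditions of Definition \ref{defnextendhurwitz}. First I would fix the branching divisor of $\mathcal{T}_n$: since $k[[y_n]]/k[[x]]$ has conductor $\iota_n$, Corollary \ref{corconductorconditions} tells us how to split $\iota_n$ over the existing leaves (those inherited from $\mathcal{T}_{n-1}$, whose level-$(n-1)$ conductors are already determined), and whether we must refine the decorated tree to introduce new $\mathbb{Z}/p$-leaves. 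Concretely, at each leaf $b$ of $\mathcal{T}_{n-1}$ with $(n-1)$-th conductor $\iota_{b,n-1}$, condition \ref{corconductorconditions2} forces $\iota_{b,n} \ge p\iota_{b,n-1}-p+1$; the ``essential part'' $q_b$ of the jump (in the sense of the OSS splitting recalled after Example \ref{examplepopdeformation}) dictates whether $b$ must be subdivided so that $q_b$ new leaves of conductor $p-1, p^2-1, \ldots$ sprout from an added final vertex. This step uses the fact that $\sum_b \iota_{b,n-1}=\iota_{n-1}$ (the conductor of $\mathcal{T}_{n-1}$) and $\sum_b \iota_{b,n}=\iota_n$, which is exactly the constraint from Remark \ref{remarkslopesumconductors}.

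Next, with the branching divisor and the refined decorated tree in hand, I would run the inductive construction of the $(\delta_v,\omega_v)$ from the final vertices inward, solving at each vertex the Cartier operator equation $\mathcal{C}(\omega_v)=\omega_v^{(n-1)}$ (when the depth does not jump) or $\mathcal{C}(\omega_v)=0$ (when it does), where $\omega_v^{(n-1)}$ is the level-$(n-1)$ differential conductor already recorded in $\mathcal{T}_{n-1}$. At a final vertex adjacent to leaves $b_1,\ldots,b_m$ the axioms \ref{c2Hurwitz}, \ref{c7Hurwitz} pin down the polar part of $\omega_v$ (a sum $\sum_i \sum_{j\le h_i} a_{i,j}(x-a_i)^{-j}dx$ with $h_i=\iota_{b_i,n}$ and prescribed leading coefficients from compatibility), while \ref{c2Hurwitz} forbids zeros on $U_v$; the freedom in the lower-order coefficients is precisely what lets one prescribe the Cartier image, because the non-exact part of a differential is detected by its residues and $p$-power-index coefficients. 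I would invoke the solvability results of \S\ref{secsolutioncartierequation} (referenced in Definition \ref{defnextensiondegdata} as ``the Cartier operator equation'') to guarantee such an $\omega_v$ exists after a finite extension of $K$; one then sets $d_e=-\ord_{z_e}\omega_{t(e)}-1$ by \ref{c5Hurwitz}, chooses the depth $\delta_v$ consistently via \ref{c3Hurwitz} and \ref{c6Hurwitz} (with $\epsilon_e$ adjusted to absorb the $(p-1)d_e$ factor), and assigns $G_v=\mathbb{Z}/p^{i+1}$ or $\mathbb{Z}/p$ as dictated by Definition \ref{defnextendhurwitz}(3),(4). The compatibility of constant coefficients along edges, forced by Theorem \ref{theoremcompatibilitydiff} (and Proposition \ref{propcompatibilitydetectsituation}), must be respected so that the resulting tree is of the form that actually arises from covers — this is where I would lean on the ``starting from the root, assigning degeneration data so that differential conductors are compatible'' recipe of \S\ref{secquotienttree}, run in reverse.

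Finally, at the root $v_0$ I must decide whether $\mathcal{T}_n$ is étale or radical. If $\mathcal{T}_{n-1}$ is étale, then by condition \ref{defnextendhurwitz}(5) I need a reduced length-$n$ Witt vector $(f^1,\ldots,f^{n-1},f^n)\in W_n(\kappa)$ extending the reduced degeneration of $\mathcal{T}_{n-1}$ and realizing $\omega_{v_1}$ on the first non-root vertex; here $f^n$ is just a reduced rational function in $x^{-1}$ whose associated degeneration on $\mathcal{D}[\epsilon_{e_0}]$ produces the prescribed $\omega_{v_1}$, and Theorem \ref{theoremCartierprediction} together with Corollary \ref{corgood}\ref{corgooditem2} guarantees the level-$n$ conductor comes out to $\iota_n$ exactly when $\deg_{x^{-1}}f^n$ is chosen as $\iota_n-1$ (or, if $\iota_n<p\iota_{n-1}-p$, the compatibility need only start at $v_1$, the bifurcation handled by the last sentence of Theorem \ref{theoremcompatibilitydiff}). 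The axiom \ref{c8Hurwitz} on monodromy groups at the root ($G_{v_0}=G_{v_1}=G$ with a unique successor) is automatic from the one-point-cover normalization of Proposition \ref{propmainonepointcover}. \emph{The main obstacle} I expect is Step two — the simultaneous solvability of the Cartier operator equation $\mathcal{C}(\omega_v)=\omega_v^{(n-1)}$ under \emph{all} the side constraints at once: the poles must sit exactly at the prescribed leaves with prescribed orders $h_i=\iota_{b_i,n}$, there must be no spurious zeros or poles on $U_v$ (axiom \ref{c2Hurwitz}), the leading coefficients must match the compatibility data flowing in from the adjacent edge, and the result must still extend $\mathcal{T}_{n-1}$ in the sense of satisfying Theorem \ref{theoremCartierprediction} at every inherited vertex. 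Showing that the affine space of candidate differentials cut out by the pole/zero conditions maps \emph{onto} the space of admissible Cartier images — rather than just meeting it — is the delicate point, and it is exactly the content deferred to \S\ref{secsolutioncartierequation} and \S\ref{secconstructtree}; I would reduce it to a dimension count comparing the degrees of freedom in the coefficients of $\omega_v$ against the constraints imposed by the Cartier image and the leading-coefficient matching, using that the Cartier operator is $p^{-1}$-linear and kills exact forms.
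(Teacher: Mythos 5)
Your outline correctly identifies where the difficulty sits, but the method you propose for the crucial step would not close it. At a vertex $v$ of the tree, axioms \ref{c2Hurwitz} and \ref{c7Hurwitz} force $\omega_v$ to be of the rigid shape $c_v\,dx/\prod_i(x-a_i)^{h_i}$ with the $a_i$ fixed by the decorated tree and the $h_i$ equal to the level-$n$ conductors: once the branching divisor is chosen there is essentially a \emph{single scalar} of freedom, not an affine space of coefficients. Consequently the equation $\mathcal{C}(\omega_v)=\omega_v^{(n-1)}$ is wildly overdetermined on this one-parameter family, and no dimension count comparing "degrees of freedom against constraints" can give surjectivity; for a generic distribution of $\iota_n$ over the leaves there is simply no solution. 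The actual content of the proposition is that the level-$n$ conductors can be \emph{chosen} so that a solution exists, and the paper does this by an explicit formula (Proposition \ref{propexistenceCartiersolution}): writing the candidate as $u^p\eta$ and using $\mathcal{C}(u^p\eta)=u\,\mathcal{C}(\eta)$, the Cartier equation is solved exactly when all but at most two leaf conductors are multiplied by $p$ and the remaining one or two take the special values $p\iota-p+1$ and $p\iota-p+a+1$. This dictates the whole combinatorial scheme of the construction (Propositions \ref{propconstructadditivetree} and \ref{propconstructminimaltree}): induct on the \emph{height} of the tree, extend exactly one subtree at each branch vertex "minimally" (where $\delta_n=p\delta_{n-1}$ and the Cartier condition bites) and all the others "additively" (where $\delta_n>p\delta_{n-1}$, so Theorem \ref{theoremCartierprediction} only requires $\omega_v$ exact), with the constant coefficients propagated from the root outward to enforce compatibility. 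Your plan inverts this logic — you fix the branching divisor first via Corollary \ref{corconductorconditions} and only afterwards try to solve for the differentials — but Corollary \ref{corconductorconditions} constrains each leaf separately and says nothing about which global distribution makes the Cartier equations solvable with differentials of the required pole-only form.

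A secondary but real problem is your proposed use of the OSS-style splitting to refine the decorated tree: sprouting $q_b$ new leaves of conductor $p-1,p^2-1,\ldots$ at a leaf with an essential jump introduces many leaves whose conductors are prime to $p$. The trees the paper builds deliberately keep \emph{at most two} such leaves beyond any rational place (Remark \ref{remarkatmost2nonp}); this is not cosmetic, since Lemma \ref{lemmacontroljunk} — needed to realize the tree by an actual cover in \S\ref{secproofmain} — only handles that case. So even if your tree existed, it would not be usable downstream. The fix is to abandon the dimension count and the OSS refinement, and instead prove the two explicit constructions (additive and minimal) by induction on the height, with Proposition \ref{propexistenceCartiersolution} supplying the Cartier solution at each minimal vertex.
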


Next, we will give three examples of extending a $\mathbb{Z}/p$-tree of conductor $\iota_1$ to certain $\mathbb{Z}/p^2$-trees, one for minimal second upper jump (i.e., $\iota_2=p\iota_{1}-p$) and two for non-minimal second upper jump (i.e., $\iota_2=p\iota_{1}-p+a_2$, where $p \nmid a_2$).

\begin{example}
\label{exminimaljump}
Suppose $\overline{\chi}$ is a character in $\textrm{H}^1_{7^2}(\kappa)$ that is given by the equation
\[ \wp(y_1,y_2)=\bigg(\frac{1}{x^{29}}, \frac{2}{x^{135}}+\frac{1}{x} \bigg). \]
\noindent One can think of $\overline{\chi}$ as a $\mathbb{Z}/{49}$-extension of the complete discrete valuation ring $k[[x]]$. By Theorem \ref{theoremcaljumpirred}, the extension $\overline{\chi}$ has upper jumps $u_1=29$ and $u_2= \max \{29 \cdot 7, 135\}=203$. That means the second jump is minimal. The $\mathbb{Z}/7$-subextension $\overline{\chi}_1$ of $\overline{\chi}$ is given by
\[ y_1^7-y_1=\frac{1}{x^{29}}. \]
\noindent Consider the $\mathbb{Z}/7$-Hurwitz tree $\mathcal{T}_1$ in Figure \ref{figextendtreeminsf1}, whose degeneration data are on Table \ref{tab:degenerationdata} .The number $a$ can be either $2+5\sqrt{6}$ or $2-5\sqrt{6}$. Note that these choices of $a$ make $dx/(x^{11}(x-1)^5(x-a)^3)$ exact. Note also that the constant coefficient $1/a^3$ of the differential conductor at $t(e_2)$ is necessary for it to be compatible with one at $t(e_1)$. Recall that one can construct from $\mathcal{T}_1$ a $\mathbb{Z}/7$-character $\chi_1$ of type $[5,6,5,3,4,7]^{\top}$ that deforms $\overline{\chi}_1$ (Theorem \ref{thminversedegreep}). The tree $\mathcal{T}_2^{\min}$ in Figure \ref{figextendtreeminsf2} with degeneration data in the third column of Table \ref{tab:degenerationdata} and with monodromy group $\mathbb{Z}/49$ at each vertex is a $\mathbb{Z}/49$-tree that extends $\mathcal{T}_1$ and has conductor $204$. One can show $\delta_{\mathcal{T}_2}(v)=7\delta_{\mathcal{T}_1}(v)$ and $\mathcal{C}(\omega_{\mathcal{T}^{\min}_2}(v))=\omega_{\mathcal{T}_1}(v)$ for each $v \in V_{\mathcal{T}^{\min}_2} \setminus \{v_0\}$. It follows from the data in Table \ref{tab:degenerationdata} that $\mathcal{T}_2^{\min}$ can represent a deformation of type
\[ \begin{bmatrix}
30 & 204 \\
\end{bmatrix} \xrightarrow{}  \begin{bmatrix}
   5 & 6 & 5 & 3 & 4 & 7 \\
   35 & 36 & 35 & 21 & 28 & 49 \\
\end{bmatrix}^\intercal. \] 
\tikzstyle{level 1}=[level distance=1.2cm, sibling distance=3.9cm]
\tikzstyle{level 2}=[level distance=1.2cm, sibling distance=2.1cm]
\tikzstyle{level 3}=[level distance=1.2cm, sibling distance=0.9cm]
\tikzstyle{level 4}=[level distance=1.2cm, sibling distance=1cm]
\tikzstyle{bag} = [text width=11.5em, text centered]
\tikzstyle{end} = [circle, minimum width=3pt,fill, inner sep=0pt]
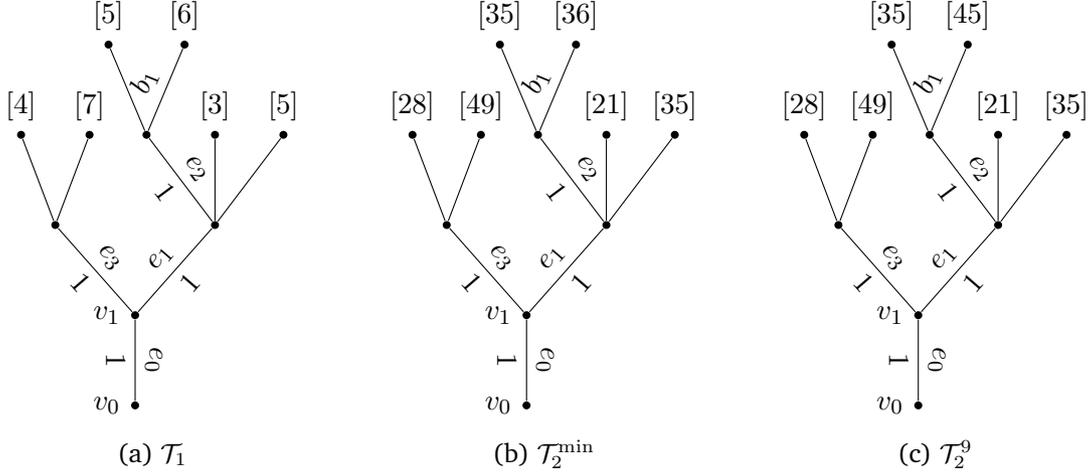
\begin{figure}[ht]
\begin{subfigure}{.32\textwidth}
  \centering
  \begin{tikzpicture}[grow=up, sloped]
\node[end, label=left:{$v_0$}]{}
child{
    node[end, label=left:{$v_1$}]{}
    child {
        node[end]{}        
            child {
                node(d)[end, label=above:
                    {$[5]$}] {}
                edge from parent
            }
            child {
                node[end, label=above:
                    {$[3]$}] {}
                edge from parent
            }
            child {
                node[end] {}
                child {
                    node(d)[end, label=above:
                    {$[6]$}] {}
                    edge from parent
                    node[above] {$b_1$}
                    }
                child {
                    node(d)[end,label=above:
                    {$[5]$}] {}
                    edge from parent
                    }
                edge from parent
                node[above] {$e_2$}
                node[below] {$1$}
            }
            edge from parent 
            node[above] {$e_1$}
            node[below]  {$1$}
    }
        child {
                node[end] {}
                child {
                    node(d)[end, label=above:
                    {$[7]$}] {}
                    edge from parent
                    }
                child {
                    node(d)[end, label=above:
                    {$[4]$}] {}
                    edge from parent
                    }
                edge from parent
                node[above] {$e_3$}
                node[below] {$1$}
            }
    edge from parent
    node[above] {$e_0$}
    node[below] {$1$}
    };
\end{tikzpicture}
  \caption{$\mathcal{T}_1$}
  \label{figextendtreeminsf1}
\end{subfigure}%
\begin{subfigure}{.32\textwidth}
  \centering
\begin{tikzpicture}[grow=up, sloped]
\node[end, label=left:{$v_0$}]{}
child{
    node[end, label=left:{$v_1$}]{}
    child {
        node[end]{}        
            child {
                node(d)[end, label=above:
                    {$[35]$}] {}
                edge from parent
            }
            child {
                node[end, label=above:
                    {$[21]$}] {}
                edge from parent
            }
            child {
                node[end] {}
                child {
                    node(d)[end, label=above:
                    {$[36]$}] {}
                    edge from parent
                    node[above] {$b_1$}
                    }
                child {
                    node(d)[end,label=above:
                    {$[35]$}] {}
                    edge from parent
                    }
                edge from parent
                node[above] {$e_2$}
                node[below] {$1$}
            }
            edge from parent 
            node[above] {$e_1$}
            node[below]  {$1$}
    }
        child {
                node[end] {}
                child {
                    node(d)[end, label=above:
                    {$[49]$}] {}
                    edge from parent
                    }
                child {
                    node(d)[end, label=above:
                    {$[28]$}] {}
                    edge from parent
                    }
                edge from parent
                node[above] {$e_3$}
                node[below] {$1$}
            }
    edge from parent
    node[above] {$e_0$}
    node[below] {$1$}
    };
\end{tikzpicture}
  \caption{$\mathcal{T}^{\min}_2$}
  \label{figextendtreeminsf2}
\end{subfigure}%
\begin{subfigure}{.32\textwidth}
  \centering
  \begin{tikzpicture}[grow=up, sloped]
\node[end, label=left:{$v_0$}]{}
child{
    node[end, label=left:{$v_1$}]{}
    child {
        node[end]{}        
            child {
                node(d)[end, label=above:
                    {$[35]$}] {}
                edge from parent
            }
            child {
                node[end, label=above:
                    {$[21]$}] {}
                edge from parent
            }
            child {
                node[end] {}
                child {
                    node(d)[end, label=above:
                    {$[45]$}] {}
                    edge from parent
                    node[above] {$b_1$}
                    }
                child {
                    node(d)[end,label=above:
                    {$[35]$}] {}
                    edge from parent
                    }
                edge from parent
                node[above] {$e_2$}
                node[below] {$1$}
            }
            edge from parent 
            node[above] {$e_1$}
            node[below]  {$1$}
    }
        child {
                node[end] {}
                child {
                    node(d)[end, label=above:
                    {$[49]$}] {}
                    edge from parent
                    }
                child {
                    node(d)[end, label=above:
                    {$[28]$}] {}
                    edge from parent
                    }
                edge from parent
                node[above] {$e_3$}
                node[below] {$1$}
            }
    edge from parent
    node[above] {$e_0$}
    node[below] {$1$}
    };
\end{tikzpicture}
  \caption{$\mathcal{T}^{9}_2$}
  \label{figextendtreeminsf3}
\end{subfigure}
\caption{Extending trees}
\label{figextendtreemin}
\end{figure}
\end{example}
\begin{table}[ht]
    \centering
\begin{tabular}{ |p{1.8cm}|p{3.7cm}|p{3.7cm}|p{3.7cm}|  }
\hline
Vertices & $\mathcal{T}_1$ & $\mathcal{T}^{\min}_2$ & $\mathcal{T}^9_2$ \\
\hline
\hline
$v_0$ & $\big( 0, \frac{1}{x^{29}} \big)$ & $\big( 0, \big(\frac{1}{x^{29}}, \frac{2}{x^{135}}+\frac{1}{x} \big)$ & $\big( 0, \big( \frac{1}{x^{29}}, \frac{3}{x^{212}}+\frac{1}{x^3} \big)$ \\
\hline
$v_1$ & $\big( 29, \frac{dx}{x^{19}(x-1)^{11}} \big)$ & $\big( 203, \frac{dx}{x^{127}(x-1)^{77}} \big)$ & $\big( 212, \frac{3dx}{x^{136}(x-1)^{77}} \big)$ \\
\hline
$t(e_3)$ & $\big( 39, \frac{dx}{x^{7}(x-1)^{4}} \big)$ & $\big( 280, \frac{dx}{x^{49}(x-1)^{28}} \big)$ & $\big( 288, \frac{3dx}{x^{49}(x-1)^{28}} \big)$ \\
\hline
$t(e_1)$ & $\big( 47, \frac{-dx}{x^{11}(x-1)^{5}(x-a)^3} \big)$ & $\big( 329, \frac{-dx}{x^{71}(x-1)^{35}(x-a)^{21}} \big)$ & $\big( 347, \frac{-3dx}{x^{80}(x-1)^{35}(x-a)^{21}} \big)$ \\
\hline
$t(e_2)$ & $\big( 57, \frac{-dx}{a^3x^{5}(x-1)^{6}} \big)$ & $\big( 399, \frac{-dx}{a^{21}x^{35}(x-1)^{36}} \big)$ & $\big( 426, \frac{-3dx}{a^{21}x^{35}(x-1)^{52}} \big)$ \\
\hline
\end{tabular}
    \caption{Degeneration data of the trees in Figure \ref{figextendtreemin}}
    \label{tab:degenerationdata}
\end{table}

\begin{example}
\label{exminimumplusone}
If we replace the Witt vector that defined $\overline{\chi}$ in the previous example by
\[ \wp(y_1, y_2)=\bigg(\frac{1}{x^{29}}, \frac{3}{x^{212}} +\frac{1}{x^3} \bigg), \]
then the upper jumps of the new character is $(30, 213)$. Suppose the deformation on the $\mathbb{Z}/7$-level is the same. Hence, its corresponding tree $\mathcal{T}_1$ is also identical to one in the previous example. Figure \ref{figextendtreeminsf3} is a tree $\mathcal{T}^9_2$ that extends $\mathcal{T}_1$ and has conductor $213$. Note that $\delta_{\mathcal{T}_2}(v)>7\delta_{\mathcal{T}_1}(v)$ and $\mathcal{C}(\omega_{\mathcal{T}^9_2}(v))=0$ for each $v \in V_{\mathcal{T}^9_2} \setminus \{v_0\}$.
\end{example}

Note that the extension tree in Example \ref{exextensionnonessential} is not unique! The following example illustrates an alternative construction where the extending tree $\mathcal{T}^{\ness}_2$ has no essential jumps from level $(n-1)$ to level $n$ (\emph{ness} stands for non-essential). The general construction will be given in Proposition \ref{propconstructreenodiff}.

\begin{example}
\label{exextensionnonessential}
Suppose the character $\overline{\chi}$ from the previous example corresponds to
\[ \wp(y_1,y_2)= \bigg( \frac{1}{x^{29}}, \frac{2}{x^{211} }+\frac{1}{x}\bigg) \bigg(\text{resp. } \bigg( \frac{1}{x^{29}}, \frac{3}{x^{212} }+\frac{1}{x^3}\bigg) \bigg), \]
and the $\mathbb{Z}/7$-deformation is the same. Then the diagram in Figure \ref{figextendtreenoessentialsf2} (resp. Figure \ref{figextendtreenoessentialsf3}) gives an extending tree of $\mathcal{T}_1$ with the right reduction type. Its degeneration data are in Table \ref{tab:degenerationdataexnonessential}. The monodromy groups at all vertices but the ends of the two leaves of conductors $7$ are isomorphic to $\mathbb{Z}/49$. The monodromy groups at those two vertices are $\mathbb{Z}/7$. Observe that, in both Figure \ref{figextendtreenoessentialsf2} and \ref{figextendtreenoessentialsf3}, we have $\delta_{\mathcal{T}_2}(v_1)=7 \delta_{\mathcal{T}_1}(v_1)$. 
\tikzstyle{level 1}=[level distance=1.2cm, sibling distance=5cm]
\tikzstyle{level 2}=[level distance=1.2cm, sibling distance=2.3cm]
\tikzstyle{level 3}=[level distance=1.2cm, sibling distance=1cm]
\tikzstyle{bag} = [text width=11.5em, text centered]
\tikzstyle{end} = [circle, minimum width=3pt,fill, inner sep=0pt]
\begin{figure}[ht]
\begin{subfigure}{.3\textwidth}
  \centering
  \begin{tikzpicture}[grow=up, sloped]
\node[end, label=below:{$v_0$}]{}
child{
node[end, label=left:{$v_1$}]{}
    child {
        node[end]{}        
            child {
                node(d)[end, label=above:
                    {$[5]$}] {}
                edge from parent
            }
            child {
                node[end, label=above:
                    {$[3]$}] {}
                edge from parent
            }
            child {
                node[end] {}
                child {
                    node(d)[end, label=above:
                    {$[6]$}] {}
                    edge from parent
                    node[above] {$b_1$}
                    }
                child {
                    node(d)[end,label=above:
                    {$[5]$}] {}
                    edge from parent
                    }
                edge from parent
                node[above] {$e_2$}
                node[below] {$1$}
            }
            edge from parent 
            node[above] {$e_1$}
            node[below]  {$1$}
    }
        child {
                node[end] {}
                child {
                    node(d)[end, label=above:
                    {$[7]$}] {}
                    edge from parent
                    }
                child {
                    node(d)[end, label=above:
                    {$[4]$}] {}
                    edge from parent
                    }
                edge from parent
                node[above] {$e_3$}
                node[below] {$1$}
            }
    edge from parent
    node[above] {$e_{0}$}
    node[below] {$1$}
    };
\end{tikzpicture}
  \caption{$\mathcal{T}_1$}
  \label{figextendtreenoessentialsf1}
\end{subfigure}%
\tikzstyle{level 1}=[level distance=0.6cm, sibling distance=4.1cm]
\tikzstyle{level 2}=[level distance=0.6cm, sibling distance=1cm]
\tikzstyle{level 3}=[level distance=1.2cm, sibling distance=2cm]
\tikzstyle{level 4}=[level distance=1.2cm, sibling distance=0.9cm]
\tikzstyle{bag} = [text width=11.5em, text centered]
\tikzstyle{end} = [circle, minimum width=3pt,fill, inner sep=0pt]
\begin{subfigure}{.32\textwidth}
  \centering
\begin{tikzpicture}[grow=up, sloped]
\node[end, label=below:{$v_0$}]{}
child{
node[end]{}
    child{
    node[end, label=below:{$[7]$}]{}
    edge from parent
    }
    child{
    node[end]{}
    child {
        node[end]{}        
            child {
                node(d)[end, label=above:
                    {$[35]$}] {}
                edge from parent
            }
            child {
                node[end, label=above:
                    {$[21]$}] {}
                edge from parent
            }
            child {
                node[end] {}
                child {
                    node(d)[end, label=above:
                    {$[36]$}] {}
                    edge from parent
                    node[above] {$b_1$}
                    }
                child {
                    node(d)[end,label=above:
                    {$[35]$}] {}
                    edge from parent
                    }
                edge from parent
                node[above] {$e_2$}
                node[below] {$1$}
            }
            edge from parent 
            node[above] {$e_1$}
            node[below]  {$1$}
    }
        child {
                node[end] {}
                child {
                    node(d)[end, label=above:
                    {$[43]$}] {}
                    edge from parent
                    }
                child {
                    node(d)[end, label=above:
                    {$[28]$}] {}
                    edge from parent
                    }
                edge from parent
                node[above] {$e_3$}
                node[below] {$1$}
            }
    edge from parent
    }
    child{
    node[end, label=below:{$[7]$}]{}
    edge from parent
    }
    edge from parent
    node[above] {$e_{0,1}$}
    node[below] {$\frac{3}{7}$}
    };
\end{tikzpicture}
  \caption{$\mathcal{T}^{8 \ness}_2$}
  \label{figextendtreenoessentialsf2}
\end{subfigure}%
\begin{subfigure}{.32\textwidth}
  \centering
  \begin{tikzpicture}[grow=up, sloped]
\node[end, label=below:{$v_0$}]{}
child{
node[end]{}
    child{
    node[end,label=below:{$[7]$}]{}
    edge from parent
    }
    child{
    node[end]{}
    child {
        node[end]{}        
            child {
                node(d)[end, label=above:
                    {$[35]$}] {}
                edge from parent
            }
            child {
                node[end, label=above:
                    {$[21]$}] {}
                edge from parent
            }
            child {
                node[end] {}
                child {
                    node(d)[end, label=above:
                    {$[37]$}] {}
                    edge from parent
                    node[above] {$b_1$}
                    }
                child {
                    node(d)[end,label=above:
                    {$[35]$}] {}
                    edge from parent
                    }
                edge from parent
                node[above] {$e_2$}
                node[below] {$1$}
            }
            edge from parent 
            node[above] {$e_1$}
            node[below]  {$1$}
    }
        child {
                node[end] {}
                child {
                    node(d)[end, label=above:
                    {$[43]$}] {}
                    edge from parent
                    }
                child {
                    node(d)[end, label=above:
                    {$[28]$}] {}
                    edge from parent
                    }
                edge from parent
                node[above] {$e_3$}
                node[below] {$1$}
            }
    edge from parent
    }
    child{
    node[end,label=below:{$[7]$}]{}
    edge from parent
    }
    edge from parent
    node[above] {$e_{0,3}$}
    node[below] {$\frac{5}{14}$}
    };
\end{tikzpicture}
  \caption{$\mathcal{T}^{9 \ness}_2$}
  \label{figextendtreenoessentialsf3}
\end{subfigure}
\caption{Extending trees with no essential jumps from level $1$ to $2$}
\label{figextendtreenoessential}
\end{figure}
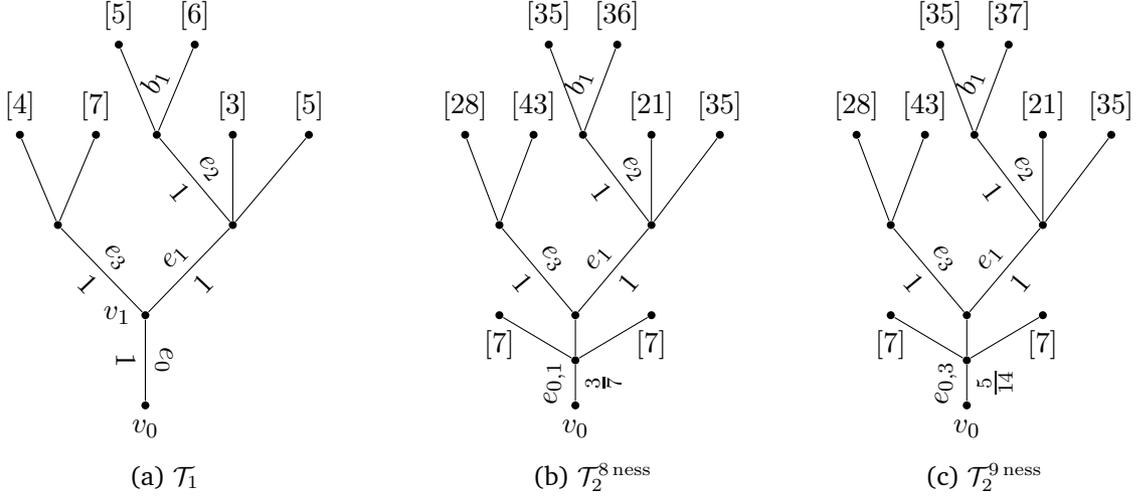
\begin{table}[ht]
    \centering
\begin{tabular}{ |p{1.8cm}|p{3.7cm}|p{3.7cm}|p{3.7cm}|  }
\hline
Vertices & $\mathcal{T}_1$ & $\mathcal{T}^{8\ness}_2$ & $\mathcal{T}^{9\ness}_2$ \\
\hline
\hline
$v_0$ & $\big( 0, \frac{1}{x^{29}} \big)$ & $\big( 0, \big(\frac{1}{x^{29}}, \frac{2}{x^{211}}+\frac{1}{x} \big)$ & $\big( 0, \big(\frac{1}{x^{29}}, \frac{3}{x^{212}}+\frac{1}{x^3} \big)$ \\
\hline
$t(e_{0,1})$ & $\big( \frac{89}{7}, \frac{dx}{x^{30}} \big)$ & $\big( \frac{633}{7}, \frac{2dx}{x^{198}(x-1)^{7}(x-2)^7} \big)$ & \\
\hline
$t(e_{0,3})$ & $\big( \frac{145}{14}, \frac{dx}{x^{30}} \big)$ & & $\big( \frac{530}{7}, \frac{3dx}{x^{199}(x-1)^{7}(x-3)^7} \big)$ \\
\hline
$v_1$ & $\big( 29, \frac{dx}{x^{19}(x-1)^{11}} \big)$ & $\big( 203, \frac{dx}{x^{127}(x-1)^{71}} \big)$ & $\big( 203, \frac{dx}{x^{128}(x-1)^{71}} \big)$ \\
\hline
$t(e_3)$ & $\big( 39, \frac{dx}{x^{7}(x-1)^{4}} \big)$ & $\big( 273, \frac{dx}{x^{43}(x-1)^{28}} \big)$ & $\big( 273, \frac{dx}{x^{43}(x-1)^{28}} \big)$ \\
\hline
$t(e_1)$ & $\big( 47, \frac{-dx}{x^{11}(x-1)^{5}(x-a)^3} \big)$ & $\big( 329, \frac{-dx}{x^{71}(x-1)^{35}(x-a)^{21}} \big)$ & $\big( 330, \frac{-dx}{x^{72}(x-1)^{35}(x-a)^{21}} \big)$ \\
\hline
$t(e_2)$ & $\big( 57, \frac{-dx}{a^3x^{5}(x-1)^{6}} \big)$ & $\big( 399, \frac{-dx}{a^{21}x^{35}(x-1)^{36}} \big)$ & $\big( 401, \frac{-dx}{a^{21}x^{35}(x-1)^{37}} \big)$ \\
\hline
\end{tabular}
    \caption{Degeneration data of the trees in Figure \ref{figextendtreenoessential}}
    \label{tab:degenerationdataexnonessential}
\end{table}
\end{example}

\begin{remark}
\label{remarkatmost2nonp}
Suppose $r$ is an arbitrary rational point on the tree $\mathcal{T}_{2}$. Then one can observe from the previous examples that there are at most two succeeding leaves whose conductors are not divisible by $p$. That also holds for all the trees $\mathcal{T}_n$ that we will construct to prove Proposition \ref{propextendtree}. This fact is essential for the proof of Lemma \ref{lemmacontroljunk}, which, in turn, is a key ingredient of Proposition \ref{propmainonepointcover}'s proof.
\end{remark}

\begin{remark}
\label{rmksubtree}

In the tree $\mathcal{T}_2^{\min}$ of Figure \ref{figextendtreeminsf2}, there is one leaf $b_1$ with conductor $l_{1,2}=7l_{1,1}-7+1$. The other leaves $b_i$ have conductor $l_{i,2}=7l_{i,1}$. Furthermore, at each vertex $v$ adjacent to the edges $e_2, e_1$, and $e_0$, $\delta_{\mathcal{T}_{2}}(v)=7\delta_{\mathcal{T}_{1}}(v)$. We say the tree $\mathcal{T}^{\min}_{2}$ extends $\mathcal{T}_{1}$ \textit{minimally}. Note that $\mathcal{T}^{\min}_{2}(e_1)$ (resp. $\mathcal{T}^{\min}_{2}(e_2)$) also extends $\mathcal{T}_{1}(e_1)$ (resp. $\mathcal{T}_{1}(e_2)$) minimally. In addition, the tree $\mathcal{T}^9_{2}(e_1)$ of Figure \ref{figextendtreeminsf3} has one leaf $b_1$ with conductor $l_{1,2}=7l_{1,1}-7+9+1$ and the other leaves $b_i$ has conductor $l_{i,2}=7l_{i,1}$. We say $\mathcal{T}^9_{2}$ extends $\mathcal{T}_{1}$ by \textit{$9$-additively}. Similarly, the tree $\mathcal{T}^{\min}_{2}(e_3)$ extends $\mathcal{T}_{1}(e_3)$ $(7-1)$-additively.

At the vertex $v_1$ of the $\mathbb{Z}/49$-trees in Figure \ref{figextendtreenoessentialsf2} and \ref{figextendtreenoessentialsf3}, all but two sub-trees starting from it extends the corresponding ones of $\mathcal{T}_1$ $(7-1)$-additively. The tree $\mathcal{T}^{9\ness}_{2}(e_1)$ extends $\mathcal{T}_{1}(e_1)$ $1$-additively, $\mathcal{T}^{8\ness}_{2}(e_3)$ and  $\mathcal{T}^{9\ness}_{2}(e_3)$) extend $\mathcal{T}_{1}(e_3)$ minimally. Formal definitions of the above conventions will be given in \S \ref{secconstructtree}.
\end{remark}

\section{Proof of Proposition \ref{propmainonepointcover}}
\label{secproofmain}

In this section, we give the final step (\ref{step4main}) of the proof of Proposition \ref{propmainonepointcover}, hence Theorem \ref{thminductionmain}, hence Theorem \ref{thmdeformtowers}. From now on, we denote $\mathbb{K}:=K(X)$, and $\kappa:=k(x)$ its residue field.

The inverse process of \S \ref{seccovertotree}, constructing a cover from a Hurwitz tree, was utilized by Henrio in \cite{2000math.....11098H} and by Bouw and Wewers in \cite{MR2254623} to solve the lifting problem for $\mathbb{Z}/p$-covers and $\mathbb{Z}/p \rtimes \mathbb{Z}/m$-covers, respectively. However, the main technique in these papers is the gluing method, which no longer works in our situation (as discussed in Remark \ref{remarkboundaryorderp} and Remark \ref{remarkAScase}). We instead generalize Obus and Wewers' approach in \cite{MR3194815}.

\subsection{Geometric set up}
\label{secgeomsetup}
In the rest of the paper, we set $C \cong \mathbb{P}_K^1=\Proj K[X,\allowbreak V]$. Recall that $\mathcal{D}$ is the closed unit disc inside $(\mathbb{A}^1_K)^{\an} \subset C^{\an}$ centered at $X=0$. We are given a character $\overline{\chi}_{n} \in \textrm{H}^1_{p^{n}}(\kappa)$ corresponding to a cyclic cover of $\mathcal{D}$ over $k$ of order exactly $p^{n}$, branching only at $0$ with upper ramification breaks $(m_1, \ldots, m_{n-1}, m_n)$. Set $(\iota_1, \ldots, \iota_n):=(m_1+1, \ldots, m_n+1)$. For $i=1, \ldots,n-1$, we set $\overline{\chi}_i:=  (\overline{\chi}_n)^{p^{n-i}} \in \textrm{H}^1_{p^i}(\kappa)$. Suppose there is a character $\chi_{n-1} \in \textrm{H}^1_{p^{n-1}}(\mathbb{K})$ deforming $\overline{\chi}_{n-1}$ (in the sense of Definition \ref{defnreductiondeformation}). In order to prove the refined local deformation problem, we must show that there exists a character $\chi_n \in \textrm{H}^1_{p^n}(\mathbb{K})$ that smoothly deforms $\overline{\chi}_n$ and verifies $\chi_{n}^p=\chi_{n-1}$.

We may assume that $\chi_{n-1}$ corresponds to an extension of $\mathbb{K}$ given by a length-$(n-1)$ Witt vector $\underline{G}_{n-1}=(G^1, \ldots, G^{n-1})$. Then any $\chi_n \in \textrm{H}^1_{p^n} (\mathbb{K})$ such that $\chi_n^p=\chi_{n-1}$ is given by a length-$n$ Witt vector $\underline{G}_n=(G^1, \ldots, G^{n-1}, \allowbreak G^n) \in W_n(\mathbb{K})$. After a finite extension of $K$, one may assume that $\underline{G}_{n-1}$ is best (Definition \ref{defnbest}). Recall that, by Proposition \ref{propextendtree}, one can construct a Hurwitz tree $\mathcal{T}_n$ with conductor $\iota_n= m_n+1$ that extends $\mathcal{T}_{n-1}$ (Definition \ref{defnextendhurwitz}). Suppose the type of such $\mathcal{T}_n$ is $(\iota_{n,1}, \ldots, \iota_{n,r} )$, where $\sum_{i} \iota_{n,i}=\iota_n$. We would then like $G^n$ to be of the form
\[ G^n=\sum_{j=1}^r \sum_{i=1}^{\iota_{n,j}-1} \frac{a_{j,i}}{(X-b_j)^i} \in \mathbb{K},  \]
\noindent where $a_{i,j} \in k((t))$, $b_i$'s are pairwise distinct $K$-points of the open unit disc $D$, and the geometry of the poles (Definition \ref{defngeometryofbranchpoints}) of $\underline{G}_{n}$ agrees with that of $\mathcal{T}_n$. We say that the polynomial $G^n$ \textit{gives rise} to the character $\chi_n$ (from $\chi_{n-1}$).

\subsection{The base case}

The case $n=1$ is trivial as one can always deform an Artin-Schreier cover trivially over $k[[t]]$ (using the same Artin-Schreier equation that defines the special fiber to represent the generic fiber). Recall that Theorem \ref{thminversedegreep} asserts that the existence of a deformation of Artin-Schreier covers with a given type is fully determined by the presence of $\mathbb{Z}/p$-differential-Hurwitz trees of the same type. One may utilize this fact to study the deformation ring $R_{\mathbb{Z}/p}$ (\S \ref{seclocalglobal}).

\subsection{Partition of the extending Hurwitz tree}
\label{secpartition}

In this section, we discuss a strategy to translate our situation to one that can be solved using the tools from \cite{MR3194815}. Suppose that $\chi_{n-1}$ branches at $s$ points $b_1, \ldots, b_s$ where $b_i \in \mathfrak{m} R$. We first show that its corresponding Witt vector $\underline{G}_{n-1}=(G^1, \ldots, G^{n-1})$ can be partitioned with respect to a rational place on the associated Hurwitz tree.

\begin{proposition}
\label{proppartitionWitt}
With the notation as in the geometric setup, suppose $A$ and $B$ partition the branch locus $\mathbb{B}(\chi_{n-1})=\{b_1, \ldots, b_s\}$. Then there exist two length-$(n-1)$ Witt vectors
\[ (G^1_{A}, \ldots, G^{n-1}_{A}), \hspace{2mm}  \text{ and } \hspace{2mm}   (G^1_{B}, \ldots, G^{n-1}_{B})\]
\noindent such that the former vector only has poles in $A$, the latter one only has poles in $B$, and $$(G^1, \ldots, \allowbreak G^{n-1})\allowbreak=(G^1_{A}, \ldots, G^{n-1}_{A})+(G^1_{B}, \ldots, G^{n-1}_{B}).$$
\end{proposition}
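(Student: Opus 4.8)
The statement is purely algebraic: it concerns the partial-fraction decomposition of the entries of a length-$(n-1)$ Witt vector over $\mathbb{K} = K(X)$, with the caveat that Witt vector addition is not componentwise. The plan is to reduce to the case $n-1 = 1$ (ordinary partial fractions) and then lift the decomposition through the truncated Witt ring layer by layer, keeping track of the pole loci using the valuation $\nu(\underline{a})$ from Definition on $W_n(\mathbb{K})$ and the ``best''/relevance-length machinery of \S\ref{sectionrefinedSwanASWleal}.

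First I would handle $n-1 = 1$. Here $G^1 \in K(X)$ has a partial fraction decomposition $G^1 = P(X) + \sum_{i} G^1_{b_i}$, where $P$ is a polynomial (no finite poles) and $G^1_{b_i}$ collects the principal part at $b_i$; since $\chi_{n-1}$ branches only inside the open disc $D$ and we may normalize away the pole at infinity (after an Artin--Schreier operation, as in Example on page for the degree-$p$ case), the polynomial part can be absorbed, and one sets $G^1_A := \sum_{b_i \in A} G^1_{b_i}$, $G^1_B := \sum_{b_i \in B} G^1_{b_i}$. Then I would induct on the length $\ell$ of the Witt vector. Suppose $(G^1,\dots,G^\ell)$ is given with branch locus $A \sqcup B$. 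Apply the length-$\ell-1$ case to the truncation $(G^1,\dots,G^{\ell-1})$ to get the splitting $(G^1_A,\dots,G^{\ell-1}_A) + (G^1_B,\dots,G^{\ell-1}_B)$ with poles in $A$, resp.\ $B$. Now consider $(G^1,\dots,G^\ell) - (G^1_A,\dots,G^{\ell-1}_A, 0) - (G^1_B,\dots,G^{\ell-1}_B,0)$ in $W_\ell(\mathbb{K})$; by the Witt addition formulas the first $\ell-1$ components of this vanish, so it equals $V^{\ell-1}(c)$ for a single $c \in \mathbb{K}$, where $V$ is the Verschiebung. Here the key point is that the ``carries'' produced by Witt addition are polynomial expressions in the $G^i_A, G^i_B$ and their $p$-th powers, hence rational functions whose poles lie in $A \cup B$; one then splits $c = c_A + c_B$ by ordinary partial fractions (again absorbing the polynomial part), sets $G^\ell_A$ to be $G^\ell$'s $A$-part suitably corrected by $c_A$ and the carry terms, and verifies by another application of the addition formula that $(G^1_A,\dots,G^\ell_A) + (G^1_B,\dots,G^\ell_B) = (G^1,\dots,G^\ell)$ with the stated pole constraints.

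The main obstacle, and the step I would spend the most care on, is controlling the pole loci through the Witt addition: the universal polynomials $S_m(X_0,\dots,X_m;Y_0,\dots,Y_m)$ defining $(X+Y)_m$ have denominators only at $p$ (they are $\mathbb{Z}$-polynomials), so evaluating them at rational functions with poles in $A \cup B$ produces a rational function with poles in $A \cup B$ --- but to get the \emph{clean separation} (the $A$-vector with poles only in $A$) one must verify inductively that the cross-terms coupling an $A$-pole to a $B$-pole cancel. Concretely, reducing at a point $b \in B$: the $A$-vector $(G^1_A,\dots,G^{\ell-1}_A)$ is already regular there by induction, so its contribution to any carry in the sum is regular at $b$, which forces the principal part of the correction $c$ at $b$ to come entirely from the $B$-side; this is the mechanism that makes the split well-defined. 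I would phrase this as: localize at each branch point and use that $W_n$ of a complete DVR is closed under the relevant operations, then reassemble. A secondary technical point is the normalization removing the pole at infinity and the polynomial part at each stage --- this uses an Artin--Schreier--Witt operation (adding $\wp(\underline{b})$) exactly as in the reduced-form discussion of \S\ref{seccycliccover}, and one should check it is compatible with the induction, i.e.\ that after the operation the resulting Witt vector still has its poles confined to $\mathbb{B}(\chi_{n-1})$, which holds because $\wp$ preserves the pole support of a Witt vector with prescribed finite poles.
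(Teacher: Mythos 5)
Your proposal is correct and follows essentially the same route as the paper: induct on the length of the Witt vector, split one coordinate by ordinary partial fractions, and use the additivity of Verschiebung together with the decomposition identity $(a^1,\ldots,a^\ell)=(a^1,\ldots,a^{\ell-1},0)+V^{\ell-1}(a^\ell)$ to reassemble (the paper runs the induction in the opposite direction, peeling off the \emph{first} coordinate via $\underline{G}-(G^1_A,0,\ldots,0)$ and recursing on the resulting $V(\cdot)$ term, but this is cosmetic). Your worry about cross-terms coupling $A$-poles to $B$-poles is unnecessary --- since the Witt addition/subtraction laws are universal integral polynomials, the single correction term $c$ automatically has poles in $A\cup B$, and splitting $c=c_A+c_B$ by partial fractions is all that is needed; likewise the Artin--Schreier--Witt normalization at infinity is best avoided, as it would replace $\underline{G}_{n-1}$ by a different representative, and is vacuous here because admissibility already confines all poles of the $G^i$ to the branch locus.
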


\begin{proof} We prove by induction on $n$. Let us first assume that $n=2$. We then can write $G^1$ as the sum of $G^1_A$ and $G^1_B$ simply by splitting its partial fraction decomposition. When $n>2$, we have
\begin{equation*}
    \begin{split}
        (G^1,G^2, \ldots, G^{n-1})-(G^1_A,0, \ldots, 0)&=(G^1_B, G^2_{1}, \ldots, G^{n-1}_{1})\\ &=(G^1_B, 0, \ldots, 0)+(0, G^2_{1}, \ldots, G^{n-1}_{1}).\\
    \end{split}
\end{equation*}
The second equation comes from the ``decomposition'' property of Witt vector \cite[\S 26, (32)]{MR2371763}. The second term is $\mathbb{V}(G^2_{1}, \ldots, G^{n-1}_{1})$, where $\mathbb{V}$ is the Verschiebung operation for Witt-vectors. By induction, we may write $(G^2_{1}, \ldots, G^{n-1}_{1})=(G^2_{1,A}, \ldots, G^{n-1}_{1,A})+(G^2_{1,B}, \ldots, G^{n-1}_{1,B})$, using the obvious notation. Therefore,
\begin{equation*}
    (G^1, \ldots, G^{n-1}) =(G^1_A,G^2_{1,A}, \ldots, G^{n-1}_{1,A})+ \allowbreak (G^1_B,G^2_{1,B}, \ldots, G^{n-1}_{1,B})
\end{equation*} as we want, completing the proof. \end{proof}

\begin{definition}
Let $s$ be a rational place on an edge $e$ of a Hurwitz tree $\mathcal{T}_{n-1}$, and $\overline{z} \in k$ or $\overline{z}=\infty$. We define $\underline{G}_{ n-1,s,\overline{z}}$ to be a partition of $\underline{G}_{n-1}$ whose poles specify to $\overline{z}$ on the canonical reduction of $\mathcal{D}[s,z_e]$. For any $s(e)<r<t(e)$, set $$\underline{G}_{n-1,r}:=\underline{G}_{ n-1,r,0}, \hspace{3mm} \text{ and } \hspace{3mm} \underline{G}_{n-1,s(e)}=\underline{G}_{n-1,t(e)}:=\underline{G}_{n-1,r}.$$ If $v=t(e)$ is a vertex, we set $\underline{G}_v:=\underline{G}_{t(e)}$. Define $$\chi_{n-1,s}:=\mathfrak{K}_{n-1}(\underline{G}_{n-1,s}), \hspace{3mm}  \text{ and }  \hspace{3mm} \chi_{n-1,s,\infty}:=\mathfrak{K}_{n-1}(\underline{G}_{n-1,s,\infty}).$$ 
\end{definition}

\begin{remark}
Observe that $\underline{G}_{n-1,s_1}=\underline{G}_{n-1,s_2}$ for all $s(e)<s_1 \le s_2 <t(e)$. In addition, if $s(e)<s<t(e)$ then $$\underline{G}_{n-1,t(e)}=\underline{G}_{n-1, s}+\underline{G}_{n-1, s, \infty}.$$
Finally, if $r=t(e)=s(e_1)=\ldots =s(e_l)$, then $\underline{G}_{n-1,t(e)}=\sum_{j=1}^l \underline{G}_{n-1, s(e_j)}$.
\end{remark}

The following result is simple but will turn out to be critical for our approach.

\begin{proposition}
\label{proppartition}
Suppose we are given a character $\chi_{n-1} \in \textrm{H}^1_{p^{n-1}}(\mathbb{K})$, which is defined by $ \underline{G}_{n-1}:=(G^1, \ldots, G^{n-1}) \in W_{n-1}(\mathbb{K})$, has good reduction, and gives rise to a $\mathbb{Z}/p^{n-1}$-Hurwitz tree $\mathcal{T}_{n-1}$. Then the followings hold.
\begin{enumerate}
    \item \label{proppartition1} Suppose $s(e)<s<t(e) \in \mathbb{Q}_{\ge 0}$, where $e$ is an edge of $\mathcal{T}_{n-1}$, and $\underline{G}_{n-1}=\underline{G}_{{n-1},s} + \underline{G}_{{n-1},s,\infty}$ as in Proposition \ref{proppartitionWitt}. Then:
\[ \delta_{\chi_{n-1}}(s)=\delta_{\chi_{n-1,s}}(s), \hspace{3mm} \text { and } \hspace{3mm} \omega_{\chi_{n-1}}(s)=\omega_{\chi_{n-1,s}}(s).\]
    \item \label{proppartition2} Suppose $s=t(e)=s(e_1)=\ldots=s(e_l)$. Then $$\delta_{\chi_{n-1}}(s)=\delta_{\chi_{n-1,s(e_i)}}(s), \hspace{3mm} \text{ and } \hspace{3mm} \omega_{\chi_{n-1}}(s)=\sum_{i=1}^l \omega_{\chi_{n-1,s(e_i)}}(s).$$ In particular, the constant coefficient (\S \ref{seccompatibility}) of the $e_i$-part of $\omega_{\chi_{n-1}}(s)$ is equal to the constant coefficient of $\omega_{\chi_{n-1,s(e_i)}}(s)$.
\end{enumerate}
\end{proposition}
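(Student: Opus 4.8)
The statement is essentially a compatibility-of-restrictions assertion: restricting the character $\chi_{n-1}$ to a sub-disc $\mathcal{D}[s,z_e]$ (resp. looking at it near a point $\overline{z}$ of the canonical reduction) only sees the part of the defining Witt vector whose poles specialize into that sub-disc (resp. to $\overline{z}$). The plan is to reduce everything to Lemma \ref{lemmacombination} applied to the partition $\underline{G}_{n-1}=\underline{G}_{n-1,s}+\underline{G}_{n-1,s,\infty}$ provided by Proposition \ref{proppartitionWitt}. The key input is a depth/differential estimate for the ``tail'' part $\underline{G}_{n-1,s,\infty}$: its poles all specialize to $\overline{\infty}$ on $\overline{\mathcal{D}}[ps,z_e]$, so its restriction to $\mathcal{D}[ps,z_e]$ is holomorphic on that closed disc except at infinity. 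Using the vanishing-cycle bound of Proposition \ref{propvanishingcycle} (or the good-reduction equality, since $\chi_{n-1}$ is assumed to have good reduction and hence each of its restrictions does too), one sees that $\delta_{\chi_{n-1,s,\infty}}(s)$ is strictly smaller than $\delta_{\chi_{n-1}}(s)$; concretely, the slope contribution at the place $s$ coming from the branch points lying outside $\mathcal{D}[ps,z_e]$ is already accounted for on the edge $e$ below $s$, while the conductors of those points do \emph{not} contribute to $\delta_{\chi_{n-1,s,\infty}}(s)$, forcing $\delta_{\chi_{n-1,s,\infty}}(s)<\delta_{\chi_{n-1,s}}(s)=\delta_{\chi_{n-1}}(s)$.

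Granting that strict inequality, part \ref{proppartition1} is immediate from Lemma \ref{lemmacombination}\ref{lemmacombination1}: since $\chi_{n-1}=\chi_{n-1,s}\cdot\chi_{n-1,s,\infty}$ and the two depths at $s$ are distinct with $\delta_{\chi_{n-1,s}}(s)$ the larger, we get $\delta_{\chi_{n-1}}(s)=\delta_{\chi_{n-1,s}}(s)$ and $\omega_{\chi_{n-1}}(s)=\omega_{\chi_{n-1,s}}(s)$. (One should first note, as in the geometric setup, that after enlarging $K$ all the restrictions in sight are weakly unramified with respect to $\nu_{ps,z_e}$, so the invariants are defined.) For part \ref{proppartition2}, where $s=t(e)=s(e_1)=\cdots=s(e_l)$, I would iterate Proposition \ref{proppartitionWitt} to write $\underline{G}_{n-1}=\underline{G}_{n-1,s,\infty}+\sum_{i=1}^l \underline{G}_{n-1,s(e_i)}$, with $\underline{G}_{n-1,s(e_i)}$ supported on the branch points specializing into the residue class corresponding to $e_i$ on $\overline{\mathcal{D}}[ps,z_e]$. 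Each $\chi_{n-1,s(e_i)}$ has a pole (for $\omega$) only at the distinct point $a_i=[z_{e_i}]_s\in\kappa$, and $\chi_{n-1,s,\infty}$ has strictly smaller depth at $s$ as before. Applying Lemma \ref{lemmacombination} repeatedly to the product: the summands with maximal depth $\delta_{\chi_{n-1}}(s)$ are exactly the $\chi_{n-1,s(e_i)}$, and their differential conductors $\omega_{\chi_{n-1,s(e_i)}}(s)$ have poles at \emph{pairwise distinct} points $a_1,\dots,a_l$, hence their sum cannot vanish (no cancellation of principal parts). Therefore $\delta_{\chi_{n-1}}(s)=\delta_{\chi_{n-1,s(e_i)}}(s)$ for every $i$ and $\omega_{\chi_{n-1}}(s)=\sum_{i=1}^l\omega_{\chi_{n-1,s(e_i)}}(s)$; restricting to the principal part at $a_i$ reads off the claim about constant coefficients of the $e_i$-parts.

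\textbf{Main obstacle.} The one genuine point to nail down is the strict inequality $\delta_{\chi_{n-1,s,\infty}}(s)<\delta_{\chi_{n-1}}(s)$ and, in part \ref{proppartition2}, the non-vanishing $\sum_i\omega_{\chi_{n-1,s(e_i)}}(s)\neq 0$. The depth inequality is where the hypothesis that $\chi_{n-1}$ has \emph{good} reduction is used: by Corollary \ref{corswgood} and Remark \ref{remarkmindepth}, at the place $s$ interior to $e$ the instantaneous rate of change of $\delta_{\chi_{n-1}}$ depends only on the sum of conductors of branch points whose difference with $z_e$ has valuation $>s$ — i.e. those lying in $\mathcal{D}[ps,z_e]$ — so removing the ``outside'' poles $\underline{G}_{n-1,s,\infty}$ can only decrease the depth at $s$, and it does so strictly because those outside poles, viewed as a cover of $\mathcal{D}[ps,z_e]$, are branched only over $\overline{\infty}$ and thus $\delta_{\chi_{n-1,s,\infty}}(s)=\delta_{\chi_{n-1,s,\infty}}(s',\infty)$ decays along $e$ toward $s(e)$ with positive slope, matching what is already subtracted on $e$. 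The non-vanishing of the sum of differentials follows because the $\omega_{\chi_{n-1,s(e_i)}}(s)$ have disjoint pole supports $\{a_i\}$, so their sum has a genuine pole at each $a_i$ and in particular is nonzero; this also shows $\delta_{\chi_{n-1}}(s)$ equals the common value $\delta_{\chi_{n-1,s(e_i)}}(s)$ rather than dropping. I expect the write-up of this last paragraph — carefully distinguishing ``$>s$'' from ``$\ge s$'' at the endpoints and invoking the vanishing-cycle formula in the form of Proposition \ref{propvanishingcycle} — to be the only part requiring real care; the rest is a direct application of Lemma \ref{lemmacombination} and Proposition \ref{proppartitionWitt}.
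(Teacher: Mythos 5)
Your overall route is the paper's own: decompose $\underline{G}_{n-1}$ via Proposition \ref{proppartitionWitt}, compare $\chi_{n-1}$ with $\chi_{n-1,s}\cdot\chi_{n-1,s,\infty}$ through Lemma \ref{lemmacombination}, and feed in the vanishing-cycle bounds together with the minimality of $\delta_{\chi_{n-1}}$ coming from good reduction. But the step on which everything rests --- the strict inequality $\delta_{\chi_{n-1,s,\infty}}(s)<\delta_{\chi_{n-1,s}}(s)$ --- is asserted rather than proved. Your justification, ``removing the outside poles can only decrease the depth at $s$,'' does not follow from the slope statement you cite, and it points in the opposite direction from what good reduction actually gives: Remark \ref{remarkmindepth} yields $\delta_{\chi_{n-1}}(w)\le\delta_{\chi_{n-1,s}}(w)$ for $w$ on the edge, since the two characters have the same branch data inside $D[pw,z_e]$, and the scenario you must exclude is precisely $\delta_{\chi_{n-1,s,\infty}}(w)=\delta_{\chi_{n-1,s}}(w)$ with the two differentials (partially) cancelling, which is exactly how $\delta_{\chi_{n-1}}$ could end up strictly smaller than $\delta_{\chi_{n-1,s}}$. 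The parenthetical appeal to good reduction of ``each of its restrictions'' does not close this: $\chi_{n-1,s}$ and $\chi_{n-1,s,\infty}$ are not restrictions of $\chi_{n-1}$ to subdiscs, and neither is known a priori to have \'etale, let alone good, reduction.

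The missing assembly is a contradiction argument comparing values at nearby places rather than at $s$ alone, and your listed ingredients do suffice for it: if $\delta_{\chi_{n-1}}(r)\neq\delta_{\chi_{n-1,s}}(r)$ for some $r\in[s,t(e))$, minimality forces $\delta_{\chi_{n-1}}(r)<\delta_{\chi_{n-1,s}}(r)$, and then Lemma \ref{lemmacombination} forces $\delta_{\chi_{n-1,s}}(r)=\delta_{\chi_{n-1,s,\infty}}(r)$; now the facts you state (by Proposition \ref{propvanishingcycle}, $\omega_{\chi_{n-1,s,\infty}}$ has no pole at $\overline{0}$ and $\omega_{\chi_{n-1,s}}$ none at $\overline{\infty}$, so by Corollary \ref{corleftrightderivative} one depth is strictly increasing and the other strictly decreasing along $e$) imply that at places just below $r$ the $\infty$-part strictly dominates, whence $\delta_{\chi_{n-1}}=\delta_{\chi_{n-1,s,\infty}}>\delta_{\chi_{n-1,s}}$ there, contradicting minimality again. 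This is exactly how the paper proceeds, and it is what delivers your strict inequality; as written, your sketch mentions the monotonicity but never makes the value comparison, so part \ref{proppartition1} is not established, and with it the depth equality and non-cancellation inputs you use in part \ref{proppartition2} (which are otherwise fine: distinct pole supports plus continuity of part \ref{proppartition1} along each $e_i$).
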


\begin{proof}
Suppose $\delta_{\chi_{n-1}}(r) \neq \delta_{\chi_{n-1,s}}(r)$ at a rational place $s(e)<s \le r <t(e)$ in $e$. Since $\chi_{n-1}$ has good reduction, it follows from Proposition \ref{propvanishingcycle} and Remark \ref{remarkmindepth} that $\delta_{\chi_{n-1}}(r)$ is the minimal depth possible at $r$ with the given positions and conductors of the branch points whose valuations (with respect to $z_e$) are greater than $r$. Note that the restrictions of $\chi_{n-1,s}$ and $\chi_n$ to $D[pr,z_e]$ have the same branching data. Hence $\delta_{\chi_{n-1}}(r) < \delta_{\chi_{n-1,s}}(r)$. Therefore, it must be true that $\delta_{\chi_{n-1,s}}(r)=\delta_{\chi_{n-1,s,\infty}}(r)$, else $\delta_{\chi_{n-1}}(r)=\max(\delta_{\chi_{n-1,s}}(r),\delta_{\chi_{n-1,s,\infty}}(r))$ by Proposition \ref{lemmacombination}, which contradicts the previous sentence. On the other hand, by applying Proposition \ref{propvanishingcycle}, we obtain
\[ \ord_{0}(\omega_{\chi_{n-1,s,\infty}}(w)) \ge 0 \text{, and } \ord_{\infty}(\omega_{\chi_{n-1,s}}(w)) \ge 0, \]
for any $w \in \mathbb{Q}\cap [s(e),t(e)]$. That means, on the direction from $s(e)$ to $t(e)$ in $e$, $\delta_{\chi_{n-1,s}}$ is stricly increasing, and $\delta_{\chi_{n-1,s,\infty}}$ is strictly decreasing (recall Corollary \ref{corleftrightderivative} which says the differential determines the rate of change of the depth). Thus, it must be true that $\delta_{\chi_{n-1,s,\infty}}(w) \le \delta_{\chi_{n-1,s}}(w)$, and the equality can only happen when $w=s(e)$. That is a contradiction. The claim about the differential conductor then follows immediately from Lemma \ref{lemmacombination}.

The rest is straightforward, following a similar line of reasoning.
\end{proof}
\begin{remark}
\label{remarkpartition}
Suppose $\chi_{n-1}:=\mathfrak{K}_{n-1}(\underline{G}_{n-1})$ is a $\mathbb{Z}/p^{n-1}$-cover that has $\mathcal{T}_{n-1}$ as the corresponding tree, and $\mathcal{T}_n$ is a $\mathbb{Z}/p$-extension of $\mathcal{T}_{n-1}$. Consider a vertex $v=t(e)$ on $\mathcal{T}$ with $m$ branches $e_1, e_2, \ldots, e_m$ on the directions away from the root. As before we can partition $\underline{G}_{n-1}$ into a sum as below
\[ \underline{G}_{n-1}= \underline{G}_{n-1, v,\infty} +\sum_{i=1}^m \underline{G}_{n-1,v,i}. \]
Each $\underline{G}_{n-1,v,i}$ here is a length-$(n-1)$-Witt-vector whose roots are the leaves of $\mathcal{T}_{n-1}$ succeeding $e_i$. The poles of $\underline{G}_{n-1,v,\infty}$ are the leaves of $\mathcal{T}_{n-1}$ that are not ones of the $\mathcal{T}_{n-1}(e_i)$'s. Set $\chi_{n-1,v,i}:=\mathfrak{K}_{n-1}(\underline{G}_{n-1,v,i})\in \textrm{H}^1_{p^{n-1}}(\mathbb{K})$. For any rational place $r$ on $\mathcal{T}_{n}(e_i)$, we have $\delta_{\chi_{n-1,v,i}}(r)=\delta_{\chi_{n-1}}(r)$ by Proposition \ref{proppartition}. Hence, we obtain full information about the depth and the differential conductor of $\chi_{n-1,v,i}$. Moreover, all of the branch points of $\chi_{n-1,v,i}$ lie in the closed sub-disc $\mathcal{D}[pv,z_e]$. That is similar to the situation considered in \cite{MR3194815}, thus allows us to apply many technique from that paper. More precisely, we will extend each of the $\chi_{n-1,v,i}$'s at a time. Their sum then gives an extension of $\chi_{n-1,v}$ with tree $\mathcal{T}_{n}(v)$. 
\end{remark}

\begin{definition}
\label{defnrightbranchdata}
Suppose we are given a tree $\mathcal{T}_n$ that extends $\mathcal{T}_{n-1}$, and $r$ is a rational place on an edge $e$ of $\mathcal{T}_n$. When $s(e)<r\le t(e)$, we define $\mathcal{H}_r$ to be the collection of all $H \in \mathbb{K}$ that gives rise to $\mathfrak{K}_1(H) \in \textrm{H}^1_{p}(\mathbb{K})$ whose branching divisor is everywhere smaller than or equal to that of $\mathcal{T}_n(s(e))=\mathcal{T}_{n}(r)$. We define $\mathcal{H}_{s(e)}=\mathcal{H}_r$ for a rational $s(e)<r<t(e)$. Hence $\mathcal{H}_r=\mathcal{H}_s$ for all $s(e) \le s \le r \le t(e)$. As before, if $v=t(e)$ is a vertex, then we set $\mathcal{H}_v:=\mathcal{H}_{t(e)}$.
\end{definition}

\begin{corollary}
\label{corHreductiontype}
With the notation as in Definition \ref{defnrightbranchdata}, a polynomial $H \in \mathcal{H}_{s(e_0)}$ extends $\chi_{n-1}$ to a character $\chi_n$ that deforms $\overline{\chi}_{n}$ if and only if its reduction type coincides with $\overline{\chi}_{n}$'s reduced representation. 
\end{corollary}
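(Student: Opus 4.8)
The plan is to prove the two implications separately, the forward one being essentially formal and the reverse one carrying all the content. For the ``only if'' direction, suppose $H\in\mathcal{H}_{s(e_0)}$ gives rise to a character $\chi_n=\mathfrak{K}_n(G^1,\ldots,G^{n-1},H)$ that deforms $\overline{\chi}_n$. By Definition \ref{defnreductiondeformation} the character $\chi_n$ then has \'etale reduction and its reduction $\overline{\chi}_n\in\textrm{H}^1_{p^n}(\kappa)$ is (birationally) the prescribed cover; hence every reduction type of $\chi_n$ represents $\overline{\chi}_n$ in $W_n(\kappa)/\wp(W_n(\kappa))$, so its reduced reduction type is exactly the reduced representation of $\overline{\chi}_n$ by the uniqueness of reduced Witt vectors (\S\ref{seccycliccover}). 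This direction uses nothing about $\mathcal{H}_{s(e_0)}$.

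For the ``if'' direction, assume $H\in\mathcal{H}_{s(e_0)}$ and that the reduction type of $\chi_n:=\mathfrak{K}_n(G^1,\ldots,G^{n-1},H)$ coincides with the reduced representation $\underline{f}_n$ of $\overline{\chi}_n$. The very fact that the reduction type is defined forces $\delta_{\chi_n}(0)=0$ (\S\ref{secSwan}), i.e.\ $\chi_n$ has \'etale reduction, with $\overline{\chi}_n$ birationally the prescribed cover; in particular $\sw_{\chi_n}(0,0,\overline{0})$ equals the classical Swan conductor $m_n=\iota_n-1$ of $\overline{\chi}_n$ by Remark \ref{remarkboundaryswan}(\ref{remarkboundaryswan1}). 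On the other hand, $H\in\mathcal{H}_{s(e_0)}$ means the branching divisor of $\mathfrak{K}_1(H)$ is everywhere at most that of $\mathcal{T}_n(s(e_0))=\mathcal{T}_n$; combining this with the conductor formula of Theorem \ref{theoremcaljumpirred} and the fact that $\chi_{n-1}$ already realizes the branching data recorded by $\mathcal{T}_{n-1}$, one sees that the level-$n$ branching divisor of $\chi_n$ is bounded above by that of $\mathcal{T}_n$, so summing gives $\mathfrak{C}_{\chi_n}(0,0,\overline{0})\le\iota_n$ (the conductor of $\mathcal{T}_n$, Remark \ref{remarkslopesumconductors}). Comparing with the vanishing-cycle inequality $\mathfrak{C}_{\chi_n}(0,0,\overline{0})\ge\sw_{\chi_n}(0,0,\overline{0})+1=\iota_n$ of Corollary \ref{corgood}(\ref{corgooditem1}) forces equality throughout, whence by the same corollary $\chi_n$ has good reduction.

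It remains to package this into a deformation statement. Since $\chi_n$ has good reduction it gives rise to an \'etale Hurwitz tree; since $\chi_n^p=\chi_{n-1}$ this tree refines the decorated tree of $\mathcal{T}_{n-1}$, and the Cartier compatibility (Theorem \ref{theoremCartierprediction}) together with the order of $\chi_n$ (which is $p^n$ because its reduction type represents $\overline{\chi}_n$) gives the remaining conditions of Definition \ref{defnextendhurwitz}, so the tree of $\chi_n$ extends $\mathcal{T}_{n-1}$. Applying Proposition \ref{propinverse} with $i=1$ to the character $\chi_n$ given by $(G^1,\ldots,G^{n-1},H)$ shows that $\chi_n$ extends $\chi_{n-1}$ and, having reduction type $\underline{f}_n$, deforms $\overline{\chi}_n$; alternatively one may quote Theorem \ref{theoremgoodhurwitz} directly. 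This finishes the argument.

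The step I expect to be the main obstacle is the upper bound $\mathfrak{C}_{\chi_n}(0,0,\overline{0})\le\iota_n$ extracted from $H\in\mathcal{H}_{s(e_0)}$. The subtlety is that the Witt vector $(G^1,\ldots,G^{n-1},H)$ need not be reduced, so translating ``the pole orders of $H$ are controlled by the branching divisor of $\mathcal{T}_n$'' into ``the level-$n$ conductor of $\chi_n$ at each branch point is at most the corresponding value on $\mathcal{T}_n$'' requires passing to a reduced representative and tracking how $u_{b,n}=\max_l\{p^{n-l}\deg_{(x-b)^{-1}}(G^l)\}$ interacts with the already-known level-$(n-1)$ data carried by $\mathcal{T}_{n-1}$ and with the $\mathcal{H}$-bound on $\deg H$. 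Once this monotonicity is in place, everything else is bookkeeping against results already established.
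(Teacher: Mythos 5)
Your argument is correct and follows essentially the same route as the paper: the upper bound $\mathfrak{C}_{\chi_n}(0,0,\overline{0})\le\iota_n$ extracted from $H\in\mathcal{H}_{s(e_0)}$ together with Theorem \ref{theoremcaljumpirred} and the bestness of $\underline{G}_{n-1}$, a matching lower bound forced by the prescribed reduction type, and Proposition \ref{propinverse} to conclude. The only cosmetic difference is that you invoke the vanishing-cycle inequality of Corollary \ref{corgood}(\ref{corgooditem1}) where the paper cites the different criterion (Proposition \ref{propdifferentcriterion}); the paper itself remarks that these are equivalent.
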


\begin{proof}
``$\Leftarrow$'' Since $H \in \mathcal{H}_{s(e_0)}$ and $\underline{G}_{n-1}$ is best (as we assume in the geometric setup), it follows from Theorem \ref{theoremcaljumpirred} that $\mathfrak{C}(\chi_n, e_0, 0) \le \iota_n$. Moreover, as the different of the generic fiber is at least that of the special fiber by Proposition \ref{propdifferentcriterion}, the assumption on $\chi_n$'s reduction type implies that the conductor of the special fiber is $\iota_n$, hence enforcing the equality. That information, combining with Proposition \ref{propinverse}, proves ``$\Leftarrow$''.

The ``$\Rightarrow$'' direction is immediate from the definition.
\end{proof}

\begin{definition}
\label{defnrightbranchdepth}
Given a $\mathbb{Z}/p^n$-tree $\mathcal{T}_n$ that extends $\mathcal{T}_{n-1}$, and a vertex $v$ of $\mathcal{T}_n$. We define $\mathcal{G}_v \subseteq \mathcal{H}_v$ to be the collection of $G \in \mathbb{K}$ such that the branching datum of the character $\chi_{n,G} \in \textrm{H}^1_{p^n}(\mathbb{K})$ to which it gives rise fits into $\mathcal{T}_n(v)$, and such that 
\[\delta_{\chi_{n,G}}(v)=\delta_{\mathcal{T}_n}(v).\]
We denote by $\mathcal{G}^{\ast}_v \subseteq \mathcal{G}_v$ the collection of $G$ such that $\omega_{\chi_{n,G}}(v)$ is a nonzero multiple of $\omega_{\mathcal{T}_n}(v)$.
\end{definition}

\begin{definition}
Suppose we are given a rational $G \in \mathcal{G}_v$, and $v'$ is a vertex succeeding $v$ in $\mathcal{T}_{n-1}$. Then, as before, one may right $G=G_{v'}+G_{v',\infty}$. We set $G\lvert_{v'}:=G_{v'}$.
\end{definition}

\begin{proposition}
\label{propvertexpartition}
Suppose $\mathcal{T}_n$ is a good $\mathbb{Z}/p^n$-Hurwitz tree that extends $\mathcal{T}_{n-1}$, and $G \in \mathcal{G}_v$ for some vertex $v$ of $\mathcal{T}_n$. Let $\chi$ be the extension of $\chi_{n-1}$ by $G$. Then 
\[ \delta_{\chi}(r)=\delta_{\mathcal{T}_n}(r) \]
\noindent for every rational place $r>v$. In particular, at every vertex $v'$ succeeding $v$, we have $G\lvert_{v'} \in \mathcal{G}_{v'}$.
\end{proposition}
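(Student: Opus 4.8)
The statement to prove is Proposition~\ref{propvertexpartition}: if $\mathcal{T}_n$ extends $\mathcal{T}_{n-1}$, $G \in \mathcal{G}_v$, and $\chi$ is the extension of $\chi_{n-1}$ by $G$, then $\delta_{\chi}(r) = \delta_{\mathcal{T}_n}(r)$ for every rational place $r > v$, and consequently $G\lvert_{v'} \in \mathcal{G}_{v'}$ at each vertex $v'$ succeeding $v$. The basic strategy is to combine the minimality principle coming from the vanishing-cycle formula (Corollary~\ref{corminimalitydepth}, Remark~\ref{remarkmindepth}) with the partition machinery of Proposition~\ref{proppartition} and Proposition~\ref{proppartitionWitt}. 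First I would reduce to the case where $r$ lies on the edge $e$ immediately below $v$ (i.e. $v = s(e)$), since the general case follows by walking along the unique path from $v$ to $r$ one edge at a time, using the conclusion at each intermediate vertex as the new hypothesis.

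\textbf{Key steps.} Fix such an edge $e$ with $s(e) = v$, and a rational place $v < r \le t(e)$. Since $G \in \mathcal{G}_v$, the branching datum of $\chi$ fits into $\mathcal{T}_n(v)$ and $\delta_{\chi}(v) = \delta_{\mathcal{T}_n}(v)$. The lower bound $\delta_{\chi}(r) \ge \delta_{\mathcal{T}_n}(r)$ is exactly Corollary~\ref{corminimalitydepth} applied to the \'etale tree $\mathcal{T}_n$ (or to $\mathcal{T}_n(v)$ as a sub-tree): the depth of any extending character dominates the depth recorded on the tree, because the vanishing-cycle inequality of Proposition~\ref{propvanishingcycle} forces $\delta_\chi$ to grow at least as fast as dictated by the conductors of the branch points lying beyond $r$. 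For the reverse inequality, I would use that $\delta_{\mathcal{T}_n}(r,e) = \delta_{\mathcal{T}_n}(v) + d_e(r - v)$ by Definition~\ref{defsumcondsprec}, where $d_e = \mathfrak{C}_{\mathcal{T}_n}(e) - 1 = \sum_{b \in \mathbb{B}_{\mathcal{T}_n}(e)} h_b - 1$. Partition $\underline{G}_n = (G^1,\dots,G^{n-1},G)$ using Proposition~\ref{proppartitionWitt} with respect to the branch points succeeding $e$ versus the rest, writing $\chi = \chi_{e} \cdot \chi_{e,\infty}$ in the notation of the partition. By Proposition~\ref{proppartition}\ref{proppartition1}, $\delta_{\chi}(r) = \delta_{\chi_e}(r)$ and $\omega_{\chi}(r) = \omega_{\chi_e}(r)$ for $v < r < t(e)$; the branch points of $\chi_e$ all lie inside $D[pr, z_e]$, so Corollary~\ref{corswgood} gives $\sw_{\chi_e}(r, z_e, \overline{0}) \le \mathfrak{C}_{\chi}(r, z_e, \overline{0}) - 1 = d_e$, and by Proposition~\ref{propdeltasw}\ref{propdeltasw1} the right derivative of $\delta_{\chi}$ at any such $r$ is at most $d_e$. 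Integrating this bound from $v$ to $r$, together with $\delta_\chi(v) = \delta_{\mathcal{T}_n}(v)$, yields $\delta_\chi(r) \le \delta_{\mathcal{T}_n}(v) + d_e(r-v) = \delta_{\mathcal{T}_n}(r)$, and combining with the lower bound gives equality. The case $r = t(e)$ follows by continuity (Proposition~\ref{propdeltasw}). Finally, the claim $G\lvert_{v'} \in \mathcal{G}_{v'}$ is almost immediate: $G\lvert_{v'} = G_{v'}$ has its poles among the leaves succeeding $v'$, so its branching datum fits into $\mathcal{T}_n(v')$; and by Proposition~\ref{proppartition}\ref{proppartition1} (applied on the edge leading into $v'$) together with the equality $\delta_\chi(v') = \delta_{\mathcal{T}_n}(v')$ just established, $\delta_{\chi_{n,G_{v'}}}(v') = \delta_\chi(v') = \delta_{\mathcal{T}_n}(v')$, which is exactly the membership condition.

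\textbf{Main obstacle.} The delicate point is the upper-bound argument for the derivative of $\delta_\chi$: I need to know that the slope of $\delta_\chi$ along $e$ is \emph{uniformly} bounded by $d_e$ on the whole interval $(v, t(e)]$, not just at isolated places, so that the piecewise-linear function $\delta_\chi$ cannot ``overshoot'' $\delta_{\mathcal{T}_n}$ and then come back down. This requires knowing that $\chi_e$ (hence $\chi$, at places where they agree) has exactly the branch points lying inside $D[pr,z_e]$ with conductors bounded by the $h_b$ recorded on $\mathcal{T}_n$ — which is guaranteed by the hypothesis $G \in \mathcal{G}_v$ and the fact that $\mathcal{T}_n$ extends $\mathcal{T}_{n-1}$ — and then applying Corollary~\ref{corswgood} at every rational place. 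One must also be careful that the partition $\chi = \chi_e \cdot \chi_{e,\infty}$ behaves well with respect to depths along all of $e$, which is precisely what Proposition~\ref{proppartition}\ref{proppartition1} supplies, so the subtlety is really just in assembling these ingredients correctly rather than in proving anything genuinely new; I expect this to be a short argument once the bookkeeping of branch loci and directions is set up carefully.
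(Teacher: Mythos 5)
Your proof is correct and follows essentially the same route as the paper: the lower bound comes from the minimality of the depth (Corollary~\ref{corminimalitydepth}), and the upper bound comes from integrating the slope bound $\sw_{\chi}(r,z_e,\overline{0})\le \mathfrak{C}_{\chi}(r,z_e,\overline{0})-1$ supplied by Proposition~\ref{propvanishingcycle} and Corollary~\ref{corleftrightderivative}, starting from the equality $\delta_{\chi}(v)=\delta_{\mathcal{T}_n}(v)$. The detour through the partition $\chi=\chi_e\cdot\chi_{e,\infty}$ is harmless but unnecessary, since the vanishing-cycle bound already only counts the branch points specializing to $\overline{0}$ on $\overline{\mathcal{D}}[pr,z_e]$; the paper applies it directly to $\chi$, and reserves Proposition~\ref{proppartition} (part~(\ref{proppartition2}), which is the precise statement at a vertex) for the final assertion about $G\lvert_{v'}$.
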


\begin{proof}
Suppose $\delta_{\chi}(r) \neq \delta_{\mathcal{T}_n}(r)$ for some $r>v$. Then, it follows from Corollary \ref{corminimalitydepth} that $\delta_{\chi}(r)>\delta_{\mathcal{T}_n}(r)$. On the other hand, the right derivative of $\delta_{\chi}$ at any rational place $r$ is $-\ord_{0}(\omega_{\chi}(r))-1$ by Corollary \ref{corleftrightderivative}, which is at most $\mathfrak{C}(\chi,r,\overline{0})-1$ by Proposition \ref{propvanishingcycle}. Hence, as $\delta_{\chi}(v)=\delta_{\mathcal{T}_n}(v)$, it must be true that $\delta_{\chi}(r) \le \delta_{\mathcal{T}_n}(r)$, which negates the assumption. The last assertion follows easily from Proposition \ref{proppartition} (\ref{proppartition2}).
\end{proof}

\begin{definition}
At a vertex $v=t(e)$ with $r$ direct successor edges $e_1, e_2, \ldots, e_l$, we define 
\[ \sum_{i=1}^r \mathcal{G}_{s(e_i)}:=\bigg\{\sum_{i=1}^l G_{s(e_i)} \mid G_{s(e_i)} \in \mathcal{G}_{s(e_i)} \bigg\}. \]
\end{definition}

\begin{corollary}
\label{corollarypartitionsum}
With the notation as in the above definition, we have  $\mathcal{G}_v=\mathcal{G}_{t(e)}=\sum_{i=1}^l \mathcal{G}_{s(e_i)}$.
\end{corollary}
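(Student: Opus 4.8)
\textbf{Proof proposal for Corollary \ref{corollarypartitionsum}.}
The plan is to prove the two inclusions $\mathcal{G}_{t(e)} \supseteq \sum_{i=1}^l \mathcal{G}_{s(e_i)}$ and $\mathcal{G}_{t(e)} \subseteq \sum_{i=1}^l \mathcal{G}_{s(e_i)}$ separately, using Proposition \ref{proppartition} and Lemma \ref{lemmacombination} as the main engines. First I would set up notation: write $v = t(e) = s(e_1) = \ldots = s(e_l)$, and for a rational function $G \in \mathbb{K}$ recall the partition $G = G_{v,\infty} + \sum_{i=1}^l G_{v,i}$ into parts whose poles are supported on the leaves succeeding $e_i$ (resp.\ not succeeding any $e_i$), as in Proposition \ref{proppartitionWitt} and Remark \ref{remarkpartition}. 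Since $\mathcal{T}_n$ is good and extends $\mathcal{T}_{n-1}$, the branch locus of $\chi_{n-1}$ at $v$ is exactly the union of the branch loci succeeding the $e_i$, so $G_{v,\infty}$ contributes no poles inside $\mathcal{D}[pv, z_e]$ beyond those already forced; in particular, for $G$ giving rise to a character whose branching datum fits into $\mathcal{T}_n(v)$, the part $G_{v,\infty}$ must be zero (its poles would have to lie strictly outside the disc corresponding to $v$). This observation is what makes both inclusions go through.

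For the inclusion $\sum_i \mathcal{G}_{s(e_i)} \subseteq \mathcal{G}_v$: take $G_i \in \mathcal{G}_{s(e_i)}$ for each $i$ and set $G := \sum_i G_i$. Each $G_i$ has poles only among the leaves succeeding $e_i$, so the branching datum of the character $\chi_{n,G}$ it gives rise to from $\chi_{n-1}$ is the ``sum'' of the branching data of the $\chi_{n,G_i}$, which fits into $\mathcal{T}_n(v)$ by construction. For the depth at $v$: by definition $\delta_{\chi_{n,G_i}}(v) = \delta_{\mathcal{T}_n(e_i)}(s(e_i)) = \delta_{\mathcal{T}_n}(v)$ for each $i$, and by Proposition \ref{proppartition}(\ref{proppartition2}) applied to the partition at $v$ (together with the compatibility of the Hurwitz tree data, Theorem \ref{theoremcompatibilitydiff}), the depths of the summands all coincide and equal $\delta_{\mathcal{T}_n}(v)$, while the differential conductors add up to $\omega_{\mathcal{T}_n}(v) = \sum_i \omega_i$, which is nonzero since the leading coefficients of the $e_i$-parts are prescribed to be nonzero. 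Hence $\delta_{\chi_{n,G}}(v) = \delta_{\mathcal{T}_n}(v)$, i.e.\ $G \in \mathcal{G}_v$. (The nonvanishing of the sum of differentials is where I would be most careful: one must rule out cancellation, which is exactly ruled out because the poles of the distinct $\omega_i$ are at distinct points $z_{e_i}$ reducing to distinct $a_i \in k$, so $\sum \omega_i$ has a genuine pole of the prescribed order at each $a_i$.)

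For the reverse inclusion $\mathcal{G}_v \subseteq \sum_i \mathcal{G}_{s(e_i)}$: take $G \in \mathcal{G}_v$ and partition $G = \sum_i G_i$ (with $G_{v,\infty} = 0$ as noted above, since the branching datum fits into $\mathcal{T}_n(v)$). Each $G_i$ gives rise to a character $\chi_{n,G_i}$ whose branch locus lies in the subdisc $D[pv, z_{e_i}]$ and whose branching datum fits into $\mathcal{T}_n(e_i)$. It remains to check $\delta_{\chi_{n,G_i}}(s(e_i)) = \delta_{\mathcal{T}_n}(v)$ for each $i$. By Proposition \ref{proppartition}(\ref{proppartition2}), $\delta_{\chi_{n-1,G}}$ and the depths of the summand characters at $v$ all agree with $\delta_{\chi_{n,G}}(v) = \delta_{\mathcal{T}_n}(v)$ — here one uses that the full character $\chi_{n,G}$ is obtained from $\chi_{n-1}$ by adding a Witt-vector coordinate, so the depth-at-$v$ computation reduces, via Lemma \ref{lemmacombination}, to the same partition statement that Proposition \ref{proppartition} establishes for the $(n-1)$-level and then transfers to level $n$ by the compatibility Theorem \ref{theoremcompatibilitydiff}. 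Thus $G_i \in \mathcal{G}_{s(e_i)}$ and $G \in \sum_i \mathcal{G}_{s(e_i)}$, completing the proof. The main obstacle is the bookkeeping needed to push the level-$(n-1)$ partition of Proposition \ref{proppartition} up to level $n$ — i.e.\ making precise that adding the single coordinate $G^n$ respects the partition at $v$ — but this is exactly what the decomposition property of Witt vectors (used in the proof of Proposition \ref{proppartitionWitt}) and Lemma \ref{lemmacombination} provide.
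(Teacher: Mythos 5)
Your ``$\supseteq$'' inclusion is essentially the paper's: the paper disposes of it in one line via the definition and Lemma \ref{lemmacombination}~(\ref{lemmacombination2}), and your extra care about non-cancellation of the differential conductors (poles at the distinct reductions $a_i$) is a reasonable elaboration of that line.

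The ``$\subseteq$'' inclusion, however, has a genuine gap. After partitioning $G=\sum_i G_i$ you claim that the depths of the summand characters at $v$ can be read off from $\delta_{\chi_{n,G}}(v)=\delta_{\mathcal{T}_n}(v)$ ``by Proposition \ref{proppartition}~(\ref{proppartition2})\dots transferred to level $n$.'' But Proposition \ref{proppartition} is a statement about the level-$(n-1)$ character $\chi_{n-1}$, and its proof uses the good reduction of that character in an essential way (minimality of the depth); the level-$n$ character $\chi_{n,G}$ is exactly the object under construction and is not known to have good reduction at this stage, so the proposition does not apply to it. Nor does Lemma \ref{lemmacombination} close the gap: it determines the depth of a product from the depths of its factors, not conversely, and a priori two summands could share a depth strictly larger than $\delta_{\mathcal{T}_n}(v)$ with cancelling differential conductors, giving the correct depth for the sum while $G_i\notin\mathcal{G}_{s(e_i)}$. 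The paper excludes this scenario by a detour your sketch omits: Proposition \ref{propvertexpartition} (a sandwich between the lower bound of Corollary \ref{corminimalitydepth} and the upper bound on the slope coming from Proposition \ref{propvanishingcycle}) yields $\delta_{\chi_{s(e_i)}}(r)=\delta_{\chi}(r)=\delta_{\mathcal{T}_n}(r)$ at every rational place $r$ strictly beyond $v$, where the branch loci of the different summands lie in disjoint residue classes and no cancellation can occur; the continuity and piecewise linearity of $r\mapsto\delta_{\chi_{s(e_i)}}(r)$ (Proposition \ref{propdeltasw}) then force the equality at $r=v$ itself, whence $G_i\in\mathcal{G}_{s(e_i)}$. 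You need this ``establish the equality beyond $v$, then pass to the limit'' step, or an equivalent argument ruling out cancellation at $v$ itself.
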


\begin{proof}
``$\supseteq$'' is immediate from the definition and Lemma \ref{lemmacombination} (\ref{lemmacombination2}). Suppose $G \in \mathcal{G}_{t(e)}$. Then, as discussed in Remark \ref{remarkpartition}, we may write $G=\sum_{i=1}^r G_{s(e_i)}$, where $G_{s(e_i)} \in \mathcal{H}_{s(e_i)}$. Finally, as $\delta_{\chi_{s(e_i)}}(r)=\delta_{\chi}(r)=\delta_{\mathcal{T}_n}(r)$ for all $s(e)<r$ by Proposition \ref{propvertexpartition}, Proposition \ref{propdeltasw} asserts that $\delta_{\chi_{s(e_i)}}(s(e_i))=\delta_{\chi}(s(e_i))=\delta_{\mathcal{T}_n}(s(e_i))$. Therefore, $G_{s(e_i)} \in \mathcal{G}_{s(e_i)}$.
\end{proof}

Note that, we also have $\mathcal{H}_{t(e)} = \sum_{i=1}^l \mathcal{H}_{s(e_i)}$.

\subsubsection{Proof of Theorem \ref{theoremcompatibilitydiff}}

We now have the necessary tools to complete the proof of the compatibility theorem. Recall that the situation of \S \ref{secdetect} has been solved by Proposition \ref{propcompatibilitydetectsituation}. Let us now consider an arbitrary rational place $r \in (s(e),t(e)]$. We may again write $\underline{G}=\underline{G}_r+\underline{G}_{r, \infty}$. Applying Proposition \ref{proppartition}, with $\chi_r:=\mathfrak{K}(\underline{G}_r)$, one obtains
\[ \delta_{\chi}(r)=\delta_{\chi_r}(r) \text{, and } \omega_{\chi}(r)=\omega_{\chi_r}(r). \]
Hence, as the differential conductor of $\chi_{r}$ follow the compatibility rules, so is $\chi$.
\qed

\begin{remark}
\label{rmkparallelmixedcharacteristic}
Most of the results in \S \ref{secHurwitz} and \S \ref{secproofmain} until this point easily translate to the case where $R$ is of mixed characteristic. The most important among them is Proposition \ref{proppartition} (\ref{proppartition1}). The only problematic part is Proposition \ref{proppartition} (\ref{proppartition2}), hence so is Theorem \ref{theoremcompatibilitydiff}, since we not yet have an exact analog of Theorem \ref{thmbest} in mixed characteristic.
\end{remark}

\subsection{The induction process}
\label{sectinduction}
Following is the outline of the induction process to construct a character $\chi_n$ that deforms $\overline{\chi}_n$, extends $\chi_{n-1}$, and gives rise to $\mathcal{T}_n$. One may assume that the  length-$(n-1)$ Witt-vector that defines $\chi_{n-1}$ is best. We would like to find a rational function $G^n \in \mathbb{K}$ that extends $\chi_{n-1}$ to a deformation $\chi_n$ of $\overline{\chi}_n$. Below is the induction (ind) process.

\begin{enumerate}[label=Ind \arabic*]
\item \label{step 1} We start at a vertex $v=t(e)$ of $\mathcal{T}_n$ that only precedes leaves. We call $v$ a \textit{final vertex} of $\mathcal{T}_n$. In \S \ref{secfinalqcqs}, we consider the affinoid $\mathcal{G}_{t(e)}$. It is a collection of $G^n_{t(e)} \in \mathbb{K}$ that gives rise to a cover $\chi_{n,t(e)}$ with desired depth and (generic) branching datum at $t(e)$. We then show that the subset $\mathcal{G}^{\ast}_{t(e)} \subsetneq \mathcal{G}_{t(e)}$, whose elements give rise to the characters with the right differential conductor at $t(e)$, is non-empty. Recall that $\omega_{\mathcal{T}_n}(t(e))$ is either exact or $\mathcal{C}(\omega_{\mathcal{T}_n}(t(e)))=\omega_{\chi_{n-1}}(t(e))$ (Theorem \ref{theoremCartierprediction}). The case $\omega_{\mathcal{T}_n}(v)$ is exact is considered in \S \ref{secfinalexact}, the non-exact case is examined in \S \ref{secfinalnonexact}. 
\item \label{step 2} This step is carried out in \S \ref{seccontroledge} and \S \ref{secminimal}. Let $e$ be an edge of $\mathcal{T}_n$ that is the direct predecessor of a final vertex. Then $\underline{G}_{n-1,t(e)}=\underline{G}_{n-1,s(e)}$ and there are no leaves in $\mathcal{T}_n$ with valuation (with respect to $z_e$) greater than $s(e)$ besides one at $t(e)$. In order for the depth of $\chi_{n,s(e)}$ at $s(e)$ to be equal to $\delta_{\mathcal{T}_n}(s(e))$, we continuously control its differential conductor along $e$ by modifying $G^n_{t(e)}$. The key ingredient is Proposition \ref{propreducekink}, which says that one can always replace $G^n_{t(e)}$ by another element of $\mathcal{G}_{t(e)}$ that ``fits'' better into $\mathcal{T}_n$. Furthermore, we prove that the collection of $G^n_{t(e)}$ whose depth at $s(e)$ is $\delta_{\mathcal{T}_n}(s(e))$, i.e., the quasi-compac quasi-separated (qcqs) set $\mathcal{G}_{s(e)}$, is a non-empty subset of $\mathcal{G}^{\ast}_{t(e)}$ (Proposition \ref{propminimal}). That shows one can secure the wanted ramification data over the edge $e$, completing the base case.
\item \label{step 3} Suppose $e$ is an edge of $\mathcal{T}_n$ where $t(e)$ is \emph{not} final, and $e_1, \ldots, e_l$ are the direct successors of $t(e)$. As in Remark \ref{remarkpartition}, we may write $\underline{G}_{n-1}$ as
\[ \underline{G}_{n-1}= \sum_{i=1}^l \underline{G}_{n-1,s(e_i)} + \underline{G}_{n-1,t(e),\infty}.\]
By induction, for each $i$, there exists a qcqs $\mathcal{G}_{s(e_i)}$ such that for each $G^n_{s(e_i)} \in \mathcal{G}_{s(e_i)}$ and for each $i=1, \ldots, l$, the following holds.
\begin{enumerate}
    \item The branching datum of $\chi_{n,s(e_i)}=\mathfrak{K}_n\big(\underline{G}_{n-1,s(e_i)}, G^n_{s(e_i)}\big)$ coincides with $\mathcal{T}_{n}(e_i)$. 
    \item The depth of $\chi_{n,s(e_i)}$ at $t(e)$ is equal to $\delta_{\mathcal{T}_n}(t(e)\big)$.
\end{enumerate}
Moreover, it follows from Corollary \ref{corollarypartitionsum} that $\mathcal{G}_{t(e)}=\sum_{i=1}^l \mathcal{G}_{s(e_i)}$. Hence, $\mathcal{G}_{t(e)}$ is also qcqs. We call $t(e)$ an \textit{exact vertex} if $\delta_{\mathcal{T}_n}(t(e))>p\delta_{\mathcal{T}_{n-1}}(t(e))$. Otherwise, $\delta_{\mathcal{T}_n}(t(e))=p\delta_{\mathcal{T}_{n-1}}(t(e))$, and we name $t(e)$ a \textit{Cartier vertex}. The controlling of exact vertices (resp. Cartier vertices) to achieve $\omega_{\mathcal{T}_n}(t(e))=\omega_{\chi_{n,t(e)}}(t(e))$ is similar to \ref{step 2}. We again obtain a non-empty open subset $\mathcal{G}^{\ast}_{t(e)} \subset \mathcal{G}_{t(e)}$ whose elements gives the desired differential conductor at $t(e)$.

\item \label{step 4} Let $e$ be the edge from the previous step. We can simply repeat the process from \ref{step 2} to achieve the right degeneration data over $e$. That completes the inductive step.
\item \label{step 5} Iterating the previous two steps, we acquire a nonempty qcqs $\mathcal{G}_{s(e_0)}$ (recall that $e_0$ is the trunk of $\mathcal{T}_n$). Finally, \S \ref{seccontrolroot} shows that $\mathcal{G}_{s(e_0)}$ contains a non-empty subset $\mathcal{G}^{\ast}_{s(e_0)}$ whose elements give rise to characters with desired reduction type. That result, together with Corollary  \ref{corHreductiontype}, complete the proof of the main theorem.
\end{enumerate}

\subsection{Some natural degree \texorpdfstring{$p$}{p} characters}
\label{seccontrollingchars}

\begin{definition}
\label{defextraexact}
Suppose $\mathcal{T}_n$ is a tree that extends $\mathcal{T}_{n-1}$ in Proposition \ref{propextendtree}, and $r$ is a rational place on an edge $e$ of $\mathcal{T}_n$. We may assume that $z_e=0$. Suppose, moreover, that $b_1, \ldots, b_m$ are the successor leaves of $e$ on $\mathcal{T}_n$, $l_j$ is the conductor at $b_j$, and set $l:=\sum_{j=1}^m l_j$. Define $\mathcal{W}_r$ to be the collection of exact differentials forms
\begin{equation}
    \label{eqnjunkpart}
    \omega_r=\sum_{i=1}^{l} \frac{c_idx}{x^i},
\end{equation}
\noindent where $c_i \in k$. Note that, as $\omega_r$ is exact, and $c_{qp-1}=0$ for all $q \in \mathbb{Z}$.
\end{definition}

\begin{lemma}
\label{lemmacontroljunk}
Suppose $\mathcal{T}_n$ is a tree extending $\mathcal{T}_{n-1}$ in Proposition \ref{propextendtree}, and $r$ is a rational place on an edge $e$ of $\mathcal{T}_n$. Suppose, moreover, that we are given $\delta \in \mathbb{Q}_{> 0}$ and $\omega$ is an exact differential form in $\mathcal{W}_{r}$. Then there exists $H_r \in \mathcal{H}_r$ (defined in Definition \ref{defnrightbranchdata}) such that, for $\psi:=\mathfrak{K}_1(H_r)$, we have $\delta_{\psi}(r)=\delta$ and $\omega_{\psi}(r)=\omega$. 
\end{lemma}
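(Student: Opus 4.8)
The statement to prove (Lemma \ref{lemmacontroljunk}) asks: given an exact differential $\omega = \sum_{i=1}^l c_i\,dx/x^i \in \mathcal{W}_r$ (so $c_{qp-1}=0$ for all $q$) and a target depth $\delta \in \mathbb{Q}_{>0}$, produce an $H_r \in \mathcal{H}_r$ such that the order-$p$ character $\psi := \mathfrak{K}_1(H_r)$ has $\delta_\psi(r) = \delta$ and $\omega_\psi(r) = \omega$. The natural approach is a direct construction using Proposition \ref{propcomputeordp}: since $\omega$ is exact, I would first integrate it. Writing $\omega = dg$ with $g = \sum_{i=1}^l \tilde c_i x^{-i+1} \in \kappa = k(x)$ (the condition $c_{qp-1}=0$ is exactly what guarantees such a $g$ exists, since the terms $dx/x^{qp}$ are the ones with no antiderivative in $k(x)$ — more precisely, $d(x^{-m}) = -m x^{-m-1}dx$, and $-m$ is a unit in $k$ iff $p \nmid m$). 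After possibly absorbing a $p$-th power, one may assume $g \notin \kappa^p$ unless $g$ is itself a constant, but since $\omega \neq 0$ forces $dg \neq 0$, we indeed have $g \notin \kappa^p$. Now lift: pick $G \in \mathbb{K}$ reducing to $g$ with $\nu_r(G) = 0$ (here I mean the Gauss valuation $\nu_{pr,0}$, scaled as in \S\ref{secdiscannuli}), and set $H_r := \pi^{-p\delta} G \cdot (\text{unit corrections if } \delta \notin \frac{1}{?}\mathbb{Z})$ — more carefully, after a finite extension of $K$ so that $p\delta \in \nu(K^\times)$, choose a uniformizer power realizing valuation $-p\delta$ and set $H_r = \pi^{-p\delta} G$ with $G$ a lift of $g$. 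Then $\nu_r(H_r) = -p\delta$ and $[H_r]_r = g \notin \kappa_r^p$, so Proposition \ref{propcomputeordp} gives directly $\delta_\psi(r) = \delta$ and $\omega_\psi(r) = dg = \omega$.

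The remaining obligation is membership: I must check $H_r \in \mathcal{H}_r$, i.e. (Definition \ref{defnrightbranchdata}) that the branching divisor of $\mathfrak{K}_1(H_r)$ is everywhere $\le$ that of $\mathcal{T}_n(s(e)) = \mathcal{T}_n(r)$. Here is where the structure of $\omega$ enters: its poles, as a differential on the reduction $\overline{\mathcal{D}}[pr,0]$, sit only at $x = 0$ (with order $\le l = \sum_j l_j$) — but $\mathcal{H}_r$ allows poles along \emph{all} the directions toward the successor leaves $b_1,\dots,b_m$ of $e$. The point is that the function $g$ (hence $G$, chosen appropriately as a polynomial in $x^{-1}$ of degree $\le l-1$, i.e. $G = \sum_{i=1}^{l} a_i X^{-i+1}$ with suitable $a_i$) produces a character $\psi$ whose branch locus, after reduction, lies in the disc $D[pr,0]$ with a single branch point of conductor $\le l$; one then redistributes this single pole of order $l$ into poles at the $m$ successor leaves of conductors $l_1,\dots,l_m$ by a further partition of the Witt vector — but since here we only need \emph{an} $H_r$ with the prescribed \emph{depth and differential at $r$}, and the branching divisor of $\mathfrak{K}_1(H_r)$ with $G$ a polynomial of $X^{-1}$ of degree $l-1$ is just $l \cdot [0]$ on the level of the disc $\mathcal{D}[pr,0]$, I would argue that $l \cdot [0] \le \sum_j l_j [b_j]$ in the relevant sense (both have total degree $l$ and the leaves $b_j$ all specialize to points succeeding $r$), so the condition of Definition \ref{defnrightbranchdata} is met. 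This uses Theorem \ref{theoremcaljumpirred}: the conductor of $\mathfrak{K}_1(G)$ at $0$ equals $1 + \deg_{X^{-1}} G \le l$.

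The main obstacle I anticipate is the bookkeeping around \emph{where} the branch points of $\psi$ land — specifically ensuring the branching divisor inequality in Definition \ref{defnrightbranchdata} holds ``everywhere'' and not just at the relevant vertex, and being careful that the lift $G$ can be taken with $\nu_r(G)=0$ and reduction exactly $g$ without introducing spurious poles at other directions. A cleaner route avoiding this is: apply Proposition \ref{propextendtree}/\ref{thminversedegreep}-style reasoning, or more simply, note that Definition \ref{defnrightbranchdata}'s condition only concerns \emph{upper bounds} on the branching divisor, so taking $G$ to have its single pole of order $\le l$ at $X=0$ — which is the common ``ancestor'' of all the successor leaves $b_j$ — automatically gives a branching divisor dominated by the one from $\mathcal{T}_n(r)$ at every rational place $\ge r$ (since all those leaves are separated from the root precisely by passing through $0$ at level $r$). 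The finite extension of $K$ needed to make $p\delta$ a value of $\nu$ is harmless. I expect the whole argument to be short once the integration step and the valuation-normalization are set up correctly, with Proposition \ref{propcomputeordp} doing all the analytic work.
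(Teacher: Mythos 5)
There is a genuine gap, and it sits exactly at the point you flagged as "the remaining obligation": the membership $H_r \in \mathcal{H}_r$. Definition \ref{defnrightbranchdata} requires the branching divisor of $\mathfrak{K}_1(H_r)$ to be \emph{everywhere} bounded by that of $\mathcal{T}_n(r)$, i.e.\ it is a pointwise inequality of divisors supported on $K$-points: the generic branch points of $\mathfrak{K}_1(H_r)$ must lie among the leaves $b_1,\dots,b_m$, with conductor at $b_j$ at most $l_j$. Your construction takes $G$ to be a polynomial in $X^{-1}$, i.e.\ a single pole of order up to $l=\sum_j l_j$ at $X=0$, and you argue that "$l\cdot[0]\le \sum_j l_j[b_j]$ in the relevant sense" because the total degrees agree and all $b_j$ specialize to $\overline{0}$ at the place $r$. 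That conflates the reduction at $r$ (where indeed all directions collapse to $x_r=0$, which is why the differential $\omega$ only sees a pole at $0$) with the generic branching divisor, which is what $\mathcal{H}_r$ constrains. A conductor-$l$ branch point at $X=0$ is not dominated by conductor $l_1<l$ at $b_1$ (even when $b_1=0$) plus conductors at the other $b_j$; and this matters downstream, because $H_r$ is added to $G^n$ in the induction, so a stray generic branch point (or an inflated conductor at $0$) raises the total conductor of $\chi_n$ above $\iota_n$ and destroys good reduction by Corollary \ref{corgood} -- the divisor bound is precisely what prevents this, and it cannot be weakened to a total-degree bound. Your "cleaner route" at the end repeats the same misreading.

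The paper's proof shows what is actually needed: after reducing (via Lemma \ref{lemmacombination}) to a monomial $\omega=c_i\,dx/x^i$ with $p\nmid(i-1)$, one writes down $H$ with poles only at the leaves $b_j$, of orders $l_j$ when $p\mid l_j$ and of order $l_j-1$ (or a case-adjusted order) at the at most two leaves whose conductors are prime to $p$, together with a monomial numerator $x^{l-i}$ (or $x^{l-1-i}$) and the scaling $t^{-p(\delta-r(i-1))}$; then $\nu_r(H)=-p\delta$, $[H]_r=c/x_r^{i-1}\notin\kappa_r^p$, and Proposition \ref{propcomputeordp} gives the depth and differential, exactly as in your analytic step. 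The case analysis on which $l_j$ are prime to $p$ is unavoidable: a pole of exact order $l_j$ at $b_j$ with $p\nmid l_j$ would give conductor $l_j+1>l_j$, violating the divisor bound, which is why Remark \ref{remarkatmost2nonp} (at most two prime-to-$p$ conductors among the successor leaves, a structural feature of the trees built in Proposition \ref{propextendtree}) is an essential ingredient. Your proposal never engages with this distribution of pole orders among the $b_j$, which is the real content of the lemma; the integration of $\omega$ and the use of Proposition \ref{propcomputeordp} are the easy part and agree with the paper.
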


\begin{proof}
Suppose $\omega$ has the form as in (\ref{eqnjunkpart}). Suppose, for each $i$, there exists a rational function $H_{r,i} \in \mathcal{H}_r$ so that, for $\psi_i:=\mathfrak{K}_1(H_{r,i})$, we have $\delta_{\psi_i}(r)=\delta$ and $\omega_{\psi_i}(r)=\frac{c_idx}{x^i}$. Then, by Lemma \ref{lemmacombination} \ref{lemmacombination2}, the character $\psi:=\sum_{i=1}^r \psi_i$ works. Therefore, we may further assume that 
\[ \omega=\frac{c_idx}{x^i}, \]
\noindent where $c_i \in k$, $p \nmid (i-1)$, and $i \le l$. By the construction of $\mathcal{T}_n$, all but at most two of the $l_i$'s are divisible by $p$ (mentioned in Remark \ref{remarkatmost2nonp}). Without loss of generality, we may assume that they are $l_1$ and $l_2$. We will treat each case separately.

Suppose first that $p \mid l_2$. That is the situation on most of the places of $\mathcal{T}^{\min}_2$ in Example \ref{exminimaljump}. Consider the rational function
\[H:= \frac{x^{l-i}}{t^{p (\delta-r(i-1))}(x-b_1)^{l_1-1}\prod_{j=2}^m(x-b_j)^{l_j}} \in \mathbb{K}. \]
\noindent Then it is straightforward to check that $H \in \mathcal{H}_r$, $[H]_r=\frac{1}{x_r^{i-1}} \notin (k(x_r))^p$, and $\nu_r(H)=-p \delta$. Hence, by Proposition \ref{propcomputeordp}, it is exactly what we are seeking.

Suppose $p\nmid l_1$ and $p \nmid l_2$, for instance, where $r$ is a place on $e_{0,1}$ of $\mathcal{T}^{8\ness}_{2}$ in Example \ref{exextensionnonessential} and $\mathcal{T}_n=\mathcal{T}^{8\ness}_{2}(r)$. We first regard the case $l_1=m_1p+1$ and $l_2= m_2p+1$. Then, as $\omega \in \mathcal{W}_r$ and $l=pq+2$, we may assume $i \le l-2$. One can show that
\[H:= \frac{x^{l-1-i}}{t^{p (\delta-r(i-1))}(x-b_1)^{mp_1}(x-b_2)^{mp_2}\prod_{j=3}^m(x-b_j)^{l_j}}  \]
\noindent works as before.

Finally, let us consider the case $l_2=m_2p+1$ and $1<\overline{l}_1<p$ as on the edge $e_{0,1}$ of $\mathcal{T}^{9\ness}_2$ in Example \ref{exextensionnonessential}. Then, similar to the other cases, the rational function that does the job is
\[H:=\frac{x^{l-1-i}}{t^{p (\delta-r(i-1))}(x-b_1)^{l_1-1}(x-b_2)^{m_2p}\prod_{j=3}^m(x-b_j)^{l_j}}.   \] We hence have solutions for all the cases. \end{proof}

\begin{corollary}
\label{corutilizeroot}
Suppose $\iota_n$ is the conductor of $\mathcal{T}_n$, $g=\sum_{j=1}^{m_n} c_j /x^j$ is a polynomial in $k[x^{-1}]$ that consists of only terms of prime-to-$p$ degree. Then, after a possible finite extension of $\mathbb{K}$, there exists $G'_{s(e_0)} \in \mathcal{H}_{s(e_0)}$ such that the character $\psi:= \mathfrak{K}_1(G'_{s(e_0)})$ satisfies the followings
\begin{itemize}
    \item $\delta_{\psi}(0)=0$, and
    \item its reduction $\overline{\psi}$, considered as a character in $\textrm{H}^1_p(\kappa)$, corresponds to the extension $y^p-y=g$.
\end{itemize}
\end{corollary}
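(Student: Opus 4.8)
The plan is to reduce the statement to Lemma \ref{lemmacontroljunk} by taking $\delta$ to be a positive rational and letting it tend to zero; more precisely, I would first produce, for each prime-to-$p$ monomial $c_j/x^j$ appearing in $g$, a degree-$p$ character $\psi_j$ supported on the subdisc attached to $s(e_0)$ whose restriction has depth $0$ at the place $0$ and whose reduction is the Artin-Schreier class of $c_j/x^j$; then I would combine them via Lemma \ref{lemmacombination}\ref{lemmacombination4}, which guarantees that a sum of depth-$0$ characters has depth $0$ and that reductions add. So it suffices to treat a single monomial $\omega = c_j\, dx / x^j$ with $p \nmid j$, $j \le m_n$, and to find $H \in \mathcal{H}_{s(e_0)}$ such that $\mathfrak{K}_1(H)$ has depth $0$ at $0$ with reduction $y^p - y = c_j/x^j$ (up to Artin-Schreier equivalence; note $j$ here plays the role of $i-1$ in the lemma once one rewrites $c_j/x^j$ as the "integral" of an exact form, but for the reduction statement one works directly with Proposition \ref{propcomputeordp}).

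The key point is that $\mathcal{H}_{s(e_0)}$ is precisely the set of $H \in \mathbb{K}$ whose associated $\mathbb{Z}/p$-branching divisor is dominated everywhere by that of $\mathcal{T}_n(s(e_0))$, and the conductor of $\mathcal{T}_n$ is $\iota_n$, with $\sum_b h_b = \iota_n$ (Remark \ref{remarkslopesumconductors}); since $m_n = \iota_n - 1$, a function with a pole of order $j \le m_n$ at $0$ and poles at the other leaves matching their conductors will lie in $\mathcal{H}_{s(e_0)}$. Concretely, following the computation inside the proof of Lemma \ref{lemmacontroljunk}, I would take
\[
H := \frac{x^{\,\iota_n - 1 - j}}{(x - b_1)^{l_1 - 1}\prod_{k=2}^{m}(x-b_k)^{l_k}} \cdot c_j,
\]
after possibly permuting the leaves so that at least one of them, say $b_1$, absorbs the "$-1$" needed to make the total pole order come out to $\iota_n$; here $b_1,\dots,b_m$ are the leaves of $\mathcal{T}_n(s(e_0))$ with conductors $l_1,\dots,l_m$. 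One checks $\nu_0(H) = 0$ (all factors are units at the place attached to $0$ because the $b_k \in \mathfrak{m}R$ reduce to $0$ while the valuation is the one of the unit disc), and $[H]_0 = c_j / x^j \notin \kappa^p$ since $p \nmid j$. Then Proposition \ref{propcomputeordp}\ref{propcomputeorderpsecond} gives $\delta_{\mathfrak{K}_1(H)}(0) = 0$ and that the reduction is the Artin-Schreier extension $y^p - y = c_j/x^j$, exactly as required. A finite extension of $\mathbb{K}$ may be needed to put things in "weakly unramified" position as in \S\ref{secreduction}, accounting for the hypothesis in the statement.

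I do not expect a genuine obstacle here: the corollary is essentially the "$\delta = 0$" boundary case of Lemma \ref{lemmacontroljunk} combined with the additivity of reductions from Lemma \ref{lemmacombination}\ref{lemmacombination4}, and the statement in the excerpt already says "the proof of the following corollary is straightforward." The only mild subtlety is bookkeeping: one must verify that after splitting $g$ into monomials and assigning each a rational function of the above shape, the individual $H_j$ all lie in the \emph{same} $\mathcal{H}_{s(e_0)}$ — this is fine because each $H_j$ has pole order $\le m_n = \iota_n - 1$ at $0$ and the fixed denominator $\prod (x-b_k)^{l_k}$ (with one exponent lowered) realizes a branching divisor dominated by $\mathcal{T}_n(s(e_0))$'s, independently of $j$. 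With that observed, summing and invoking Lemma \ref{lemmacombination}\ref{lemmacombination4} finishes the argument. \qed
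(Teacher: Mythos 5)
Your proposal is correct and follows essentially the same route as the paper, which proves the corollary by observing that it is the $\delta=0$ instance of the construction in Lemma \ref{lemmacontroljunk}, with the monomial $c_j/x^j$ playing the role of the exact form $-jc_j\,dx/x^{j+1}$, followed by summation via Lemma \ref{lemmacombination}. The only caveat is that your single displayed formula for $H$ covers just the case where at most one of the $l_k$ is prime to $p$; when two leaves have prime-to-$p$ conductors you must use the other two shapes of $H$ from the lemma's proof (lowering the exponents at both $b_1$ and $b_2$ and adjusting the numerator degree), exactly as you gesture at when citing that proof.
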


\begin{proof}
The proof is almost identical to one for Lemma \ref{lemmacontroljunk}. In particular, $c_j/x^j$ in this case plays the role of $-jc_jdx/x^{j+1}$ in the proof of that lemma.
\end{proof}

\subsection{Controlling a final vertex}
\label{seccontrolfinal}
Suppose $v=t(e)$ is a final vertex of $\mathcal{T}_n$ with differential conductor
\[ \omega_{\mathcal{T}_n}(v)=\frac{cdx}{\prod_{j=1}^r(x-[b_j]_v)^{l_j}}, \]
\noindent where $l_j$ is the $n$-th conductor of the branch point associated to the leaf $b_j$ on $\mathcal{T}_n$. It follows from the rules asserted in Theorem \ref{theoremCartierprediction} that either $\mathcal{C}(\omega_{\mathcal{T}_n}(v))=\omega_{\mathcal{T}_{n-1}}(v)=\omega_{\chi_{n-1}}(v)$, or $\mathcal{C}(\omega_{\mathcal{T}_n}(v))=0$. We discuss the former case in \S \ref{secfinalexact} and the latter one in \S \ref{secfinalnonexact}.

Recall from \S \ref{secpartition} that $\chi_{n-1}$ can be represented by a best length-$(n-1)$ Witt vector $(G^1, \ldots, \allowbreak G^{n-1})\allowbreak=(G^1_{v}, \ldots, G^{n-1}_{v})+(G^{1}_{\infty}, \ldots, G^{n-1}_{\infty})$. The discussion in Remark \ref{remarkpartition} allow us to reduce our study to the case where $\underline{G}_{n-1}$ is equal to $(G^1_{v}, \ldots, G^{n-1}_{v})$ and is also best. We first set $G^n=0$. Then, it follows from Theorem \ref{thmbest} and Theorem \ref{theoremCartierprediction} that the character $\chi_{n}$ it gives rise to verifies
\[ \delta_{\chi_n}(v)=p \delta_{\mathcal{T}_{n-1}}(v), \text{ and }  \mathcal{C}(\omega_{\chi_n}(v))=\omega_{\mathcal{T}_{n-1}}(v).  \]
\noindent Furthermore, Theorem \ref{theoremcaljumpirred} shows that the $n$-th conductor of $\chi_n$ at $b_j$ is $\iota_{j,n}=p \iota_{j,n-1}$ for all $j$. Therefore, $0 \in \mathcal{H}_{v}$ and $\delta_{\chi_n}(v) \le \delta_{\mathcal{T}_n}(v)$. In the rest of this subsection, we show that one can replace $0$ by some $G^n \in \mathcal{H}_v$ such that $\delta_{\chi_n}(v)=\delta_{\mathcal{T}_n}(v)$ and $\omega_{\chi_n}(v)=\omega_{\mathcal{T}_n}(v)$.

\subsubsection{The final qcqs}
\label{secfinalqcqs}
Suppose there are $m$ leaves $b_1, \ldots, b_m$ with $n$-th conductors $l_1, \ldots, l_m$, respectively, that are attached to $v$. As discussed in Remark \ref{remarkatmost2nonp}, one may assume that $p$ divides $l_i$ for all $i$'s but maybe $l_1$ and $l_2$.
Consider the subset $\mathcal{H}'_v$ of $\mathcal{H}_v$ that consists of the elements of the form
\begin{equation}
\label{eqngeneralfinalqcqs}
    G=\frac{1}{t^{p\delta_{\mathcal{T}_n}(v)}} \bigg(\sum_{j=1}^m \sum_{i=1}^{l_j-1} \frac{a_{j,i}}{(x_v-(b_j)_v)^i}\bigg),
\end{equation}
where $a_{j,i} \in k((t))$, $ \min_{i \not\equiv 0 \pmod{p}}  (\nu(a_{j,i})) = 0$, and ${a}_{j,i}=0$ where $i \equiv 0 \pmod{p}$ for each $j$. Note that $\mathcal{H}'_v$ can be identified with an affinoid subset of $\mathbb{A}_{\mathbb{K}}^{\sum_{j=1}^m (l_j-1)}$ via the $a_{j,i}$'s. We first show that this collection is exactly the $\mathcal{G}_v$ defined in Definition \ref{defnrightbranchdepth}. 

\begin{proposition}
The subset $\mathcal{H}'_v$ coincides with $\mathcal{G}_v$. In particular, $\mathcal{G}_v$ is an affinoid (hence qcqs).
\end{proposition}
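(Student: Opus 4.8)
The plan is to show the two inclusions $\mathcal{H}'_v \subseteq \mathcal{G}_v$ and $\mathcal{G}_v \subseteq \mathcal{H}'_v$. The first inclusion is the substantial one and amounts to a computation of the depth and branching datum of the character $\chi_n$ attached to a $G$ of the form \eqref{eqngeneralfinalqcqs}, together with its sub-cover $\chi_{n-1}$. The second inclusion is essentially a normalization argument: any $G \in \mathcal{G}_v$ can be put into the shape \eqref{eqngeneralfinalqcqs} after an Artin-Schreier-Witt operation and after discarding higher-order terms, which does not change the character nor the depth at $v$.

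For the inclusion $\mathcal{H}'_v \subseteq \mathcal{G}_v$, I would first recall that $\underline{G}_{n-1} = (G^1_v, \ldots, G^{n-1}_v)$ is best by the reduction in \S\ref{secpartition}, so the length-$n$ Witt vector $\underline{G}_n = (G^1, \ldots, G^{n-1}, G^n)$ with $G^n = t^{-p\delta_{\mathcal{T}_n}(v)}(\text{stuff})$ as in \eqref{eqngeneralfinalqcqs} has, by the condition $\min_{i \not\equiv 0}\nu(a_{j,i}) = 0$, the property that $\nu_{v}(G^n) = -p\,\delta_{\mathcal{T}_n}(v)$ and $[G^n]_v \notin \kappa_v^p$ (the absence of $p$-divisible-degree terms guarantees this, by the characterization of ``best'' in the length-one sense, Proposition before Definition \ref{defnrelevance}). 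Since $\delta_{\mathcal{T}_n}(v) > p\,\delta_{\mathcal{T}_{n-1}}(v) = \delta_{\chi_{n-1}}(v)$ on a final vertex where $\omega_{\mathcal{T}_n}(v)$ is to be achieved — or more precisely since $\delta_{\mathcal{T}_n}(v) \geq p\,\delta_{\mathcal{T}_{n-1}}(v)$ always and we have chosen $G^n$ to realize the prescribed depth — position $n$ becomes relevant (Definition \ref{defnrelevance}), so $\underline{G}_n$ is best by Theorem \ref{thmbest}, and hence $\delta_{\chi_n}(v) = \nu_v(d\underline{G}_n)/(-1)\cdot$ gives exactly $\delta_{\mathcal{T}_n}(v)$ via Proposition \ref{propcalculaterefinedswanwitt}. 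For the branching datum, Theorem \ref{theoremcaljumpirred} computes the $n$-th conductor of $\chi_n$ at each $b_j$ from $\deg_{(x-b_j)^{-1}}$ of the entries; the shape of \eqref{eqngeneralfinalqcqs} forces these to be at most $l_j$, and the monodromy/geometry constraints put them into $\mathcal{T}_n(v)$, so $G \in \mathcal{G}_v$.

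For the reverse inclusion I would take an arbitrary $G \in \mathcal{G}_v$, giving a character $\chi_{n,G}$ with $\delta_{\chi_{n,G}}(v) = \delta_{\mathcal{T}_n}(v)$ and branching datum fitting $\mathcal{T}_n(v)$. Expanding $G$ in partial fractions with respect to the coordinate $x_v$ centered at the poles $(b_j)_v$, the bound on the branching datum (via Theorem \ref{theoremcaljumpirred} again) forces all pole orders to be $\leq l_j - 1$ at $b_j$; one may then clear a common power of $t$ to write $G = t^{-p\delta_{\mathcal{T}_n}(v)} \sum a_{j,i} (x_v - (b_j)_v)^{-i}$. An Artin-Schreier-Witt operation (adding $\wp(\underline{b})$ with $\underline{b}$ supported in the appropriate positions, exactly as in the proof of Proposition \ref{propASclassdetect} / Lemma \ref{lemmaapprox}) removes all $a_{j,i}$ with $p \mid i$ without altering $\chi_{n,G}$, and the depth condition $\delta_{\chi_{n,G}}(v) = \delta_{\mathcal{T}_n}(v)$ combined with Proposition \ref{propcomputeordp} (length-one) and Theorem \ref{thmbest} (general length) forces $\min_{i \not\equiv 0}\nu(a_{j,i}) = 0$. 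Hence $G$ lands in $\mathcal{H}'_v$. Finally, the identification of $\mathcal{H}'_v$ with an affinoid subset of $\mathbb{A}^{\sum(l_j-1)}_{\mathbb{K}}$ is built into its definition (it is cut out by the closed conditions $\nu(a_{j,i}) \geq 0$ and the vanishing conditions $a_{j,i} = 0$ for $p \mid i$, together with the affinoid condition $\min_{i\not\equiv 0}\nu(a_{j,i}) = 0$, which defines a rational subdomain), so $\mathcal{G}_v$ is qcqs.

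I expect the main obstacle to be verifying carefully that the ``best'' property of $\underline{G}_{n-1}$ is preserved when appending the new coordinate $G^n$ — i.e. that position $n$ genuinely becomes relevant and no lower position is ``better'' — since this is where the hypothesis on the $t$-adic valuation of the $a_{j,i}$ (the normalization $\min_{i\not\equiv 0}\nu(a_{j,i}) = 0$) interacts with the relevance-length bookkeeping of Definition \ref{defnrelevance} and Theorem \ref{thmbest}. The case division forced by Remark \ref{remarkatmost2nonp} (at most two conductors $l_j$ not divisible by $p$) also needs to be threaded through the computation of $[G^n]_v \notin \kappa_v^p$, exactly as in the case analysis of Lemma \ref{lemmacontroljunk}; this is routine but must be stated.
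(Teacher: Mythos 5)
Your proposal is correct and, in outline, follows the paper's proof: two inclusions, with the converse handled by a partial-fraction expansion plus an Artin-Schreier-Witt normalization that discards the $p$-divisible-order terms, and the affinoid (qcqs) structure read off directly from the coordinates $a_{j,i}$. The genuine difference is in the inclusion $\mathcal{H}'_v \subseteq \mathcal{G}_v$: the paper simply observes that $[G]_v \notin \kappa_v^p$ and invokes the length-one Proposition \ref{propcomputeordp}, leaving implicit the passage from the depth of $\mathfrak{K}_1(G)$ to the depth of the full character $\chi_{n,G}$ (which in the paper's framework rests on Lemma \ref{lemmacombination}, using in the Cartier case that $\omega_{\chi_{n,0}}(v)$ is not exact, so no cancellation can occur); you instead append $G^n$ to the best vector $\underline{G}_{n-1}$ and argue that $\underline{G}_n$ is best via Theorem \ref{thmbest}, then read off the depth from Proposition \ref{propcalculaterefinedswanwitt}. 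That route is valid -- position $n$ is relevant precisely because $\delta_{\mathcal{T}_n}(v) \ge p\,\delta_{\mathcal{T}_{n-1}}(v)$, and $G^n$ is best in the length-one sense since every nonzero $a_{j,i}$ has $p \nmid i$ and some such coefficient is a unit -- and it has the advantage of computing the depth of $\chi_{n,G}$ directly, making explicit a step the paper glosses over; the price is exactly the relevance bookkeeping you flag as the main obstacle. Two small corrections: the equality ``$p\delta_{\mathcal{T}_{n-1}}(v) = \delta_{\chi_{n-1}}(v)$'' is a slip (the induction hypothesis gives $\delta_{\chi_{n-1}}(v) = \delta_{\mathcal{T}_{n-1}}(v)$, so the relevant bound is $\delta_{\mathcal{T}_n}(v) \ge p\,\delta_{\chi_{n-1}}(v)$), and Remark \ref{remarkatmost2nonp} plays no role here: the non-$p$-power property of $[G]_v$ needs only the vanishing of the $p$-divisible-order coefficients together with one unit coefficient of prime-to-$p$ order, the at-most-two-prime-to-$p$ conductors being needed only in Lemma \ref{lemmacontroljunk}. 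Finally, note that, exactly as in the paper's own converse, your reverse inclusion exhibits an element of $\mathcal{H}'_v$ only in the same Artin-Schreier-Witt class as the given $G$ (and bounds the pole orders via Theorem \ref{theoremcaljumpirred} only after passing to a reduced representative); this imprecision is inherited from the statement itself and is not specific to your argument.
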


\begin{proof}
Suppse $G \in \mathcal{H}'_v$. It is immediate from (\ref{eqngeneralfinalqcqs}) that $[G]_v \in \kappa_v \setminus \kappa_v^p$. Hence, we have $\delta_G(v)=\delta_{\mathcal{T}_n}(v)$ by Proposition \ref{propcomputeordp}. Therefore, $G \in \mathcal{G}_v$, hence $\mathcal{H}'_v \subseteq \mathcal{G}_v$. Conversely, each $H \in \mathcal{G}_v$ is in the same Artin-Schreier class of one of the form
$$\Tilde{H}= \frac{1}{t^{p\delta_{\mathcal{T}_n(v)}}} (H')_v =\frac{1}{t^{p\delta_{\mathcal{T}_n(v)}}} \bigg(\sum_{j=1}^m \sum_{i=1}^{l_j-1}\bigg( \frac{b_{j,i}}{(x-b_j)^i}\bigg)\bigg)_v  , $$
where $[H']_v \not\in \kappa^p$, again by Proposition \ref{propcomputeordp}. It is straight forward to realize that $\Tilde{H}$ must be of the form (\ref{eqngeneralfinalqcqs}).
\end{proof}

\subsubsection{The exact case}
\label{secfinalexact}
Suppose first that $\omega_{\mathcal{T}_n}(v)$ is exact. Then, by the conditions on an extending tree (Definition \ref{defnextendhurwitz}), we have
$$\delta_{\mathcal{T}_n}(v)>p \delta_{\mathcal{T}_{n-1}}(v) \text{, and } \omega_{\mathcal{T}_n}(v)=\frac{cdx}{\prod_{j=1}^m(x-[b_j]_v)^{l_j}} =:d \overline{h}_v. $$
for some $\overline{h}_v \in k(x)\setminus k(x)^p$. We may further assume that
\[ \overline{h}_v= \sum_{j=1}^m \sum_{i=1}^{l_j-1} \frac{b_{j,i}}{(x-[b_j]_v)^i} \]
where $b_{j,i} \in k$, and $b_{j,l_j-1} \neq 0$ for each $j$. One can consider $\overline{h}_v$, after replacing $x$ by $x_v$, as an element of $k[[t]]](x_v)$. Hence, the rational function
\[ H_v:=\frac{1}{t^{p\delta_{\mathcal{T}_{n}}(v)}} \bigg(\sum_{j=1}^m \sum_{i=1}^{l_j-1} \frac{b_{j,i}}{(x_v-(b_j)_v)^i} \bigg) \in \mathbb{K} \]
is an element of $\mathcal{H}_v$ that verifies, for $\psi:=\mathfrak{K}_1(H_v)$,
\[ \delta_{\psi}(v)=\delta_{\mathcal{T}_{n}}(v), \text{ and } \omega_{\psi}(v)=\omega_{\mathcal{T}_{n}}(v). \]
It follows from Lemma \ref{lemmacombination} \ref{lemmacombination1} that, adding $H_v$ to $G^n=0$ makes
\[ \delta_{\chi_n}=\delta_{\mathcal{T}_{n}}(v), \text{ and } \omega_{\chi_n}=\omega_{\mathcal{T}_{n}}(v). \]
\noindent We thus achieve the desired branching data at $v$. Define
\[ \mathcal{G}^{\ast}_{t(e)}:=\big\{ G \in \mathcal{G}_{t(e)} \mid \omega_{\chi_{n}}(v)=\omega_{\mathcal{T}_{n}}(v) \big\}, \]
which is non-empty as the above construction suggests.

\subsubsection{The non-exact case}
\label{secfinalnonexact}
Let us now consider the case $\mathcal{C}(\omega_{\mathcal{T}_n}(v))=\omega_{\mathcal{T}_{n-1}}(v)=\omega_{\chi_{n-1}}(v)$. Recall that, we start with $G^n=0$. Then, as $\underline{G}_{n-1}$ is best, it follows from Theorem \ref{thmbest} that $$ \delta_{\chi_n}(v)=p \delta_{\chi_{n-1}}(v)=\delta_{\mathcal{T}_n}(v).$$
Thus, $\mathcal{C}(\omega_{\mathcal{T}_n}(v)-\omega_{\chi_n}(v))=\omega_{\mathcal{T}_{n-1}}(v)-\omega_{\mathcal{T}_{n-1}}(v)=0$, i.e., the difference between the two differential forms is exact. Therefore, we may assume
\[\omega_{\mathcal{T}_n}(v)-\omega_{\chi_n}(v) = d\overline{h}_v  \]
\noindent for some $\overline{h}_v \in k(x)\setminus k(x)^p$. In addition, Theorem \ref{theoremcaljumpirred} again tells us that the $n$-th conductor at the branch point $b_j$ is $\iota_{j,n}=p \iota_{j,n-1} \le l_j$ for $1 \le j \le m$. It then follows from Proposition \ref{propvanishingcycle} that
\[ \ord_{[b_j]_v} (d\overline{h}_v) \ge -l_j   \]
\noindent for all $j$. One thus may assume that
\[ \overline{h}_v= \sum_{j=1}^m \sum_{i=1}^{l_j-1} \frac{b_{j,i}}{(x-[b_j]^i)}. \]
\noindent Hence, $d\overline{h}_v \in \mathcal{W}_v$. By the same argument as before, there exists an $H_v \in \mathcal{H}_v$ such that, for $\psi:=\mathfrak{K}_1(H_v)$ ,
\[ \delta_{\psi}(v)=\delta_{\mathcal{T}_n}(v) \text{ and } \omega_{\psi}(v)=d\overline{h}_v .\]
Again, replacing $G^n$ by $G^n+H_v$ results to
\[ \omega_{\chi_n}(v)=\omega_{\mathcal{T}_n}(v).\]
Thus, the set $\mathcal{G}^{\ast}_{t(e)}$ defined in the previous subsection is not empty, completing \ref{step 1}. 

\subsection{Controlling an edge}
\label{seccontroledge}
Suppose $e$ is an edge where $t(e)$ is final. We again may assume that $z_e=0$. Recall that $\mathcal{G}_{t(e)}^{\ast}$, which is the collection of $G \in \mathbb{K}$ that gives rise to a character $\chi_n$ verifying
$$ \delta_{\chi_n}(t(e))=\delta_{\mathcal{T}_n}(t(e)) \text{, and } \omega_{\chi_n}(t(e))=\omega_{\mathcal{T}_n}(t(e)),$$  
is non-empty by \ref{step 1}. Suppose, moreover, that the order of $\infty$ of $\omega_{\mathcal{T}_n}(t(e))$ is $l-2$. Then, by Corollary \ref{corleftrightderivative}, the left derivative of $\delta_{\chi_n}$ at $t(e)$ is $l-1$. Therefore, as $\delta_{\chi_n}$ is piece-wise linear by Proposition \ref{propdeltasw}, there exists a rational $s(e) \le \lambda <t(e)$ such that
\[ \delta_{\chi_n}(s)=\delta_{\mathcal{T}_n}(t(e))-(l-1)(t(e)-s)=\delta_{\mathcal{T}_n}(s)\]
\noindent for all $s \in [\lambda,t(e)]$. Let $\lambda_e(G)$ be the minimal value of $\lambda$ with this property. That means, $\lambda_e(G)$ is the largest ``kink'' of the function $\delta_{\chi_n}$ on $(s(e),t(e))$ (or is $s(e)$ if $\delta_{\chi_n}$ is linear on $[s(e),t(e))$). Our goal is proving that there exists a $G \in  \mathcal{G}^{\ast}_{t(e)}$ satisfying $\lambda_e(G)=s(e)$. First, we will show that one can always reduce the kink on $e$, i.e., making $\lambda_e(G)$ strictly smaller (if it is greater than $s(e)$) by replacing it with another element of $\mathcal{G}^{\ast}_{t(e)}$.

\begin{proposition}
\label{propreducekink}
Suppose $G \in \mathcal{G}^{\ast}_{t(e)}$ with $\lambda_e(G)>s(e)$. Then there exists $G' \in \mathcal{G}^{\ast}_{t(e)}$ with $\lambda_e(G')<\lambda_e(G)$.
\end{proposition}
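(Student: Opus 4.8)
The idea is the same "kink-reduction by subtracting an approximating Artin--Schreier class" move that underlies \cite[\S 6.4]{MR3194815}, applied to the edge $e$ via the detection machinery of \S \ref{secdetect}. Since $G \in \mathcal{G}^{\ast}_{t(e)}$, the character $\chi_n$ it gives rise to has, over the sub-disc $\mathcal{D}[pt(e),0]$, the correct depth $\delta_{\mathcal{T}_n}(t(e))$ and differential conductor $\omega_{\mathcal{T}_n}(t(e))$, whose order of infinity is $l-2$; moreover the branch points of $\chi_n$ succeeding $e$ all lie in $D[pt(e),0]$ and $T=0$ is one of them. First I would check that the restriction $\chi_n|_{\mathcal{D}[p\lambda_e(G),0]}$ together with the parameter $X_{\lambda}:=X\pi^{-p\lambda_e(G)}$ satisfies conditions \ref{firstconddetect}, \ref{secondconddetect}, \ref{thirdconddetect} of \S \ref{secdetect} with $r_0:=\lambda_e(G)$ (rescaled appropriately) and $m:=l-1$: \ref{firstconddetect} holds because the branch locus succeeding $e$ is inside $D[pt(e),0] \subseteq D[p\lambda_e(G),0]$ and $0$ is a branch point; \ref{secondconddetect} holds because, by Corollary \ref{corleftrightderivative} and Proposition \ref{propvanishingcycle}, the left derivative of $\delta_{\chi_n}$ on $(s(e),t(e))$ is bounded by $\mathfrak{C}_{\mathcal{T}_n}(e)-1 = l-1$; and \ref{thirdconddetect} holds since $\delta_{\chi_n}>0$ on all of $e$ (as $s(e)\ne v_0$, so $\delta_{\mathcal{T}_n}(s(e))>0$, and a lower bound is provided by Corollary \ref{corminimalitydepth}).

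\textbf{Key steps.} Next, write $G = G_{t(e)} + G_{t(e),\infty}$ using Proposition \ref{proppartitionWitt}/\ref{proppartition}: only $G_{t(e)}$ contributes to the behavior of $\delta_{\chi_n}$ and $\omega_{\chi_n}$ on $e$, and $G_{t(e)}$ can be written as a power series in $X^{-1}$ that is holomorphic on the annulus $s(e) < \nu(\,\cdot\,)/p < t(e)$ with no pole at infinity, so Proposition \ref{propASclassdetect} applies. Applying Proposition \ref{propdetect} (with $\lambda_{m,l}$, i.e.\ Corollary \ref{cordetectlowerbound}, to respect the lower bound $s(e)$ coming from $\mathcal{T}_{n-1}$), the kink $\lambda_e(G)$ is detected by the explicit quantity $\mu_{m}(\chi_n)$, which is computable from the coefficients $c_k$ of $F + a^p - a$ where $a$ is the polynomial approximant of Lemma \ref{lemmaapprox}. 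The reduction move is then: subtract from $G^n$ (equivalently, add to the Witt vector an Artin--Schreier class concentrated at $0$) a suitable term $H \in \mathcal{H}_{t(e)}$ built as in Lemma \ref{lemmacontroljunk} / Corollary \ref{corutilizeroot}, namely one whose differential conductor at $\lambda_e(G)$ is $-\ord_\infty$-matched to cancel the leading $c_M X^{-M}$ term responsible for the kink, while leaving the data at $t(e)$ untouched (this is possible precisely because the exact-differential freedom $\mathcal{W}_{t(e)}$ at a final vertex has enough room below $\omega_{\mathcal{T}_n}(t(e))$; here Remark \ref{remarkatmost2nonp}, that at most two succeeding conductors are prime-to-$p$, is what guarantees the needed solvability). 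After this modification the new $G'$ still lies in $\mathcal{G}_{t(e)}$ (same depth and branching datum at $t(e)$, by Lemma \ref{lemmacombination}\ref{lemmacombination1} since the added character has strictly smaller depth at $t(e)$), still lies in $\mathcal{G}^{\ast}_{t(e)}$ (same $\omega$ at $t(e)$, same reasoning), and has $\mu_m(\chi_n') < \mu_m(\chi_n)$, hence $\lambda_e(G') < \lambda_e(G)$ by Proposition \ref{propdetect}\ref{propdetectitem3}.

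\textbf{Main obstacle.} The delicate point is arranging the correction term $H$ so that it simultaneously (i) is supported so as to strictly decrease the kink, (ii) does not disturb $\big(\delta_{\chi_n}(t(e)),\omega_{\chi_n}(t(e))\big)$, and (iii) genuinely lies in $\mathcal{H}_{t(e)}$, i.e.\ its induced branching divisor does not exceed that prescribed by $\mathcal{T}_n(t(e))$. Condition (ii) forces $\delta_\psi(t(e)) < \delta_{\mathcal{T}_n}(t(e))$ for $\psi := \mathfrak{K}_1(H)$, which by $\delta_\psi(\,\cdot\,)$ linearity pins down the allowed slope of $\psi$ on $e$; one must check this slope is compatible with a differential conductor whose pole orders at the $[b_j]$ are $\le l_j$, and this is exactly where the structure of the extending tree from Proposition \ref{propextendtree} — in particular Remark \ref{remarkatmost2nonp} and the compatibility of constant coefficients (Theorem \ref{theoremcompatibilitydiff}) — is used to produce an $H$ of the explicit shape in the proof of Lemma \ref{lemmacontroljunk}. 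I expect the bulk of the work to be this bookkeeping, essentially a line-by-line adaptation of \cite[Proposition 6.9 and its proof]{MR3194815} to the equal-characteristic Witt-vector setting, using Proposition \ref{propcomputeordp} in place of the mixed-characteristic computation of depths.
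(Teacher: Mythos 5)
Your core move is the right one --- at the largest kink $\lambda:=\lambda_e(G)$, add an order-$p$ character built from Lemma \ref{lemmacontroljunk} that kills the defect of $\omega_{\chi_n}(\lambda)$ without disturbing the data at $t(e)$ --- and your analysis of non-disturbance (the correction has depth slope $\le l-2$ past $\lambda$, so $\delta_\psi(t(e))<\delta_{\mathcal{T}_n}(t(e))$ and Lemma \ref{lemmacombination}\ref{lemmacombination1} applies) is correct. But two steps, as written, fail. First, cancelling only ``the leading $c_M X^{-M}$ term responsible for the kink'' does not make $\lambda_e$ strictly decrease: by Corollary \ref{corleftrightderivative} the kink at $\lambda$ is the gap between the left derivative $\ord_{\overline\infty}(\omega_{\chi_n}(\lambda))+1$ and the right derivative $l-1$, and removing one term of the polar part only raises $\ord_{\overline\infty}$ to the next nonzero index; unless that index is already $l$, the kink persists at $\lambda$ and $\lambda_e(G')=\lambda_e(G)$. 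One must cancel the \emph{entire} defect $d\overline{F}:=\omega_{\chi_n}(\lambda)-c\,dx/x^l$, i.e.\ every term of pole order $<l$ at $0$, so that afterwards the left and right derivatives at $\lambda$ both equal $l-1$. Second, to feed this defect into Lemma \ref{lemmacontroljunk} you must know it lies in $\mathcal{W}_\lambda$, i.e.\ that it is \emph{exact}; this is not automatic and is exactly where the case distinction enters: if $\delta_{\chi_n}(\lambda)>p\delta_{\chi_{n-1}}(\lambda)$ then $\omega_{\chi_n}(\lambda)$ is itself exact by Theorem \ref{theoremCartierprediction}, while if $\delta_{\chi_n}(\lambda)=p\delta_{\chi_{n-1}}(\lambda)$ one argues as in \S\ref{secfinalnonexact} that $\omega_{\chi_n}(\lambda)$ and $\omega_{\mathcal{T}_n}(\lambda)$ have the same Cartier image, so their difference is exact. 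You skip this verification entirely.

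The detour through \S\ref{secdetect} is both unnecessary here and not available as you use it: conditions \ref{firstconddetect}--\ref{thirdconddetect}, Proposition \ref{propASclassdetect}, Lemma \ref{lemmaapprox} and Proposition \ref{propdetect} are formulated for characters of \emph{order $p$}, whereas $\chi_n$ has order $p^n$, so there is no single Artin--Schreier representative $F+a^p-a$ whose coefficients compute $\mu_m(\chi_n)$. The paper invokes that machinery only in \S\ref{secminimal}, after restricting $\chi_n$ to the function field $\mathbb{K}_{n-1}$ of $Y_{n-1}$ (where it becomes an order-$p$ character) and transporting the kinks through the Herbrand function (Lemma \ref{lemmarestriction}); that is what is needed to show $\lambda_e$ \emph{attains} a minimum, not to show it can be strictly decreased. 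For the present proposition the direct argument at the place $\lambda$ suffices: Proposition \ref{propvanishingcycle} pins down $\ord_{\overline 0}(\omega_{\chi_n}(\lambda))=-l$ and $\ord_{\overline z}(\omega_{\chi_n}(\lambda))\ge 0$ elsewhere, the exactness argument above identifies the defect as an element of $\mathcal{W}_\lambda$, and Lemma \ref{lemmacontroljunk} together with Lemma \ref{lemmacombination}\ref{lemmacombination2} removes it.
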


\begin{proof}
Suppose $\lambda:=\lambda_e(G)$ is the largest kink of $\chi_n$, which is the extension of $\chi_{n-1}$ by $G$, on $e$. We know from Proposition \ref{propvanishingcycle} that $\ord_{\infty}(\omega_{\chi_n}(\lambda))$ is equal to the left-derivative-of-$\delta_{\chi_n}$-plus-one, which is at most $l-2$. Moreover, it follows from the same corollary that
\[ \ord_0(\omega_{\chi_n}(\lambda))=-l, \text{ and } \ord_{\overline{z}}(\omega_{\chi_n}(\lambda)) \ge 0, \hspace{2mm} \forall  \overline{z} \neq 0, \infty. \]
Let us first consider the case $\delta_{\chi_n}(\lambda)>p\delta_{\chi_{n-1}}(\lambda)$. Then $\omega_{\chi_n}(s)$ is exact and $(l-1)$ is prime to $p$. Therefore, we may assume that $\omega_{\chi}(\lambda)$ is of the form
\[ \omega_{\chi}(\lambda)=\frac{cdx}{x^l} + d\overline{F} \]
\noindent where $0 \neq d\overline{F} \in \mathcal{W}_{\lambda}$. Applying Lemma \ref{lemmacontroljunk}, we obtain $F \in \mathcal{H}_{\lambda}$, such that, for $\psi:=\mathfrak{K}_1(F)$,
\[ \delta_{\psi}(\lambda)=\delta_{\chi_n}(\lambda), \text{ and } \omega_{\psi}(\lambda)=-d\overline{F}. \]
\noindent One may check that $G':=G+F$ lies in $\mathcal{G}_{t(e)}$. Finally, it follows from Lemma \ref{lemmacombination} \ref{lemmacombination2} that replacing $G$ by $G'$ changes $\chi_n$ as follows
\[ \delta_{\chi_n}(\lambda)=\delta_{\mathcal{T}_n}(\lambda), \text{ and } \omega_{\chi_{n}}(\lambda)= \frac{cdx}{x^l}.  \]
\noindent Thus, $\chi_n$ has no kink at $\lambda$, whence $\lambda_e(G')<\lambda_e(G)$.

Let us now consider the case $\delta_{\chi_n}(\lambda)=p\delta_{\chi_{n-1}}(\lambda)$. The same line of reason as in \S \ref{secfinalnonexact} gives
\[ \omega_{\chi_n}(\lambda)-\omega_{\mathcal{T}_n(\lambda)}=d\overline{F}, \]
\noindent where $\overline{F}$ is as in the previous case. Repeating the above process, we obtain a $G' \in \mathcal{G}^{\ast}_{t(e)}$ that verifies $\lambda_e(G')<\lambda$.
\end{proof}

\begin{remark}
\label{remarkstrategymin}
If $\lambda_e(G')$ is once more strictly greater than $s(e)$, we can repeat the procedure from the above proof to obtain an element of $\mathcal{G}^{\ast}_{t(e)}$ with strictly smaller kink (on $e$). Thus, if we can show that $\lambda_e$ achieves a minimum value then it must be equal to $s(e)$. That is the goal of the next section.
\end{remark}

\subsection{The minimal depth Swan conductor}
\label{secminimal}
This section adapts \cite[\S 6.4]{MR3194815}. We would like show that the function $\lambda_{e}: \mathcal{G}_{t(e)} \rightarrow \mathbb{Q}_{\ge 0}$ defined in \S \ref{seccontroledge} takes the value $s(e)$ for some $G_{t(e)} \in \mathcal{G}^{\ast}_{t(e)}$. As discussed in Remark \ref{remarkstrategymin}, it suffices to prove the following.

\begin{proposition}
\label{propminimal}
The function $G_{t(e)} \mapsto \lambda_{e}(G_{t(e)})$ takes a minimal value on $\mathcal{G}_{t(e)}$ (i.e. $\mathcal{G}_{s(e)}$ is nonempty). Moreover, $\mathcal{G}_{s(e)}$ can be identified with a qcqs set. 
\end{proposition}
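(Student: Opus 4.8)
The statement is the equal-characteristic analog of \cite[Lemma 6.6 / Proposition 6.10]{MR3194815}, so the strategy is to mirror Obus--Wewers' argument for the ``minimal Swan conductor'' with the modifications forced by our setting (characteristic $p$ on $R$, and the presence of branches, handled via the partition results of \S\ref{secpartition}). First I would set up the geometric picture precisely: fix the edge $e$ with $z_e=0$, recall that $\mathcal{G}_{t(e)}=\mathcal{H}'_{t(e)}$ is an explicit affinoid in the coordinates $a_{j,i}$ (the presentation \eqref{eqngeneralfinalqcqs}), and that for $G\in\mathcal{G}^{\ast}_{t(e)}$ the character $\chi_n=\mathfrak{K}_n(\underline{G}_{n-1},G)$ has the prescribed depth and differential conductor at $t(e)$, with $\omega_{\mathcal{T}_n}(t(e))$ of pole order $l-2$ at $\infty$, so the left derivative of $\delta_{\chi_n}$ at $t(e)$ is $l-1$. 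The function $\lambda_e\colon \mathcal{G}_{t(e)}\to\mathbb{Q}_{\ge 0}$ records the largest kink of $\delta_{\chi_n}$ on $(s(e),t(e))$. By Proposition \ref{propreducekink} and Remark \ref{remarkstrategymin}, once $\lambda_e$ attains a minimum on $\mathcal{G}^{\ast}_{t(e)}$ that minimum is necessarily $s(e)$; so the whole content is an existence-of-minimum (semicontinuity/compactness) statement, after which $\mathcal{G}_{s(e)}=\lambda_e^{-1}(s(e))$ must be described as a qcqs set.

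\textbf{Key steps.} (1) \emph{Reduce to the partitioned situation.} Using Proposition \ref{proppartition} and Remark \ref{remarkpartition}, replace $\underline{G}_{n-1}$ by its part $\underline{G}_{n-1,t(e)}$ whose branch points all lie in the closed sub-disc $D[pt(e),0]$; this does not change $\delta_{\chi_n}$ or $\omega_{\chi_n}$ at any place $r\in(s(e),t(e)]$, and puts us exactly in the shape of \S\ref{secdetect} (conditions \ref{firstconddetect}--\ref{thirdconddetect} with $r_0=t(e)$, $m=l-1$). (2) \emph{Express $\lambda_e$ by the detection function.} Apply Proposition \ref{propdetect} (more precisely Corollary \ref{cordetectlowerbound} with the lower bound $l=s(e)$): for $s$ slightly below $t(e)$ we get $\lambda_{m,s(e)}(\chi_n)=\mu_{m,s(e)}(\chi_n)$, and $\mu_m$ is the max of finitely many explicit slopes $\frac{\nu(c_m)-\nu(c_k)}{m-k}$, where the $c_k$ are polynomials in the finitely many solved parameters $b_1,\dots,b_N$ of Lemma \ref{lemmaapprox}, which in turn depend analytically on the $a_{j,i}$ (hence on $G\in\mathcal{G}_{t(e)}$), as noted in the remark after Lemma \ref{lemmaapprox}. (3) \emph{Semicontinuity and compactness.} Since $\nu(c_k)$ is, on each locally closed analytic piece of $\mathcal{G}_{t(e)}$, a continuous (piecewise-linear in the valuations of the parameters) function, $\mu_{m,s(e)}=\lambda_e$ is lower semicontinuous on the affinoid $\mathcal{G}_{t(e)}$; an affinoid is quasi-compact, so $\lambda_e$ attains its infimum. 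By Proposition \ref{propreducekink} that infimum is $s(e)$, so $\mathcal{G}_{s(e)}\ne\emptyset$. (4) \emph{$\mathcal{G}_{s(e)}$ is qcqs.} The locus $\{\lambda_e=s(e)\}$ is cut out inside $\mathcal{G}_{t(e)}$ by finitely many valuation inequalities among the $\nu(c_k)$ (equivalently, the vanishing/leading-term conditions that force $\mu_m\le s(e)$, together with the qcqs subset of $\mathcal{G}^{\ast}_{t(e)}$ produced in \ref{step 1}); such a locus in an affinoid is a finite union of rational subsets, hence qcqs. (For a non-final $t(e)$, i.e.\ in \ref{step 3}, one instead writes $\mathcal{G}_{t(e)}=\sum_{i=1}^{l}\mathcal{G}_{s(e_i)}$ by Corollary \ref{corollarypartitionsum}, so qcqs is inherited from the inductive hypothesis, and the edge-control of \S\ref{seccontroledge} is run on the predecessor edge exactly as above.)

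\textbf{The main obstacle.} The delicate point is Step (3): making the dependence of $\lambda_e$ on $G\in\mathcal{G}_{t(e)}$ genuinely semicontinuous. The $c_k$ are obtained only after the inductive elimination in Lemma \ref{lemmaapprox}, whose solutions $b_j$ are algebraic (not polynomial) in the $a_{j,i}$ and may require a finite base extension of $K$; one must check that after such a finite (admissible) cover of $\mathcal{G}_{t(e)}$ the valuations $\nu(c_k)$ vary in a piecewise-affine, lower-semicontinuous way, and that the ``$\max$'' over finitely many ratios preserves this. In characteristic $p$ Obus--Wewers' analytic estimates go through essentially verbatim because the relevant computations (Proposition \ref{propcomputeordp}, Lemma \ref{lemmaapprox}) are the exact analogs already established in \S\ref{secdetect}; the one genuinely new ingredient relative to \cite{MR3194815} is the bookkeeping that the branches do not interfere, which is precisely what Proposition \ref{proppartition} provides. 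I would therefore spend the bulk of the written proof on Step (3), citing \cite[Proposition 6.9]{MR3194815} for the formal semicontinuity/qcqs argument once the analytic inputs are in place, and keep Steps (1), (2), (4) brief.
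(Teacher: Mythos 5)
There is a genuine gap in Step (2). You apply the detection machinery of \S\ref{secdetect} (Proposition \ref{propASclassdetect}, Lemma \ref{lemmaapprox}, Proposition \ref{propdetect}) directly to $\chi_n$, but that entire section is written for characters of \emph{order $p$}: it represents the character by a single Artin--Schreier class $F=\sum_i a_iX^{-i}$ and computes depths via Proposition \ref{propcomputeordp}, neither of which is available for the order-$p^n$ character $\chi_n$ (whose depth is governed by a best Witt-vector representative, Theorem \ref{thmbest}, not by a single rational function). The paper's proof inserts an essential intermediate step that your proposal omits entirely: it restricts $\chi$ to the function field $\mathbb{K}_{n-1}$ of the covering curve $Y_{n-1}$, obtaining an order-$p$ character $\widetilde{\chi}$ on the pulled-back disc $C=\phi_{n-1}^{-1}(D)$ with parameter $\widetilde{x}$ satisfying $x=\widetilde{x}^{p^{n-1}}u(\widetilde{x})$. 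Lemma \ref{lemmarestriction} then uses the inverse Herbrand function $\psi_{\mathbb{K}_{n-1}/\mathbb{K}}$, together with the good reduction of all the subcharacters $\chi_i$ (which makes each $\delta_{\chi_i}$ linear of slope $m_i$ on $[s(e),t(e)]$), to show $\lambda_e(\widetilde{\chi})=p^{-n+1}\lambda_e(\chi)$ and that $\widetilde{\chi}$ satisfies \ref{firstconddetect}--\ref{thirdconddetect} with respect to the integer $\widetilde{m}=p^{n-1}m-\sum_{i=1}^{n-1}m_i(p-1)p^{i-1}$, which is prime to $p$. Only after this reduction does \S\ref{secdetect} apply, and the functions $c_l$ become analytic functions on a finite cover $\mathcal{G}'_{t(e)}\to\mathcal{G}_{t(e)}$ (Lemma \ref{lemmadetectapp}). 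Note that the same restriction step is present in the Obus--Wewers argument you cite as your model, so your proposal drops the structural core of that proof rather than adapting it.

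By contrast, the semicontinuity/compactness issue you flag as ``the main obstacle'' in Step (3) is the routine part: the paper disposes of it with Lemma \ref{lemmamax}, which shows that a maximum of finitely many functions of the form $\nu(f^i(x))/i$ on a qcqs space attains its extremum on a qcqs (finite union of Weierstrass domains) locus, with the algebraicity of the $b_j$ absorbed into the finite cover $\mathcal{G}'_{t(e)}$. Your Steps (1) and (4) (partitioning via Proposition \ref{proppartition}, and inheriting qcqs via Corollary \ref{corollarypartitionsum} at non-final vertices) do match the paper. To repair the proof you must insert the restriction to $\mathbb{K}_{n-1}$ and the Herbrand-function comparison of kinks before invoking Proposition \ref{propdetect}, and verify $p\nmid\widetilde{m}$, since the detection argument breaks down when the relevant order of vanishing at infinity is divisible by $p$.
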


\subsubsection{A lemma from rigid analysis}
The following lemma will be a crucial ingredient in the proof of Proposition \ref{propminimal}.

\begin{lemma}[c.f. {\cite[Lemma 6.16]{MR3194815}}]
\label{lemmamax}
Let $X$ be qcqs over $K$ and $f^1, \ldots, f^n \allowbreak \in A$ be analytic functions on $X$. Then the function
\[ \phi: X \rightarrow \mathbb{R}, \hspace{1cm} x \mapsto \max_{1 \le i \le n} \sqrt[i]{\lvert f^i(x) \rvert} \]
takes a minimal value on $X$. Equivalently, the function
\[x \mapsto \tau(x):= \min_{1 \le i \le n} \frac{\nu(f^i(x))}{i} \]
takes a maximal value on $X$. Furthermore, the subset of $X$ on which the minimum (maximum) is attained is qcqs.
\end{lemma}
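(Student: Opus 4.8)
\textbf{Proof proposal for Lemma \ref{lemmamax}.}

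The plan is to reduce the statement to a standard fact from rigid geometry: that a continuous, semialgebraic (or ``piecewise monomial'') real-valued function on a quasi-compact space attains its extrema, and that the locus where an extremum is attained is again quasi-compact. First I would observe that it suffices to treat the ``min of $\nu(f^i)/i$'' formulation; the two are equivalent via $\nu = -\log|\cdot|$, with the caveat that one should restrict attention to the locus where all $f^i$ are nonzero, or else interpret $\nu(0) = +\infty$ (in which case $\tau \equiv +\infty$ there and the maximum is trivially attained). Since $X$ is qcqs over $K$, we may assume, after passing to a finite affinoid cover and using that a finite union of qcqs sets is qcqs, that $X = \Sp A$ is affinoid and each $f^i \in A$.

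The key step is to invoke the theory of the valuative/adic (or Berkovich) space $\mathrm{Spa}(A, A^\circ)$, which is spectral, hence quasi-compact. On this space each $f^i$ defines a continuous function $x \mapsto \nu_x(f^i) \in \mathbb{R} \cup \{\infty\}$, and therefore $\tau = \min_i \nu_x(f^i)/i$ is continuous with values in a totally ordered set; a continuous function from a quasi-compact space to $\mathbb{R} \cup \{\infty\}$ attains a maximum. For the qcqs-ness of the extremal locus, I would note that $\{x : \tau(x) \ge c\} = \bigcap_i \{x : \nu_x(f^i) \ge ic\}$ is an intersection of rational (or at worst pro-rational) subsets; taking $c$ equal to the maximal value $c_{\max}$ of $\tau$, the extremal set is $\{x : \tau(x) \ge c_{\max}\}$, which is an intersection of finitely many rational domains (once $c_{\max} \in \mathbb{Q}$, which follows because the values of $\nu_x(f^i)$ lie in $\frac{1}{N}\mathbb{Z}$ for a fixed $N$ controlling the ramification of the extension generated by the $f^i$), hence qcqs. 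Alternatively, and perhaps more in the spirit of \cite{MR3194815}, one argues directly on the affinoid: the function $x \mapsto \max_i |f^i(x)|^{1/i}$ is the supremum semi-norm type expression, and one uses the maximum modulus principle together with a Weierstrass-preparation/normalization argument to reduce to the case where the $f^i$ are ``monomials up to units'', where the claim is explicit.

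The main obstacle I expect is purely bookkeeping: making sure the value $c_{\max}$ is rational (not merely real) so that the extremal locus is genuinely a finite intersection of rational domains and hence qcqs, rather than an infinite intersection. This is handled by fixing at the outset a finite extension $K'/K$ and an affinoid model over which all the $f^i$ are defined, so that all the relevant valuations take values in a fixed discrete group $\frac{1}{N}\mathbb{Z}$; then $\tau$ itself takes values in $\frac{1}{N \cdot \mathrm{lcm}(1,\dots,n)}\mathbb{Z} \cup \{\infty\}$, a discrete set, and a continuous $\frac{1}{M}\mathbb{Z}$-valued function on a connected quasi-compact space is locally constant on the fibers of a finite partition, so it attains its sup. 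Everything else is a direct citation of \cite[Lemma 6.16]{MR3194815}, whose proof transfers verbatim to equal characteristic since it uses only the quasi-compactness of affinoids and elementary manipulations of the supremum norm, neither of which is sensitive to the characteristic of $R$. I would therefore present the proof as: ``This is \cite[Lemma 6.16]{MR3194815}; the argument given there is characteristic-free,'' followed by the one-paragraph recollection of the spectral-space argument above for completeness.
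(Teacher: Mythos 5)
There is a genuine gap in your main argument. Its pivot is the claim that the extremal value $c_{\max}$ is rational ``because the values of $\nu_x(f^i)$ lie in $\frac{1}{N}\mathbb{Z}$ for a fixed $N$.'' That is false: at non-classical points of the Berkovich/adic space (e.g.\ Gauss points of discs of irrational radius) $\nu_x(f^i)$ can be irrational, and even at classical points the points range over arbitrarily ramified finite extensions of $K$, so no single $N$ bounds the denominators. With that, your discreteness argument for attainment of the supremum collapses, and—more importantly—without knowing $c_{\max}\in\mathbb{Q}$ (equivalently, that the extremal value lies in $\sqrt{\lvert K^{\times}\rvert}$ after a finite extension) you cannot write the extremal locus $\{\tau\ge c_{\max}\}$ as a finite intersection of rational subdomains. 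Then you get neither the asserted qcqs-ness nor the existence of classical points in that locus, and the latter is exactly what the application in Proposition \ref{propminimal} requires, since the minimizer must be an honest character defined over a finite extension of $K$; attainment at a merely adic point is not enough on its own.

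The paper's proof (adapting Obus--Wewers and the analogous lemma of Obus) repairs precisely this point by a partition argument rather than a sublevel-set argument: after reducing to an affinoid, cover $X$ by the finitely many rational subdomains $X_i$ on which $g_i:=\nu(f^i)/i$ is minimal among the $g_j$; on $X_i$ one has $\tau=g_i$, and the maximum modulus principle (if $f^i$ has no zero on $X_i$ it is a unit there, so apply the principle to $1/f^i$; otherwise the maximum of $g_i$ is $+\infty$, attained on the zero locus) shows that $g_i$ attains its maximum on $X_i$, with value automatically in $\sqrt{\lvert K^{\times}\rvert}\cup\{\infty\}$; the locus of attainment is a Weierstrass domain in $X_i$, hence qcqs, and the global extremal locus is the finite union of those pieces realizing $\gamma$, hence qcqs. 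This supplies, as a byproduct, exactly the rationality you were missing, so if you want to keep your sublevel-set description you should derive $c_{\max}\in\sqrt{\lvert K^{\times}\rvert}$ from the maximum modulus principle rather than from any discreteness of point-wise valuations. Your closing remark that the Obus--Wewers argument is characteristic-free is correct and is essentially the paper's position; but as written your primary spectral-space route has the gap above, and the alternative ``Weierstrass preparation to reduce to monomials up to units'' is too vague to stand in for it.
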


\begin{proof}
The proof adapts \cite[Lemma 6.16]{MR3194815} and \cite[Lemma 4.19]{MR3552986}. It suffices to show that, if the following inequality holds $$ \gamma:=\sup_{a \in X} \bigg(\min_{1 \le i \le n} \frac{\nu(f^i(a))}{i}\bigg) \ge 0, $$
then $\gamma$ is achieved on a qcqs subset of $X$. We may assume that $X$ is an affinoid. We set $g_i(x):=\nu(f^i(x))/i$, for $1 \le i \le n$. For each $i$, let $X_i \subseteq X$ be the rational sub-domain where $g_i$ is minimal among all the $g_j$'s. Then the restriction of $\tau$ to $X_i$ is simply equal to $g_i$, which attains its maximum on $X_i$ by the maximum modulus principle. Furthermore, the subspace of $X_i$ where this maximum is attained is a Weierstrass domain in $X_i$, hence qcqs. Thus, the subspace of $X$ where $\gamma$ is attained is a union of finitely many qcqs spaces, whence also qcqs, completing the proof.
\end{proof}

\subsubsection{A quasi-compact quasi-separated contained in \texorpdfstring{$\mathcal{G}_{t(e)}$}{Gte}}
\label{secinitialqcqs}

Consider an edge $e$ of the tree $\mathcal{T}_n$. Assume for a moment that we got the desired refined Swan conductor at every $r \ge t(e)$. Suppose, moreover, that there are $m$ edges $e_1, e_2, \ldots, e_m$ where $s(e_i)=t(e)$ for each $i$. Set $I_{e_j}$ to be an indexing set of leaves succeeding $e_j$. It follows from the previous constructions that an element $G_{t(e)} \in \mathcal{G}_{t(e)}$ can be represented by
\begin{equation}
    \label{eqninitialvertex}
    G_{t(e)} =
\sum_{j=1}^m G_{s(e_j)}=\sum_{j=1}^m \Bigg( \sum_{h\in \cup I_{e_j}}\bigg( \sum_{i=1}^{l_h-1} \frac{a_{h,i}}{(x-b_h)^i}\bigg)\Bigg),
\end{equation}
where $G_{s(e_j)} \in \mathcal{G}_{s(e_j)}$, and $a_{h,i}=0$ for $i \equiv 0 \pmod{p}$. Set $m_{t(e)}:=\sum_{j=1}^m \sum_{h\in \cup I_{e_j}} (l_h-1-\lfloor l_h/p \rfloor)$. As before, we can think of $\mathcal{G}_{t(e)}$ as a subset of an affine $m_{t(e)}$-space over $K$ via the coordinates $a_{h,i}$ and $G_{t(e)}$ as one of its $K$-rational point. Moreover, by the induction hypothesis in \S \ref{sectinduction}, the subset $\mathcal{G}_{s(e_j)}$ is qcqs for $j=1, \ldots, l$. Hence, $\mathcal{G}_{t(e)}=\sum_{j=1}^m \mathcal{G}_{s(e_j)} $ is also a qcqs subset of $(\mathbb{A}_K^{m_{t(e)}})^{\an}$. 

It is a consequence of \S \ref{sectcontrolvertex}  that $\emptyset \neq \mathcal{G}^{\ast}_{t(e)} \subsetneq \mathcal{G}_{t(e)}$ where
\[ \mathcal{G}^{\ast}_{t(e)}= \{ G \in \mathcal{G}_{t(e)} \mid \omega_{\chi_n(G)}(t(e))=\omega_{\mathcal{T}_n}(t(e)) \}. \]
\noindent As a rigid analytic space, $\mathcal{G}^{\ast}_{t(e)}$ is an open subset of $\mathcal{G}_{t(e)}$. The goal of this section is to show that $\lambda_e(\mathcal{G}_{t(e)})$ takes the minimal value on $\mathcal{G}_{t(e)}$ and that the points where that minimum is achieved form a qcqs subset of $\mathcal{G}^{\ast}_{t(e)}$.

\subsubsection{Analysis}
\label{subrestriction}
Let $\phi_{n-1}:Y_{n-1} \rightarrow X$ be the Galois cover corresponding to the character $\chi_{n-1}$. By our induction hypothesis, it has good reduction and is totally ramified above $x=z_e$ which we may assume to be $0$. As usual, $D \subset X^{\an}$ is an open unit disc center at $0$. It follows that the rigid analytic subspace $C:=\phi_{n-1}^{-1}(D) \subseteq Y_{n-1}$ is another unit disc and contains the unique point $y_{n-1} \in Y_{n-1}$ above $x=0$. We choose a parameter $\widetilde{x}$ for the disc $C$ such that $\widetilde{x}(y_{n-1})=0$. Then
\[ x= \widetilde{x}^{p^{n-1}} u(\widetilde{x}), \hspace{2mm} \text{ with } u(\widetilde{x}) \in R[[\widetilde{x}]]^{\times}. \]
\noindent One sees that for $r>0$, the inverse image of the closed disc $D[pr] \subset D$ defined by the condition $\nu(x) \ge r$ is the closed disc $C[\widetilde{r}]$ defined by $\nu(\widetilde{x})\ge \widetilde{r}:=p^{-n+1} r$. Set $\widetilde{r}_{n-1}:=p^{-n+1} r_{n-1}$. Let $\mathbb{K}_{n-1}$ denote the function field of $Y_{n-1}$.

Let us fix a character $\chi \in \textrm{H}^1_{p^n}(\mathbb{K})$ that is given rise from $\chi_{n-1}$ by a rational $G \in \mathcal{G}_{t(e)}$. Let $\widetilde{\chi}:=\chi \lvert_{\mathbb{K}_{n-1}} \in \textrm{H}^1_p(\mathbb{K}_{n-1})$ denote the restriction of $\chi$ to $\mathbb{K}_{n-1}$. If $\chi$ corresponds to a cover $Y_n \rightarrow X$, then $\widetilde{\chi}$ correspond to the cover $Y_n \rightarrow Y_{n-1}$. In analogy to $\lambda_e(\chi)$, we define the function $\lambda_e(\widetilde{\chi}): \textrm{H}^1_{p}(\mathbb{K}_{n-1}) \xrightarrow[]{} \mathbb{Q}$ as follows
\[\lambda_e(\widetilde{\chi})= \begin{cases} 
       \min(\widetilde{s(e)} \le \widetilde{r} <\widetilde{t(e)} \mid \text{$\delta_{\widetilde{\chi}}$ linear on $[\widetilde{s(e)}, \widetilde{t(e)}]$}), & \chi \in \mathcal{G}^{\ast}_{t(e)} \\
      \widetilde{t(e)}, & \chi \in \mathcal{G}_{t(e)} \setminus \mathcal{G}^{\ast}_{t(e)}.
   \end{cases}
\]
Let $m_i$ (resp. $m$) be the left derivative (on $e$) of $\delta_{\chi_i}$ (resp. $\delta_{\chi}$) at $t(e)$. The following result suggests that one may apply the tools from \S \ref{secdetect} to determine the kinks of $\chi$ on $e$.

\begin{lemma}
\label{lemmarestriction}
 \begin{enumerate}
     \item \label{lemmarestriction1} For $\chi \in \mathcal{G}^{\ast}_{t(e)}$, we have $\lambda_e(\widetilde{\chi})=p^{-n+1}\lambda_e(\chi)$.
     \item \label{lemmarestriction2} Let
     \[ \widetilde{m}=p^{n-1} m-\sum_{i=1}^{n-1}m_i(p-1)p^{i-1} \]
     \noindent Then $p \nmid \widetilde{m}$ and the character $\widetilde{\chi} \in \textrm{H}^1_p(\mathbb{K}_{n-1})$ satisfies the conditions of (a),(b), and (c) of \S \ref{secdetect}  with respect to $\widetilde{m}$, the open disc $C \in Y^{\an}_{n-1}$ and the family of subdisc $C[\widetilde{r}]$ for $\widetilde{r} \in [\widetilde{s(e)},\widetilde{t(e)}]$.
     \item \label{lemmarestriction3} If $\lambda_{\widetilde{m},\widetilde{s(e)}}(\widetilde{\chi})$ is defined as in Corollary \ref{cordetectlowerbound}, and if we set $r_0$ in Proposition \ref{propdetect} to be equal to $\widetilde{t(e)}$, then $\lambda_e(\widetilde{\chi})=\lambda_{\widetilde{m}, \widetilde{s(e)}}(\widetilde{\chi})$ for all $\chi \in \mathcal{G}_{t(e)}$.
 \end{enumerate}
\end{lemma}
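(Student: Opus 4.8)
The plan is to prove the three claims in order, with (1) and (2) being essentially unwinding of definitions plus the behaviour of Swan conductors under restriction along a totally ramified cover, and (3) being the ``comparison'' statement that ties our situation to the framework of \S\ref{secdetect}. For (1): recall that $\phi_{n-1}^{-1}(D)=C$ is again an open unit disc, and that for $r>0$ the preimage of $D[pr]$ is $C[\widetilde r]$ with $\widetilde r=p^{-n+1}r$; this is exactly the statement already set up in \S\ref{subrestriction} via $x=\widetilde x^{p^{n-1}}u(\widetilde x)$. Since $Y_n\to Y_{n-1}$ is the cover attached to $\widetilde\chi:=\chi|_{\mathbb{K}_{n-1}}$, the depth function $\delta_{\widetilde\chi}(\widetilde r)$ is, up to the change of variable $\widetilde r=p^{-n+1}r$, a constant multiple of (a piece of) $\delta_{\chi}(r)$: more precisely one needs the relation between $\sw(\chi|_{\mathcal D[pr]})$ and $\sw(\widetilde\chi|_{C[\widetilde r]})$, which for a totally ramified $\mathbb Z/p^{n-1}$-cover with ramification breaks $m_1,\dots,m_{n-1}$ of $\chi_{n-1}$ on the relevant discs is governed by the Hasse--Arf/tower formula. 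A kink of $\delta_\chi$ at $r\in(s(e),t(e))$ then corresponds precisely to a kink of $\delta_{\widetilde\chi}$ at $\widetilde r$, and linearity of $\delta_\chi$ on $[\lambda,t(e)]$ is equivalent to linearity of $\delta_{\widetilde\chi}$ on $[p^{-n+1}\lambda,\widetilde{t(e)}]$; since $\lambda_e(\chi)$ is by definition the largest kink, this gives $\lambda_e(\widetilde\chi)=p^{-n+1}\lambda_e(\chi)$ when $\chi\in\mathcal G^{\ast}_{t(e)}$, and the boundary case $\chi\in\mathcal G_{t(e)}\setminus\mathcal G^{\ast}_{t(e)}$ matches the second branch of the defining cases by construction.

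For (2): the formula $\widetilde m=p^{n-1}m-\sum_{i=1}^{n-1}m_i(p-1)p^{i-1}$ is the standard translation of a slope on $X$ into a slope on $Y_{n-1}$ under a wildly ramified tower (it is the analog of the computation in \cite{MR3194815} relating ramification invariants downstairs and upstairs); $p\nmid\widetilde m$ will follow because $m$ is the left derivative at $t(e)$, which by Corollary~\ref{corleftrightderivative} equals $\ord_{\overline\infty}(\omega_\chi(t(e)))+1$, and the rules of Theorem~\ref{theoremCartierprediction} force the right congruence. The three conditions of \S\ref{secdetect} are then checked one at a time: \ref{firstconddetect} holds because the branch locus of $\chi$ (hence of $\widetilde\chi$) on $C$ is contained in $C[\widetilde{t(e)}]$, with $\widetilde x=0$ a branch point (the cover is totally ramified above $0$); \ref{secondconddetect} is precisely the bound on the left derivative of $\delta_{\widetilde\chi}$ by $\widetilde m$, which follows from the slope-translation formula together with Proposition~\ref{propvanishingcycle}/Corollary~\ref{corswgood}; and \ref{thirdconddetect}, positivity of $\delta_{\widetilde\chi}$ on the whole interval $(\,\widetilde s(e),\widetilde t(e)\,]$, follows from $\chi$ being radical along $e$ (since $t(e)\neq v_0$ forces $\delta_{\mathcal T_n}(t(e))>0$ by axiom \ref{c1Hurwitz}, and by the induction hypothesis $\delta_\chi\ge\delta_{\mathcal T_n}>0$ there via Corollary~\ref{corminimalitydepth}).

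For (3): once (1) and (2) are in place, we may feed $\widetilde\chi$ into Proposition~\ref{propdetect} (and its refinement Corollary~\ref{cordetectlowerbound}) with $r_0=\widetilde{t(e)}$ and lower bound $l=\widetilde{s(e)}$. By definition $\lambda_{\widetilde m}(\widetilde\chi)$ is the largest place in $(0,\widetilde{t(e)}\,]$ where $\sw_{\widetilde\chi}(\,\cdot\,,\infty)>-\widetilde m$, i.e.\ the largest kink of $\delta_{\widetilde\chi}$ on that interval (Remark~\ref{remarkdetect}); intersecting with the lower cutoff $\widetilde{s(e)}$ gives $\lambda_{\widetilde m,\widetilde s(e)}(\widetilde\chi)$, which is exactly how $\lambda_e(\widetilde\chi)$ was defined (the min over $[\widetilde s(e),\widetilde t(e))$ where $\delta_{\widetilde\chi}$ is linear, or $\widetilde t(e)$ if $\chi\notin\mathcal G^{\ast}_{t(e)}$, the latter matching because then $\delta_{\widetilde\chi}$ has a kink arbitrarily close to $\widetilde t(e)$ — the differential conductor is wrong at $t(e)$, so $\sw_{\widetilde\chi}(\widetilde t(e),\infty)>-\widetilde m$). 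Hence $\lambda_e(\widetilde\chi)=\lambda_{\widetilde m,\widetilde s(e)}(\widetilde\chi)$ for all $\chi\in\mathcal G_{t(e)}$. The main obstacle I expect is part (2), specifically getting the slope-translation formula for $\widetilde m$ exactly right and verifying $p\nmid\widetilde m$ uniformly over $\mathcal G_{t(e)}$ — this requires a careful bookkeeping of the ramification breaks $m_1,\dots,m_{n-1}$ of the intermediate cover and their interaction with the Gauss valuations $\nu_{\widetilde r}$, parallel to but more delicate than the degree-$p$ computations in \cite[\S6]{MR3194815}; everything else is a matter of transporting known facts across the finite map $\phi_{n-1}$.
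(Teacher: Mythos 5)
You follow the same route as the paper: restrict $\chi$ to $\mathbb{K}_{n-1}$, translate depths and slopes through the tower, and read the three parts off Proposition~\ref{propdetect} and Corollary~\ref{cordetectlowerbound}. However, the two places you leave vague are exactly where the content lies. The paper's proof rests on the explicit relation
\[ \delta_{\widetilde{\chi}}(\widetilde{r})=\psi_{\mathbb{K}_{n-1}/\mathbb{K}}\big(\delta_{\chi}(r)\big)=\delta_{\chi}(r)-\sum_{i=1}^{n-1}\delta_i(r)\,\frac{p-1}{p^{n-i}} \]
(quoted from \cite[\S 7.1]{MR3167623}), combined with the observation that each intermediate $\delta_i$ is \emph{linear of slope $m_i$} on $[s(e),t(e)]$, because every $\chi_i$ has good reduction and all its branch points lie in $\mathcal{D}[t(e)]$. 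That linearity is what makes a kink of $\delta_{\chi}$ at $r$ correspond exactly to a kink of $\delta_{\widetilde{\chi}}$ at $\widetilde{r}$ (part (1)) and yields the slope law: a left slope $c$ of $\delta_{\chi}$ at $r$ becomes $p^{n-1}c+\widetilde{m}-p^{n-1}m$ upstairs, so condition \ref{secondconddetect} reduces to the single inequality $c\le m$, and part (3) follows because the slope at $\widetilde{t(e)}$ equals $\widetilde{m}$ precisely when $\chi\in\mathcal{G}^{\ast}_{t(e)}$. Your appeal to a generic ``Hasse--Arf/tower formula'' and the phrase ``constant multiple of a piece of $\delta_{\chi}$'' leave this computation (which you yourself flag as the main obstacle) undone; it is not a constant multiple, and without linearity of the $\delta_i$ the kink correspondence could fail.

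The one outright misdirected step is your justification of $p\nmid\widetilde{m}$. Modulo $p$ the term $p^{n-1}m$ vanishes and $\widetilde{m}\equiv m_1\pmod p$, so the claim has nothing to do with $m$, with Corollary~\ref{corleftrightderivative} applied to $\omega_{\chi}(t(e))$, or with Theorem~\ref{theoremCartierprediction}; what is needed is that $m_1$, the slope of $\delta_{\chi_1}$ along $e$ (which, by good reduction, is the first upper ramification break), is prime to $p$ (Remark~\ref{remarkupperbreaks}, Corollary~\ref{corgood}). Similarly, condition \ref{thirdconddetect} concerns positivity of $\delta_{\widetilde{\chi}}$, not of $\delta_{\chi}$: radicality of $\chi$ along $e$ gives it only after passing through the displayed formula (using $\delta_i\ge p\,\delta_{i-1}$ to see the subtracted sum cannot exhaust $\delta_{\chi}(r)$). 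With these repairs your plan coincides with the paper's argument.
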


\begin{proof}

For $r>s(e)$, we systematically use the notation $\widetilde{r}:=p^{-n+1}r$. Let the valuation $\nu_{\widetilde{r}}$ of $\mathbb{K}_{n-1}$, which corresponds to the Gauss valuation on $C[\widetilde{r}]$ (\S \ref{secdiscannuli}), be the unique extension of $\nu_r$. By \cite[\S 7.1]{MR3167623} we have the following equation
\begin{equation} 
\begin{split}
\delta_{\widetilde{\chi}}(\widetilde{r}) & = \psi_{\mathbb{K}_{n-1}/\mathbb{K}}(\delta_{\chi}(r))  \\
 & = \delta_{\chi}(r)-\bigg( \delta_1(r) \frac{p-1}{p^{n-1}} +\ldots + \delta_{n-1}(r) \frac{p-1}{p} \bigg),
\end{split}
\end{equation}
\noindent where $\psi$ is the inverse Herbrand function \cite[IV, \S 3]{MR554237}. Since all the characters $\chi_i:=\chi^{p^{n-i}} (1 \le i <n)$ have good reduction by the hypothesis and their branch points are all contained in $\mathcal{D}[t(e)]$, each $\delta_i:=\delta_{\chi_i} (1 \le i <n)$ is linear of slope $m_i$ on the interval $[s(e),t(e)]$. Therefore, the left and the right derivative of $\delta_i(r)$ is equal to $m_i$ for all $r \in (s(e),t(e))$. Let $c$ be the left slope of $\delta_{\chi}$ at $r$. Then the left slope of $\delta_{\widetilde{\chi}}$ at $\widetilde{r}$ is equal to 
\[ p^{n-1}c -\sum_{i=1}^{n-1}m_i(p-1)p^{i-1}= p^{n-1} c +\widetilde{m} -p^{n-1}m.\]
\noindent Part (\ref{lemmarestriction1}) then follows immediately. Part (\ref{lemmarestriction2}) follows from the fact that $c \le m$. Part (\ref{lemmarestriction3}) also follows from this property, along with Proposition \ref{propdeltasw} and the fact that $\sw_{\chi}(t(e),\infty)=m$ if and only if $\chi \in \mathcal{G}^{\ast}_{t(e)}$.
\end{proof}

One may assume that the character $\widetilde{\chi}$ is the Artin-Schreier class of the rational function
\[ \widetilde{G}:=F^{n}(y_1, \ldots, y_{n-1})+G^n \in \mathbb{K}^{\times}_{n-1},  \]
\noindent where $F^n$ is a polynomial over $\mathbb{F}_p$ in $(n-1)$ variables as in \cite[Proposition 6.5]{MR3051249}. We thus may write $\widetilde{G}$ as a power series in the parameters $\widetilde{x}$ as follows
\[ \widetilde{G}= \sum_{l=1}^{\infty}a_l \widetilde{x}^{-l}. \]
Since $G^1, \ldots, G^{n-1}$ are fixed, $\widetilde{G}$ is determined by the choice of $G^n$. So, we may think of the $a_l$'s as analytic functions on the space $\mathcal{G}_{t(e)}$. In fact, $a_l$ is a polynomial in the coordinate $a_{k,i}$ (as in (\ref{eqninitialvertex})) with coefficients in $R$.

\subsubsection{}
\label{secanalyticcover}
We continue with the process of proving Proposition \ref{propminimal}. Suppose $G_0$ is an arbitrary element of $\mathcal{G}^{\ast}_{t(e)}$ and $\chi_0$ is the character it gives rise to. Then it is immediate that $\lambda_e(G_0)<t(e)$. We therefore may choose a rational number $s \in (\lambda(G_0), t(e))$. With the notation from Section \ref{subrestriction}, recall that $\widetilde{\chi}$ is the restriction of $\chi$ to the function field $\mathbb{K}_{n-1}$ of $Y_{n-1}$. It is an Artin-Schreier cover defined by a rational function $\widetilde{G} \in \mathbb{K}_{n-1}^{\times}$. By Lemma \ref{lemmarestriction}, we have
\[ \lambda(\widetilde{G}) < \widetilde{s}:=p^{1-n}s < \widetilde{t(e)}. \]
\noindent Set $\widetilde{\delta}:=\delta_{\widetilde{\chi}} (\widetilde{t(e)})$. Let $N$ be an integer such that
\[ Np \ge \frac{\widetilde{\delta}}{\widetilde{t(e)}-\widetilde{s}}. \]
We hence arrive at the situation of (\ref{eqndetectbound}). The following is parallel to \cite[Lemma 6.8]{MR3194815}
\begin{lemma}
\label{lemmadetectapp}
There exists a finite cover $\mathcal{G}'_{t(e)} \rightarrow \mathcal{G}_{t(e)}$ and analytic functions $b_1, \ldots, b_N$ on $\mathcal{G}'_{t(e)}$ with the following property. Set
\[ d:= \sum_{j=0}^N b_j \widetilde{x}_1^{-j} \]
\noindent and write
\[ F+d^p-d=\sum_{l=1}^{\infty}c_l \widetilde{x}_1^{-l}, \]
\noindent where the $c_l$ are now analytic functions on $\mathcal{G}'_{t(e)}$. Then
\begin{enumerate}
    \item for all $l \ge 1$ and all points $y \in \mathcal{G}'_{t(e)}$, we have $\nu(c_l(y)) \ge p( \widetilde{t(e)} l -\widetilde{\delta})$;
    \item we have $c_{pl}=0$ for $l \le N$.
\end{enumerate}
\end{lemma}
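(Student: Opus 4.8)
This lemma is the "analytic family" version of Lemma \ref{lemmaapprox} applied to the universal Artin--Schreier cover parametrized by $\mathcal{G}_{t(e)}$, and the proof is the expected adaptation of \cite[Lemma 6.8]{MR3194815}. First I would recall the set-up: on the disc $C \subset Y_{n-1}^{\an}$ with parameter $\widetilde{x}$ the character $\widetilde{\chi}$ is the Artin--Schreier class of $\widetilde{G} = F^n(y_1,\dots,y_{n-1}) + G^n = \sum_{l \ge 1} a_l \widetilde{x}^{-l}$, where, by the discussion in \S\ref{subrestriction}, each coefficient $a_l$ is a polynomial in the coordinates $a_{k,i}$ on $\mathcal{G}_{t(e)}$ with coefficients in $R$; in particular the $a_l$ are analytic functions on the affinoid (qcqs) space $\mathcal{G}_{t(e)}$. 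Lemma \ref{lemmarestriction}(\ref{lemmarestriction2}) guarantees that $\widetilde{\chi}$ satisfies conditions \ref{firstconddetect},\ref{secondconddetect},\ref{thirdconddetect} of \S\ref{secdetect} with respect to $\widetilde{m}$, the disc $C$, and the family $C[\widetilde{r}]$, uniformly over the family (the depth at $\widetilde{t(e)}$ is the constant $\widetilde{\delta}$ because the lower-level characters $\chi_i$ are fixed with good reduction, so $\delta_i$ is linear on $[s(e),t(e)]$). The choice of $N$ with $Np \ge \widetilde{\delta}/(\widetilde{t(e)} - \widetilde{s})$ places us exactly in the situation of (\ref{eqndetectbound}).

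The core of the argument is to run the inductive elimination in the proof of Lemma \ref{lemmaapprox} with the $a_l$ now being functions rather than scalars. Writing $d = \sum_{j=0}^N b_j \widetilde{x}^{-j}$ with $b_j$ to be determined, the conditions $c_{pN} = c_{p(N-1)} = \dots = c_p = 0$ become, from top to bottom, the system
\[
a_{pN} + b_N^p = 0, \quad \dots, \quad a_{p(N-i)} + b_{N-i}^p - b_{p(N-i)} = 0, \quad \dots, \quad a_p + b_1^p - b_p = 0,
\]
where $b_{p(N-i)} := 0$ when $p(N-i) > N$. Solving these from the top, $b_N = (-a_{pN})^{1/p}$, then $b_{N-1} = (-a_{p(N-1)} + b_{p(N-1)})^{1/p}$, and so on: each $b_j$ is obtained by extracting a $p$-th root of a polynomial expression in the already-determined $b$'s and the $a_l$'s. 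Extracting $p$-th roots is where I must pass to a finite cover $\mathcal{G}'_{t(e)} \to \mathcal{G}_{t(e)}$ (a composite of Kummer-type covers of degree $p$ for the successive radicals, i.e. normalizations of $\mathcal{G}_{t(e)}$ in the function fields obtained by adjoining these $p$-th roots); since we are in characteristic $p$ these are purely inseparable, but they are still finite and the $b_j$ become genuine analytic functions on $\mathcal{G}'_{t(e)}$, which remains qcqs. Having the $b_j$, the $c_l = c_l(b_1,\dots,b_{\min(l,\widetilde{m})}, a_l)$ are manifestly analytic on $\mathcal{G}'_{t(e)}$, and $c_{pl} = 0$ for $l \le N$ holds by construction, giving (2). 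For (1), the valuation bound $\nu(c_l(y)) \ge p(\widetilde{t(e)}\, l - \widetilde{\delta})$ is checked pointwise: at each $y \in \mathcal{G}'_{t(e)}$ we are exactly in the scalar situation of Lemma \ref{lemmaapprox}, whose proof shows $\nu(b_{N-i}) \ge p(\widetilde{t(e)}(N-i) - \widetilde{\delta})$ and hence $\nu(c_l) \ge p(\widetilde{t(e)}\, l - \widetilde{\delta})$; the only thing to note is that the input bounds $\nu(a_l(y)) \ge p(\widetilde{t(e)}\, l - \widetilde{\delta})$ hold uniformly over $y$ because condition \ref{firstconddetect} (holomorphy on the annulus) and the constancy of $\widetilde{\delta}$ over the family are uniform, which is precisely what Lemma \ref{lemmarestriction}(\ref{lemmarestriction2}) provides.

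\textbf{Main obstacle.} The routine calculations (the triangular solve, the valuation estimates) are exactly as in \cite[Lemma 6.8]{MR3194815} and carry over verbatim once everything is uniform over the family. The one point requiring genuine care is the passage to the finite cover $\mathcal{G}'_{t(e)}$: I need the $p$-th roots extracted in the inductive solve to actually exist as analytic functions after a \emph{finite} base change, and I need to verify that the bounds produced at a general point of $\mathcal{G}'_{t(e)}$ are the ones claimed — i.e. that the "replace $K$ by a finite extension" step in Lemma \ref{lemmaapprox} globalizes to a finite cover rather than merely holding fiberwise. This is handled by noting that each radical lies in a finitely generated extension of the affinoid algebra of $\mathcal{G}_{t(e)}$ (it is a root of a monic polynomial with analytic coefficients), so its normalization is again an affinoid, hence qcqs; composing finitely many such steps keeps us qcqs, and the remark following Lemma \ref{lemmaapprox} (finitely many solutions, varying analytically with the $a_i$) is what makes this cover finite.
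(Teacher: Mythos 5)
Your proposal is correct and is essentially the paper's argument: the paper disposes of this lemma in one line by invoking the scalar approximation results of \S\ref{secdetect} (the triangular elimination of Lemma \ref{lemmaapprox} together with the remark that its finitely many solutions vary analytically in the $a_i$'s), exactly the content you spell out in family form following \cite[Lemma 6.8]{MR3194815}. Your expansion — solving the system $c_{pN}=\dots=c_p=0$ over the affinoid, passing to a finite (here purely inseparable) cover to extract the $p$-th roots, and checking the valuation bounds pointwise via Lemma \ref{lemmaapprox} using the uniformity of $\widetilde{\delta}$ over $\mathcal{G}_{t(e)}$ — fills in precisely the steps the paper leaves implicit.
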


\begin{proof}
The lemma follows immediately from Proposition \ref{propdetect} and Remark \ref{remarkdetect}.
\end{proof}

\subsubsection{Proof of Proposition \ref{propminimal}}
We can now complete the proof of Proposition \ref{propminimal}. Let $\widetilde{m}$ be as in part (\ref{lemmarestriction2}) of Lemma \ref{lemmarestriction}. Define the function $\mu_{\widetilde{m},\widetilde{s(e)}}:\mathcal{G}'_{t(e)} \rightarrow \mathbb{R}$ as follows
\[ \mu_{\widetilde{m},\widetilde{s(e)}}(x):=\max\bigg(\bigg\{\frac{\nu(c_{\widetilde{m}}(x))-\nu(c_l(x))}{\widetilde{m}-l} \mid 1 \le l <\widetilde{m}\bigg\} \cup \{\widetilde{s(e)} \}\bigg). \]
\noindent Let $\chi \in \mathcal{G}_{t(e)}$, write $\widetilde{\chi}:=\chi\lvert_{\mathbb{K}_{n-1}}$ for its restriction to the function field of $Y_{n-1}$, and let $x \in \mathcal{G}'_{t(e)}$ be an arbitrary point over $\chi$. Thanks to Lemma \ref{lemmadetectapp}, we can apply Proposition \ref{propdetect} to compare $\mu_{\widetilde{m},\widetilde{s(e)}}(x)$ to $\lambda_{\widetilde{m},\widetilde{s(e)}}(\widetilde{\chi})$, which, in turn, is equal to $\lambda_e(\widetilde{\chi})$ by Lemma \ref{lemmarestriction} (\ref{lemmarestriction3}). We thus conclude that $\mu_{\widetilde{m},\widetilde{s(e)}}(x)< \widetilde{s}$ if and only if $\lambda_e(\widetilde{\chi}) < \widetilde{s}$. Moreover, if this is the case, then we have $\mu_{\widetilde{m},\widetilde{s(e)}}(x)=\lambda_e(\widetilde{\chi})$. In any case, by Lemma \ref{lemmarestriction} (\ref{lemmarestriction1}), we have $\lambda_e(\chi)=p^{n-1} \lambda_e(\widetilde{\chi})$ when $\chi \in \mathcal{G}^{\ast}_{t(e)}$.

The rest of the proof is identical to one in \cite[\S 6.4.5]{MR3194815}. Let $G_0 \in \mathcal{G}^{\ast}_{t(e)}$ be the rational function at the beginning of \S \ref{secanalyticcover}. Let $G'_0 \in \mathcal{G}'_{t(e)}$ be a point above $G_0$. Because $\lambda_e(G_0)<s$, we have $\mu_{\widetilde{m},\widetilde{s(e)}}(G'_0)<\widetilde{s}$ as discussed above. 
 
It follows from Lemma \ref{lemmamax} that the function $\mu_{\widetilde{m},\widetilde{s(e)}}$ takes a minimum on $\mathcal{G}'_{t(e)}$. Let $w \in \mathcal{G}'_{t(e)}$ be a point where this minimum is achieved, and let $W \in \mathcal{G}_{t(e)}$ be its image under $\mathcal{G}'_{t(e)} \rightarrow \mathcal{G}_{t(e)}$. We have $\mu_{\widetilde{m},\widetilde{s(e)}}(w) \le \mu_{\widetilde{m},\widetilde{s(e)}} (G'_0) < \widetilde{s}$. Since
\[ \lambda_{\widetilde{m},\widetilde{s(e)}} (\widetilde{W})=\mu_{\widetilde{m},\widetilde{s(e)}} (w)< \widetilde{s} <\widetilde{t(e)}, \]
\noindent we see that $W \in \mathcal{G}^{\ast}_{t(e)}$. Applying the above arguments a second time, we conclude that $\lambda_e(W)=p^{n-1}\mu_{\widetilde{m},\widetilde{s(e)}} (w)$, and this is actually the minimal value of the function $\lambda_e: \mathcal{G}^{\ast}_{t(e)} \rightarrow \mathbb{R}$. Moreover, the subset on which the minimum is attained is qcqs by the Lemma \ref{lemmamax}. This completes the proof of Proposition \ref{propminimal}.\qed



\subsection{Controlling a non-final vertex}
\label{sectcontrolvertex}
Suppose $t(e)=s(e_1)=\ldots=s(e_m)$ is a non-final-vertex of $\mathcal{T}_n$. By the induction process, we obtain a bunch of qcqs sets $\mathcal{G}_{s(e_1)}, \ldots, \mathcal{G}_{s(e_m)}$. Recall that each $G_i \in \mathcal{G}_{s(e_1)}$ has (generic) branching datum fits into $\mathcal{T}_n(e_i)$ and verifies, for $\chi_{G_i}$ be the extension of $\chi_{n-1,e_i}$ by $G_i$, the equation $\delta_{\chi_{G_i}}(s(e_i))=\delta_{\mathcal{T}_n}(s(e_i))$. Suppose, moreover, that all the branch points of $\chi_{G_i}$ specialize to $\overline{a}_i \in k$ at $t(e)$. Then the $\overline{a}_i$'s are distinct, and the differential conductor $\omega_{\chi_{G_i}}(t(e))$ is of the form
\[ \omega_{\chi_{G_i}}(t(e))=\frac{f^i(x)dx}{(x-\overline{a}_i)^{l_i}},\]
where $f^i(x) \in k[x]$ and $l_i:=\sum_{b \in \mathbb{B}(\mathcal{T}_n(e_i))} h_b$. Set $G:=\sum_{i=1}^m G_i$ and $\chi_{G}$ is the character that is given rise from $\chi_{n-1,e}$ by $G$. By Lemma \ref{lemmacombination} \ref{lemmacombination2}, we have 
$$\omega_{\chi_G}(t(e))=\sum_{i=1}^m \omega_{\chi_{G_i}}(t(e)) \text{, and } \delta_{\chi_G}(t(e))=\delta_{\chi_{G_i}}(t(e)) \hspace{2mm} \forall i=1, \ldots, m.$$ 
Hence, the collection of such $G$, which we call $\mathcal{G}_{t(e)}$, is equal to $\sum_{i=1}^m \mathcal{G}_{s(e_i)}$ and thus is qcqs. Its elements give rise to characters with the desired depth and the geometry of the branch locus at $t(e)$. Suppose the wanted differential form at $t(e)$ is
\[ \omega_{\mathcal{T}_n}(t(e))=\frac{cdx}{\prod_{i=1}^m(x-\overline{a}_i)^{l_i}} \hspace{2mm} (c \neq 0). \]
\noindent Recall that $\mathcal{C}(\omega_{\mathcal{T}_n}(t(e)))=\omega_{\mathcal{T}_{n-1}}(t(e))$ when $\delta_{\mathcal{T}_n}(t(e))=p\delta_{\mathcal{T}_{n-1}}(t(e))$, and $\mathcal{C}(\omega_{\mathcal{T}_n}(t(e)))=0$ otherwise. Either cases, as  we achieve the right depth Swan conductor at $t(e)$, we may assume that $\omega_{\chi_G}$ has the form
\[ \omega_{\chi_G} (t(e))= \frac{cdx}{\prod_{i=1}^m(x-\overline{a}_i)^{l_i}} + \sum_{i=1}^m \omega'_{\chi_G}(s(e_i)),\]
where $\omega'_{\chi_G}(s(e_i)) \in \mathcal{W}_{s(e_i)}$ is exact. Now, by repeating the process in \S \ref{seccontrolfinal} for each $i$, we replace $G$ by another one in $\mathcal{G}_{t(e)}$ where $\omega'_{\chi_G}(s(e_i))=0$ for all $i$. That shows $\mathcal{G}^{\ast}_{t(e)}$ is non-empty.

\subsection{Controlling the root}
\label{seccontrolroot} 
By the previous steps of the induction process, we get to the point where $\mathcal{G}_{s(e_0)} \neq \varnothing$. That means there exists a $G_{\min} \in \mathcal{G}_{s(e_0)}$ that gives rise to a character $\chi_{\min}$ whose braching datum fits to $\mathcal{T}_n$ and whose depth is zero at $s(e_0)$, i.e., $\chi_{\min}$ has good reduction.

Recall that $\overline{\chi}_n$ is our original character. Assume that ramification breaks of the one point cover corresponding to $\overline{\chi}_n$ is $(m_1, \ldots, m_n)$, and it is represented (upon completion at $x=0$) by a Witt vector $\underline{g}_n:=(g^1, \ldots, g^n) \in W_n(k(x))$. As discussed in \S \ref{secdeformonepointcover}, one may regard $\overline{\chi}_n$ as a one-point-cover of $\mathbb{P}^1_k$. Therefore, we further assume that each $g^i$ is a polynomial in $k[x^{-1}]$, all of whose terms have prime-to-$p$ degree. On the other hand, it follows from the previous constructions that $\overline{\chi}_{\min}$ corresponds to a Witt vector $\overline{g}_{\min}=(g^1, \ldots, g^{n-1},g_{\min})$ where $g_{\min} \in k[x^{-1}]$ has degree less than (only when $m_n=pm_{n-1}$) or equal to $m_n$ and consists of only terms of prime-to-$p$ degree. Subtracting Witt vectors yield
$$ \underline{g}_n -\underline{g}_{\min}=(0, \ldots, 0, g^n -g_{\min}). $$
\noindent We define $g:=g^n-g_{\min}$. Recall that Corollary \ref{corHreductiontype} asserting $\chi_{\min}$ deforms $\overline{\chi}_n$ if and only if $g=0$.

\begin{proposition}
There is a character $\chi_n$ that is a deformation of $\overline{\chi}_n$.
\end{proposition}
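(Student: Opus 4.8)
The plan is to correct $\chi_{\min}$ by one more ``junk-killing'' character supported entirely in the direction $\overline{0}$ at the root, using Corollary \ref{corutilizeroot}. Recall that $\overline{\chi}_{\min}$ corresponds to the Witt vector $(g^1,\ldots,g^{n-1},g_{\min})$ and $\overline{\chi}_n$ to $(g^1,\ldots,g^{n-1},g^n)$, so that $\underline g_n-\underline g_{\min}=(0,\ldots,0,g)$ with $g:=g^n-g_{\min}\in k[x^{-1}]$ a polynomial all of whose terms have prime-to-$p$ degree. By Corollary \ref{corHreductiontype} (together with Proposition \ref{propinverse}) it suffices to produce a character $\chi_n\in \textrm{H}^1_{p^n}(\mathbb{K})$ that extends $\chi_{n-1}$, gives rise to a Hurwitz tree fitting into $\mathcal{T}_n$, still has good reduction, and whose reduced reduction type equals $\underline g_n$.

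First I would apply Corollary \ref{corutilizeroot} to the polynomial $g$: after a finite extension of $\mathbb{K}$ there is $G'_{s(e_0)}\in\mathcal{H}_{s(e_0)}$ such that the order-$p$ character $\psi:=\mathfrak{K}_1(G'_{s(e_0)})$ has $\delta_\psi(0)=0$ and reduction $\overline\psi$ corresponding to the Artin--Schreier extension $y^p-y=g$. Next set $G^n:=G_{\min}+G'_{s(e_0)}$, i.e. $\chi_n:=\mathfrak{K}_n(\underline G_{n-1},G^n)$, which is the same as multiplying $\chi_{\min}$ by the order-$p$ character $\phi_{1,n}(\psi)$. Since $\delta_{\chi_{\min}}(0)=0$ and $\delta_{\psi}(0)=0$, Lemma \ref{lemmacombination}\ref{lemmacombination4} gives $\delta_{\chi_n}(0)=0$ and $\overline{\chi}_n^{\,\text{new}}=\overline{\chi}_{\min}\cdot\overline\psi$, so the reduction type of $\chi_n$ is $(g^1,\ldots,g^{n-1},g_{\min})+(0,\ldots,0,g)=\underline g_n$ as Witt vectors; in particular $\chi_n$ extends $\chi_{n-1}$ and reduces (upon completion at $x=0$) to the extension defining $k[[y_n]]/k[[x]]$.

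It then remains to check that $\chi_n$ still has good reduction, equivalently (Corollary \ref{corgood}\ref{corgooditem1}) that $\delta_{\chi_n}(0)=0$ — already done — and that the $n$-th conductor of the generic fiber equals $\iota_n$, the conductor of $\mathcal{T}_n$. Here I would argue as in Corollary \ref{corHreductiontype}: because $G'_{s(e_0)}\in\mathcal{H}_{s(e_0)}$, the branching divisor of $\chi_n$ is everywhere bounded by that of $\mathcal{T}_n$, so $\mathfrak{C}(\chi_n,e_0,\overline 0)\le \iota_n$ by Theorem \ref{theoremcaljumpirred} and Remark \ref{remarkslopesumconductors}; on the other hand the reduction $\overline{\chi}_n$ has conductor exactly $\iota_n=m_n+1$ (it is the cover $k[[y_n]]/k[[x]]$, whose $n$-th ramification break is $m_n$), and by the different criterion (Proposition \ref{propdifferentcriterion}, equivalently Corollary \ref{corgood}) the conductor of the generic fiber is at least that of the special fiber, forcing equality. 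Then $\delta_{\chi_n}(0)=0$ together with $\iota_n=\mathfrak{C}_{\chi_n}(0,0,\overline 0)=\sw_{\chi_n}(0,0,\overline0)+1$ and Corollary \ref{corgood}\ref{corgooditem1} shows $\chi_n$ has good reduction. Finally Proposition \ref{propinverse} (applied with $\mathcal{T}_n\succ\mathcal{T}_{n-1}$ and reduction type $\underline g_n$) yields that $\chi_n$ is a deformation of $\overline{\chi}_n$ extending $\chi_{n-1}$, which completes the proof.

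The main subtlety is not the algebra of Witt vectors but making sure that adding $G'_{s(e_0)}$ does not disturb the delicate degeneration data built into $\chi_{\min}$ along the interior of $\mathcal{T}_n$; this is precisely guaranteed by the fact that $G'_{s(e_0)}$ lies in $\mathcal{H}_{s(e_0)}$ and by the partition machinery of \S\ref{secpartition} (Proposition \ref{proppartition}), so that $\psi$ only modifies the $\overline 0$-direction data at the root while leaving every vertex above intact — and at the root the only remaining obligation is matching the reduced reduction type, which is exactly what Corollary \ref{corutilizeroot} was designed to achieve.
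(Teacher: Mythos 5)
Your proposal is correct and follows essentially the same route as the paper: apply Corollary \ref{corutilizeroot} to $g$ to obtain a depth-zero order-$p$ correction character with reduction $y^p-y=g$, add it to $G_{\min}$ (equivalently multiply $\chi_{\min}$ by the corresponding character), and use Lemma \ref{lemmacombination} to identify the reduction type as $\underline{g}_n$. The only cosmetic difference is that you re-run the conductor/different argument explicitly, whereas the paper packages that step as a direct citation of Corollary \ref{corHreductiontype}.
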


\begin{proof}
Applying Corollary \ref{corutilizeroot} for $g$, we obtain $H \in \mathcal{H}_{s(e_0)}$ such that $\psi:=\mathfrak{K}_1(H) \in \textrm{H}^1_p(\mathbb{K})$ satisfying the following
\[ \delta_{\psi}(0)=0 \text{, and } \overline{\psi} \text{ is given by } y^p-y=g.   \]
\noindent Hence, if $\psi':=\mathfrak{K}_n((0, \ldots, 0, H)) \in \textrm{H}^1_{p^n}(\mathbb{K})$, then we also have $\delta_{\psi'}(0)=0$ and the reduction $\overline{\psi}'$ corresponds to the same extension, which is represented by the Witt vector $(0, \ldots, g) \in W_n(\kappa)$.

On the other hand, recall that $\chi_{\min}$ corresponds to the length-$n$-Witt-vector
\[(G^1, G^2, \ldots, G^{n-1},G_{\min}) \in W_n(\mathbb{K})/\wp(W_n(\mathbb{K})). \]
\noindent Therefore, replacing $G_{\min}$ by $G^n:=G_{\min}+H$ equates to multiplying $\psi'$ to $\chi_{\min}$. By Lemma \ref{lemmacombination}, the result is an {\'e}tale character $\chi_n \in \textrm{H}^1_{p^n}(\mathbb{K})$ with reduction $\overline{\chi}_{\min} \cdot \overline{\psi}'$, which, in turn, is defined by the Witt vector $(g^1, \ldots, g^{n-1}, g_{\min}+g=g^n)$. Finally, since $G^n \in \mathcal{G}_{s(e_0)}$ as $H \in \mathcal{H}_{s(e_0)}$, it follows from Corollary \ref{corHreductiontype} that $\chi_n$ is a lift of $\overline{\chi}_n$.
\end{proof}

That completes the proof of the Main Theorem.  \qed

\begin{remark}
It follows from the construction and the compatibility of the differential conductors that the $\mathbb{Z}/p^n$-tree arising from $\chi_n$ coincides with the tree $\mathcal{T}_n$.
\end{remark}

\section{Proofs of Technical Results}
\label{sectechnical}

\subsection{A solution to the Cartier operator equation}
\label{secsolutioncartierequation}

\begin{proposition}
\label{propexistenceCartiersolution}
Suppose we are given a differential form as below $$\omega=\frac{cdx}{\prod_{i=1}^r(x-d_i)^{\iota_i}},$$
where $c$ and the $d_i$'s are in $k$, $d_i \neq d_j$ for $i \neq j$, $r \ge 2$, and a fixed integer $1 \le a \le p-1$. Set $\iota'_1:=p\iota_1-p+a+1$ ,$\iota'_2:=p\iota_2-p+1$, $\iota_i':=p\iota_i$ for each $i \neq 1,2$. Then the differential form $$\omega'=\frac{c^p dx}{ (-d_1-d_2)^{p-a-1} \prod_{i=1}^r(x-d_i)^{\iota'_i}} $$ verifies $\mathcal{C}(\omega')=\omega$.
\end{proposition}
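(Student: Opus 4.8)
The plan is to reduce the claim to an explicit computation with the Cartier operator on rational differential forms in one variable over $k$, using the standard fact that $\mathcal{C}$ kills exact forms and satisfies $\mathcal{C}(f^p\,\omega) = f\,\mathcal{C}(\omega)$, together with the ``basic identity'' $\mathcal{C}\bigl(\tfrac{dx}{x}\bigr)=\tfrac{dx}{x}$ and $\mathcal{C}\bigl(\tfrac{dx}{x^j}\bigr)=0$ for $j\not\equiv 1\pmod p$. First I would perform a partial fraction decomposition of $\omega$, writing
\[
\omega = \sum_{i=1}^r \sum_{j=1}^{\iota_i} \frac{c_{i,j}\,dx}{(x-d_i)^j},
\qquad c_{i,\iota_i}\neq 0,
\]
and observe that, since $\mathcal{C}$ is additive and commutes with the translations $x\mapsto x-d_i$, it suffices to understand $\mathcal{C}$ on each ``polar block'' $\sum_j c_{i,j}(x-d_i)^{-j}\,dx$ and to match it against the corresponding polar block of $\omega'$. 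The key local computation is that for a form $\sum_{j\ge 1}\beta_j u^{-j}\,du$ with a pole of order $M$ at $u=0$, the Cartier transform has a pole of order exactly $\lceil M/p\rceil$ when $p\nmid M$ (more precisely the leading term is governed by $\beta_{pN+1}$ for the appropriate $N$), and its leading coefficient is the $p$-th root of the coefficient of $u^{-(pN+1)}$.

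The heart of the argument is then to check the two ``special'' poles $d_1$ and $d_2$, where the exponents jump irregularly: $\iota_1' = p\iota_1-p+a+1$ and $\iota_2' = p\iota_2 - p + 1$. For the pole at $d_2$, note $\iota_2' \equiv 1 \pmod p$, so $\iota_2' = p(\iota_2 - 1) + 1$, and taking $M=\iota_2'$ gives $\lceil M/p\rceil = \iota_2$; the leading coefficient of $\mathcal{C}$ of the $d_2$-block is the $p$-th root of the $(x-d_2)^{-\iota_2'}$-coefficient of $\omega'$, which I would compute directly from the explicit formula for $\omega'$ and compare with $c_{2,\iota_2}$. For the pole at $d_1$, the exponent $\iota_1'=p(\iota_1-1)+(a+1)$ with $1\le a\le p-1$, so $a+1\in\{2,\dots,p\}$; the subtlety is that $a+1$ could equal $p$, but the relevant congruence class modulo $p$ of $\iota_1'$ is $a+1$, which is $\not\equiv 1$ unless $a = 0$ (excluded) or $a+1 = p+1$ (impossible), so $\lceil \iota_1'/p\rceil = \iota_1$ and again the leading coefficient of the Cartier transform is a $p$-th root of an explicit coefficient of $\omega'$. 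The factor $(-d_1-d_2)^{-(p-a-1)}$ in the numerator of $\omega'$ is exactly the fudge factor needed so that these $p$-th roots come out to $c$ (equivalently to $c_{1,\iota_1}$ and $c_{2,\iota_2}$ after clearing the other factors $(x-d_i)^{\iota_i}$); I expect this is why $r\ge 2$ is required — one needs a second pole $d_2\neq d_1$ for the quantity $-d_1-d_2$ to be a genuine nonzero parameter (after a translation one may even normalize, but the formula is stated translation-covariantly).

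The main obstacle will be bookkeeping the lower-order terms: after matching leading coefficients at each $d_i$, one must verify that $\mathcal{C}(\omega') - \omega$ has no poles at all, i.e. that it is a \emph{holomorphic} differential on $\mathbb{P}^1_k$, hence zero. I would argue this by a degree/order count: $\omega'$ has poles only at the $d_i$ and at $\infty$, $\mathcal{C}$ does not create new poles, and the order of the pole of $\mathcal{C}(\omega')$ at each $d_i$ is at most $\lceil\iota_i'/p\rceil=\iota_i$ (with equality and the right leading coefficient by the above), while at $\infty$ the form $\omega'$ has a zero of sufficiently high order that $\mathcal{C}(\omega')$ is regular there — a direct check using $\deg$ of numerator versus denominator, noting $\sum\iota_i'\ge p\sum\iota_i - p + 2 > \sum\iota_i$. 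Since the difference $\mathcal{C}(\omega')-\omega$ then has pole order $\le \iota_i - 1$ at each $d_i$ after the leading terms cancel — wait, more carefully: one shows the difference is a global regular differential on $\mathbb{P}^1_k$ of which there are none, so it vanishes. Rather than grind through every intermediate coefficient, I would phrase the lower-order cancellation abstractly: the space of $\omega'$ with the prescribed pole orders producing a given $\omega$ under $\mathcal{C}$ is a torsor under $\{\eta : \mathcal{C}(\eta)=0,\ \eta \text{ with poles bounded by the }\iota_i'\}$, i.e. under exact forms, and the explicit $\omega'$ is one valid representative; checking it is valid reduces exactly to the leading-coefficient matching at $d_1,d_2$ done above plus the trivial matching ($\iota_i'=p\iota_i$, $\mathcal{C}$ of a $p$-th power times a form) at the remaining $d_i$. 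This last step is where I would be most careful in a full writeup, but it is routine given Proposition \ref{propcomputeordp} and the elementary properties of $\mathcal{C}$.
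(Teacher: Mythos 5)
Your overall plan (partial fractions, locate the surviving $\equiv 1 \pmod p$ terms, match coefficients, then conclude by a global argument on $\mathbb{P}^1_k$) has a genuine gap at the final step. Matching only the \emph{leading} polar coefficients of $\mathcal{C}(\omega')$ and $\omega$ at $d_1,\dots,d_r$ does not force $\mathcal{C}(\omega')-\omega=0$: after the leading terms cancel, the difference can still have poles of order up to $\iota_i-1$ at the $d_i$, and such differentials need not vanish (e.g.\ $\frac{dx}{x-d_1}-\frac{dx}{x-d_2}$ is a nonzero differential on $\mathbb{P}^1_k$ with only simple poles), so the ``regular differential, hence zero'' conclusion is not available. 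The ``torsor under exact forms'' remark does not repair this: knowing the solution set of $\mathcal{C}(\eta)=\omega$ is a torsor under $\ker\mathcal{C}$ tells you nothing about whether your specific $\omega'$ lies in it — that is exactly what has to be proved. To finish along your lines you would have to match \emph{every} polar coefficient in the partial-fraction expansion (all exponents $\equiv 1 \pmod p$ up to $\iota_i'$ at each $d_i$), which amounts to a messier coefficient-by-coefficient version of the direct computation. A smaller inaccuracy: the pole order of the Cartier transform is only \emph{at most} $\lceil M/p\rceil$; it is attained exactly when the coefficient of the relevant exponent $\equiv 1\pmod p$ is nonzero, which again is part of what must be computed, not assumed.

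The paper's proof sidesteps all of this with the semilinearity you quote, $\mathcal{C}(u^p\eta)=u\,\mathcal{C}(\eta)$, applied globally rather than pole by pole: write $\omega'=u^p\eta$ with $u=c\,(x-d_1)^{-\iota_1}(x-d_2)^{-(\iota_2-1)}\prod_{i\ge 3}(x-d_i)^{-\iota_i}$, so that $\eta=\frac{(x-d_1)^{p-a-1}\,dx}{(d_2-d_1)^{p-a-1}(x-d_2)}$ has a single simple pole. Expanding $(x-d_1)^{p-a-1}$ in $y:=x-d_2$, every term $y^{j-1}dy$ with $1\le j\le p-a-1$ is killed by $\mathcal{C}$, and only the $j=0$ term $(d_2-d_1)^{p-a-1}\,dy/y$ survives, giving $\mathcal{C}(\eta)=\frac{dx}{x-d_2}$ and hence $\mathcal{C}(\omega')=u\cdot\frac{dx}{x-d_2}=\omega$ as an exact identity, with no residual lower-order terms to control. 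This computation also shows the normalizing constant must be $(d_2-d_1)^{p-a-1}$ (read the paper's ``$-d_1-d_2$'' as $-(d_1-d_2)$); your guess that $r\ge 2$ is there to make ``$-d_1-d_2$'' nonzero is off the mark — what is actually needed is $d_1\neq d_2$, i.e.\ a second distinct pole at which to place the jump $\iota_2'=p\iota_2-p+1$.
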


\begin{proof}
The differential form $\omega'$ above can be rewritten as $$\omega'=\frac{c^p (x-d_1)^{p-a-1} dx}{(x-d_2) (-d_1-d_2)^{p-a-1} (x-d_2)^{p(\iota_2-1)}(x-d_1)^{p\iota_1}\prod_{i=3}^r(x-d_i)^{p\iota_i}}. $$ The proposition then follows from the fact that $\mathcal{C}(u^p\omega)=u\mathcal{C}(\omega)$ for all $u \in K$.
\end{proof}

\subsection{Construction of the extending Hurwitz tree}
\label{secconstructtree}

We dedicate this section for the proof of Proposition \ref{propextendtree}. In each proof, we will only demonstrate the construction of the extending Hurwitz trees. The readers can easily check that these trees satisfy the axioms in Definition \ref{defnhurwitztree} together with the conditions for an extension asserted by Theorem \ref{theoremCartierprediction} and Theorem \ref{theoremcompatibilitydiff}.

Recall the set up of Proposition \ref{propextendtree}. We are give a $\mathbb{Z}/p^{n-1}$-tree $\mathcal{T}_{n-1}$ and an $n$-th dengeneration data $(\delta_{n}, \omega_n) \in \mathbb{Q}_{>0} \times \Omega^1_{\kappa}$ (or $(0, f_n)$, where $f_n \in \kappa$) at its root. The goal is to construct a $\mathbb{Z}/p^n$-tree $\mathcal{T}_n$ that extends $\mathcal{T}_{n-1}$ and whose degeneration data at its root is the same as the given one. Note that the original result only requires $\delta_n=0$ but we will prove the general case in order to utilize an induction technique that we will soon see. 

Suppose $\mathcal{T}_{n-1}$ is a $\mathbb{Z}/p^{n-1}$-tree that has $m$ leaves $b_1, \ldots, b_m$ ($m \ge 2$), and the conductor at $b_i$ is $\iota_{i,n-1}$. As usual, $v_0$ and $e_0$ denote the root and the trunk of $\mathcal{T}_{n-1}$. Set $v_1:=t(e_0)$. Suppose, moreover, that the differential conductor (the $(n-1)$th degeneration) at $v_1$ and $v_0$ of $\mathcal{T}_{n-1}$ are 
\begin{equation}
\label{eqninitialdiffcondn-1}
\begin{split}
   \omega_{\mathcal{T}_{n-1}}(v_1)&= \frac{c_{v_1}dx}{\prod_{i=1}^r(x-[b_i]_{v_1})^{\iota_{i,n-1}}},\\   \omega_{\mathcal{T}_{n-1}}(v_0) &=\frac{c_{v_0}dx}{x^{\iota_{n-1}}} + \sum_{j < \iota_{n-1}} \frac{c_j dx}{x^j} \hspace{2mm} \bigg(g^{n-1}=\sum_{j=1}^{l} \frac{d_j}{x^j}\bigg), 
\end{split}
\end{equation}
respectively, where $l<\iota_{n-1}$. We may write $\omega_{\mathcal{T}_{n-1}}(v_0)$ in (\ref{eqninitialdiffcondn-1}) as $f(x)dx/x^{\iota_{n-1}}$. Then, when $\delta_{\mathcal{T}_{n-1}}(v_0)>0$, as the two forms are compatible (\S \ref{seccompatibility}), we have the relation $c_{v_0}=c_{v_1}=:c$. When $\delta_{\mathcal{T}_{n-1}}(v_0)=0$ and $\iota_{n-1}=p\iota_{n-2}-p+1$, where $\iota_{n-2}$ is the conductor of the sub-$\mathbb{Z}/p^{n-2}$-tree, as discussed in Theorem \ref{theoremcompatibilitydiff}, we don't have to worry about the compatibility at the root. Otherwise, the same theorem asserts that $l=\iota_{n-1}-1$ and $c_{v_1}=-ld_l$.

Suppose, moreover, that we are given a level $n$ depth $\delta_{\mathcal{T}_n}(v_0) \ge p\delta_{\mathcal{T}_{n-1}}(v_0)$ and a level $n$ differential conductor (or degeneration) at the root as follows
\begin{equation}
    \label{eqnintialdiffcondn}
    \omega_{\mathcal{T}_{n}}(v_0)= \frac{C_{v_0}dx}{x^{\iota_n}}+ \sum_{j < \iota_n} \frac{C_j dx}{x^j} \hspace{2mm} \bigg(\text{or } g^{n}=\sum_{j=1}^{L} \frac{D_j}{x^j}\bigg),
\end{equation}
where $L< \iota_{n}$. When $\delta_{\mathcal{T}_n}(v_0)>0$, it is required that $\mathcal{C}(\omega_{\mathcal{T}_{n}}(v_0))=\omega_{\mathcal{T}_{n-1}}(v_0)$ if $\delta_{\mathcal{T}_n}(v_0)=p\delta_{\mathcal{T}_{n-1}}(v_0)$, or $\omega_{\mathcal{T}_n}(v_0)$ is exact if $\delta_{\mathcal{T}_n}(v_0) > p\delta_{\mathcal{T}_{n-1}}(v_0)$. When $\delta_{\mathcal{T}_n}(v_0)= p\delta_{\mathcal{T}_{n-1}}(v_0)=0$, then $L=\iota_{n}-1$ if $\iota_n>\iota_{n-1}-p+1$.

Just as the main theme of the whole paper, we first construct the extensions of certain sub-trees of the given tree $\mathcal{T}_{n-1}$. 

\begin{definition}
With the notation as above, let $\mathcal{T}_n$ be an extension of $\mathcal{T}_{n-1}$ with the same underlying tree and such that the depth at the root of $\mathcal{T}_{n}$ is \emph{$p$ times} that of $\mathcal{T}_{n-1}$. Let $a_n \in \mathbb{Z}_{> 0}$ and $a_n=:a'_n+pu_n$, $1 \le a'_n <p$. We say
\begin{itemize}
    \item $\mathcal{T}_n$ is an \textit{additive $a_n$ extension} of $\mathcal{T}_{n-1}$ if, at all but one leaf (which may assume to be $b_1$), we have $\iota_{i,n}=p\iota_{i,n-1}$, hence $\iota_{1,n}=p\iota_{1,n-1}-p+1+a_n$, and
    \item $\mathcal{T}_n$ is a \textit{minimum extension} of $\mathcal{T}_{n-1}$ if $\iota_{1,n}=p\iota_{1,n-1}-p+1$ and $\iota_{i,n}=p\iota_{i,n-1}$ for $i \neq 2$.
\end{itemize}
\end{definition}

We first show that one can always construct an additive extension for the tree $\mathcal{T}_{n-1}$. That is the extension of $\mathcal{T}_{1}(e_3)$ in Example \ref{exminimaljump}.
\begin{proposition}
\label{propconstructadditivetree}
With the above settings, there exists a Hurwitz tree $\mathcal{T}_n$ that extends $\mathcal{T}_{n-1}$ $a_n$-additively. In particular, Proposition \ref{propextendtree} holds when $\iota_n=p\iota_{n-1}-p+1+a_n$.
\end{proposition}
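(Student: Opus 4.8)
The plan is to build $\mathcal{T}_n$ by a two-step recursion along the combinatorial tree of $\mathcal{T}_{n-1}$: first prescribe the right degeneration data on each of the sub-trees $\mathcal{T}_{n-1}(e_i)$ hanging off the root, then splice these together at $v_1$ and adjoin the trunk $e_0$ and root $v_0$. The base of the recursion is a \emph{final vertex} $v$ of $\mathcal{T}_{n-1}$: here all successors of $v$ are leaves $b_{i_1},\dots,b_{i_s}$ with conductors $\iota_{i_j,n-1}$, and the differential conductor $\omega_{\mathcal{T}_{n-1}}(v)$ is of the shape $c_v\,dx/\prod_j (x-[b_{i_j}]_v)^{\iota_{i_j,n-1}}$. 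For the additive case, choose one leaf, say $b_{i_1}$, and set $\iota_{i_1,n}:=p\iota_{i_1,n-1}-p+1+a_n$ and $\iota_{i_j,n}:=p\iota_{i_j,n-1}$ for $j\geq 2$. Since $a_n$ is prime to $p$ precisely when $a_n'\neq 0$, exactly one new conductor is prime to $p$ modulo $p$-powers; I would apply Proposition \ref{propexistenceCartiersolution} (the Cartier-operator solution) with that distinguished leaf playing the role of $d_1$ and a second leaf playing the role of $d_2$ to produce a differential form $\omega_{\mathcal{T}_n}(v)$ with $\mathcal{C}(\omega_{\mathcal{T}_n}(v))=\omega_{\mathcal{T}_{n-1}}(v)$ and with pole orders exactly $(\iota_{i_j,n})_j$. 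The depth at $v$ is forced to be $p\,\delta_{\mathcal{T}_{n-1}}(v)$, and $d_{\mathcal{T}_n}$ on the edge into $v$ is read off from $\mathrm{ord}_{z_e}\omega_{\mathcal{T}_n}(v)$ via axioms \ref{c5Hurwitz}, \ref{c6Hurwitz}; the thickness is unchanged, which is consistent since the depth scales by $p$ and the slope scales appropriately. Here Remark \ref{remarkatmost2nonp} (at most two succeeding leaves with conductor prime to $p$) is exactly what guarantees that the two-leaf recipe of Proposition \ref{propexistenceCartiersolution} suffices.

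Next I would propagate this up the edges: along an edge $e$ with $v=t(e)$ already decorated, conditions \ref{c3Hurwitz} and \ref{c5Hurwitz} determine $\mathrm{ord}_{z_e}\omega_{s(e)}$, and \ref{c6Hurwitz} gives $\delta_{s(e)}$ (which will again be $p$ times the level-$(n-1)$ depth on that part of $e$, since $d_e$ scales by $p$); the constant coefficient at $s(e)$ of the $e$-part must equal $c_v$ by the compatibility theorem (Theorem \ref{theoremcompatibilitydiff}), and this is consistent by construction because we carried the factor $c^p(-d_1-d_2)^{-(p-a-1)}$ through Proposition \ref{propexistenceCartiersolution}. At a branching vertex $v'=t(e')=s(e_1')=\cdots=s(e_t')$ with each $\mathcal{T}_n(e_k')$ already built, I would \emph{add} the $e_k'$-parts of the differential conductors: the sum $\sum_k \omega_{\mathcal{T}_n}(e_k')$-part is again a differential form with the correct total pole order $\sum_k(\iota_{\cdot,n})$, its Cartier image is $\sum_k \omega_{\mathcal{T}_{n-1}}(e_k')$-part $=\omega_{\mathcal{T}_{n-1}}(v')$ by linearity of $\mathcal{C}$, and the depth is the common value $p\,\delta_{\mathcal{T}_{n-1}}(v')$; this uses nothing more than Lemma \ref{lemmacombination}\ref{lemmacombination2}-style bookkeeping at the level of the tree. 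Iterating to $v_1=t(e_0)$ yields a differential form $\omega_{\mathcal{T}_n}(v_1)$ of the form $c^p(\text{unit})\,dx/\prod_i(x-[b_i]_{v_1})^{\iota_{i,n}}$ with $\mathcal{C}(\omega_{\mathcal{T}_n}(v_1))=\omega_{\mathcal{T}_{n-1}}(v_1)$; by Remark \ref{remarkslopesumconductors} the total $\sum_i\iota_{i,n}=p\iota_{n-1}-p+1+a_n=\iota_n$, so $d_{e_0}=\iota_n-1$ as required, and condition \ref{c4Hurwitz} pins down $\omega_{\mathcal{T}_n}(v_1)$ at $z_{e_0}$.

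Finally I would decorate the trunk $e_0$ and the root. By hypothesis $\iota_n=p\iota_{n-1}-p+1+a_n$, so we are precisely in the Cartier (non-exact) case $\delta_{\mathcal{T}_n}(v_0)=p\,\delta_{\mathcal{T}_{n-1}}(v_0)$, and the $n$-th degeneration at $v_0$ must satisfy $\mathcal{C}(\omega_{\mathcal{T}_n}(v_0))=\omega_{\mathcal{T}_{n-1}}(v_0)$ (or, if $\delta_{\mathcal{T}_{n-1}}(v_0)=0$, the reduced degeneration must lift $(f^1,\dots,f^{n-1})$); since $\iota_n>\iota_{n-1}-p+1$ the compatibility theorem forces the top pole order $L=\iota_n-1$ and the leading coefficient to match the $e_0$-part leading coefficient of $\omega_{\mathcal{T}_n}(v_1)$ — which again holds by construction. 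The lower-order terms of $\omega_{\mathcal{T}_n}(v_0)$ (the part killed by $\mathcal{C}$, i.e. the exact or prime-to-$p$-degree terms) are free to be chosen arbitrarily subject to the pole-order bound, so any prescribed $n$-th degeneration data compatible with Theorem \ref{theoremCartierprediction} can be realized by adjusting these. Assembling all the vertex and edge data gives $\mathcal{T}_n$; checking the axioms \ref{c1Hurwitz}--\ref{c8Hurwitz} and the extension conditions of Definition \ref{defnextendhurwitz} is the routine verification deferred to the reader, and the monodromy groups are set by the rule in Definition \ref{defnextendhurwitz} (multiply by $\mathbb{Z}/p$ at old vertices, $\mathbb{Z}/p$ at new ones). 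The statement about $\iota_n=p\iota_{n-1}-p+1+a_n$ in Proposition \ref{propextendtree} then follows since $a_n:=\iota_n-(p\iota_{n-1}-p+1)$ is the required additive jump and is prime to $p$ exactly when this is not the minimal case. The main obstacle is ensuring the Cartier-operator recipe of Proposition \ref{propexistenceCartiersolution} can be applied consistently at \emph{every} final vertex simultaneously — i.e. that one can always single out the distinguished leaf(s) carrying the prime-to-$p$ conductor so that the constraint in Remark \ref{remarkatmost2nonp} is respected all the way up the tree; managing which branch inherits the ``$+a_n$'' as one moves toward the root, so that the partial sums of conductors along each edge stay of the prescribed congruence type, is the delicate combinatorial point.
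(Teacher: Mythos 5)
Your construction is internally inconsistent in the additive case, and it is not the paper's. You keep the decorated tree and the thicknesses of $\mathcal{T}_{n-1}$, put depth $p\,\delta_{\mathcal{T}_{n-1}}(v_0)$ at the root, assign the additive conductors, and then claim that every vertex $v$ carries depth exactly $p\,\delta_{\mathcal{T}_{n-1}}(v)$ together with a differential form satisfying $\mathcal{C}(\omega_{\mathcal{T}_n}(v))=\omega_{\mathcal{T}_{n-1}}(v)$, asserting that ``the slope scales appropriately.'' It does not. By Remark \ref{remarkslopesumconductors}, on any edge $e$ preceding the distinguished leaf $b_1$ one has $d_e^{(n)}=p\,d_e^{(n-1)}+a_n$, and on any other edge $d_e^{(n)}=p\,d_e^{(n-1)}+(p-1)$; in particular this holds on the trunk, so axiom \ref{c6Hurwitz} with unchanged thicknesses already gives $\delta_{\mathcal{T}_n}(v_1)=p\,\delta_{\mathcal{T}_{n-1}}(v_1)+(p-1)\epsilon_{e_0}a_n$, and inductively $\delta_{\mathcal{T}_n}(v)>p\,\delta_{\mathcal{T}_{n-1}}(v)$ for \emph{every} $v\neq v_0$. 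Theorem \ref{theoremCartierprediction} part \ref{theoremCartierpredictionpart2b} then forces $\omega_{\mathcal{T}_n}(v)$ to be \emph{exact} at all these vertices, so prescribing Cartier preimages of $\omega_{\mathcal{T}_{n-1}}(v)$ there contradicts the depths your own data impose. (You can see this in Table 4: on the branch through $e_3$ of $\mathcal{T}_2^{\min}$, which is extended additively, $280=\delta_{\mathcal{T}_2}(t(e_3))>7\cdot 39$.) Moreover, Proposition \ref{propexistenceCartiersolution} cannot even produce the additive pole orders: its output necessarily lowers a \emph{second} leaf to $p\iota_2-p+1$, so the total pole order is $p\iota_{n-1}-2p+a+2$, which never equals $p\iota_{n-1}-p+1+a_n$ for $1\le a\le p-1$ and $a_n>0$. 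Trying to rescue the scheme by shrinking thicknesses to keep the depths at exactly $p$ times would violate condition (1) of Definition \ref{defnextendhurwitz}, since the decorated tree would no longer be a refinement of the one determined by the (unchanged) branch points.

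What you have written is essentially the recipe for the \emph{minimum} extension (where the Cartier equation $\mathcal{C}(\omega_n)=\omega_{n-1}$ genuinely governs $v_1$ and the branch through $b_1$; see Propositions \ref{propconstructminimaltree} and \ref{propconstructreenodiff}), misapplied to the additive one. The paper's additive construction is much simpler: keep the decorated tree and thicknesses, assign the conductors $\iota_{1,n}=p\iota_{1,n-1}-p+1+a_n$ and $\iota_{i,n}=p\iota_{i,n-1}$ and the slopes $d_e=\sum_{h\succ e}\iota_{h,n}-1$, set the depth $p\,\delta_{\mathcal{T}_{n-1}}(v_0)$ at the root only, propagate depths outward via \ref{c6Hurwitz} (accepting that they strictly exceed $p$ times the old ones), and place at each $v\neq v_0$ an exact form $c_v\,dx/\prod_{b\succ v}(x-[b]_v)^{\iota_{b,n}}$ with the constants $c_v$ fixed inductively from the root by the compatibility requirement. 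No Cartier-operator equation needs to be solved anywhere in this case.
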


\begin{proof}
First, we set the decorated tree of $\mathcal{T}_{n}$ and the thickness of the edges to be that of $\mathcal{T}_{n-1}$. We then assign to $b_1$ the conductor $\iota_{1,n}=p\iota_{i,n-1}-p+a_n+1$, to each leaf $b_i$ ($i \neq 1$) a conductor $\iota_{i,n}=p\iota_{i,n-1}$, to each edge $e$ a slope $d_e:=\sum_{h \succ e} \iota_{h,n}-1$, and to the root $v_0$ the depth $p\delta_{\mathcal{T}_{n-1}}(v_0)$. At each vertex or leaf of $\mathcal{T}_n$ where the corresponding one in $\mathcal{T}_{n-1}$ has monodromy group $\mathbb{Z}/p^i$, we equip it with the monodromy group $\mathbb{Z}/p^{i+1}$. We then assign the depth at each vertex $v \neq v_0$ inductively on the positive direction starting from $v_0$ so that \ref{c5Hurwitz} is verified. Then it is straight forward to check that $\delta_{\mathcal{T}_n}(v)>p\delta_{\mathcal{T}_{n-1}}(v)$ at all vertices $v \neq v_0$. Finally, we equip each vertex $v$ away from the root with an \emph{exact} differential form as follows
\begin{equation}
    \omega_{\mathcal{T}_{n}}(v)=\frac{c_vdx}{\prod_{b \succ v}(x-[b]_v)^{\iota_{b,n}}},
\end{equation}
where $c_v \in k^{\times}$ is determined inductively from the root so that the differential conductors at all vertices are compatible. More precisely, if $e$ is an edge of $\mathcal{T}_{n-1}$, then we set the coefficient of $\omega_{\mathcal{T}_n}(t(e))$ to be the $e$-part of $\omega_{\mathcal{T}_n}(s(e))$ (resp. the coefficient of the $n$-th degeneration) when $\delta_{\mathcal{T}_n}(s(e))>0$ (resp. when $\delta_{\mathcal{T}_n}(s(e))=0$).
\end{proof}

We then construct a minimum extension. That is the extending of $\mathcal{T}_{1}$ or $\mathcal{T}_{1}(e_1)$ in Example \ref{exminimaljump}.

\begin{proposition}
\label{propconstructminimaltree}
With the above settings, there exists a Hurwitz tree $\mathcal{T}_n$ that minimally extends $\mathcal{T}_{n-1}$. In particular, Proposition \ref{propextendtree} holds for the case $\iota_n=p\iota_{n-1}-p+1$.
\end{proposition}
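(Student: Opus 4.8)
The plan is to mimic the construction in Proposition \ref{propconstructadditivetree}, but with the subtler point that a \emph{minimum} extension forces the differential conductor at every vertex $v \neq v_0$ to satisfy $\mathcal{C}(\omega_{\mathcal{T}_n}(v)) = \omega_{\mathcal{T}_{n-1}}(v)$ rather than $\mathcal{C}(\omega_{\mathcal{T}_n}(v))=0$; this is because $\iota_{1,n} = p\iota_{1,n-1}-p+1 \equiv 1 \pmod p$ is exactly the minimal jump, which by Corollary \ref{corconductorconditions} and Theorem \ref{theoremCartierprediction}\ref{theoremCartierpredictionpart2a} forces depths to scale by exactly $p$ and differentials to be compatible via the Cartier operator. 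So first I would set the decorated tree, thicknesses, and branch-point positions of $\mathcal{T}_n$ equal to those of $\mathcal{T}_{n-1}$, assign conductors $\iota_{1,n}=p\iota_{1,n-1}-p+1$ and $\iota_{i,n}=p\iota_{i,n-1}$ for $i \neq 2$ (the leaf $b_2$ absorbs the remaining slack, if any, exactly as in the additive case with $a_n$ minimal), set the depth at $v_0$ to be $p\,\delta_{\mathcal{T}_{n-1}}(v_0)$, and promote each monodromy group $\mathbb{Z}/p^i$ to $\mathbb{Z}/p^{i+1}$.

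Next I would propagate the depths $\delta_{\mathcal{T}_n}(v)$ for $v \neq v_0$ inductively in the positive direction using axiom \ref{c6Hurwitz}: with slopes $d_e := \sum_{h \succ e}\iota_{h,n}-1$, one checks $d_e = p(\sum_{h \succ e}\iota_{h,n-1}-1) = p\,d_e^{(n-1)}$ except on edges separating $b_2$, where one still has $d_e \ge p\,d_e^{(n-1)}$; combined with $(p-1)\epsilon_e d_e = \delta_{\mathcal{T}_n}(t(e)) - \delta_{\mathcal{T}_n}(s(e))$ and $\delta_{\mathcal{T}_n}(v_0)=p\delta_{\mathcal{T}_{n-1}}(v_0)$, an easy induction gives $\delta_{\mathcal{T}_n}(v) = p\,\delta_{\mathcal{T}_{n-1}}(v)$ for every $v$ on the path not separated by $b_2$, and $\delta_{\mathcal{T}_n}(v) \ge p\,\delta_{\mathcal{T}_{n-1}}(v)$ otherwise, which is exactly the inequality required by Theorem \ref{theoremCartierprediction}\ref{theoremCartierpredictionpart2}. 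Then I would define the differential conductor at each $v \neq v_0$ by
\[
\omega_{\mathcal{T}_n}(v) = \frac{c_v\,(x-[b_1]_v)^{p-1-0}\,dx}{\prod_{b \succ v}(x-[b]_v)^{\iota_{b,n}}}
\]
when $\delta_{\mathcal{T}_n}(v) = p\,\delta_{\mathcal{T}_{n-1}}(v)$, using Proposition \ref{propexistenceCartiersolution} with $a=1$ (so the numerator factor is $(x-[b_1]_v)^{p-2}$ and an extra $(-[b_1]_v-[b_2]_v)^{-(p-2)}$-type constant appears, matching the shape of the entries in Table \ref{tab:degenerationdata}), which guarantees $\mathcal{C}(\omega_{\mathcal{T}_n}(v)) = \omega_{\mathcal{T}_{n-1}}(v)$; on the finitely many vertices where $\delta_{\mathcal{T}_n}(v) > p\,\delta_{\mathcal{T}_{n-1}}(v)$ (those separated by $b_2$ once the slack is nonzero) I would instead take $\omega_{\mathcal{T}_n}(v)$ to be any \emph{exact} differential with the prescribed pole orders, as in Proposition \ref{propconstructadditivetree}. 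The constants $c_v$ are fixed inductively from the root downward so that at each edge $e$ the constant coefficient of $\omega_{\mathcal{T}_n}(t(e))$ equals the $e$-part coefficient of $\omega_{\mathcal{T}_n}(s(e))$ — this is the compatibility of Theorem \ref{theoremcompatibilitydiff}, and Proposition \ref{propexistenceCartiersolution} tells us precisely which constant makes the Cartier-preimage compatible.

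Finally I would verify the root data: axiom \ref{c4Hurwitz} and the compatibility at $v_0$. Here the case division of Theorem \ref{theoremcompatibilitydiff} is used — when $\delta_{\mathcal{T}_{n-1}}(v_0)=0$ and $\iota_{n-1}=p\iota_{n-2}-p+1$ there is no compatibility constraint at the root, so one sets $\omega_{\mathcal{T}_n}(v_0)$ (or the $n$-th degeneration $g^n$) freely subject only to $\mathcal{C}(\omega_{\mathcal{T}_n}(v_0))=\omega_{\mathcal{T}_{n-1}}(v_0)$ and $\deg_{x^{-1}} g^n \le \iota_n - 1$; otherwise $L = \iota_n - 1$ and $C_{v_0} = c$ (inherited from $c_{v_1}$), which again is arranged by Proposition \ref{propexistenceCartiersolution}. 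The main obstacle I anticipate is bookkeeping the constant coefficients consistently across branchings: at a vertex $v$ with several outgoing edges $e_1,\dots,e_m$, the single constant $c_v$ must simultaneously produce the correct $e_i$-part coefficient for each $i$, and one must check that the partial-fraction identity $\sum_i \omega_i = \omega_{\mathcal{T}_n}(v)$ with the Cartier constraint on the sum is compatible with the individual Cartier constraints on each $\omega_i$ — this is where Proposition \ref{propexistenceCartiersolution}'s explicit formula, applied per branch, does the real work, and where one must confirm that "at most two successor leaves have conductor prime to $p$" (Remark \ref{remarkatmost2nonp}) so that only one branch ever needs the $a=1$ numerator correction. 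Everything else is a routine check that the axioms \ref{c1Hurwitz}--\ref{c8Hurwitz} of Definition \ref{defnhurwitztree} and the extension conditions of Definition \ref{defnextendhurwitz} hold, which I would state but not belabor.
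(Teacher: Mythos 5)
Your overall architecture is essentially the paper's construction unwound: along the spine from the root to the minimal leaf the depths scale exactly by $p$ and the differentials must be Cartier preimages of the level-$(n-1)$ ones, while off the spine the depths exceed $p$ times the old ones and exact forms suffice. The paper packages precisely this as an induction on the height of the tree, extending the spine subtree minimally and every other subtree $(p-1)$-additively via Proposition \ref{propconstructadditivetree}, so the single global pass you propose is not itself the issue. The genuine problem is your key formula at the Cartier (spine) vertices. You invoke Proposition \ref{propexistenceCartiersolution} with $a=1$; with that choice the preimage has pole order $p\iota_{1,n-1}-p+2$ at one leaf and $p\iota_{2,n-1}-p+1$ at a second, so \emph{two} leaves acquire prime-to-$p$ conductors, the pole orders of $\omega_{\mathcal{T}_n}(v)$ no longer match the conductors you assigned (violating \ref{c7Hurwitz}, hence also \ref{c3Hurwitz} and \ref{c5Hurwitz}), and the total conductor becomes $p\iota_{n-1}-2p+3$, which differs from $\iota_n=p\iota_{n-1}-p+1$ whenever $p>2$, so the tree would not have the conductor required in Proposition \ref{propextendtree}. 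The correct choice is $a=p-1$: the constant $(-d_1-d_2)^{p-a-1}$ then disappears and the spine differential is simply $c^p\,dx/\prod_{b\succ v}(x-[b]_v)^{\iota_{b,n}}$, with only the minimal leaf's exponent dropped by $p-1$; this is exactly what the paper writes down, and it is what the constants in Table \ref{tab:degenerationdata} reflect ($a^{21}=(a^{3})^{7}$ is the $p$-th power of the level-one constant, not a factor produced by the $a=1$ case). Your displayed numerator exponent ($p-1$ in the formula, $p-2$ in the parenthetical) is internally inconsistent, and with your own conductor assignment in the denominator no numerator correction should appear at all.

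A second, smaller gap: at a spine vertex the constant $c_v$ is not free, since $\mathcal{C}(\omega_{\mathcal{T}_n}(v))=\omega_{\mathcal{T}_{n-1}}(v)$ pins it to (essentially) the $p$-th power of the level-$(n-1)$ constant, so one must check that this forced value agrees with the value demanded by edge-compatibility (Theorem \ref{theoremcompatibilitydiff}). It does, but for a reason you never use: every off-spine exponent is a multiple of $p$, so the $e$-part constant of the parent's $c^p$-form is the $p$-th power of the level-$(n-1)$ $e$-part constant, and level-$n$ compatibility follows from level-$(n-1)$ compatibility. You flag this as the anticipated obstacle but leave it unresolved, and with the wrong parameter $a$ the construction of the differential conductors --- the actual content of the minimal case --- does not go through as written. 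Finally, note that in the setting of Proposition \ref{propextendtree} the $n$-th degeneration at the root is prescribed, not chosen ``freely subject to'' constraints; the role of the hypothesis $\iota_n=p\iota_{n-1}-p+1$ is that the prescribed root datum is automatically compatible (or requires no compatibility) with the datum you build at $v_1$.
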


\begin{proof}
Suppose first that the height of the tree $\mathcal{T}_{n-1}$ is $1$. We assign conductor $\iota_{1,n}:=p\iota_{1,n-1}-p+1$ to $b_1$ and $\iota_{i,n}:=p\iota_{i,n-1}$ to $b_i$ where $i \neq 1$, making the sum of all conductors to be $\iota_n$. We then set the depth at $v_1$ to be $p\delta_{\mathcal{T}_{n-1}}(v_1)$, the slope of $e_0$ to be $\iota_n-1$, and the differential conductor at $v_1$ to be $$    \omega_{\mathcal{T}_n}(v_1)=\frac{c^pdx}{\prod_{i=1}^r(x-[b_i]_{v_1})^{\iota_{i,n}}},$$
which verifies $\mathcal{C}(\omega_{\mathcal{T}_n}(v_1))=\omega_{\mathcal{T}_{n-1}}(v_1)$ as shown in Proposition \ref{propexistenceCartiersolution}. As before, a vertex's (or leaf's) monodromy group is cyclic of order $p$ times that of the corresponding one in $\mathcal{T}_{n-1}$. The hypothesis $\iota_n=p\iota_{n-1}-p+1$ also implies that the differential conductor (resp. the $n$-th degeneration) is of the form 
\begin{equation}
    \label{eqncondattherootminimal}
     \omega_{\mathcal{T}_n(x)}(v_0)= \frac{c^pdx}{x^{\iota_n}} +\sum_{j \le \iota_{n-1}} \frac{c_j^pdx}{x^{pj-p+1}} +dg(x) \hspace{2mm} (\text{resp. } g^n(x)),
\end{equation}
where $g(x)$ (resp. $g^n(x)$) is a polynomial in $x^{-1}$ of degree at most $\iota_n-2$. That makes $v_0$ compatible with $v_1$ in $\mathcal{T}_n$ when $\delta_{\mathcal{T}_n}(v_0)>0$, completing the base case. Recall that, in our minimal jump assumption and when $\delta_{\mathcal{T}_n}(v_0)=0$, we don't need the degenerations at $v_0$ and $v_1$ to be compatible.

Let us now consider the case where the height of $\mathcal{T}_{n-1}$ is greater than one. Suppose there are $m$ edges $e_1, \ldots, e_m$ ($m \ge 2$) starting from $v_1$. We then first assign the degeneration data at $v_0$ and $v_1$ identical to the base case. Note that the Hurwitz tree $\mathcal{T}_{n-1}(e_i)$ has height at most $h(\mathcal{T}_{n-1})-1$. We thus can extend $\mathcal{T}_{n-1}(e_1)$ to a minimum tree by induction, and $\mathcal{T}_{n-1}(e_i)$, $i \neq 1$, to an additive tree as in Proposition \ref{propconstructadditivetree}, all with depth $\delta_{\mathcal{T}_n}(v_1)$. The conductor of $\mathcal{T}_{n}(e_1)$ (resp. $\mathcal{T}_{n}(e_i)$) is $\iota_{n,e_1}:=p\mathfrak{C}_{\mathcal{T}_{n-1}}(e_1)-p+1$ (resp. $\iota_{n,e_i}:=p\mathfrak{C}_{\mathcal{T}_{n-1}}(e_i)$). Hence, the sum of conductors are $\iota_n$. That completes the construction of the tree $\mathcal{T}_n$. 
\end{proof}

We have just proved Proposition \ref{propextendtree}. One can also do the following alternative construction, which is utilized in Example \ref{exextensionnonessential}.

\begin{proposition}
\label{propconstructreenodiff}
There exists a $\mathbb{Z}/p^n$-tree $\mathcal{T}_{n}$ that extends $\mathcal{T}_{n-1}$ with no essential jumps.
\end{proposition}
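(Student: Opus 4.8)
\textbf{Proof plan for Proposition \ref{propconstructreenodiff}.}
The strategy is to combine the two building-block constructions --- the additive extension (Proposition \ref{propconstructadditivetree}) and the minimum extension (Proposition \ref{propconstructminimaltree}) --- but to absorb the ``essential'' part of the jump at level $n$ into \emph{new} $\mathbb{Z}/p$-leaves attached along the trunk, rather than into the conductors of the existing leaves of $\mathcal{T}_{n-1}$. Concretely, write $\iota_n=p\iota_{n-1}-p+1+a_n$ with $0\le a_n$ and decompose $a_n=a'_n+pu_n$, $0\le a'_n<p$ (so $u_n$ is the essential part of the jump at level $n$, in the terminology of \S\ref{secbranchingdatum}). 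The non-essential jump condition (Definition \ref{defessentialjump}) means we are \emph{not} allowed to create essential jumps from level $n-1$ to $n$ at any of the leaves inherited from $\mathcal{T}_{n-1}$; thus at each such leaf $b_i$ the conductor must be either $p\iota_{i,n-1}-p+1$ or $p\iota_{i,n-1}-p+1+a_{i,n}$ with $1\le a_{i,n}\le p-1$. The extra ``room'' $u_n$ coming from the essential part of $a_n$ will instead be realized by adjoining, along a refinement of the trunk $e_0$, a collection of $u_n$ (or however many are needed) étale-after-reduction $\mathbb{Z}/p$-leaves of conductor exactly $p$, each contributing a prime-to-$p$ pole to the differential at the root and each carrying monodromy group $\mathbb{Z}/p$ --- exactly as the leaves of conductor $7$ appear in Figures \ref{figextendtreenoessentialsf2} and \ref{figextendtreenoessentialsf3}, and as produced abstractly by Lemma \ref{lemmacontroljunk}.

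The steps, in order, would be as follows. First, refine the trunk $e_0$ by introducing a new vertex $w$ between $v_0$ and $v_1$ (placed at a suitable rational place, dictated by the slope bookkeeping of \ref{c6Hurwitz}), and attach to $w$ the required $\mathbb{Z}/p$-leaves of conductor $p$; the thickness of the two pieces of $e_0$ is then forced by condition \ref{c6Hurwitz} together with the prescribed depths $\delta_{\mathcal{T}_n}(v_0)$ and $\delta_{\mathcal{T}_n}(w)$. Second, set $\delta_{\mathcal{T}_n}(v_1):=p\,\delta_{\mathcal{T}_{n-1}}(v_1)$ and apply the inductive construction of Proposition \ref{propconstructminimaltree} (respectively Proposition \ref{propconstructadditivetree}) to the sub-trees $\mathcal{T}_{n-1}(e_i)$ hanging off $v_1$: at most one of them is extended minimally or $a'_n$-additively (to carry the residual non-essential part $a'_n$), the rest $(p-1)$-additively, exactly as in Remark \ref{rmksubtree}, so that the sum of all leaf conductors below $v_1$ is $p\iota_{n-1}-p+1+a'_n$ and the remaining $pu_n$ is supplied by the $u_n$ new conductor-$p$ leaves below $w$. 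Third, propagate the differential conductors down from the root using the compatibility recipe (\S\ref{seccompatibility}): at the root prescribe $\omega_{\mathcal{T}_n}(v_0)$ (or $g^n$) as in \eqref{eqnintialdiffcondn}, split off its $e_{0,\ast}$-parts corresponding to the new leaves, feed the ``main'' part into $w$ and then into $v_1$, and determine the constant coefficients $c_v$ inductively so that conditions \ref{c2Hurwitz}--\ref{c5Hurwitz} hold; since we are in the non-essential regime, Proposition \ref{propexistenceCartiersolution} (with the exponent $a=a'_n$) gives the Cartier-operator solution needed at $v_1$, and the exact differentials at the new leaves are handled by Lemma \ref{lemmacontroljunk}. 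Fourth, assign the monodromy groups: $\mathbb{Z}/p^{i+1}$ at every vertex/leaf that already appeared in $\mathcal{T}_{n-1}$ with group $\mathbb{Z}/p^i$, and $\mathbb{Z}/p$ at every newly created vertex/leaf; then verify \ref{c8Hurwitz}. Finally, check that all the axioms (H1)--(H8) of Definition \ref{defnhurwitztree} and the extension conditions of Definition \ref{defnextendhurwitz} and Theorem \ref{theoremCartierprediction} are satisfied --- this is the routine verification the excerpt explicitly defers to the reader.

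The main obstacle, and the only place where genuine care is needed, is the \emph{simultaneous} arithmetic of the conductor sums and the depths/slopes: one must choose the number and placement of the new $\mathbb{Z}/p$-leaves (and the rational place of the intermediate vertex $w$ on $e_0$) so that (i) the total conductor below every edge equals $d_e+1$ as required by Remark \ref{remarkslopesumconductors}, (ii) the depth at $v_1$ is forced to be exactly $p\,\delta_{\mathcal{T}_{n-1}}(v_1)$ so the inductive minimal/additive extensions apply verbatim, and (iii) the thicknesses $\epsilon_e$ coming out of \ref{c6Hurwitz} are positive rationals. Because $\delta_{\mathcal{T}_n}(v_0)$ and the slope $d_{e_0}=\iota_n-1$ are prescribed while $\delta_{\mathcal{T}_n}(w)$ is whatever the conductor-$p$ leaves dictate, one has exactly enough freedom in the position of $w$ to make all thicknesses positive; writing this out is a short but slightly fiddly computation, analogous to the one implicit in Table \ref{tab:degenerationdataexnonessential}. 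One should also double-check the boundary case where $\mathcal{T}_{n-1}$ has height one (so there is no branching at $v_1$), where the new leaves below $w$ carry the entire essential part and the single inherited leaf carries $a'_n$; this is the case actually illustrated in Example \ref{exextensionnonessential}. Everything else is an application of Propositions \ref{propconstructadditivetree} and \ref{propconstructminimaltree} together with Lemma \ref{lemmacontroljunk}, exactly as those propositions were used to prove Proposition \ref{propextendtree}.
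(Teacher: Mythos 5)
Your blueprint is the paper's (subdivide the trunk, park new $\mathbb{Z}/p$-leaves at the new vertex $w$ to absorb the essential part $u_n$, extend the old sub-trees by the minimal/additive constructions, and use Proposition \ref{propexistenceCartiersolution} for the Cartier preimage at $v_1$), but your conductor bookkeeping breaks down at exactly the delicate point. You allocate a total of $p\iota_{n-1}-p+1+a'_n$ to the leaves below $v_1$ and only $pu_n$ to the new leaves at $w$. Then the slope of $\mathcal{T}_n$ on the piece of the trunk between $w$ and $v_1$ is $p\iota_{n-1}-p+a'_n=p(\iota_{n-1}-1)+a'_n$, and on the piece between $v_0$ and $w$ it is $\iota_n-1=p(\iota_{n-1}-1)+a'_n+pu_n$; since $a_n$ is prime to $p$ (Corollary \ref{corconductorconditions}), $a'_n\ge 1$ and \emph{both} slopes strictly exceed $p(\iota_{n-1}-1)$. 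Because $\delta_{\mathcal{T}_n}(v_0)=p\delta_{\mathcal{T}_{n-1}}(v_0)$ and the two thicknesses must sum to $\epsilon_{e_0}$, condition \ref{c6Hurwitz} then forces $\delta_{\mathcal{T}_n}(v_1)>p\delta_{\mathcal{T}_{n-1}}(v_1)$ for \emph{every} position of $w$: your claim (ii), that the placement of $w$ gives ``exactly enough freedom'' to get $\delta_{\mathcal{T}_n}(v_1)=p\delta_{\mathcal{T}_{n-1}}(v_1)$, is false, since both slopes lie on the same side of the required average $p(\iota_{n-1}-1)$. Once the depth at $v_1$ is strictly larger than $p$ times the old one, Theorem \ref{theoremCartierprediction} part \ref{theoremCartierpredictionpart2b} forces the differential there (and, by the same slope comparison propagated downward, at every vertex below) to be exact, so Propositions \ref{propconstructminimaltree} and \ref{propconstructadditivetree} — whose root depth is exactly $p$ times and whose root differential is a non-exact Cartier preimage — cannot be applied ``verbatim''; worse, a minimally extended sub-tree forces a pole of order $\equiv 1\pmod p$ in $\omega_{\mathcal{T}_n}(v_1)$, which an exact differential can never have, so \ref{c5Hurwitz}/\ref{c7Hurwitz} cannot hold. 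There is also a secondary mismatch: the preimage supplied by Proposition \ref{propexistenceCartiersolution} necessarily has \emph{two} poles of order smaller than $p$ times the old ones, so its pole orders cannot match your allocation in which only one sub-tree is special.

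The paper's proof exists precisely to fix this arithmetic: two of the old sub-trees are given reduced conductor sums ($p\mathfrak{C}_1-p+a'_n$ and $p\mathfrak{C}_2-p+1$), so the total below $v_1$ is $p\iota_{n-1}-2p+a'_n+1$ and the slope on the lower piece of the trunk is $p(\iota_{n-1}-1)-(p-a'_n)<p(\iota_{n-1}-1)$, while the deficit is returned by giving the new leaf (or leaves) total conductor $p(u_n+1)$ rather than $pu_n$. With one slope below and one above $p(\iota_{n-1}-1)$ there is a unique positive subdivision solving $(\iota_n-1)\epsilon_{e_{0,1}}+(p\iota_{n-1}-2p+a'_n)\epsilon_{e_{0,2}}=p(\iota_{n-1}-1)\bigl(\epsilon_{e_{0,1}}+\epsilon_{e_{0,2}}\bigr)$, which is exactly what pins $\delta_{\mathcal{T}_n}(v_1)$ at $p\delta_{\mathcal{T}_{n-1}}(v_1)$, makes the non-exact Cartier preimage of Proposition \ref{propexistenceCartiersolution} (whose two reduced pole orders now match the two reduced sub-trees) legitimate at $v_1$, and lets the minimal and additive sub-extensions be glued there. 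If you want to keep your allocation you would have to abandon the depth-$p\times$ requirement at $v_1$ altogether and redo the construction with exact differentials and strict inequalities at every vertex below the root, checking exactness and the coefficient compatibility of Theorem \ref{theoremcompatibilitydiff} by hand — a different argument from the one you wrote, not a routine verification.
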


\begin{proof}
Suppose there are $m$ edges $e_1, \ldots, e_m$ succeeding $v_1$ with conductors $\mathfrak{C}_1, \ldots, \mathfrak{C}_m$, respectively, on $\mathcal{T}_{n-1}$. Hence $\sum_{i=1}^m \mathfrak{C}_i=\iota_{n-1}$. Set $\mathfrak{C}'_1:=p\mathfrak{C}_1-p+a'_n$, $\mathfrak{C}'_2:=p\mathfrak{C}_2-p+1$, and $\mathfrak{C}'_i:=p\mathfrak{C}_i$. We first assign $p\delta_{\mathcal{T}_{n-1}}(v_1)$ as the depth at $v_1$ on $\mathcal{T}_n$, and the non-exact differential form $$\omega_{\mathcal{T}_n}(v_1)=\frac{C_{v_1}dx}{\prod_{i=1}^s(x-[b_i]_{v_1})^{\mathfrak{C}'_i}},$$ that verifies $\mathcal{C}(\omega_{\mathcal{T}_n}(v_1))=\omega_{\mathcal{T}_{n-1}}(v_1)$ ( which exists by Proposition \ref{propexistenceCartiersolution}) as the differential conductor at $v_1$.  We then extend $\mathcal{T}_{n-1}(e_1)$ to an additive-$a'_n$-tree, $\mathcal{T}_{n-1}(e_2)$ to a minimum tree, and each $\mathcal{T}_{n-1}(e_i)$ where $i \neq 1,2$ to an additive-$(p-1)$-tree, all with depth $p\delta_{\mathcal{T}_{n-1}}(v_1)$, and so that they are compatible with the differential conductor $\omega_{\mathcal{T}_n}(v_1)$. That makes the current sum of conductors to be $\mathfrak{C}(v_1):=p\iota_{n-1}-2p+a'_n+1$ which is strictly smaller than $p\iota_{n-1}-p+1$ and not congruent to $1$ modulo $p$. Let us now consider the trunk $e_0$ of $\mathcal{T}_{n-1}$. A simple calculation shows that one can break down the edge $e_0$ into two rational ones, called $e_{0,1}$ and $e_{0,2}$, so that $(\mathfrak{C}(v_1)-1)\epsilon_{e_{0,2}}+(\iota_n-1)\epsilon_{e_{0,1}}=p\delta_{\mathcal{T}_{n-1}}$. We then assign to $t(e_{0,1})$ on $\mathcal{T}_n$ the exact differential $$\omega_{\mathcal{T}_n}(t(e_{0,1}))=\frac{-C_0dx}{(\iota_{n}-1)x^{\mathfrak{C}(v_1)}(x-a)^{p(u_n+1)}},$$ where $a:=\big(-C_0 /((\iota_n-1)\mathfrak{C}(v_1))\big)^{1/(p(u_n+1))}$. A straightforward computation shows that the choice of $a$ makes $v_0$, $v_1$, and $t(e_{0,1})$ compatible. We then glue a leaf of conductor $p(u_n+1)$ and monodromy group $\mathbb{Z}/p^{n-1}$ to $t(e_{0,1})$. Finally, we set the remaining degeneration data at $e_{0,1}$, $e_{0,2}$, and $v_0$ in the obvious manner, completing the proof.
\end{proof}

\bibliographystyle{acm}
\bibliography{mybib}

\end{document}